\documentclass{amsart}

\usepackage{time}

\usepackage[margin=1.5in]{geometry}

\usepackage{amscd}
\usepackage{amsmath}
\usepackage{amssymb}
\usepackage{amsthm}
\usepackage{arydshln}
\usepackage{bbm}
\usepackage{bm}
\usepackage{colonequals}
\usepackage{color}
\usepackage{enumitem}
\usepackage{latexsym}
\usepackage{mathrsfs}
\usepackage{mathtools}
\usepackage{stmaryrd}
\usepackage{textcomp}
\usepackage{tikz-cd}
\usepackage{url}
\usepackage{varwidth}
\usepackage[all]{xy}
\usepackage{yhmath}

\usepackage[colorlinks=false,hidelinks]{hyperref}
\usetikzlibrary{decorations.pathmorphing}

\newcommand{\ad}{\mathrm{ad}}

\newcommand{\der}{\mathrm{der}}

\newcommand{\pr}{\mathrm{pr}}

\newcommand{\red}{\mathrm{red}}
\newcommand{\reg}{\mathrm{reg}}

\newcommand{\rss}{\mathrm{rss}}

\newcommand{\ur}{\mathrm{ur}}
\newcommand{\vreg}{\mathrm{vreg}}

\newcommand{\cO}{\mathcal{O}}

\newcommand{\mfp}{\mathfrak{p}}

\newcommand{\x}{\mathbf{x}}

\newcommand{\FF}{\mathbb{F}}

\newcommand{\sF}{\mathscr{F}}
\newcommand{\cF}{\mathcal{F}}
\newcommand{\cG}{\mathcal{G}}

\newcommand{\bbA}{\mathbb{A}}

\newcommand{\bbR}{\mathbb{R}}

\newcommand{\QQ}{\mathbb{Q}}

\newcommand{\bbW}{\mathbb{W}}

\newcommand{\bbG}{\mathbb{G}}

\newcommand{\cH}{\mathcal{H}}

\newcommand{\bbZ}{\mathbb{Z}}

\newcommand{\cL}{\mathcal{L}}
\newcommand{\cK}{\mathcal{K}}

\newcommand{\cB}{\mathcal{B}}

\newcommand{\mfb}{\mathfrak{b}}
\newcommand{\mfu}{\mathfrak{u}}
\newcommand{\mfg}{\mathfrak{g}}
\newcommand{\mfh}{\mathfrak{h}}

\newcommand{\mfi}{\mathfrak{i}}

\newcommand{\mfl}{\mathfrak{l}}
\newcommand{\mft}{\mathfrak{t}}
\newcommand{\mfz}{\mathfrak{z}}

\newcommand{\cY}{\mathcal Y}
\newcommand{\G}{\mathsf{G}}
\newcommand{\T}{\mathsf{T}}

\DeclareMathOperator{\tr}{tr}

\DeclareMathOperator{\Av}{Av}

\DeclareMathOperator{\Ad}{Ad}

\DeclareMathOperator{\Hom}{Hom}

\DeclareMathOperator{\Ind}{Ind}

\DeclareMathOperator{\Lie}{Lie}

\DeclareMathOperator{\Stab}{Stab}

\DeclareMathOperator{\Tr}{Tr}

\DeclareMathOperator{\GL}{GL}

\DeclareMathOperator{\SL}{SL}

\DeclareMathOperator{\height}{ht}

\DeclareMathOperator{\pInd}{pInd}
\DeclareMathOperator{\pRes}{pRes}
\DeclareMathOperator{\For}{For}
\DeclareMathOperator{\fpInd}{{\frak{pInd}}}
\DeclareMathOperator{\fpRes}{{\frak{pRes}}}
\DeclareMathOperator{\FT}{FT}
\DeclareMathOperator{\Infl}{Infl}

\newcommand{\sD}{D}

\newcommand{\from}{\colon}

\pagestyle{plain}
 \setlength{\itemsep}{0pt}
 \setcounter{totalnumber}{3}
 \setcounter{topnumber}{1}
 \setcounter{bottomnumber}{3}
 \setcounter{secnumdepth}{3}

\theoremstyle{plain}
\newtheorem{thm}{Theorem}[section]
\newtheorem{theorem}[thm]{Theorem}
\newtheorem*{thm*}{Theorem}

\newtheorem{proposition}[thm]{Proposition}

\newtheorem{lemma}[thm]{Lemma}

\newtheorem{corollary}[thm]{Corollary}

\theoremstyle{definition}

\newtheorem{definition}[thm]{Definition}

\newtheorem{claim}{Claim}

\theoremstyle{remark}

\newtheorem*{claim*}{Claim}
\newtheorem{remark}[thm]{Remark}

\theoremstyle{theorem}
\newtheorem{displaytheorem}{Theorem}

\newtheorem*{displayconjecture}{Conjecture}
\newtheorem*{displaytheorem*}{Theorem}

\makeatletter
\newcommand{\dashover}[2][\mathop]{#1{\mathpalette\df@over{{\dashfill}{#2}}}}
\newcommand{\fillover}[2][\mathop]{#1{\mathpalette\df@over{{\solidfill}{#2}}}}
\newcommand{\df@over}[2]{\df@@over#1#2}
\newcommand\df@@over[3]{%
  \vbox{
    \offinterlineskip
    \ialign{##\cr
      #2{#1}\cr
      \noalign{\kern1pt}
      $\m@th#1#3$\cr
    }
  }%
}
\newcommand{\dashfill}[1]{%
  \kern-.5pt
  \xleaders\hbox{\kern.5pt\vrule height.4pt width \dash@width{#1}\kern.5pt}\hfill
  \kern-.5pt
}
\newcommand{\dash@width}[1]{%
  \ifx#1\displaystyle
    2pt
  \else
    \ifx#1\textstyle
      1.5pt
    \else
      \ifx#1\scriptstyle
        1.25pt
      \else
        \ifx#1\scriptscriptstyle
          1pt
        \fi
      \fi
    \fi
  \fi
}
\newcommand{\solidfill}[1]{\leaders\hrule\hfill}
\makeatother

\SelectTips{cm}{11}

\title{Generic character sheaves on parahoric subgroups}

\author{Roman Bezrukavnikov}\address{Department of Mathematics, Massachusetts Institute of Technology, Cambridge, Massachusetts}\email{bezrukav@math.mit.edu}

\author{Charlotte Chan}
\address{Department of Mathematics, University of Michigan, Ann Arbor, Michigan}
\email{charchan@umich.edu}


\begin{document}

\maketitle

\begin{abstract}
 We study parabolic induction producing $\ell$-adic sheaves on a parahoric subgroup scheme in the loop group of a reductive group.
Under a genericity assumption on the input data, we 
  prove that it produces conjugation equivariant perverse sheaves on the parahoric subgroup; this is upgraded to 
 a $t$-exact equivalence of categories of $\ell$-adic sheaves.
  An iterative version of the construction produces such a perverse sheaf starting from a geometric analogue of the data considered by J.-K.\ Yu and J.\ Kim.
  We prove, under a mild condition on $q$, that generic parabolic induction from a parahoric torus realizes the character of the representation arising from the associated parahoric Deligne--Lusztig induction, which is known to parametrize the Fintzen--Kaletha--Spice twist of types.
  In the simplest interesting setting, our construction produces a simple perverse sheaf associated to a sufficiently nontrivial multiplicative local system on a torus, resolving a conjecture of Lusztig. 
\end{abstract}

\setcounter{tocdepth}{1}
\tableofcontents

\newpage

\section{Introduction}

Lusztig's character sheaves on reductive groups \cite{MR0792706} is one of the most important discoveries in the last half-century and has incited many breakthroughs marrying perverse sheaves and representation theory. Among the recent developments is an emerging theory of character sheaves on loop groups, with expected applications to the Langlands program. 
First steps in the direction of depth zero character sheaves appear in the literature starting with \cite{Lus15,Lus14}, see also \cite{BKV15,BV21,Che23,NY25};
the available approach here is based on inducing character sheaves on the reductive quotient of a parahoric subgroup.

This emerging theory should not be limited to the depth zero setting; in a higher depth generalization character sheaves on the reductive quotients must be 
replaced by certain  sheaves on the non-reductive algebraic groups $G_r$ coming from quotients in the Moy--Prasad filtration of parahoric group schemes. In general, it is a very difficult problem to study character sheaves for non-reductive algebraic groups which remains widely open outside of some special cases (unipotent groups \cite{MR3068399,MR3147415}, solvable groups \cite{Des17}).

This paper initiates a theory of character sheaves on $G_r$. In the simplest nontrivial case, our construction produces a simple perverse sheaf associated to a sufficiently regular multiplicative local system, resolving an outstanding conjecture of Lusztig \cite{MR2181813}. We establish that these $\ell$-adic sheaves give the sheaf-theoretic counterpart to positive-depth Deligne--Lusztig induction \cite{Lus04,Sta09,CI21-RT}. Since positive-depth Deligne--Lusztig induction is known to realize $L$-packets of supercuspidal representations \cite{CO21,CO25}, the character sheaves on $G_r$ in this paper provide a necessary ingredient for constructing positive-depth character sheaves on loop groups and utilizing them to study endoscopic character relations for positive-depth supercuspidal representations; it is joint work in progress of the authors with Y.\ Varshavsky to generalize the geometric depth-zero approach of the first author and Varhavsky \cite{BV21,BV21b}.

The algebraic groups $G_r$ arise naturally in the representation theory of $p$-adic groups. For example, it is known by work of Kim \cite{Kim07} and Fintzen \cite{Fin21-Ann} that outside a small collection of primes $p$, that Yu's construction \cite{Yu01} is exhaustive: every supercuspidal representation can be obtained as the compact induction of an irreducible representation of the rational points of some $G_r$. Yu (and more generally Kim--Yu for non-supercuspidal types) produces such irreducible representations from \textit{generic datum}.  

In this paper, we produce a class of $G_r$-equivariant perverse sheaves which are constructed from sheaf-theoretic generic datum. We expect that the functions associated to the sheaves we construct should form a basis for the subspace of class functions spanned by all Kim--Yu types associated to unramified tori. By appealing to Kaletha's Howe factorization, our construction associates a simple $G_r$-equivariant perverse sheaf $\cK_\cL$ to any multiplicative local system $\cL$ with trivial Weyl-group stabilizer; we believe these to be the character sheaf incarnation of the character of regular supercuspidal types \cite{Kal19}. We give evidence for these assertions by establishing explicit compatibility between $\cK_\cL$ and positive-depth supercuspidal $L$-packets in the setting that $\cL$ is ``sufficiently generic.'' This crucially uses positive-depth Deligne--Lusztig induction \cite{Lus04,Sta09,CI21-RT} and the results of the second author and Oi \cite{CO21,CO25} establishing comparisons (under a mild condition on the size of the residue field) to the algebraic constructions of Yu and Kim--Yu, especially in revealing that these geometric constructions obtain the \textit{corrected} parametrization \cite{DS18,Kal19,FKS23}. Our construction therefore allows for the possibility of studying positive-depth supercuspidal $L$-packets using the technology of perverse sheaves.

In the following subsections, we describe the main contributions of this paper. In Section \ref{subsec:intro_lusztig}, we describe Lusztig's conjecture in \cite{MR2181813} on the existence of a particular class of character sheaves on $G_r$ (these are exactly the ones associated to ``sufficiently generic'' $\cL$ mentioned in the preceding paragraph). In Section \ref{subsec:intro_construction}, we describe our more general construction of character sheaves on $G_r$ and outline the proof. In Section \ref{subsec:intro_DL}, we discuss our main theorem bridging $\ell$-adic sheaves and positive-depth Deligne--Lusztig induction.

\subsection{Lusztig's conjecture}\label{subsec:intro_lusztig}

Let $G$ be a connected reductive group over the maximal unramified extension $F^{\ur}$ of a non-archimedean local field $F$. Write $\cO^{\ur}$ for the ring of integers of $F^{\ur}$ and choose a uniformizer $\varpi$. Given a split maximal torus $T \hookrightarrow G$, choose a point $\x$ in the apartment of $T$ and fix a non-negative integer $r$. From the Moy--Prasad filtration associated to $\x$, we have then an associated ``truncated'' parahoric group scheme $G_r$ defined over the residue field $k$ of $F^{\ur}$. (When $r=0$, this is a connected reductive group. When $r>0$ and $\x$ is hyperspecial, then $G_r$ is the $r$th jet scheme of a connected reductive group $\bbG$, and in particular, $G_r(k) = \bbG(\cO^{\ur}/\varpi^{r+1})$.) Choose a Borel subgroup $B \subset G$ containing $T$, consider the associated subgroup schemes $T_r \subset B_r \subset G_r$ with natural projection $\beta \from B_r \to T_r$, and consider the diagram
\begin{equation}\tag{$\star$}\label{eq:pull push}
  \begin{tikzcd}
    & \widetilde G_r \ar{dl}[above left]{f} \ar{dr}{\pi} \\
    T_r && G_r
  \end{tikzcd}
\end{equation}
where
\begin{align*}
  \widetilde G_r \colonequals \{(g,hB_r) \in G_r \times G_r/B_r : h^{-1}gh \in B_r\}, \quad f(g,hB_r) = \beta(h^{-1}gh), \quad \pi(g,hB_r) = g.
\end{align*}

\begin{displayconjecture}[Lusztig]
  Let $r > 0$. If $\cL$ is a sufficiently generic multiplicative local system on $T_r$, then $\pInd_{B_r}^{G_r}(\cL) \colonequals \pi_! f^* \cL$ is an intersection cohomology complex on $G_r$.
\end{displayconjecture}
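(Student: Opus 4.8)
The plan is to deduce the conjecture from the main structural result of this paper: on suitable ``generic loci'' of the equivariant derived categories of constructible sheaves, positive-depth parabolic induction is a $t$-exact equivalence of categories. The diagram $(\star)$ is the parahoric Grothendieck--Springer correspondence, so $\pInd_{B_r}^{G_r} = \pi_! f^*$ is the principal-series instance of parabolic induction, from $T_r$ to $G_r$, and I will show that a sufficiently generic multiplicative local system $\cL$ on $T_r$ is carried by this functor into the generic locus on $G_r$, where every simple object is of $\mathrm{IC}$ type. On the torus side this is immediate: $T_r$ is connected, smooth and commutative over $k$, so $\cL[\dim T_r]$ is the $\mathrm{IC}$ complex of $T_r$ with coefficients in $\cL$, in particular a simple object of $\Perv(T_r)$. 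The content of ``sufficiently generic'' is exactly that $\cL[\dim T_r]$ should lie in the generic locus to which the equivalence applies; unwinding the definitions, this is a finite list of explicit nontriviality conditions on the restrictions of $\cL$ to the graded pieces of the Moy--Prasad filtration of $T_r$ (along the positive-depth root directions), together with the requirement that the reductive-level part $\cL_0$, i.e.\ the restriction to $T_0$, be a regular Kummer local system. This is the ``sufficiently nontrivial'' hypothesis referred to in the abstract.

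Next I would identify the functor and conclude. Since $\widetilde G_r \to G_r/B_r$ is a fibration with fiber $B_r$, the variety $\widetilde G_r$ is smooth of dimension $\dim G_r$, so $f^*\cL$ is a genuine local system and $\pi_! f^*$ agrees, up to an explicit cohomological shift, with the normalized functor $\fpInd_{B_r}^{G_r}$ occurring in the equivalence theorem. By the compatibility of the construction with the Moy--Prasad filtration, $\fpInd_{B_r}^{G_r}$ is realized as a composite of one-layer positive-depth parabolic inductions indexed by the layers of the filtration, iterated on top of the classical ($r=0$) principal-series induction $\fpInd_{B_0}^{G_0}$ --- the base case being the classical fact (Lusztig) that for a regular Kummer system $\cL_0$ the principal-series character sheaf $\fpInd_{B_0}^{G_0}(\cL_0[\dim T_0])$ is the irreducible character sheaf $\mathrm{IC}(G_0,\mathcal M_0)$, hence lies in the generic locus on $G_0$. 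Each positive-depth layer is, by our main theorem, a $t$-exact equivalence between the relevant generic loci, hence preserves simplicity and perversity, so $\fpInd_{B_r}^{G_r}(\cL[\dim T_r])$ is a simple perverse sheaf on $G_r$. A simple perverse sheaf on the irreducible variety $G_r$ is $\mathrm{IC}(\overline Z,\mathcal M)$ for an irreducible local system $\mathcal M$ on a smooth locally closed $Z\subseteq G_r$; since $\pi$ restricts to a finite étale cover of positive degree over a dense open of $G_r$, the induced sheaf is nonzero there, so this simple perverse sheaf has full support and $\overline Z = G_r$. Undoing the shift, $\pInd_{B_r}^{G_r}(\cL) = \pi_! f^*\cL$ is then $\mathrm{IC}(G_r,\mathcal M)$ up to shift, which is the assertion of the conjecture.

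The main obstacle is the genericity bookkeeping that makes the above argument go through: pinning down the correct notion of ``sufficiently generic'' for $\cL$, verifying that it forces the genericity hypothesis at \emph{every} Moy--Prasad layer so that the equivalence theorem applies at each stage of the iteration, and checking that the composite of the layerwise inductions is genuinely the pull--push of $(\star)$ rather than merely agreeing with it over the regular locus. It is worth stressing why the structural equivalence is essential here and not a mere convenience: for $r>0$ the morphism $\pi$ is \emph{not} proper --- the quotient $G_r/B_r$ is an affine-space bundle over the flag variety of $G_0$, not a projective variety --- so the decomposition theorem is unavailable, and controlling perversity and simplicity through the non-proper pushforward $\pi_!$ is precisely the content that the $t$-exact equivalence on the generic locus supplies.
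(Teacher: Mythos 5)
Your core idea---put $\cL[\dim T_r]$ in the generic subcategory and apply the $t$-exact equivalence once to get a simple perverse sheaf, then argue full support---is the right skeleton, but the way you get into the generic locus is not correct and introduces hypotheses that are not part of the conjecture. You propose to factor $\pInd_{B_r}^{G_r}$ as a composite of ``one-layer'' positive-depth inductions indexed by the Moy--Prasad layers, sitting on top of the classical $r=0$ principal-series induction, and accordingly you demand genericity conditions at \emph{every} layer, including that the depth-zero restriction $\cL_0$ be a regular Kummer system. No such layerwise factorization of $\pInd_{B_r}^{G_r}$ is available (the iterated construction in Section \ref{sec:generic char sheaves} is an iteration over a tower of twisted Levis with strictly increasing depths and auxiliary local systems, not a decomposition of a single principal-series induction into Moy--Prasad layers), and Lusztig's ``sufficiently generic'' hypothesis constrains only the restriction of $\cL$ to the top graded piece $\mft=T_{r:r+}$: it says exactly that $\cL$ is $(\mft,\cL_\psi)$-equivariant for some $X_\psi$ satisfying $\mathfrak{ge1}$. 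Nothing is (or needs to be) assumed about $\cL|_{T_0}$ or intermediate layers. The intended argument is a single application of Theorem \ref{thm:t exact} at depth $r$ with $L=T$, $P=B$: the top-layer genericity (together with $\mathfrak{ge2}$, which by Yu's Lemma 8.1 follows from $\mathfrak{ge1}$ for all but finitely many $p$---a caveat you omit, and the reason the paper's Corollary \ref{cor:Lusztig} is only ``for almost all $p$'') places $\cL[\dim T_r]$ in $D_{T_r}^\psi(T_r)$, and the equivalence sends it to a simple perverse sheaf in $D_{G_r}^\psi(G_r)$. Your proposed induction on layers, with its $r=0$ base case, is both unproven and unnecessary, and if taken literally it proves a different statement (one with extra hypotheses on $\cL_0$).

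For the final step, ``simple perverse with full support, hence IC,'' your appeal to $\pi$ being finite \'etale of positive degree over a dense open is essentially the content of Lemma \ref{lem:tilde Gvreg}: over the very regular locus $G_{r,\vreg}$ both $f$ and $\pi$ restrict to $W$-torsors, and since $\cL$ is multiplicative its support contains $T_{r,\vreg}$, so the induced sheaf is nonzero on the dense open $G_{r,\vreg}$; Theorem \ref{thm:IC vreg} then identifies $\pInd_{B_r}^{G_r}(\cL[\dim T_r])$ with $(j_{\vreg})_{!*}\bigl((\pi_{\vreg})_! f_{\vreg}^*\cL_{\vreg}[\dim G_r]\bigr)$. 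You should justify this nonvanishing via the very-regular-locus analysis rather than assert it, but that part of your outline is repairable; the layerwise reduction is the genuine gap.
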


When $r=0$, this statement is true for any multiplicative local system and is one of the first main results in the theory of character sheaves. The proof crucially relies on the fact that $\pi$ is proper and small. When $r > 0$, we strike out twice: $\pi$ is neither proper nor small.

As evidence, Lusztig proved his conjecture for $G_1(k) = \GL_2(k[t]/t^2)$ \cite{MR2181813}. Later, Lusztig considered the case $G_r(k) = G(k[t]/t^{r+1})$ for $G$ connected reductive and proved his conjecture for $r = 1,3$ (and a weak form for $r=2$) \cite{Lus17}, and proposed the hope that this method could be applied for larger $r$. In this paper, we take a completely different approach. The notion of ``sufficiently generic'' with which we work is equivalent to Lusztig's notion under a mild assumption on $p$ ($p$ not a torsion prime for the dual root datum of $G$); for example, $\GL_n$ the notions are the same for all $p$, but for $\SL_n$, our assumption is stronger when $p=2$. (See Remark \ref{rem:small p} for more comments.)

\begin{displaytheorem*}[\ref{thm:IC vreg}]
  Lusztig's conjecture is true and $\pInd_{B_r}^{G_r}$ is an intersection cohomology complex on the very regular locus of $G_r$.
\end{displaytheorem*}

We obtain this as a special case of a much more general construction of simple $G_r$-equivariant perverse sheaves on $G_r$ for $r > 0$.

\subsection{Character sheaves on $G_r$}\label{subsec:intro_construction}

Our construction is based on input datum that can be viewed as a geometric incarnation of the datum used by Yu \cite{Yu01} to construct tame supercuspidal representations. We start with a triple $(\vec G, \cF, \vec \cL)$ where 
\begin{enumerate}[label=\textbullet]
  \item $\vec G = (G^0, \ldots, G^d)$ is a tower of Levi subgroups $G^0 \subsetneq \cdots \subsetneq G^d = G$ containing $T$
  \item $\cF$ is a simple $G_0^0$-equivariant perverse sheaf on $G_0^0$
  \item $\vec \cL = (\cL_0, \ldots, \cL_d)$ is a sequence of multiplicative local systems such that for $0 \leq i \leq d-1$, $\cL_i$ is a $(G^i,G^{i+1})$-generic sheaf on $G_{r_i}^i$, where $0 < r_0 < \cdots < r_d = r$
\end{enumerate}
The notion of genericity here is essentially the same as in \cite{Yu01} (see Definition \ref{def:generic element}).

\begin{displaytheorem}[\ref{thm:generic character sheaf}]\label{thm:intro K_Psi}
  We can construct a simple $G_r$-equivariant perverse sheaf $\cK_\Psi$ on $G_r$.
\end{displaytheorem}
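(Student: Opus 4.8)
The plan is to build $K_\Psi$ by induction on the length $d$ of the tower $\vec G$, iterating three operations on the equivariant derived category: \emph{inflation} along a Moy--Prasad projection $G^i_{r'} \twoheadrightarrow G^i_{r}$ with $r < r'$, \emph{twisting} by a multiplicative local system, and \emph{positive-depth parabolic induction} $\pInd_{P^i_{r_i}}^{G^{i+1}_{r_i}}$ from the Levi $G^i$ up to $G^{i+1}$ at depth $r_i$. Concretely, set $K^{(0)} \colonequals \cF$ on $G^0_0$; given a simple $G^i_{r_{i-1}}$-equivariant perverse sheaf $K^{(i)}$ on $G^i_{r_{i-1}}$ (with $r_{-1} \colonequals 0$), inflate it to $G^i_{r_i}$, tensor by $\cL_i$, and apply $\pInd$ for the parabolic $P^i$ of $G^{i+1}$ with Levi quotient $G^i$, obtaining $K^{(i+1)}$ on $G^{i+1}_{r_i}$; finally put $K_\Psi \colonequals \Infl(K^{(d)}) \otimes \cL_d$ on $G^d_{r_d} = G_r$. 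Whether one prefers to carry out all the inductions at the top depth $r$ and inflate the $\cL_i$ accordingly, or to work at the successive depths $r_i$ as above, is immaterial.

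I would first dispose of the two elementary operations. The inflation map $G^i_{r'} \twoheadrightarrow G^i_{r}$ is smooth and surjective with fibers that are torsors under a split unipotent group (a successive extension of Moy--Prasad graded pieces, hence an affine space as a variety); therefore its suitably shifted pullback $\Infl$ is fully faithful, $t$-exact for the perverse $t$-structures, and carries simple perverse sheaves to simple perverse sheaves. Moreover the conjugation action of $G^i_{r'}$ on itself lifts the one of $G^i_r$, and the projection is invariant under the kernel, so $\Infl$ promotes a $G^i_r$-equivariant structure to a $G^i_{r'}$-equivariant one. Twisting by the rank-one multiplicative local system $\cL_i$ is an autoequivalence of the relevant equivariant derived category preserving both the perverse $t$-structure and simplicity. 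Hence each inflate-then-twist stage preserves ``simple equivariant perverse sheaf''.

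The heart of the argument is the $\pInd$ stage, which rests entirely on the main equivalence theorem: on the $(G^i,G^{i+1})$-generic locus of the $G^i_{r_i}$-equivariant derived category, $\pInd_{P^i_{r_i}}^{G^{i+1}_{r_i}}$ is a $t$-exact equivalence onto a correspondingly generic locus of the $G^{i+1}_{r_i}$-equivariant derived category, and in particular sends simple perverse sheaves to simple perverse sheaves. So all that must be checked is that $\Infl(K^{(i)}) \otimes \cL_i$ lands in that generic locus — which is exactly the content of the hypothesis that $\cL_i$ is $(G^i,G^{i+1})$-generic on $G^i_{r_i}$: tensoring any object by such an $\cL_i$ places it in the generic part of the category, and the genericity condition only sees a graded piece of the Moy--Prasad filtration, so it is insensitive to having just inflated $K^{(i)}$ from the lower depth $r_{i-1}$. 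Feeding this into the equivalence produces a simple $G^{i+1}_{r_i}$-equivariant perverse sheaf $K^{(i+1)}$, and iterating up the tower — with a last inflation to $G_r$ and twist by $\cL_d$ — yields the desired $K_\Psi$.

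The main obstacle is organizational rather than a single hard step: one must pin down precisely which subcategory the ``$(G^i,G^{i+1})$-generic locus'' is at each stage, verify that the image of the stage-$i$ equivalence is compatible, after inflation and twisting by $\cL_{i+1}$, with the domain required for the stage-$(i+1)$ equivalence (so that the chain of equivalences actually composes), and keep the equivariance structures straight as the ambient group grows from $G^i_{\bullet}$ to $G^{i+1}_{\bullet}$. Once these compatibilities are in place, the theorem follows formally from the equivalence theorem together with the two lemmas on inflation and twisting.
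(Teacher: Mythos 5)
Your proposal is correct and takes essentially the same route as the paper's proof of Theorem \ref{thm:generic character sheaf}: induct up the tower, using that shifted pullback along the smooth Moy--Prasad projections and twisting by the rank-one multiplicative local systems preserve simple equivariant perverse sheaves, that the resulting object lies in the $(G^i,G^{i+1})$-generic subcategory, and that the $t$-exact equivalence of Theorem \ref{thm:t exact} then sends it to a simple perverse sheaf on the next group in the tower. The only caveat is that it is not tensoring an \emph{arbitrary} object with $\cL_i$ that forces membership in the generic subcategory, but the fact that the object being twisted is inflated from depth $r_{i-1} < r_i$ (hence constant along the top graded piece, so the twist is $(\mfg^i,\cL_{\psi_i})$-equivariant) --- which is exactly the point your ``insensitive to the graded piece'' remark is getting at, and is also how the paper argues.
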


In particular, Theorem \ref{thm:generic character sheaf} shows that associated to a character sheaf on $G_0^0$ and a sequence of easy ``positive-depth'' data, we obtain an object that should be deserving of the terminology \textit{character sheaf on $G_r$}.

We mention a special case of Theorem \ref{thm:intro K_Psi} which may be of particular interest. When $p$ does not divide the order of the Weyl group of $G$, we may appeal to Kaletha's Howe factorization \cite{Kal19} of \textit{any} multiplicative local system $\cL$ on $T_r$. This associates to $\cL$ a (non-unique) sequence $\vec \cL$ of successively generic multiplicative local systems, and construction yielding Theorem \ref{thm:intro K_Psi} then produces for us a semisimple $G_r$-equivariant perverse sheaf on $G_r$. We prove that this is independent of the choice of Howe factorization and that the resulting perverse sheaf $\cK_\cL$ has a simple description as the intermediate extension of an explicit local system on the \textit{very regular locus} of $G_r$ (see Theorems \ref{thm:IC vreg} and \ref{thm:K_L}).

The construction of $\cK_\Psi$ is inductive, with each step being given by parabolic induction. This mimics the inductive nature of Yu's construction \cite{Yu01}. Let $P \subset G$ be a parabolic subgroup whose Levi component $L$ contains $T$. To a(ny) $(L,G)$-generic element $\psi$, we define associated idempotents $e_\psi \in D_{L_r}(L_r)$ and $f_\psi \in D_{G_r}(G_r)$ with respect to convolution $\star$. These in turn define ``generic'' monoidal subcategories $D_{L_r}^\psi(L_r) \colonequals e_\psi \star D_{L_r}(L_r)$ and $D_{G_r}^\psi(G_r) \colonequals f_\psi \star D_{G_r}(G_r)$. These generic subcategories can be described intrinsically: they are the full subcategories of $D_{L_r}(L_r)$ and $D_{G_r}(G_r)$ consisting of objects those whose Fourier--Deligne transform is locally supported ``above'' $\psi$ and the $G_r$-orbit of $\psi$, respectively  (Lemma \ref{lem:local support}). Here, the Fourier--Deligne transform is taken with respect to the vector bundles $L_r \to L_{r-1}$ and $G_r \to G_{r-1}$.

Using a generalization of \eqref{eq:pull push} wherein $T$ is replaced by $L$, we can define a parabolic induction functor $\pInd_{P_r}^{G_r} \from D_{L_r}(L_r) \to D_{G_r}(G_r)$ which restricts to a functor on generic subcategories. We may now state the main theorem of this paper:

\begin{displaytheorem}[\ref{thm:equivalence}]\label{thm:intro pInd}
  $\pInd_{P_r}^{G_r} \from D_{L_r}^\psi(L_r) \to D_{G_r}^\psi(G_r)$ is a $t$-exact equivalence of categories.
\end{displaytheorem}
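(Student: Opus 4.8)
The plan is to produce an explicit quasi-inverse to $\pInd_{P_r}^{G_r}$ in the form of a suitably normalized parabolic restriction functor $\pRes_{P_r}^{G_r}\colon D_{G_r}(G_r)\to D_{L_r}(L_r)$, attached to the correspondence $L_r \xleftarrow{\,\beta_P\,} P_r \hookrightarrow G_r$ (Levi projection on the left, closed embedding on the right), and then to deduce $t$-exactness by a formal argument. First I would check that $\pRes_{P_r}^{G_r}$ carries $D_{G_r}^\psi(G_r)$ into $D_{L_r}^\psi(L_r)$: granting that $\pInd_{P_r}^{G_r}$ already preserves generic subcategories (as recorded just before the statement), this is immediate from the local-support description of Lemma \ref{lem:local support}, since restriction to $P_r$ followed by descent along $\beta_P$ moves Fourier support only within the $P_r$-saturation of the locus above $G_r\cdot\psi$, whose trace on $\mathfrak l_r$ (after applying $\FT$ along $L_r\to L_{r-1}$) is again concentrated above $L_r\cdot\psi$ because $\psi$ is fixed by $L$ acting on the top Moy--Prasad layer, by Definition \ref{def:generic element}.

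The heart of the proof, and the step I expect to be the main obstacle, is to show that on the generic subcategories both composites $\pRes_{P_r}^{G_r}\circ\pInd_{P_r}^{G_r}$ and $\pInd_{P_r}^{G_r}\circ\pRes_{P_r}^{G_r}$ are isomorphic to the identity. By base change each composite is convolution with a kernel supported on the fibre product of the two defining correspondences, and this fibre product stratifies by the relative position of a pair of parabolic-flag coordinates, i.e.\ over $W_L\backslash W/W_L$ (with its evident positive-depth thickening when $r>0$). The claim is that genericity of $\psi$ kills the contribution of every stratum indexed by a nontrivial double coset, leaving only the diagonal one, on which the composite visibly collapses to the identity after the shifts built into the normalizations cancel the relative dimension of $\beta_P$. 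This is exactly where the hypothesis $\Stab_G(\psi)=L$ of Definition \ref{def:generic element} enters. Applying $\FT$ along $G_r\to G_{r-1}$, an object of $D_{G_r}^\psi(G_r)$ is supported over the coadjoint orbit $G_r\cdot\psi$; thus a point $g$ of $G_r$ on which the relevant pulled-back sheaf is nonzero has leading term in $G\cdot\psi$, and the defining relation $h^{-1}gh\in P_r$ then forces the $\bar h$-conjugate of that leading term to lie in $\mathfrak p_r$. Since $\psi$ is $(L,G)$-generic, its $G$-orbit meets $\mathfrak p_r$ along a single $U_{P_r}$-orbit, whence $\bar h\in P$; hence the non-diagonal strata do not meet the supports in play. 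Making this precise requires an orbit-by-orbit analysis of the correspondence variety over the generic locus together with careful bookkeeping of perverse shifts; it is the precise geometric mechanism that replaces the properness and smallness of $\pi$ which fail for $r>0$.

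Granting the equivalence, $t$-exactness follows formally. Both $\pInd_{P_r}^{G_r}$, a shifted $\pi_! f^*$, and $\pRes_{P_r}^{G_r}$, a shifted $(\beta_P)_!\, j^*$, are right $t$-exact in their perverse normalizations: the map $f$ in the $L$-analogue of \eqref{eq:pull push} is smooth, so shifted $f^*$ is $t$-exact, while $\pi_!$, $(\beta_P)_!$, and $j^*$ for the closed embedding $j$ are right $t$-exact (the pushforwards up to the normalizing shift). A self-equivalence of a triangulated category with $t$-structure whose inverse is also right $t$-exact is automatically $t$-exact: if $F(D^{\le 0})\subseteq D^{\le 0}$ and $F^{-1}(D^{\le 0})\subseteq D^{\le 0}$ then $F(D^{\le 0})=D^{\le 0}$, and an equivalence carrying $D^{\le 0}$ onto itself carries its right-orthogonal, hence $D^{\ge 0}$, onto itself. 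Applying this with $F=\pInd_{P_r}^{G_r}$ and $F^{-1}=\pRes_{P_r}^{G_r}$ on the generic subcategories---which are stable under perverse truncation, again by Lemma \ref{lem:local support}---gives the theorem. Alternatively, one reads off $t$-exactness directly from the Fourier-transformed picture above, in which $\pInd_{P_r}^{G_r}$ is identified, up to a shift absorbed by the normalization, with the tautological induction equivalence from $\Stab_{G_r}(\psi)$-equivariant sheaves on a point-bundle to $G_r$-equivariant sheaves on the orbit $G_r\cdot\psi$, which is manifestly $t$-exact.
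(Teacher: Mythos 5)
Your plan has two genuine gaps, and the first is fatal as stated. You take the quasi-inverse to be the bare (shifted) parabolic restriction along $L_r\xleftarrow{\beta_P}P_r\hookrightarrow G_r$, and you claim that $(L,G)$-genericity kills every stratum of the composite kernel indexed by a nontrivial class in $W_L\backslash W/W_L$, so that $\pRes\circ\pInd\cong\mathrm{Id}$ outright and $\pRes$ preserves the generic subcategories. Neither claim is true. What genericity (condition $\mathfrak{ge1}$) kills is only the positive-depth part of the strata, namely the cells with $h\notin P_r'N_{G_r}(T_r)P_r$; what remains is a full Mackey decomposition $\pRes_{P_r!}^{G_r}\pInd_{P_r!}^{G_r}(M)\cong\bigoplus_{w\in W_L\backslash W/W_L}\pInd\,\pRes(M_w)$ (Proposition \ref{prop:mackey}), and the $w\neq1$ summands are nonzero: already for $G=\SL_2$, $L=T$ one gets $\pRes_{B_r!}^{G_r}\pInd_{B_r!}^{G_r}(\cL)\cong\cL\oplus\ad(s)^*\cL$. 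These extra terms are generic for the translated elements $\ad(w^{-1})^*X_\psi$, so by $\mathfrak{ge2}$ and Lemma \ref{lem:idempotent orthog} they are orthogonal to $D_{L_r}^\psi(L_r)$, but they do not vanish; in particular $\pRes$ does \emph{not} carry $D_{G_r}^\psi(G_r)$ into $D_{L_r}^\psi(L_r)$, and your support argument (``the $G$-orbit of $\psi$ meets $\mathfrak p$ in a single $U_P$-orbit, whence $\bar h\in P$'') proves too much. The correct inverse is $\cL_{\psi,r}\star_!\pRes_{P_r!}^{G_r}$, i.e.\ restriction composed with the generic idempotent, and the other composite $\pInd\circ(\cL_{\psi,r}\star_!\pRes)\cong\mathrm{Id}$ on $D_{G_r}^\psi(G_r)$ needs a mechanism absent from your sketch: the Harish-Chandra transform $\phi_!$ along the affine map $\phi\from G_r\to G_r/U_{P,r}$, the support statement that $\cL_{\psi,r}\star_!\phi_!N$ lives on $L_r$ (Proposition \ref{prop:p psi support}), the compatibility $\pInd(M\star_!\phi_!N)\cong\pInd(M)\star_!N$ (Lemma \ref{lem:CH HC}), and $\pInd_{P_r!}^{G_r}(\cL_{\psi,r})\cong\cF_{\psi,r}$ (Proposition \ref{prop:Ind L psi}).

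The $t$-exactness step also rests on false inputs. Neither $\pi_!$ nor $(\beta_P)_!$ is right $t$-exact: $\pi$ is neither proper nor small for $r>0$, and for an \emph{affine} morphism lower-shriek is left $t$-exact while lower-star is right $t$-exact (for $\bbA^1\to\mathrm{pt}$ the compactly supported pushforward of $\overline\QQ_\ell[1]$ sits in perverse degree $+1$). So neither $\pInd$ nor your $\pRes$ is right $t$-exact ``for free,'' and your formal lemma (an equivalence with right $t$-exact inverse is $t$-exact), while correct in itself, has no verified hypotheses. The paper's mechanism is different: generic restriction is a direct summand of $\phi_!N$ with $\phi$ affine, hence \emph{left} $t$-exact; the $*$-version is right $t$-exact; and one then needs the nontrivial identification of the $!$ and $*$ versions of generic induction/restriction on the generic subcategories (via the adjunctions of Lemma \ref{lem:adjointness}, or alternatively Braden's hyperbolic localization) to conclude exactness on both sides. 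Your alternative ``Fourier-transformed picture'' is not a substitute: the Fourier transform is taken only along the top layer $G_r\to G_{r-1}$, so objects of $D_{G_r}^\psi(G_r)$ are not equivariant sheaves on the coadjoint orbit, and no tautological induction equivalence over $\bbG(X_\psi)$ is available.
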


The proof of Theorem \ref{thm:intro pInd} is based on two main ingredients: the geometric Mackey formula for generic parabolic induction (Proposition \ref{prop:mackey}) and the relationship between generic parabolic restriction and the Harish-Chandra transform (Proposition \ref{prop:p psi support}). In the positive-depth parahoric setting, the Bruhat decomposition pulls back to a decomposition of $G_r$ indexed by the Weyl group elements $w \in W$ and an affine space (depending which depends on $w$) which sits in the kernel of $G_r \to G_0$. To establish the Mackey formula, we use $(L,G)$-genericity to prove a vanishing statement---that this affine space contributes trivially---therefore reducing the argument to a setting similar to the reductive case. This allows us to prove that $\pInd_{P_r}^{G_r}$ has a left inverse given by parabolic restriction $\pRes_{P_r}^{G_r}$.

The relation to the Harish-Chandra transform turns out to give us the remaining steps of the proof of Theorem \ref{thm:intro pInd}. Consider the quotient map $\phi \from G_r \to G_r/U_{P,r}$. In general, $\pRes_{P_r}^{G_r} = i^*\phi_!$ where $i \from L_r \hookrightarrow G_r/U_{P,r}$. However, on the generic subcategory $D_{G_r}^\psi(G_r)$, we get lucky---the convolution $e_\psi \star \phi_!$ is \textit{already} supported on $L_r$. This therefore gives us an alternative formulation of parabolic restriction, allowing us to complete the proof that $\pInd_{P_r}^{G_r}$ and $\pRes_{P_r}^{G_r}$ are inverse equivalences. The above $!$ formulations of parabolic induction and parabolic restriction can be replaced by $*$ formulations, by taking adjunctions, we see that this gives rise to isomorphic functors on the generic subcategories. In particular, we see that $e_\psi \star \phi_! \cong e_\psi \star \phi_*$ on $D_{G_r}^\psi(G_r)$, which by Artin's theorem ($\phi$ being an affine morphism) implies left and right $t$-exactness of generic parabolic restriction, completing the proof.

Finally, we point out that the intrinsic characterization of the generic subcategories implies that $M \in D_{L_r}^\psi(L_r)$ is simple as an object of $D_{L_r}^\psi(L_r)$ if and only if it is simple as an object of $D_{L_r}(L_r)$ (and the same assertion for $D_{G_r}^\psi(G_r)$). In particular, Theorem \ref{thm:intro pInd} shows that $\pInd_{P_r}^{G_r}$ takes $(L,G)$-generic simple perverse sheaves on $L_r$ to simple perverse sheaves on $G_r$.

\subsection{Relation to positive-depth Deligne--Lusztig induction}\label{subsec:intro_DL}

Having now constructed generic character sheaves on $G_r$, a natural question is how these $\cK_\Psi$ are related to the representation theory of $G_r$. We establish in this paper that $\pInd_{B_r}^{G_r}$ is compatible with the parahoric Deligne--Lusztig induction functor $R_{T_r}^{G_r}$ defined in \cite{CI21-RT}. Assume now that $G$ and $T$ each arise as the base-change of a connected reductive group $\G$ and a torus $\T$ defined over $F$. In this way, we have an associated Frobenius morphism $\sigma$ on $G_r$. For any $\sigma$-equivariant sheaf $M$ on $T_r$ or $G_r$, we may consider the associated trace-of-Frobenius function $\chi_M$. We remark that if $\sigma(B_r) = B_r$ (this is the split case), it is easy to see that $\chi_{\pInd_{B_r}^{G_r}(\cL)}$ realizes the parabolic induction of $\chi_\cL$ in the sense of representation theory; the content here is that, like in the classical $r=0$ setting, geometric parabolic induction also realizes Deligne--Lusztig induction in the ``twisted'' (non-split) setting.

\begin{displaytheorem}[\ref{thm:comparison}]\label{thm:intro comparison}
  If the residue field of $F$ is sufficiently large, then for any $\sigma$-equivariant $(T,G)$-generic multiplicative local system $\cL$ on $T_r$,
  \begin{equation*}
    \chi_{\pInd_{B_r}^{G_r}(\cL)} = (-1)^{\dim G_r} \cdot \Theta_{R_{T_r}^{G_r}(\chi_\cL)},
  \end{equation*}
  where $\Theta_{R_{T_r}^{G_r}(\chi_\cL)}$ denotes the character of the parahoric Deligne--Lusztig induction $R_{T_r}^{G_r}(\chi_\cL)$.
\end{displaytheorem}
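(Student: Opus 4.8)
The plan is to reduce Theorem~\ref{thm:intro comparison} to a statement about trace functions, where the geometric content of Theorem~\ref{thm:intro pInd} (really just the precursor identity $\pInd_{B_r}^{G_r}(\cL) = \pi_! f^*\cL$ together with the decomposition of $G_r$ it affords) can be matched termwise against the definition of parahoric Deligne--Lusztig induction in \cite{CI21-RT}. First I would recall that $R_{T_r}^{G_r}$ is defined via the alternating sum of cohomology of a parahoric analogue of the Deligne--Lusztig variety $X = \{g\bU_{P,r} : g^{-1}\sigma(g) \in \bU_{P,r}\sigma(\bU_{P,r})\}$ (or the Coxeter-type variety, depending on the formulation there), with its commuting $G_r(k)^\sigma$- and $T_r(k)^\sigma$-actions, so that $\Theta_{R_{T_r}^{G_r}(\chi_\cL)}(g) = \frac{1}{|T_r^\sigma|}\sum_{t} \chi_\cL(t)\,\mathrm{tr}\bigl((g,t) \mid H^*_c(X)\bigr)$. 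On the geometric side, the Grothendieck--Lefschetz trace formula applied to $\pi_! f^*\cL$ gives $\chi_{\pInd_{B_r}^{G_r}(\cL)}(g) = \sum_{(g,h\bB_r)\in \widetilde G_r^\sigma} (f^*\cL)_{(g,h\bB_r)} = \sum_{h\bB_r : h^{-1}gh\in\bB_r,\ \sigma\text{-fixed}} \chi_\cL\bigl(\beta(h^{-1}gh)\bigr)$. The bulk of the work is a bijective/character-theoretic comparison of these two sums.

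The key steps, in order: (i) Set up the twisted Frobenius $\sigma$ carefully --- since $B_r$ need not be $\sigma$-stable, one chooses $g_0 \in G_r$ with $g_0 B_r g_0^{-1} = \sigma(B_r)$ and uses the Lang--Steinberg theorem on $G_r$ (which holds since $G_r$ is connected) to pass to the standard Deligne--Lusztig setup; this is where the "twisted" vs. "split" dichotomy flagged in the paragraph before the theorem actually bites. (ii) Compare $\widetilde G_r^\sigma$ with the incidence variety underlying $R_{T_r}^{G_r}$: the fixed points $(g,h\bB_r)$ with $h^{-1}gh \in \bB_r$ and $\sigma(g,h\bB_r)=(g,h\bB_r)$ should, after the Lang--Steinberg twist, be reorganized into $G_r(k)^\sigma$-orbits each of which is an image of $X \times^{T_r(k)^\sigma}(\text{something})$, so that summing $\chi_\cL\circ\beta\circ(\mathrm{conj}_{h^{-1}})$ over them reproduces $\frac{1}{|T_r^\sigma|}\sum_t \chi_\cL(t)\,|\{h : h^{-1}gh\in\bB_r,\ \beta(h^{-1}gh)\sim t,\ \sigma\text{-fixed in the twisted sense}\}|$. (iii) Identify the counting function $|\{h:\dots\}|$ with $\mathrm{tr}((g,t)\mid H^*_c(X))$ --- this is the parahoric analogue of the classical fact (Deligne--Lusztig, also in the jet-scheme work of Chen--Ivanov \cite{CI21-RT}) that the virtual character of $R_{T_r}^{G_r}(\theta)$ at $g$ is computed by a fixed-point count on the variety of Borels --- and track the sign $(-1)^{\dim G_r}$, which enters because $R_{T_r}^{G_r}$ is normalized by the Euler characteristic / the sign convention $(-1)^{\ell(w)}$ or $(-1)^{\dim}$ in \cite{CI21-RT}; matching signs cleanly will require the hypothesis that $G_r$ and $T_r$ have compatible dimension parities, which is automatic. (iv) Invoke the hypothesis that the residue field is sufficiently large: this is needed so that the genericity of $\cL$ (hence of $\chi_\cL$, which is a "generic" character of $T_r(k)^\sigma$ in the sense that its restriction to the relevant Moy--Prasad quotients is nontrivial in the required way) guarantees, on the representation-theoretic side, that $R_{T_r}^{G_r}(\chi_\cL)$ is --- up to sign --- an honest (irreducible) representation rather than merely a virtual one, and also so that Lefschetz trace arguments and the Chen--Ivanov results are applicable without small-$q$ pathologies.

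The main obstacle I expect is step (ii)/(iii): the reorganization of $\widetilde G_r^\sigma$ into the Deligne--Lusztig-variety fixed-point count in the \emph{twisted} setting. In the split case the identity is, as the authors note, elementary --- geometric parabolic induction literally is representation-theoretic parabolic induction on functions, and $R_{T_r}^{G_r}$ of a character just unwinds to the same formula. The non-split case is the real content: one must show that the alternating sum $\sum_i (-1)^i \mathrm{tr}((g,t)\mid H^i_c(X))$ equals the naive $\sigma$-twisted fixed-point count coming from $\widetilde G_r$, which in the classical $r=0$ theory is exactly the Deligne--Lusztig character formula and uses properness/purity inputs that are more delicate for $G_r$ with $r>0$ (where, as emphasized, $\pi$ is not proper). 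I would handle this either by citing the trace-formula computation of $R_{T_r}^{G_r}$ already carried out in \cite{CI21-RT} and only verifying that the geometric $\pInd_{B_r}^{G_r}(\cL)$ produces the matching integrand, or --- if \cite{CI21-RT} does not give the character formula in the form needed --- by a direct Lang--Steinberg plus Grothendieck--Lefschetz argument on $\widetilde G_r$, decomposing along the $W$-indexed strata of the parahoric Bruhat decomposition (the same strata used in the Mackey formula, Proposition~\ref{prop:mackey}) and observing that genericity of $\cL$ forces all strata except the ones contributing to the Coxeter/DL count to cancel. A secondary, more bookkeeping-level obstacle is pinning down the precise normalization and sign conventions of $R_{T_r}^{G_r}$ in \cite{CI21-RT} so that the $(-1)^{\dim G_r}$ comes out exactly right; I would settle this by evaluating both sides at $g = 1$ (or on the regular semisimple locus) as a sanity check before committing to the general argument.
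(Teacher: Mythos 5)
Your overall shape (compare trace functions, use a Grothendieck--Lefschetz computation, cite \cite{CI21-RT} for the Deligne--Lusztig side, pin down a scalar at the end) is the right family of ideas, but three essential ingredients of the actual argument are missing, and without them the plan does not go through. First, since $\sigma(B_r)\neq B_r$ in general, $\sigma$ does not act on $\widetilde G_r$ at all, and a ``Lang--Steinberg twist'' is not enough to produce a Frobenius whose fixed points you can count: the paper's solution is Theorem \ref{thm:Borel sequence}, the re-expression of $\pInd_{B_r}^{G_r}$ via the variety $Y_{\underline\Omega}$ attached to the sequence $(B,\sigma(B),\dots,\sigma^{n-1}(B),B)$, on whose fibers the twisted map $\sigma_Y$ genuinely is a Frobenius (Section \ref{subsec:Borel sequence Frob}); proving that theorem is itself a substantial double induction over elementary and $0$-trivial double cosets (the $B_r^{(i)}$ need not even be $G_r$-conjugate for $r>0$), using $(T,G)$-genericity at every step. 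Your alternative suggestion of ``decomposing along $W$-strata and letting genericity cancel everything else'' gestures at the right vanishing phenomena but is not a substitute for this construction. Second, on the representation-theoretic side your formula $\Theta(g)=\frac{1}{|T_r^\sigma|}\sum_t\chi_\cL(t)\,\mathrm{tr}((g,t)\mid H_c^*(X))$ cannot be evaluated by a fixed-point count directly, because $(g,t)$ is not a Frobenius: one can only count points for $(g^{-1},t)\circ\sigma^{nm}$ (Proposition \ref{prop:DL formula sigma}), and removing the $\sigma^{nm}$ requires knowing that $\sigma^n$ acts on $R_{T_r}^{G_r}(\theta)$ by a scalar. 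For $r>0$ this is not known a priori (the cohomology is not known to be concentrated in one degree), and the paper has to prove it (Theorem \ref{thm:Frob scalar}) --- using, circularly from your point of view, the character-sheaf results themselves (simplicity of $\pInd_{B_r}^{G_r}(\cL)$ plus Proposition \ref{prop:Ind Y function}). Your plan silently assumes this step.

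Third, the endgame and the role of the hypothesis on $q$ are different from what you describe. After the two formulas are matched up to a constant, the paper pins the constant not at $g=1$ (which is intractable on both sides for $r>0$) but at a very regular element of $T_r^\sigma$: on the sheaf side via Theorem \ref{thm:IC vreg} and Lemma \ref{lem:tilde Gvreg} (the maps are $W$-torsors over $G_{r,\vreg}$), on the Deligne--Lusztig side via \cite[Theorem 1.2]{CI21-RT}, with nonvanishing of $\sum_w\theta^w(g)$ secured by the pairing trick of \cite[Lemma 9.6]{CO21}. The ``residue field sufficiently large'' hypothesis is used exactly to guarantee that $(T_r^\sigma)_{\vreg}\neq\varnothing$ (Lemma \ref{lem:q}, via transfer to the quasi-split inner form), not to make $R_{T_r}^{G_r}(\chi_\cL)$ irreducible --- that irreducibility holds by \cite{CI21-RT} for any $(T,G)$-generic $\theta$. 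So the gaps are concrete: no mechanism replacing Theorem \ref{thm:Borel sequence}, no argument for the scalar action of $\sigma^n$ (Theorem \ref{thm:Frob scalar}), and a normalization step that as proposed (evaluation at $g=1$) would not terminate.
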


When $r=0$, this is (a special case of) Lusztig's theorem on Green's functions \cite{Lus90} (see also \cite{MR1040575} for an exposition on the torus case). Lusztig's proof has two main steps: to prove the assertion up to an unknown scalar, and then to pinpoint the scalar by computing both sides at a convenient point. Our proof of Theorem \ref{thm:intro comparison} follows this strategy; let us now explain the additional ingredients the $r>0$ setting requires in order to achieve this.

To obtain the first step, Lusztig gives an alternative description of $\pInd_{B_0}^{G_0}$ in terms of an arbitrary sequence of Borel subgroups $(B^{(0)}, \ldots, B^{(n)})$ of $G$ such that each $B^{(i)}$ contains $T$ and $B^{(0)} = B^{(n)}$. This is established via an inductive argument on the lengths of the Weyl group elements associated to the associated sequence of relative positions of the Borel subgroups. In the $r>0$ setting, we are also able to give an analogous alternative description of $\pInd_{B_r}^{G_r}$ (Theorem \ref{thm:Borel sequence}), but to prove it, we have an additional complication: two Borel subgroups $B,B' \subset G$ may yield $B_r,B_r'$ which are not $G_r$-conjugate. The proof therefore involves a \textit{double} induction, first to reduce to the $G_r$-conjugate setting, and then to induct on length in the Weyl group; each step crucially relies on $(T,G)$-genericity.

As in \cite{Lus90}, specializing to the special case that $B^{(i)} = \sigma^i(B)$ and $n$ is taken to be a(ny) integer such that $\sigma^n(B) = B$, we obtain a formula (Proposition \ref{prop:Ind Y function}) for $\chi_{\pInd_{B_r}^{G_r}(\cL)}$. While this particular formula is essentially not computable, the key significance is that we can show that $\Theta_{R_{T_r}^{G_r}(\chi_\cL)}$ satisfies the \textit{same} formula, up to a scalar multiple (Proposition \ref{prop:DL formula}). This step has some subtleties because our understanding of parahoric Deligne--Lusztig varieties falls short of that of classical Deligne--Lusztig varieties: while it is known that the virtual representation $R_{T_r}^{G_r}(\chi_\cL)$ is irreducible (up to a sign) as a representation of $G_r^\sigma$ \cite[Theorem 1.1]{CI21-RT}, it is not \textit{a priori} clear whether $R_{T_r}^{G_r}(\chi_\cL)$ remains irreducible (up to a sign) as a representation of $G_r^\sigma \times \langle \sigma^n \rangle$. Surprisingly, we are able to establish this using the character sheaf technology developed in this paper (see Theorem \ref{thm:Frob scalar}).

We come now to the second step---we are in a position where to establish Theorem \ref{thm:intro comparison}, we need only compare the two functions $\chi_{\pInd_{B_r}^{G_r}(\cL)}$ and $\Theta_{R_{T_r}^{G_r}(\chi_\cL)}$ at any single convenient nonvanishing value. Let $G_{r,\vreg}$ denote the locus of very regular elements of $G_r$. It has been known for several decades that character values on such elements takes on a particularly simple form, and in a direction pioneered by Henniart \cite{Hen92} and continued in \cite{CO21, CO23}, these character values are often enough to identify the representation itself. From Theorem \ref{thm:intro pInd}, we can deduce a geometric version of this characterization assertion: $\pInd_{B_r}^{G_r}(\cL)$ is given by the intermediate extension of its restriction to $G_{\vreg}$ (Theorem \ref{thm:IC vreg}). Since the restriction of the two morphisms in \eqref{eq:pull push} to $\pi^{-1}(G_{r,\vreg})$ are both \'etale, we see that $\chi_{\pInd_{B_r}^{G_r}(\cL)}$ also takes a simple form on very regular elements. Matching this with $\Theta_{R_{T_r}^{G_r}(\chi_\cL)}$ (see \cite[Theorem 1.2]{CI21-RT}), and utilizing a simple trick (see \cite[Lemma 9.6]{CO21}) to establish the nonvanishing of these values, then gives us the desired comparison of the  natural decategorifications of $\pInd_{B_r}^{G_r}(\cL)$ and $R_{T_r}^{G_r}(\chi_\cL)$. Of course, this comparison only works if there exists a $\sigma$-fixed very regular element in $T_r$! That this is implied by the hypothesis of Theorem \ref{thm:intro comparison} follows from an argument involving transferring $\T$ to a particular elliptic torus in the quasi-split inner form of $\G$ (see \cite[Proposition 5.8]{CO21}).

To finish the introduction, we mention a corollary of Theorem \ref{thm:intro comparison} which may be of particular interest. Assume $\T$ is elliptic. In joint work of the second author with Oi \cite{CO21}, it is shown that the compact induction of $R_{T_r}^{G_r}(\chi_\cL)$ to $\G(F)$ is irreducible and supercuspidal. (In fact, this theorem relies on the study of very regular elements: we prove that these representations are uniquely determined by their character values on such elements. We remark additionally that the proof requires a yet stronger largeness condition on the residue field of $F$.) This gives a geometric incarnation of a subclass of regular supercuspidal representations \cite{Kal19} and moreover defines a natural parametrization compatible with toral $L$-packets \cite{DS18} (this is subtle; see \cite[Theorem B]{CO21} for more details). It follows then that Theorem \ref{thm:intro comparison} allows for the possibility of studying positive-depth supercuspidal $L$-packets  using the character sheaves constructed in this paper. 

\medbreak
\noindent{\bfseries Acknowledgments.}\quad
A major source of motivation for this project for us was the potential to generalize \cite{BV21,BV21b} to positive-depth supercuspidal $L$-packets. We thank Yakov Varshavsky for continued joint discussions in this direction. We also thank George Lusztig, for inspiration, for comments on an earlier draft, and for telling the second author about his conjecture over lunch at Desi Dhaba in Fall 2019.

\section{Notation}

Let $F$ be a non-archimedean local field and let $F^{\ur}$ denote the maximal unramified extension of $F$. We write $\cO_F$ and $\cO^{\ur}$ for the ring of integers of $F$ and $F^{\ur}$. Write $k_F \cong \FF_q$ and $k$ for the residue fields of $F$ and $F^{\ur}$; note that $k$ is an algebraic closure of $k_F$. Choose a uniformizer $\varpi$ of $F$. 
Let $F$ be a non-archimedean local field and let $F^{\ur}$ denote the maximal unramified extension of $F$. We write $\cO_F$ and $\cO^{\ur}$ for the ring of integers of $F$ and $F^{\ur}$. Write $k_F \cong \FF_q$ and $k$ for the residue fields of $F$ and $F^{\ur}$; note that $k$ is an algebraic closure of $k_F$. Choose a uniformizer $\varpi$ of $F$. 

Let $G$ be a connected reductive group over $F^{\ur}$. We denote by $\Lie G$ the Lie algebra of $G$ and $\Lie^* G$ the dual of $\Lie G$. By Bruhat--Tits theory, to every point $\x$ in the (enlarged) Bruhat--Tits building $\cB(G)$, we may associate a smooth affine $\cO^{\ur}$-group scheme $\cG_{\x,0}$ whose generic fiber is $G$ and whose group of rational points is the parahoric subgroup $G_{\x,0}$. By Yu \cite{Y15}, for every $r \in \widetilde \bbR \colonequals \bbR \sqcup \{r+: r \in \bbR\} \sqcup \{\infty\}$, there exists a smooth affine $\cO^{\ur}$-model $\cG_{\x,r}$ of $G$ such that $\cG_{x,r}(\cO^{\ur})$ is the $r$th Moy--Prasad filtration subgroup \cite{MP94,MP96} of the parahoric subgroup $\cG_{x,0}(\cO^{\ur}) \subset G(F^{\ur})$. We have associated filtrations $\Lie(G)_{\x,r}$ and $\Lie^*(G)_{\x,r}$ which are stable under the adjoint and coadjoint action of $\cG_{\x,0}$, respectively.

Given a split maximal torus $T \hookrightarrow G$, choose a point $\x$ in the apartment of $T$. Fix a positive integer $r > 0$. Following \cite[Section 2.5]{CI21-RT}, we consider the perfectly of finite type smooth affine group scheme $G_{s:r+}$ representing the perfection of the functor
\begin{equation}\label{eq:truncated parahoric}
  R \mapsto \cG_{\x,s}(\bbW(R))/\cG_{\x,r+}(\bbW(R)),
\end{equation}
where $R$ is any $k$-algebra. Here, $\bbW$ denotes the Witt ring associated to $F$ if $F$ has characateristic $0$ and $\bbW(R) = R[\![\varpi]\!]$ if $F$ has positive characteristic. The necessity of passing to the perfection comes from the mixed characteristic setting: it is possible to have $\bbW(R)/p\bbW(R) \neq R$ when $R$ is not a perfect $k$-algebra. When $F$ has positive characteristic, \eqref{eq:truncated parahoric} is already representable by a finite-type smooth affine group scheme. Since the operation of taking perfections preserves \'etale sites \cite[Proposition A.5]{Z17}, choosing the perfect framework is innocuous in the characteristic $p$ setting and allows us to work uniformly for any $F$. We refer to \cite[Appendix A]{Z17} for generalities about perfect schemes, including the set-up of constructible $\ell$-adic \'etale sheaves on perfect algebraic spaces, which we will implicitly use throughout this work. As in \cite[Section 2.6]{CI21-RT}, associated to any closed subgroup scheme $H$ of $G$, we have an associated closed subgroup scheme $H_{s:r+}$ of $G_{s:r+}$. Abusing notation, we define
\begin{equation*}
  G_r \colonequals G_{0:r+}.
\end{equation*}

Let $P$ be a parabolic subgroup of $G$ with Levi decomposition $P = LU_P$ such that $L$ contains $T$. We then have corresponding closed subgroup schemes $T_r \subset L_r \subset P_r \subset G_r$ and $U_{P,r} \subset P_r$. We write
\begin{align*}
  \mfl &\colonequals L_{r:r+} \cong \Lie(L)_{\x,r}/\Lie(L)_{\x,r+}, &
  \mfl^* &= \Lie^*(L)_{\x,-r}/\Lie^*(L)_{\x,(-r)+}, \\
  \mfg &\colonequals G_{r:r+} \cong \Lie(G)_{\x,r}/\Lie(G)_{\x,r+}, & 
  \mfg^* &= \Lie^*(G)_{\x,-r}/\Lie^*(G)_{\x,(-r)+}.
\end{align*}
We also write $\mft \colonequals T_{r:r+}$ and $\mfp \colonequals P_{r:r+}$. If $B$ is a Borel subgroup of $G$ containing $T$, we write $\mfb \colonequals B_{r:r+}$.

Since the induced action of $G_{0+:r+}$ on $\mfg,\mfg^*$ is trivial, there is a natural action of the reductive quotient $G_0$ on $\mfg,\mfg^*$. The natural pairing
\begin{equation}\label{eq:h}
  h \from \frak g^* \times \frak g \to \bbG_a, \qquad (X,Y) \mapsto X(Y) \mod \varpi \cO
\end{equation}
is non-degenerate, $G_r$-equivariant, and symmetric bilinear.

Fix once and for all a nontrivial rank-$1$ local system $\cL$ on $\bbG_a$.

If $X$ is a variety over $\FF_q$ with Frobenius map $\sigma \from X \to X$ endowed with an action of an algebraic group $H$ over $\FF_q$, we denote by $D_H(X)$ the associated equivariant derived category of constructible $\ell$-adic sheaves. For a closed subvariety $Z$ of $X$, we denote by $\delta_Z$ the extension-by-zero sheaf of the constant sheaf on $Z$; whenever we use this notation, $X$ should be clear from the context. If $K \in D(X)$ is a complex with a given isomorphism $\varphi \from \sigma^*K \xrightarrow{\sim} K$, we define the associated trace-of-Frobenius function
\begin{equation*}
  \chi_{K,\varphi} \from X(\FF_q) \to \overline \QQ_\ell, \qquad \chi_{K,\varphi}(x) = \sum_i (-1)^i \Tr(\varphi, \cH^i(K)_x),
\end{equation*}
where $\cH^i(K)_x$ denotes the stalk at $x$ of the $i$th cohomology sheaf $\cH^i(K)$ of $K$.

For a morphism $f \from X \to Y$, sheaf functors such as $f_!$ and $f_*$ are always derived.

\subsection{Brief glossary} \mbox{}

\noindent
  \begin{tabular}{l l}
    $X_\psi$ & $(L,G)$-generic element of depth $r$ \\
    $\cL_\psi,\cF_\psi$ & $(L,G)$-generic idempotents in $D_{L_r}(L_r)$ and $D_{G_r}(G_r)$ \\
    $D_{L_r}^\psi(L_r), D_{G_r}^\psi(G_r)$ & $(L,G)$-generic subcategories of $D_{L_r}(L_r)$ and $D_{G_r}(G_r)$ \\
    $\pInd_{P_r}^{G_r} \from D_{L_r}^\psi(L_r) \to D_{G_r}^\psi(G_r)$ & $(L,G)$-generic parabolic induction \\
    $\pRes_{P_r}^{G_r} \from D_{G_r}^\psi(G_r) \to D_{L_r}^\psi(L_r)$ & $(L,G)$-generic parabolic restriction \\
    $\Psi = (T, \vec G, \x, \vec r, \cF_\rho, \vec \cL)$ & generic datum \\
    $K_{\Psi}$ & simple equivariant perverse sheaf associated to $\Psi$
  \end{tabular}

\section{Definitions}

We collect definitions of general constructions in this section.

\subsection{Convolution and Fourier transform}\label{subsec:FT}

\subsubsection{Convolution}

Let $\mu \from H \times X \to X$ be the morphism associated to the action of an algebraic group $H$ on a variety $X$. Consider the diagram
\begin{equation*}
\begin{tikzcd}
& H \times X \ar{r}{\mu} \ar{dl}[above left]{p_1} \ar{dr}{p_2} & X \\
H && X
\end{tikzcd}
\end{equation*}
and define the corresponding convolution functors: 
\begin{align*}
  \sD(H) \times \sD(X) &\to \sD(X), & (M,N) &\mapsto M \star_! N \colonequals \mu_!((p_1^*M) \otimes (p_2^*N)), \\
  \sD(H) \times \sD(X) &\to \sD(X), & (M,N) &\mapsto M \star_* N \colonequals \mu_*((p_1^*M) \otimes (p_2^*N)).
\end{align*}

\subsubsection{Fourier--Deligne transform}

Recall that we have fixed a nontrivial rank-$1$ local system $\cL$ on $\bbG_a$. Assume $X \to S$ is a vector bundle of constant rank $r \geq 1$ and let $X' \to S$ be the dual vector bundle. Let $h \from X \times X' \to \bbG_a$ be the canonical pairing. Consider the diagram
\begin{equation*}
\begin{tikzcd}
& X \times X' \ar{r}{h} \ar{dl}[above left]{\pr} \ar{dr}{\pr'} & \bbG_a \\
X && X'
\end{tikzcd}
\end{equation*}
and define the associated Fourier--Deligne transform:
\begin{align*}
\FT \from \sD(X) &\to \sD(X'), & M &\mapsto \pr_!'(\pr^*(M) \otimes h^*\cL)[r]. 
\end{align*}

\subsubsection{Free actions}

There is a nice relationship between convolution and Fourier--Deligne transforms (see \cite[Proposition 1.2.2.7]{Lau87}); we make use of a slight variation of this (the same proof as in \textit{op.\ cit.} works). Let $\frak h$ be an affine space of dimension $r$ with an algebraic group structure and assume that $\frak h$ acts freely on $X$ via $\mu \from \frak h \times X \to X$. Then $X \to X/\frak h$ is a vector bundle; let $X' \to X/\frak h$ be the dual.

\begin{lemma}\label{lem:convolution}
  For $M \in D(\frak h)$ and $N \in D(X)$, we have
  \begin{align*}
    \FT(M \star_! N) &\cong \mu'{}^* (\FT(M) \boxtimes \FT(N))[-r], \\
    \FT(M \star_* N) &\cong \mu'{}^! (\FT(M) \boxtimes \FT(N)),
  \end{align*}
  where $\mu' \from X' \to \frak h^* \times X'$ is the transpose of the action map $\mu \from \frak h \times X \to X$.
\end{lemma}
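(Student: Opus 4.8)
The plan is to follow the proof of \cite[Proposition 1.2.2.7]{Lau87} verbatim, the only new feature being that the abelian group acting on $X$ is an affine space $\frak h$ rather than $\bbG_a$, which changes nothing in the argument. First I would set up the relevant spaces. Since $\frak h$ acts freely on $X$, the quotient $S \colonequals X/\frak h$ exists as a perfect algebraic space and $X \to S$ is a torsor under the (constant group scheme) $\frak h$, hence a vector bundle of rank $r$; let $X' \to S$ be its dual. Simultaneously one forms $\frak h^* \times X'$, and the transpose of $\mu \from \frak h \times X \to X$ is a morphism $\mu' \from X' \to \frak h^* \times X'$ over $S$ (this is just the statement that the dual of a short exact sequence of vector bundles over $S$ is again one; here $0 \to \frak h_S \to X \to S \times (\text{something}) \to 0$ dualizes appropriately). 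One checks that under the canonical pairings $h_{\frak h} \from \frak h \times \frak h^* \to \bbG_a$, $h_X \from X \times X' \to \bbG_a$, the diagram relating $\mu$, $\mu'$, $p_1$, $p_2$, $\pr$, $\pr'$ and the two pairings commutes — more precisely, pulling back $h_X$ along $\mu \times \id_{X'}$ equals the sum of $h_{\frak h}$ (on the $\frak h \times \frak h^*$ factors) and $h_X$ (on the $X \times X'$ factors) after identifying $X' \cong$ fiber product pieces via $\mu'$. This is the key compatibility and is a direct unwinding of definitions.

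Next I would compute $\FT(M \star_! N)$ directly from the definitions. Write $\star_! N = \mu_!((p_1^*M) \otimes (p_2^*N))$, apply $\FT = \pr'_!(\pr^*(-) \otimes h_X^*\cL)[r]$, use base change and the projection formula to move the $\mu_!$ outside, and then use the commutativity of the big diagram above to rewrite the pullback of $h_X^*\cL$ as a tensor product of the pullback of $h_{\frak h}^*\cL$ and $h_X^*\cL$. After a change of variables (absorbing the free $\frak h$-action into the fibers), the resulting expression factors as an external product $\FT(M) \boxtimes \FT(N)$ pulled back along $\mu'$, with a degree shift $[-r]$ coming from bookkeeping of the two $[r]$ shifts in the two Fourier transforms on the right versus the one on the left, together with the relative dimension $r$ of $\frak h$. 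The $\star_*$ statement is obtained by the dual computation: replace $\mu_!$ by $\mu_*$, use the other base change / projection formulas, and note that $\FT$ intertwines $!$- and $*$-pushforwards up to the standard duality, so the pullback $\mu'^*$ is replaced by $\mu'^!$ and the shift disappears. Alternatively, one can deduce the $\star_*$ case from the $\star_!$ case by Verdier duality, using that $\FT$ commutes with duality up to a twist and sign and that duality exchanges $\star_!$ with $\star_*$ and $\mu'^*$ with $\mu'^!$.

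The only genuinely nontrivial point — and the one I would write out with care — is the commutativity of the diagram of pairings and action maps, i.e.\ that $\mu' \from X' \to \frak h^* \times X'$ really is the transpose of $\mu$ in the precise sense needed, and that $h_X \circ (\mu \times \id)$ decomposes as claimed. This is elementary linear algebra fiberwise over $S$, but one must be careful that everything is done relative to $S$ and compatibly with the perfect-scheme formalism; since all the functors ($f_!$, $f_*$, $\otimes$, base change, projection formula) are available in the constructible $\ell$-adic setting on perfect algebraic spaces \cite[Appendix A]{Z17}, no new foundational input is required. I expect the bookkeeping of shifts and the verification of the base-change squares to be the bulk of the (routine) work, with no conceptual obstacle.
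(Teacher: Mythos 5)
Your proposal is correct and takes essentially the same route as the paper, which gives no independent argument for Lemma \ref{lem:convolution} but simply cites \cite[Proposition 1.2.2.7]{Lau87} and remarks that the same proof works in this slightly more general setting. Your outline—identifying $\mu'$ as the transpose over $X/\mathfrak h$, verifying the compatibility of the pairings under $\mu\times\mathrm{id}$, and then using base change, the projection formula, and the shift bookkeeping (with the $\star_*$ case handled dually)—is precisely that adaptation of Laumon's argument.
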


A corollary of this is the following lemma:

\begin{lemma}\label{lem:local support}
  Let $U$ be an open subvariety of the quotient $X/\mfh$ over which the vector bundle $X \xrightarrow{\pi} X/\mfh$ trivializes via $\varphi_U \from \mfh \times U \xrightarrow{\cong} \pi^{-1}(U)$. Let $\varphi_U' \from \mfh^* \times U \xrightarrow{\cong} \pi'{}^{-1}(U)$ be the corresponding trivialization map for $X' \xrightarrow{\pi'} X/\mfh$. 
  \begin{enumerate}
    \item For any $M \in D(X)$ and any closed subvariety $\mfz^*$ of $\mfh^*$, the two sheaves $\FT(\FT(\delta_{\mfz^*}) \star_! M)|_{\pi'{}^{-1}(U)}$ and $\FT(\FT(\delta_{\mfz^*}) \star_* M)|_{\pi'{}^{-1}(U)}$ are supported on $\varphi_U'(-\mfz^* \times U)$.
    \item Assume that $M \in D(X)$ is such that $\FT(M)|_{\pi'{}^{-1}(U)}$ is supported on $\varphi_U'(-\mfz^* \times U)$ for any $U$ as above. The $M \cong \FT(\delta_{\mfz^*}) \star_! M[2r]$ and $M \cong \FT(\delta_{\mfz^*}) \star_* M$.
  \end{enumerate}
\end{lemma}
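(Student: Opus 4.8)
The plan is to reduce everything to Lemma \ref{lem:convolution} by working locally on the base $X/\mfh$. Since both statements are about sheaves on $X$ (or $X'$), and being supported on a closed subvariety is a local condition on the target, it suffices to check the claims after restricting to each $\pi^{-1}(U)$ (resp.\ $\pi'^{-1}(U)$) for $U$ ranging over an open cover of $X/\mfh$ trivializing the bundle. On such a $U$, the restriction of $X \to X/\mfh$ is the trivial bundle $\mfh \times U$, so the Fourier--Deligne transform $\FT$ on $X$ restricts to the fiberwise Fourier transform $\FT_\mfh \boxtimes \id_U$ on $\mfh \times U$, and similarly for $X'$. The key point is that $\FT(\delta_{\mfz^*})$, viewed on $\mfh$, is (up to shift and twist) the inverse Fourier transform applied to the constant sheaf on $\mfz^* \subset \mfh^*$; convolving with it on $\mfh$ and then applying $\FT$ turns convolution into tensor product, so $\FT(\FT(\delta_{\mfz^*}) \star_! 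M)$ becomes, roughly, $\FT(M)$ tensored with the constant sheaf on $-\mfz^*$ in the $\mfh^*$-direction — whence the support statement.

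For part (1), I would first record the elementary fact that for an affine space $\mfh$ of dimension $r$ and a closed subvariety $\mfz^* \subseteq \mfh^*$, one has $\FT_{\mfh^*}(\FT_\mfh(\delta_{\mfz^*})) \cong \delta_{-\mfz^*}[2r]$ by Fourier inversion (with the usual $[-r]$'s and the sign coming from $x \mapsto -x$ on $\mfh$), and more to the point that $\FT_\mfh(\delta_{\mfz^*})$ is a sheaf whose further behavior under convolution is governed by Lemma \ref{lem:convolution}. Then, applying Lemma \ref{lem:convolution} with $M$ there equal to $\FT(\delta_{\mfz^*})$ (an object of $D(\mfh)$) and $N$ equal to $M$ here, we get $\FT(\FT(\delta_{\mfz^*}) \star_! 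M) \cong \mu'^*\bigl(\FT(\FT(\delta_{\mfz^*})) \boxtimes \FT(M)\bigr)[-r] \cong \mu'^*\bigl(\delta_{-\mfz^*}[2r] \boxtimes \FT(M)\bigr)[-r]$. Now restrict to $\pi'^{-1}(U)$: under the trivialization $\varphi_U'$, the transposed action map $\mu'$ becomes (up to the identification) a map whose first coordinate lands in $\mfh^*$, and the pullback of $\delta_{-\mfz^*}\boxtimes(-)$ along it is supported exactly where that coordinate lies in $-\mfz^*$, i.e.\ on $\varphi_U'(-\mfz^* \times U)$. The $\star_*$ case is identical using the second displayed isomorphism in Lemma \ref{lem:convolution} (with $\mu'^!$ in place of $\mu'^*$), since $\mu'^!$ and $\mu'^*$ have the same support-propagation behavior here, $\mu'$ being an isomorphism onto its image after trivializing — or more simply, one notes $\mu'$ is smooth so $\mu'^! = \mu'^*[\,\text{shift}\,](\text{twist})$.

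For part (2), I would argue that the hypothesis — $\FT(M)|_{\pi'^{-1}(U)}$ is supported on $\varphi_U'(-\mfz^*\times U)$ for all trivializing $U$ — says precisely that, locally on the base, $\FT(M)$ is a complex ``constant in the $\mfh^*$-direction and concentrated on $-\mfz^*$''. Concretely, on each $\mfh^* \times U$ we have $\FT(M)|_U \cong \delta_{-\mfz^*} \boxtimes (\text{something on } U)$ up to the caveat that it need only be \emph{supported} there; but support on $-\mfz^* \times U$ is exactly the condition needed to conclude $\FT(M) \cong \FT(\delta_{\mfz^*}) \star_? M$-type identities after one more application of $\FT$ and $\FT$-inversion. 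More precisely: applying $\FT$ to both sides of the desired identity $M \cong \FT(\delta_{\mfz^*})\star_! M[2r]$ and using part (1)'s computation, the claim becomes $\FT(M) \cong \mu'^*(\delta_{-\mfz^*}\boxtimes \FT(M))[r]$, which locally on $U$ reads $\FT(M)|_U \cong (\delta_{-\mfz^*}\boxtimes \id)^*\cdots$, an identity that holds as soon as $\FT(M)|_U$ is supported on $-\mfz^*\times U$ (pulling back a sheaf supported on a closed subset and then pushing forward via the inclusion recovers the sheaf). The $\star_*$ statement follows dually, or by the same local argument with $\mu'^!$.

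The main obstacle I anticipate is bookkeeping rather than conceptual: one must track all the shifts and Tate twists through $\FT$, $\FT$-inversion, and the $\mu'^*$ vs.\ $\mu'^!$ distinction, and verify that the transposed action map $\mu'\from X' \to \mfh^* \times X'$ really does have the asserted form in the trivialization — i.e.\ that its $\mfh^*$-component is literally the projection twisted by the $U$-dependent transition data, so that $\varphi_U'(-\mfz^*\times U)$ is the honest preimage of $-\mfz^*$. A secondary subtlety is that in part (2) the hypothesis is only a \emph{support} condition (not an actual splitting $\delta_{-\mfz^*}\boxtimes(-)$), so one should phrase the argument so that support alone suffices — which it does, because the only property of $\FT(\delta_{\mfz^*})$ used is that convolving with it, after $\FT$, amounts to restricting to $-\mfz^*$ in the fiber direction, and restricting a sheaf already supported there to there changes nothing. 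Everything else is a direct consequence of Lemma \ref{lem:convolution} applied fiberwise over the base.
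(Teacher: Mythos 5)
Your proposal is essentially the paper's own route: the lemma is presented there simply as a corollary of Lemma \ref{lem:convolution}, and your argument—apply that lemma fiberwise with $M=\FT(\delta_{\mfz^*})$, use Fourier inversion, note that in a trivialization the $\mfh^*$-component of the transposed action map $\mu'$ is the fiber coordinate (so pulling back $\delta_{-\mfz^*}\boxtimes(-)$ cuts out exactly $\varphi_U'(-\mfz^*\times U)$), and for (2) observe that tensoring a complex supported on a closed subset with the extension-by-zero of the constant sheaf on that subset changes nothing—is exactly the intended deduction. The only caveat is bookkeeping, which you already flagged: with the paper's normalization one has $\FT(\FT(\delta_{\mfz^*}))\cong\delta_{-\mfz^*}(-r)$ rather than $\delta_{-\mfz^*}[2r]$ (compare the proof of Lemma \ref{lem:idempotent}), but this affects only shifts and Tate twists (which the paper itself suppresses), not the support statements or the structure of the argument.
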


\begin{proposition}\label{prop:free idempotent}
  Let $\mfz^*$ be a closed subvariety of $\mfh^*$. Then $\FT(\delta_{\mfz^*}) \star_! D(X) = \FT(\delta_{\mfz^*}) \star_* D(X)$ and is a full subcategory compatible with the perverse $t$-structure.
\end{proposition}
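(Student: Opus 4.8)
The plan is to use Lemma \ref{lem:local support} to show that both categories coincide with the intrinsically-defined full subcategory of objects whose Fourier--Deligne transform is, locally on $X/\mfh$, supported on the translate of $-\mfz^*$. First I would set $e \colonequals \FT(\delta_{\mfz^*})$ and observe, by applying $\FT$ (which is an equivalence up to shift and sign-twist) to the conclusion of Lemma \ref{lem:local support}(1), that for any $M \in D(X)$ the objects $e \star_! M$ and $e \star_* M$ both lie in the full subcategory
\[
  \cC \colonequals \bigl\{ N \in D(X) : \FT(N)|_{\pi'^{-1}(U)} \text{ is supported on } \varphi_U'(-\mfz^* \times U) \text{ for every trivializing } U \bigr\}.
\]
Conversely, Lemma \ref{lem:local support}(2) says precisely that if $N \in \cC$ then $N \cong e \star_! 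N[2r] \cong e \star_* N$; in particular $N \in e \star_! D(X)$ and $N \in e \star_* D(X)$. Combining the two inclusions gives $e \star_! D(X) = \cC = e \star_* D(X)$, which is the first assertion, and simultaneously exhibits this common category as the intrinsically characterized $\cC$.

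Next I would deduce that $\cC$ is a full subcategory in the strong sense required — i.e. that the inclusion is fully faithful and $\cC$ is closed under the relevant operations — and check compatibility with the perverse $t$-structure. Fullness is immediate since $\cC$ is defined as a full subcategory of $D(X)$. For the $t$-structure: since $\FT$ is $t$-exact up to the shift $[r]$ (it is a twisted Fourier--Deligne transform, hence exact for the perverse $t$-structure up to a shift), and since "being supported on a closed subvariety" is a condition stable under perverse truncation — the truncation functors commute with restriction to a locally closed subvariety and with pushforward from a closed subvariety — the subcategory $\cC$ is stable under $\,{}^p\tau_{\leq n}$ and $\,{}^p\tau_{\geq n}$. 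Therefore $\cC$ inherits a $t$-structure whose truncation functors are computed in $D(X)$, which is the meaning of being "compatible with the perverse $t$-structure."

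The main obstacle I anticipate is bookkeeping with shifts and with the gluing over the cover of $X/\mfh$ by trivializing opens $U$: Lemma \ref{lem:local support} is stated one $U$ at a time, so I would need to note that the support condition is local on the base and hence can be checked on a cover, and that the two isomorphisms $M \cong e \star_! M[2r]$ and $M \cong e \star_* M$ furnished by part (2) are genuinely functorial (or at least that $e \star_! (-)[2r]$ and $e \star_* (-)$ are idempotent endofunctors restricting to the identity on $\cC$), so that $\cC$ is literally the essential image of $e \star_! (-)$ and of $e \star_* (-)$. Once this is granted, matching the two essential images via the common description $\cC$ is formal, and the perverse-$t$-structure claim follows from $t$-exactness of $\FT$ up to shift together with the evident stability of a support condition under perverse truncation.
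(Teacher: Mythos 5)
Your proposal is correct and follows essentially the same route as the paper: both identify $\FT(\delta_{\mfz^*}) \star_! D(X)$ and $\FT(\delta_{\mfz^*}) \star_* D(X)$ with the intrinsically defined subcategory of objects whose Fourier--Deligne transform is locally supported over $-\mfz^*$ via the two parts of Lemma \ref{lem:local support}, and then deduce the $t$-structure compatibility from the fact that this local support condition is stable under perverse truncation (equivalently, can be checked on perverse cohomology sheaves and their simple subquotients), using that $\FT$ is a perverse $t$-exact equivalence up to shift.
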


\begin{proof}
  Both $\FT(\delta_{\mfz^*}) \star_! D(X)$ and $\FT(\delta_{\mfz^*}) \star_* D(X)$ consist of objects $M \in D(X)$ such that---using the same notation as in Lemma \ref{lem:local support}---for any $U$ over which the vector bundle $X \xrightarrow{\pi} X/\mfh$ trivializes, restriction $\FT(M)|_{\varphi^{\prime-1}(U)}$ of the Fourier--Deligne transform is supported on $\varphi_U'(-\mfz^* \times U)$. This shows the equality $\FT(\delta_{\mfz^*}) \star_! D(X) = \FT(\delta_{\mfz^*}) \star_* D(X)$. Furthermore, this description implies that $M \in D(X)$ lies in $\FT(\delta_{\mfz^*}) \star_! D(X)$ if and only if its perverse cohomology sheaves do and this happens if and only if each simple subquotient of each perverse cohomology sheaf does.
\end{proof}

Proposition \ref{prop:free idempotent} implies that the simple (resp.\ perverse) objects in $\FT(\delta_{\mathfrak z^*}) \star_! D(X)$ are exactly the simple (resp.\ perverse) objects in $D(X)$ which lie in $\FT(\delta_{\mathfrak z^*}) \star_! D(X)$.

\subsection{Averaging functors}\label{subsec:Av}

Let $G$ be an algebraic group, let $H \subset G$ be a closed subgroup, and let $X$ be an $H$-variety. We define the induction space $G \times^H X$ to be the geometric quotient of $G \times X$ by the $H$-action $h \cdot (g,x) \mapsto (gh^{-1},h \cdot x)$. We have a forgetful functor $\For_H^G \from D_G(X) \to D_H(X)$ and we denote by $\Av_{H!}^G$ and $\Av_{H*}^G$ the left and right adjoints of $\For_H^G$.  Explicitly,
the functor $\For_H^G$ isomorphic to the composition 
\begin{equation*}
  D_G(X) \xrightarrow[\sigma_X^*]{\sigma_X^!} D_G(G \times^H X) \xrightarrow[\iota_X^* \circ \For_H^G]{\iota_X^! \circ \For_H^G} D_H(X)
\end{equation*}
where $\iota_X \from X \to G \times^H X$ is defined by $x \mapsto (e,x)$ and $\sigma_X \from G \times^H X \to X$ is induced by the action map $G \times X \to X$. We then see that:
\begin{align*}
  &\Av_{H!}^G \from D_H(X) \xrightarrow{(\iota_X^! \circ \For_H^G)^{-1}} D_G(G \times^H X) \xrightarrow{\sigma_{X!}} D_H(X) \\
  &\Av_{H*}^G \from D_H(X) \xrightarrow{(\iota_X^* \circ \For_H^G)^{-1}} D_G(G \times^H X) \xrightarrow{\sigma_{X*}} D_H(X)
\end{align*}
Recall that $\iota_X^! \circ \For_H^G \cong \iota_X^* \circ \For_H^G[-2\dim G/H](-\dim G/H).$

We denote the forgetful functor $D_H(X) \to D(X)$ simply by $\For$, noting that $H$ and $X$ should be clear from the context.

\subsection{Parabolic induction and parabolic restriction}\label{subsec:Ind and Res}

Let $P \hookrightarrow G$ be a parabolic subgroup of $G$ whose Levi component $L$ contains $T$. Consider the associated subgroup schemes $L_r$ and $P_r$ in $G_r$ and consider the inclusion map $i \from P_r \hookrightarrow G_r$ and the natural surjection $p \from P_r \to L_r$. Write $U_{P,r}$ for the subgroup scheme of $G_r$ associated to the unipotent radical $U_P$ of $P$; note $\ker(p) = U_{P,r}$.

\begin{definition}[parabolic induction and parabolic restriction]\label{def:pInd pRes}
  We define parabolic induction and parabolic restriction functors as
  \begin{align*}
    \pInd_{P_r!}^{G_r} &\colonequals \Av_{P_r!}^{G_r}{} \circ i_! \circ \Infl_{L_r}^{P_r} {} \circ p^* \\
    \pInd_{P_r*}^{G_r} &\colonequals \Av_{P_r*}^{G_r}{} \circ i_* \circ \Infl_{L_r}^{P_r} {} \circ p^! \\
    \pRes_{P_r!} &\colonequals p_! \circ i^* \circ \For_{L_r}^{G_r} \\
    \pRes_{P_r*}^{G_r} &\colonequals p_* \circ i^! \circ \For_{L_r}^{G_r}
  \end{align*}
  These define functors
  \begin{align*}
    \pInd_{P_r!}^{G_r},\pInd_{P_r*}^{G_r} \from D_{L_r}(L_r) &\to D_{G_r}(G_r), \\
    \pRes_{P_r!}^{G_r}, \pRes_{P_r*}^{G_r} \from D_{G_r}(G_r) &\to D_{L_r}(L_r).
  \end{align*}
\end{definition}

\begin{remark}
  We could alternatively define these functors in the language of stacks: Consider the correspondence
  \begin{equation*}
    \begin{tikzcd}
      & P_r/P_r \ar{dl}[above left]{p} \ar{dr}{q} \\
      L_r/L_r && G_r/G_r
    \end{tikzcd}
  \end{equation*}
  where each quotient is under the conjugation action. Then
  \begin{align*}
    \pInd_{B_r!}^{G_r} &= q_! \circ p^*, &
    \pInd_{B_r*}^{G_r} &= q_* \circ p^!, \\
    \pRes_{B_r!}^{G_r} &= p_! \circ q^*, &
    \pRes_{B_r*}^{G_r} &= p_* \circ q^!.
  \end{align*}
\end{remark}

\begin{lemma}\label{lem:adjointness}
  $\pRes_{P_r!}^{G_r}$ is left adjoint to $\pInd_{P_r*}^{G_r}$ and $\pRes_{P_r*}^{G_r}$ is right adjoint to $\pInd_{P_r!}^{G_r}$.
\end{lemma}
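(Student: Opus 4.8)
The plan is to unwind the definitions in Definition \ref{def:pInd pRes} and assemble the two adjunctions from the standard adjunctions among the constituent functors. Recall the three basic adjoint pairs in play: $(p_!, p^!)$ and $(p^*, p_*)$ for the smooth surjection $p \from P_r \to L_r$ (with $\Infl_{L_r}^{P_r}$ on equivariant categories being literally $p^*$ up to the usual shift, so I will treat $\Infl \circ p^*$ and $\Infl \circ p^!$ as the pullback functors along $P_r/P_r \to L_r/L_r$); $(i_!, i^!)$ and $(i^*, i_*)$ for the closed immersion $i \from P_r \hookrightarrow G_r$ (here $i^* = i^!$ up to nothing since $i$ is a closed immersion, but I only need the adjunctions); and $(\Av_{P_r!}^{G_r}, \For_{P_r}^{G_r})$ and $(\For_{P_r}^{G_r}, \Av_{P_r*}^{G_r})$ from Section \ref{subsec:Av}, i.e.\ $\Av_{P_r!}^{G_r}$ is left adjoint to $\For_{P_r}^{G_r}$ and $\Av_{P_r*}^{G_r}$ is right adjoint to $\For_{P_r}^{G_r}$.

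First I would verify the second claim, that $\pRes_{P_r*}^{G_r}$ is right adjoint to $\pInd_{P_r!}^{G_r}$. Writing $\pInd_{P_r!}^{G_r} = \Av_{P_r!}^{G_r} \circ\, i_! \circ \Infl \circ\, p^*$ and taking right adjoints term by term in the reverse order, the right adjoint is $p_* \circ (\Infl)^{R} \circ\, i^! \circ \For_{P_r}^{G_r}$; since $\Infl \circ p^*$ is pullback along a smooth map its right adjoint is the corresponding pushforward $p_*$ (on the conjugation-equivariant categories this is exactly the $p_*$ appearing in $\pRes_{P_r*}^{G_r}$, after identifying $D_{L_r}(P_r/P_r)$ appropriately), and the right adjoint of $i_!$ is $i^!$, and the right adjoint of $\Av_{P_r!}^{G_r}$ is $\For_{P_r}^{G_r}$. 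Composing, the right adjoint of $\pInd_{P_r!}^{G_r}$ is $p_* \circ i^! \circ \For_{P_r}^{G_r}$. This is not literally the formula $\pRes_{P_r*}^{G_r} = p_* \circ i^! \circ \For_{L_r}^{G_r}$, so the one genuine point to check is the identification $\For_{L_r}^{G_r} \cong \For_{P_r}^{G_r}$ after restriction — that is, the forgetful functor $D_{G_r}(G_r) \to D_{L_r}(L_r)$ built by first forgetting to $P_r$-equivariance, restricting along $i$, and pushing down along $p$, agrees with the one built by forgetting all the way to $L_r$-equivariance; this is because $L_r \hookrightarrow P_r$ is a section of $p$ and equivariance descends along the (contractible, in the equivariant sense) unipotent radical $U_{P,r} = \ker p$. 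The first claim, that $\pRes_{P_r!}^{G_r}$ is left adjoint to $\pInd_{P_r*}^{G_r}$, is entirely parallel: take left adjoints term by term in $\pInd_{P_r*}^{G_r} = \Av_{P_r*}^{G_r} \circ\, i_* \circ \Infl \circ\, p^!$, using that the left adjoint of $\Av_{P_r*}^{G_r}$ is $\For_{P_r}^{G_r}$, the left adjoint of $i_*$ is $i^*$, and the left adjoint of $p^!$ is $p_!$, then re-identify the forgetful functors as before.

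The main obstacle is bookkeeping rather than mathematics: one must be careful with the shifts and Tate twists hidden in $\Infl_{L_r}^{P_r}$ and in $\iota_X^! \circ \For \cong \iota_X^* \circ \For[-2\dim G/H](-\dim G/H)$ from Section \ref{subsec:Av}, and with the fact that $p^!$ versus $p^*$ differ by the relative dimension shift/twist — these are exactly what make the \emph{two} different adjunctions (one for the $!$-induction, one for the $*$-induction) come out with compatible normalizations, and they are why $\pInd_{P_r!}^{G_r}$ uses $p^*$ while $\pInd_{P_r*}^{G_r}$ uses $p^!$. Concretely I would keep track of the relative dimension $\dim U_{P,r}$ throughout and observe that it cancels correctly. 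Everything else is a formal composition of adjunctions.
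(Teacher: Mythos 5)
Your argument is correct and is essentially the paper's own proof: both decompose the functors as in Definition \ref{def:pInd pRes}, take adjoints termwise using the pairs $(\Av_{P_r!}^{G_r},\For_{P_r}^{G_r},\Av_{P_r*}^{G_r})$, $(i^*,i_*=i_!,i^!)$, $(p^*,p_*)$, $(p_!,p^!)$, and use that $\Infl_{L_r}^{P_r}$ is an equivalence with inverse $\For_{L_r}^{P_r}$ because $P_r=L_r\ltimes U_{P,r}$ --- which is exactly your ``equivariance descends along $U_{P,r}$'' step identifying the termwise adjoint with $\pRes$. The only cosmetic difference is your concern about shift/twist bookkeeping: none is needed, since $\Infl_{L_r}^{P_r}$ carries no shift and the adjunctions hold on the nose with the definitions as stated.
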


\begin{proof}
  We have adjointness relations
  \begin{equation*}
    (\Av_{H!}^{G}, \For_{H}^{G}, \Av_{H*}^{G}), \qquad (i^*, i_*=i_!, i^!), \qquad (p^*, p_*), \qquad (p_!, p^!).
  \end{equation*}
  As for the inflation functor $\Infl_{L_r}^{P_r}$, we note that since $P_r = L_r \ltimes U_{P,r}$, this functor is an equivalence of categories with inverse equivalence $\For_{L_r}^{P_r}$. The desired adjointness assertions follow.
\end{proof}

It will be useful to have a non-equivariant description of the parabolic induction functors.

\begin{definition}[$f$,  $\pi$, and $\alpha$]\label{def:f and pi}
  Define
  \begin{align*}
    \widetilde G_r &\colonequals \{(g,hP_r) \in G_r \times G_r/P_r : h^{-1} g h \in P_r\}, \\
    \widehat G_r &\colonequals \{(g, h) \in G_r \times G_r : h^{-1} g h \in P_r\},
  \end{align*}
  and consider the morphisms
  \begin{align*}
    f \from \widehat G_r &\to L_r, & (g, h) &\mapsto p(h^{-1}gh), \\
    \pi \from \widetilde G_r &\to G_r, & (g, hP_r) &\mapsto g, \\
    \alpha \from \widehat G_r &\to \widetilde G_r, & (g,h) &\mapsto (g,hP_r).
  \end{align*}
\end{definition}

\begin{lemma}\label{lem:pInd}
  Let $n = \dim U_{P,r}$. We have commutative diagrams
  \begin{equation*}
    \begin{tikzcd}
      D_{L_r}(L_r) \ar{rrr}{\pInd_{P_r!}^{G_r}} \ar{d}[left]{\For} &&& D_{G_r}(G_r) \ar{d}{\For} \\
      D(L_r) \ar{rrr}{M \mapsto \pi_! \widetilde{f^* M}[2n]} &&& D(G_r)
    \end{tikzcd}
    \qquad
    \begin{tikzcd}
      D_{L_r}(L_r) \ar{rrr}{\pInd_{P_r*}^{G_r}} \ar{d}[left]{\For} &&& D_{G_r}(G_r) \ar{d}[left]{\For} \\
      D(L_r) \ar{rrr}{M \mapsto \pi_* \widetilde{f^! M}[-2n]} &&& D(G_r)
    \end{tikzcd}
  \end{equation*}
  where $\widetilde{f^* M}$ is the unique object in $D(\widetilde G_r)$ satisfying $\alpha^* \widetilde{f^* M} \cong f^*M$.
\end{lemma}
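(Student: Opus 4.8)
The plan is to unwind the definition of $\pInd_{P_r!}^{G_r}$ as a composite of four functors and identify the non-equivariant incarnation of each one, using the factorization of $\For_{L_r}^{G_r}$ through the induction space $G_r \times^{P_r} L_r$ described in Section~\ref{subsec:Av}. Concretely, I would first observe that, forgetting equivariance, $p^* \from D_{L_r}(L_r) \to D_{P_r}(P_r)$ becomes pullback along $p \from P_r \to L_r$ at the level of $D(-)$; the inflation functor $\Infl_{L_r}^{P_r}$ is invisible non-equivariantly (it is an equivalence whose composite with $p^*$ is just $p^*$ on the underlying complexes); and $i_! \from D_{P_r}(P_r) \to D_{P_r}(G_r)$ becomes $i_!$ for $i \from P_r \hookrightarrow G_r$. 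So the only genuinely non-trivial step is to compute the non-equivariant avatar of $\Av_{P_r!}^{G_r}$.

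For that step I would use the explicit description $\Av_{P_r!}^{G_r} = \sigma_{X!} \circ (\iota_X^! \circ \For_{P_r}^{G_r})^{-1}$ with $X = G_r$ under the conjugation action, so that $G_r \times^{P_r} G_r$ parametrizes pairs, and cut things down to the subspace where conjugation lands in $P_r$. The key geometric point is that $\widetilde G_r = \{(g,hP_r) : h^{-1}gh \in P_r\}$ is precisely the locus inside $G_r \times^{P_r} G_r$ (via $(g,hP_r) \mapsto$ the class of $(h, h^{-1}gh)$) supporting the pushforward of the complex built from $p^* \circ \Infl \circ (\text{stuff})$, and under this identification $\pi \from \widetilde G_r \to G_r$ is $\sigma_X$ restricted to the relevant locus while $f \from \widetilde G_r \to L_r$ is $p \circ (\text{second projection})$. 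Tracking the shift: $\iota_X^! \circ \For_{P_r}^{G_r} \cong \iota_X^* \circ \For_{P_r}^{G_r}[-2\dim G_r/P_r](-\dim G_r/P_r)$, and $\dim G_r/P_r = \dim U_{P,r} = n$ (since $U_{\bar P, r} \cong G_r/P_r$), so inverting $\iota_X^!$ introduces a shift by $[2n]$ and a Tate twist, giving the displayed $\pi_! \circ f^*[2n]$; the Tate twist is suppressed in the statement (or absorbed, as is standard in this literature). Commutativity of the left square then follows formally from the fact that each elementary functor commutes with $\For$ up to the recorded shift, and base-change/proper-base-change identities glue the pieces.

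The $\star$-version is entirely parallel: replace $p^*$ by $p^!$, $i_!$ by $i_*$, $\sigma_{X!}$ by $\sigma_{X*}$, and $(\iota_X^!\circ\For)^{-1}$ by $(\iota_X^*\circ\For)^{-1}$, which now introduces a shift by $[-2n]$ (the inverse of $[2n]$), yielding $\pi_* \circ f^![-2n]$; alternatively one can deduce it from the $!$-case by Verdier duality, noting that duality intertwines $\pInd_{P_r!}^{G_r}$ and $\pInd_{P_r*}^{G_r}$ up to the appropriate shift and exchanges $\pi_!\leftrightarrow\pi_*$, $f^*\leftrightarrow f^!$. I expect the main obstacle to be purely bookkeeping: correctly identifying $\widetilde G_r$ as a locally closed (in fact closed-in-an-open) subspace of the induction space $G_r \times^{P_r} G_r$ compatibly with $\pi$, $f$, $\sigma_X$, $\iota_X$, and then pinning down the precise degree shift (and keeping track of, or justifiably suppressing, the Tate twist) so that the constants $[2n]$ and $[-2n]$ come out exactly as stated. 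No single estimate is hard; the care is in making the diagram chase genuinely commute on the nose rather than just up to an unspecified shift.
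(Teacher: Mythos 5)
Your approach is correct and matches the paper's proof essentially step for step: both unwind $\Av_{P_r!}^{G_r}$ via its $\sigma_! \circ (\iota^! \circ \For_{P_r}^{G_r})^{-1}$ description, identify $\widetilde G_r$ as the relevant locus in $G_r \times^{P_r} G_r$ via a Cartesian square and base change, and track the $[2n]$ shift coming from converting $\iota^!$ to $\iota^*$ with $\dim G_r/P_r = n$. The paper likewise suppresses the Tate twist as you anticipate, and treats the $*$-case by the parallel argument (rather than appealing to Verdier duality).
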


We finish this section by defining analogous functors for sheaves on the $\mfl$ and $\mfg$. Abusing notation, we again write $i \from \mfp \hookrightarrow \mfg$ and $p \from \mfp \to \mfl$. Then the same formulas as in Definition \eqref{def:pInd pRes} define parabolic induction $\fpInd_{P_r!}^{G_r}, \fpInd_{P_r*}^{G_r} \from D_{L_r}(\mft) \to D_{G_r}(\mfg)$ and parabolic restriction functors $\fpRes_{P_r!}^{G_r}, \fpRes_{P_r*}^{G_r}\from D_{G_r}(\mfg) \to D_{L_r}(\mft)$:
  \begin{align*}
    \fpInd_{P_r!}^{G_r} &\colonequals \Av_{P_r!}^{G_r}{} \circ i_! \circ \Infl_{L_r}^{P_r} \circ p^* \\
    \fpInd_{P_r*}^{G_r} &\colonequals \Av_{P_r*}^{G_r}{} \circ i_* \circ \Infl_{L_r}^{P_r} \circ p^! \\
    \fpRes_{P_r!}^{G_r} &\colonequals p_! \circ i^* \circ \For_{P_r}^{G_r} \\
    \fpRes_{P_r*}^{G_r} &\colonequals p_* \circ i^! \circ \For_{P_r}^{G_r}
  \end{align*}
  Define
  \begin{align*}
    \widetilde \mfg &\colonequals \{(g,hP_r) \in \mfg \times G_r/P_r : \Ad(h^{-1})(g) \in \mfp\}, \\
    \widehat \mfg &\colonequals \{(g,h) \in \mfg \times G_r : \Ad(h^{-1})(g) \in \mfp\},
  \end{align*}
  and consider the morphisms
  \begin{align*}
    f \from \widehat \mfg &\to \mft &
    (g,h) &\mapsto p(\Ad(h^{-1})(g)), \\
    \pi \from \widetilde \mfg &\to \mfg &
    (g,hP_r) &\mapsto g, \\
    \alpha \from \widehat \mfg &\to \widetilde \mfg & (g,h) &\mapsto \Ad(h^{-1})(g).
  \end{align*}
  As in Lemma \ref{lem:pInd}, for any $M \in D_{L_r}(\mfl),$
  \begin{align*}
    &\For(\fpInd_{P_r!}^{G_r}(M)) \cong \pi_! \widetilde{f^* \For(M)}[2\dim U_{P,r}], \\
    &\For(\fpInd_{P_r*}^{G_r}(M)) \cong \pi_* \widetilde{f^! \For(M)}[-2\dim U_{P,r}].
  \end{align*}

\begin{remark}\label{rem:fpInd}
  Note that since the $G_r$-action on $\mfg$ factors through $G_0$, we have $D_{G_r}(\mfg) \cong D_{G_0}(\mfg)$. Analogously to above, we may define parabolic induction and parabolic restriction functors with respect to $L_0, P_0, G_0$. By the projection formula, it is not hard to see that
  \begin{equation*}
    \mathfrak{pInd}_{P_r!}^{G_r}(M) = \mathfrak{pInd}_{P_0!}^{G_0}(M).
  \end{equation*}
\end{remark}

\section{Generic idempotents}\label{sec:idempotents}

We will study two closely related notions of genericity.

\begin{definition}[generic elements]\label{def:generic element}
  Let $X \in \mfl^*$ be fixed the coadjoint action of $L$ and consider the following two properties.
  \begin{enumerate}
    \item[$\frak{ge1}$] $X|_{\mft_\alpha} \not\equiv 0$ for all $\alpha \in \Phi(G,T) \smallsetminus \Phi(L,T).$
    \item[$\frak{ge2}$] The stabilizer of $X|_{\mft}$ in the Weyl group of $G$ is the Weyl group of $L$.
  \end{enumerate}   
  We say that $X$ is \textit{$(L,G)$-generic} if $X$ satisfies both $\mathfrak{ge1}$ and $\mathfrak{ge2}$.
\end{definition}

The two conditions $\mathfrak{ge1}$ and $\mathfrak{ge2}$ are the ``at $F^{\ur}$-level'' versions of the genericity notions foundational in Yu's construction of supercuspidal types \cite{Yu01} and of Kim--Yu types \cite{KY17} (see GE1 and GE2 of \cite[Section 8]{Yu01}). Yu proves (see Lemma 8.1 of \textit{op.\ cit.}) that if $p$ is not a torsion prime for the dual root datum of $G$, then $\mathfrak{ge1}$ implies $\mathfrak{ge2}$.

\mbox{}

For the rest of the paper, let $X_\psi$ be a $(L,G)$-generic element of $\frak l^*$. Note that this guarantees that the orbit $\bbG(X_\psi)$ of $X_\psi$ under the coadjoint action of $G_0$ on $\mfg^*$ is closed.

\begin{definition}[generic idempotents]\label{def:generic idempotent}
  Define
  \begin{equation*}
    \cL_\psi \colonequals \FT(\delta_{X_\psi}) \in D(\mfl), \qquad \cF_\psi \colonequals \FT(\delta_{\bbG(X_\psi)}) \in D(\mfg).
  \end{equation*}
  Abusing notation, we write $\mfi$ for both inclusions $\mfl \hookrightarrow L_r$ and $\mfg \hookrightarrow G_r$. Define
  \begin{equation*}
    \cL_{\psi,r} \colonequals \mfi_! \cL_\psi[\dim \mfl]
    \in D_{L_r}(L_r), \qquad 
    \cF_{\psi,r} \colonequals \mfi_! \cF_\psi[\dim \mfg]
    \in D_{G_r}(G_r)
  \end{equation*}
  to be the \textit{$(L,G)$-generic idempotents} associated to $X_\psi$. 
\end{definition}

\begin{lemma}\label{lem:idempotent}
  $\cL_{\psi,r}$ and $\cF_{\psi,r}$ are both idempotents with respect to  $\star_!$ and $\star_*$.
\end{lemma}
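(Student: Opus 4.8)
The plan is to prove the statement for $\cL_{\psi,r}$; the argument for $\cF_{\psi,r}$ is identical with $\delta_{X_\psi}$ replaced by $\delta_{\bbG(X_\psi)}$. The key point is that although $\cL_{\psi,r}$ lives on the group scheme $L_r$, it is pushed forward from the affine space $\mfl = L_{r:r+}$, which is a normal subgroup of $L_r$ with quotient $L_0$; moreover $\cL_\psi = \FT(\delta_{X_\psi})$ is a rank-one multiplicative local system on $\mfl$ (since $X_\psi$ is a single point fixed by the coadjoint $L$-action, so $\delta_{X_\psi}$ is supported at a single $L_0$-fixed point of $\mfl^*$). So the first step is to reduce the convolution $\cL_{\psi,r} \star_? \cL_{\psi,r}$ on $L_r$ to a convolution on the abelian affine group $\mfl$. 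Concretely, using that $\mfi\colon \mfl \hookrightarrow L_r$ is a closed subgroup and $\cL_{\psi,r} = \mfi_!\cL_\psi[\dim\mfl]$, base change and the projection formula let us rewrite $\mfi_!\cL_\psi \star_? \mfi_!\cL_\psi$ in terms of the multiplication map $m\colon \mfl \times \mfl \to \mfl$, namely $\mfi_!(m_?((\cL_\psi \boxtimes \cL_\psi))) $ up to the appropriate shift; one has to check the support bookkeeping, i.e.\ that the convolution on $L_r$ of two sheaves supported on the normal subgroup $\mfl$ is again supported on $\mfl$ and computes the $\mfl$-convolution. Here the shift $[\dim\mfl]$ is designed precisely so that $\star_!$ of two such normalized complexes lands in the same normalization.

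The second step is the computation on $\mfl$ itself. Since $\mfl$ is an affine space regarded as a vector group and $\cL_\psi = \FT(\delta_{X_\psi})$, I would invoke the standard fact that Fourier--Deligne transform interchanges $\star$ (convolution on $\mfl$) with $\otimes$ (tensor product on $\mfl^*$) up to shift: $\FT(A \star B) \cong \FT(A) \otimes \FT(B)[-\dim\mfl]$ (this is exactly the $\mfh$-acts-freely-on-itself case of Lemma~\ref{lem:convolution}, with $X = \mfl$). Applying $\FT$ to $\cL_\psi \star \cL_\psi$ therefore gives $\delta_{X_\psi} \otimes \delta_{X_\psi}[-\dim\mfl] = \delta_{X_\psi}[-\dim\mfl]$, since $\delta_{X_\psi}$ is (the skyscraper of) a point and $\delta_{X_\psi}\otimes\delta_{X_\psi} \cong \delta_{X_\psi}$. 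Transporting back through $\FT$ (which is an equivalence) and keeping track of shifts yields $\cL_\psi \star \cL_\psi \cong \cL_\psi[\,-\dim\mfl\,]$ or equivalently $\cL_{\psi,r}\star\cL_{\psi,r}\cong\cL_{\psi,r}$ after reinstating the normalizing shift $[\dim\mfl]$. The same computation works verbatim for $\star_!$ and $\star_*$ because on the affine space $\mfl$ with the point-sheaf $\delta_{X_\psi}$ the $!$- and $*$-versions of Lemma~\ref{lem:convolution} agree (this is also the content of Proposition~\ref{prop:free idempotent}: $\FT(\delta_{\mfz^*})\star_! D(X) = \FT(\delta_{\mfz^*})\star_* D(X)$, applied with $X=\mfl$, $\mfz^*=\{X_\psi\}$).

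The main obstacle is the first step: carefully justifying that convolution on the group scheme $L_r$ of complexes supported on the normal affine subgroup $\mfl$ reduces to convolution on $\mfl$, with the correct shifts, signs, and equivariance. Once that bookkeeping is done, the actual idempotency is the soft Fourier-theoretic computation of step two. One clean way to organize step one is to observe that $L_r$ acts on $\mfl$ through $L_0$, decompose $L_r \cong \mfl \rtimes (\text{lift of } L_0)$ locally, and note that the multiplication morphism $\mfl\times L_r \to L_r$ restricted to $\mfl\times\mfl$ is exactly the group law of $\mfl$; then $\cL_{\psi,r}\star_? \cL_{\psi,r} = \mu_?(p_1^*\mfi_!\cL_\psi \otimes p_2^*\mfi_!\cL_\psi)[2\dim\mfl]$ and both $p_1^*\mfi_!\cL_\psi$ and $p_2^*\mfi_!\cL_\psi$ are extensions by zero from $\mfl\times\mfl \subset L_r\times L_r$, so $\mu_?$ of their tensor product is an extension by zero from $\mfl$ computing $m_?(\cL_\psi\boxtimes\cL_\psi)$. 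Combining with step two finishes the proof. Finally, one should note that $\cF_{\psi,r}$ requires the mild additional remark that $\delta_{\bbG(X_\psi)}$, being the constant sheaf on the closed $G_0$-orbit $\bbG(X_\psi)$ (closed by $\frak{ge1}$, as recorded after Definition~\ref{def:generic element}), still satisfies $\FT(\delta_{\bbG(X_\psi)})\star \FT(\delta_{\bbG(X_\psi)}) \cong \FT(\delta_{\bbG(X_\psi)})$ up to shift — which again follows from Proposition~\ref{prop:free idempotent} applied to the vector bundle $\mfg \to \mfg/\mfz$ for a suitable $\mfz$, or directly since $\FT$ interchanges $\star$ on $\mfg$ with $\otimes$ on $\mfg^*$ and the constant sheaf on a subvariety is tensor-idempotent.
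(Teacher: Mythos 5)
Your proposal is correct and follows essentially the same route as the paper's proof: reduce to $\mfl$ via the monoidal property of $\mfi_!$ for the group homomorphism $\mfi\from\mfl\hookrightarrow L_r$, then compute the convolution on $\mfl$ through the Fourier--Deligne transform (Lemma~\ref{lem:convolution}), using that $\delta$-sheaves on points and closed orbits are tensor-idempotent. The only difference in emphasis is that the paper treats the reduction to $\mfl$ as immediate (one sentence: ``since $\mfi$ is a group homomorphism, it follows from the definitions that $\mfi_!M\star_!\mfi_!M\cong\mfi_!(M\star_!M)$'') whereas you flag it as the main obstacle and elaborate at length; that elaboration is fine but not a gap the paper needs to fill.
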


\begin{proof}
  Since $\mfi$ is a group homomorphism, it follows from the definitions that
  \begin{equation*}
    \mfi_!M \star_! \mfi_! M \cong \mfi_!(M \star_! M), \qquad \text{for any $M \in D(\mfl)$.}
  \end{equation*}
  Hence to see that $\mfi_! \cL_\psi[\dim \mft](\dim \mft)$ is an idempotent in $D_{L_r}(L_r)$, it is equivalent to show that $\cL_\psi[\dim \mft](\dim \mft)$ is an idempotent in $D(\mfl)$. To this end, we have $\FT(\cL_\psi) \cong \FT(\FT(\delta_{X_\psi})) \cong \delta_{-X_\psi}(-\dim \mft)$. By Lemma \ref{lem:convolution}, we have 
  \begin{align*}
    \FT(\cL_\psi[\dim \mft] \star_! \cL_\psi[\dim \mft])
    &\cong (\FT(\cL_\psi)[\dim \mft]\otimes \FT(\cL_\psi)[\dim \mft])[-\dim \mft] \\
    &\cong (\delta_{-X_\psi} \otimes \delta_{-X_\psi})[\dim \mft] 
    \cong \delta_{-X_\psi}[\dim \mft] \cong \FT(\cL_\psi[\dim \mft]).
  \end{align*}
  Since $\FT$ is an equivalence of categories, it follows that $\cL_{\psi,r} \star_! \cL_{\psi,r} \cong \cL_{\psi,r}$, which proves the lemma. The argument for $\cF_{\psi,r}$ is similar.
\end{proof}

We will also make use of the following orthogonality result.

\begin{lemma}\label{lem:idempotent orthog}
  Let $X_{\psi'}$ be a $(L',G)$-generic element of $(\mfl')^*$ for a Levi subgroup $L'$ of $G$ and write $\cF_{\psi'} \colonequals \FT(\delta_{\bbG(X_{\psi'} + \mfu')}) \in D(\mfg)$. If $X_{\psi'} \notin \bbG(X_{\psi})$, then
  \begin{equation*}
    \mfi_! \cF_{\psi} \star_! \mfi_! \cF_{\psi'} = \mfi_! \cF_{\psi} \star_* \mfi_! \cF_{\psi'} = 0.
  \end{equation*}
\end{lemma}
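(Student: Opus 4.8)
The plan is to carry the whole statement over to the abelian vector group $\mfg = G_{r:r+}$ along the closed homomorphism $\mfi$, Fourier-transform, and then observe that the two generic idempotents become, up to a Tate twist, the extension-by-zero constant sheaves on the two coadjoint orbits $-\bbG(X_\psi)$ and $-\bbG(X_{\psi'})$, which are disjoint. Concretely, since $\mfi \from \mfg \hookrightarrow G_r$ is a closed immersion onto the normal subgroup $\mfg$ and a group homomorphism, the multiplication of $G_r$ carries $\mfg \times \mfg$ into $\mfg$, where it restricts to addition; hence, just as in the proof of Lemma \ref{lem:idempotent} (the relevant tensor product $(p_1^*\mfi_!\cF_\psi) \otimes (p_2^*\mfi_!\cF_{\psi'})$ is supported on $\mfg \times \mfg$, and one base-changes along the multiplication map), one gets
\begin{align*}
  \mfi_!\cF_\psi \star_! \mfi_!\cF_{\psi'} &\cong \mfi_!(\cF_\psi \star_! \cF_{\psi'}), &
  \mfi_!\cF_\psi \star_* \mfi_!\cF_{\psi'} &\cong \mfi_!(\cF_\psi \star_* \cF_{\psi'}),
\end{align*}
the convolutions on the right being taken in $D(\mfg)$ with respect to addition. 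As $\mfi_! = \mfi_*$ is fully faithful, it suffices to show $\cF_\psi \star_! \cF_{\psi'} = \cF_\psi \star_* \cF_{\psi'} = 0$ in $D(\mfg)$.

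For this, apply the equivalence $\FT \from D(\mfg) \xrightarrow{\sim} D(\mfg^*)$; it is enough to see that the two Fourier transforms vanish. Taking $\mfh = X = \mfg$, with $\mfg$ acting on itself by translation, in Lemma \ref{lem:convolution} — so that $X/\mfh$ is a point, $X' = \mfg^*$, and the transpose of addition $\mfg \times \mfg \to \mfg$ is the diagonal $\Delta \from \mfg^* \to \mfg^* \times \mfg^*$ — one obtains
\begin{align*}
  \FT(\cF_\psi \star_! \cF_{\psi'}) &\cong \Delta^*\bigl(\FT(\cF_\psi) \boxtimes \FT(\cF_{\psi'})\bigr)[-\dim\mfg], \\
  \FT(\cF_\psi \star_* \cF_{\psi'}) &\cong \Delta^!\bigl(\FT(\cF_\psi) \boxtimes \FT(\cF_{\psi'})\bigr).
\end{align*}
By Fourier inversion (as used in the proof of Lemma \ref{lem:idempotent}), $\FT(\cF_\psi) = \FT(\FT(\delta_{\bbG(X_\psi)})) \cong \delta_{-\bbG(X_\psi)}$ up to a Tate twist, where $-\bbG(X_\psi)$ is the $G_0$-orbit of $-X_\psi$, and likewise $\FT(\cF_{\psi'}) \cong \delta_{-\bbG(X_{\psi'})}$ up to a Tate twist.

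Consequently $\FT(\cF_\psi) \boxtimes \FT(\cF_{\psi'})$ is supported on $\bigl(-\bbG(X_\psi)\bigr) \times \bigl(-\bbG(X_{\psi'})\bigr)$. By $\frak{ge1}$ the two coadjoint orbits $\bbG(X_\psi)$ and $\bbG(X_{\psi'})$ are closed, and they are distinct since $X_{\psi'} \notin \bbG(X_\psi)$; distinct orbits are disjoint, hence so are their negatives, and therefore $\bigl(-\bbG(X_\psi)\bigr) \times \bigl(-\bbG(X_{\psi'})\bigr)$ is disjoint from the diagonal $\Delta(\mfg^*)$. Since both $\supp(\Delta^*K)$ and $\supp(\Delta^!K)$ are contained in $\Delta^{-1}(\supp K)$ for any $K$, the sheaves $\Delta^*\bigl(\FT(\cF_\psi) \boxtimes \FT(\cF_{\psi'})\bigr)$ and $\Delta^!\bigl(\FT(\cF_\psi) \boxtimes \FT(\cF_{\psi'})\bigr)$ vanish; hence $\FT(\cF_\psi \star_! \cF_{\psi'}) = \FT(\cF_\psi \star_* \cF_{\psi'}) = 0$, and so $\mfi_!\cF_\psi \star_! \mfi_!\cF_{\psi'} = \mfi_!\cF_\psi \star_* \mfi_!\cF_{\psi'} = 0$.

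I do not expect a real obstacle here: this is the geometric shadow of the orthogonality, under convolution, of two locally constant ``central characters'' taking different values. The only points needing a little care are that the compatibility of $\mfi_!$ with convolution also holds in the $\star_*$-flavour — which is immediate once one notes the relevant tensor product is supported on $\mfg \times \mfg$, reducing it to the $\star_!$-case treated in Lemma \ref{lem:idempotent} — and the bookkeeping of Tate twists, which plays no role in a vanishing statement.
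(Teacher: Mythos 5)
Your proposal is correct and follows essentially the same route as the paper: push the computation down to $D(\mfg)$ via the compatibility of $\mfi_!$ with convolution, apply the Fourier--Deligne transform and Lemma \ref{lem:convolution}, and conclude from the disjointness of the closed coadjoint orbits (the paper writes this step as $\delta_{-\bbG(X_\psi)} \otimes \delta_{-\bbG(X_{\psi'})} = 0$, which is exactly your $\Delta^*(\,\cdot\boxtimes\cdot\,)$ formulation). You have merely spelled out details the paper leaves implicit, such as identifying $\mu'$ with the diagonal and handling the $\star_*$ case via $\Delta^!$ and support considerations.
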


\begin{proof}
  As in the previous lemma, we see that lemma is equivalent to showing the vanishing of $\cF_{\psi} \star_! \cF_{\psi'}$. By construction, we have $\FT(\cF_\psi) \cong \delta_{-\bbG(X_\psi + \mfu)}$ and $\FT(\cF_{\psi'}) \cong \delta_{-\bbG(X_\psi' + \mfu')}$. Since $X_{\psi'} \notin \bbG(X_\psi)$ by assumption, necessarily the orbits $\bbG(X_\psi + \mfu)$ and $\bbG(X_{\psi'} + \mfu')$ are disjoint, so that $\delta_{-\bbG(X_\psi) + \mfu} \otimes \delta_{-\bbG(X_{\psi'} + \mfu')} = 0$. The conclusion follows by Lemma \ref{lem:convolution}.
\end{proof}

\begin{lemma}\label{lem:FT pInd}
  We have
  \begin{equation*}
    \mathfrak{pInd}_{P_r!}^{G_r}(\cL_\psi[\dim \mfl]) \cong \cF_\psi[\dim \mfg].
  \end{equation*}
\end{lemma}

\begin{proof}
  By Remark \ref{rem:fpInd} we have $\mathfrak{pInd}_{P_r!}^{G_r} \cong \mathfrak{pInd}_{P_0!}^{G_0}$, so this reduces to the classical $r=0$ setting. It is well known 
  that $\FT$ commutes with $\fpInd_{P_0}^{G_0}$ (see for example \cite[Coda 11.3]{B86}, \cite[Proposition 13.6]{KW01}), and so since $\cL_\psi = \FT(\delta_{X_\psi})$, we may conclude that 
  \begin{equation}\label{eq:FT pInd}
    \FT(\fpInd_{P_0}^{G_0}(\cL_\psi)) \cong \fpInd_{P_0}^{G_0}(\delta_{-X_\psi}) \cong \delta_{-\bbG(X_\psi + \mfu)}[2\dim U_{P,0}].  
  \end{equation}
  Therefore  $\fpInd_{P_0}^{G_0}(\cL_\psi) \cong \FT(\delta_{\bbG(X_\psi + \mfu)})[2\dim U_{P,0}]
  \cong \cF_\psi[2\dim U_{P,0}]
  $. Since $\dim U_{P,0} = \dim \mfu$ and $\dim \mfg = \dim \mfl + 2 \dim \mfu$, the desired conclusion now follows.
\end{proof}

\begin{definition}[$(L,G)$-generic subcategories]\label{def:psi subcats}
  Associated to $X_\psi$ we have \textit{$(L,G)$-generic subcategories} of $D_{L_r}(L_r)$ and $D_{G_r}(G_r)$: 
  \begin{align*}
    D_{L_r}^\psi(L_r) &\colonequals \cL_{\psi,r}\star_! D_{L_r}(L_r) = \cL_{\psi,r} \star_* D_{L_r}(L_r), \\
    D_{G_r}^\psi(G_r) &\colonequals \cF_{\psi,r} \star_! D_{G_r}(G_r) = \cF_{\psi,r} \star_* D_{G_r}(G_r),
  \end{align*}
  where the two second equalities hold by Proposition \ref{prop:free idempotent}. 
\end{definition}

The following lemma describes the behavior of objects in the generic subcategory $D_{L_r}^\psi(L_r)$; its proof is standard and we omit it.

\begin{lemma}\label{lem:L psi}\mbox{}
  \begin{enumerate}[label=(\alph*)]
    \item $\cL_\psi$ is a multiplicative local system on $\mfl$.
    \item Any $M \in D_{L_r}^\psi(L_r)$ is $(\mfl,\cL_\psi)$-equivariant.
  \end{enumerate}
\end{lemma}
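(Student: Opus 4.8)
The plan is to get (a) by an explicit computation of the Fourier--Deligne transform $\FT(\delta_{X_\psi})$, and (b) by reducing to the general fact that convolving with a multiplicative local system along a translation action produces an equivariant complex.

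For (a), I would unwind $\cL_\psi=\FT(\delta_{X_\psi})$ straight from the definition of $\FT\colon D(\mfl^*)\to D(\mfl)$. The pullback of the skyscraper $\delta_{X_\psi}$ to $\mfl^*\times\mfl$ is the extension by zero of the constant sheaf on $\{X_\psi\}\times\mfl$; tensoring with $h^*\cL$ and pushing forward along $\mathrm{pr}'$ over that locus changes nothing, and one is left with
\[
  \cL_\psi\;\cong\;\chi^*\cL[\dim\mfl],\qquad \chi\colon\mfl\longrightarrow\bbG_a,\quad Y\longmapsto h(X_\psi,Y),
\]
where $h$ is the canonical pairing (cf.\ \eqref{eq:h}, with $L$ in place of $G$). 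Bilinearity of $h$ makes $\chi$ a homomorphism of algebraic groups, and the pullback of the multiplicative (Artin--Schreier) local system $\cL$ along a homomorphism is again a multiplicative rank-one local system; up to the cohomological shift in the normalization of $\FT$, this is (a). This computation also records $\FT(\cL_\psi)\cong\delta_{-X_\psi}$ up to Tate twist, as used in the proof of Lemma~\ref{lem:idempotent}.

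For (b), the first step is to make membership in $D_{L_r}^\psi(L_r)$ explicit. By idempotency of $\cL_{\psi,r}$ (Lemma~\ref{lem:idempotent}), any $M\in D_{L_r}^\psi(L_r)$ satisfies $M\cong\cL_{\psi,r}\star_! M$, the convolution being that of Section~\ref{subsec:FT} attached to the multiplication map $L_r\times L_r\to L_r$. Since $\cL_{\psi,r}=\mfi_!\cL_\psi[\dim\mfl]$ is supported on the subgroup $\mfl\subset L_r$, and the restriction of multiplication to $\mfl\times L_r$ is the left-translation action $a\colon\mfl\times L_r\to L_r$, this rewrites as $M\cong a_!\bigl(\cL_\psi[\dim\mfl]\boxtimes M\bigr)$ --- that is, $M$ is (a shift of) the convolution $\cL_\psi\star_! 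M$ for the $\mfl$-module structure on $D(L_r)$ by translation. This is the same identification underlying the intrinsic description of the generic subcategory in Proposition~\ref{prop:free idempotent}. The second step is a general claim that has nothing to do with $D_{L_r}^\psi(L_r)$: for the free left-translation action of $\mfl$ on $L_r$ and any $N\in D(L_r)$, the complex $a_!\bigl(\cL_\psi[\dim\mfl]\boxtimes N\bigr)$ is $(\mfl,\cL_\psi)$-equivariant. Writing $\cL_\psi\cong\chi^*\cL[\dim\mfl]$ as in (a) and computing $a^*a_!$ by base change along the Cartesian square with fibre product $(\mfl\times L_r)\times_{L_r}(\mfl\times L_r)\cong\mfl\times\mfl\times L_r$, the homomorphism identity $\chi(zz')=\chi(z)+\chi(z')$ (equivalently, the multiplicativity of $\cL_\psi$) yields $a^*\bigl(a_!(\cL_\psi[\dim\mfl]\boxtimes N)\bigr)\cong\chi^*\cL\boxtimes a_!\bigl(\cL_\psi[\dim\mfl]\boxtimes N\bigr)$, and the same input gives the cocycle compatibility on $\mfl\times\mfl\times L_r$. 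Applying this with $N=M$ proves (b).

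The lemma is soft, so I do not expect a genuine obstacle: the only things requiring care are the bookkeeping in the first step of (b) --- that convolution on $D_{L_r}(L_r)$ against $\cL_{\psi,r}$ coincides with the left-translation $\mfl$-module action, which holds precisely because $\cL_{\psi,r}$ is supported on $\mfl$ --- and consistently tracking the shift in the normalization of $\FT$. As an alternative to the second step of (b), one can instead invoke Lemma~\ref{lem:local support} directly: Fourier transform along the fibres of $L_r\to L_r/\mfl$ sends $(\mfl,\cL_\psi)$-equivariant complexes to complexes supported, on each trivializing chart, on the $(-X_\psi)$-section (using $\FT(\cL_\psi)\cong\delta_{-X_\psi}$), and that support condition is exactly the one cutting out $D_{L_r}^\psi(L_r)$.
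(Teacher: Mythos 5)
Your proposal is correct and takes essentially the same route as the paper: the paper dismisses (a) as standard (your computation $\cL_\psi\cong\chi^*\cL[\dim\mfl]$ with $\chi=h(X_\psi,-)$ is exactly that standard argument), and for (b) it likewise reduces, via the definition of $D_{L_r}^\psi(L_r)$ and idempotency of $\cL_{\psi,r}$, to showing that the convolution $\cL_\psi\star_! M$ along the translation action $\mfl\times L_r\to L_r$ is $(\mfl,\cL_\psi)$-equivariant, which it verifies by the same base-change/projection-formula computation on the fiber product, using the multiplicativity of $\cL_\psi$, that you describe. No gaps.
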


We next use Proposition \ref{prop:free idempotent} to describe the role of the parabolic induction functors on $(L,G)$-generic subcategories.

\begin{proposition}\label{prop:generic induction}
  Parabolic induction restricts to functors on $(L,G)$-generic subcategories:
  \begin{equation*}
    \pInd_{P_r!}^{G_r}, \pInd_{P_r*}^{G_r} \from D_{L_r}^\psi(L_r) \to D_{G_r}^\psi(G_r).
  \end{equation*}
\end{proposition}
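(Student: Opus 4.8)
The plan is to verify directly that $\pInd_{P_r!}^{G_r}(M)$ lies in $D_{G_r}^\psi(G_r)$ by checking the intrinsic Fourier-theoretic description of the generic subcategory, doing the real work at the reductive Lie-algebra level. By Proposition~\ref{prop:free idempotent} membership can be tested on the underlying non-equivariant complex, and by Lemma~\ref{lem:local support} it says precisely: for every open $U\subseteq G_{r-1}$ over which the vector bundle $G_r\to G_{r-1}$ trivializes, the Fourier--Deligne transform of $\For\pInd_{P_r!}^{G_r}(M)$, restricted over $U$, is supported on $\varphi_U'(-\bbG(X_\psi)\times U)$. Likewise the hypothesis $M\in D_{L_r}^\psi(L_r)$ says that $\FT(\For M)$ is, fibrewise over $L_{r-1}$, supported along the section cut out by $-X_\psi$ (equivalently, that $\For M$ is fibrewise the local system $\cL_\psi$, up to a twist pulled back from $L_{r-1}$). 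Using the non-equivariant formula $\For\pInd_{P_r!}^{G_r}(M)\cong\pi_!f^*\For(M)[2n]$ of Lemma~\ref{lem:pInd} (with $n=\dim U_{P,r}$), the proposition reduces to a statement about the support of $\FT(\pi_!f^*\For M)$ along $G_r\to G_{r-1}$.

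I would then compute this Fourier transform fibrewise over $G_{r-1}$. The correspondence $L_r\xleftarrow{f}\widetilde G_r\xrightarrow{\pi}G_r$ is compatible over $G_{r-1}$ with the Lie-algebra correspondence $\widetilde{\mfg}$ of~\eqref{eq:fpInd} (via the evident map $\widetilde G_r\to\widetilde G_{r-1}$), so, using the projection-formula bookkeeping of Remark~\ref{rem:fpInd} together with Lemma~\ref{lem:convolution}, the fibrewise Fourier transform of $\pi_!f^*\For M$ is controlled by the Lie-algebra parabolic induction of~\eqref{eq:fpInd} (which by Remark~\ref{rem:fpInd} is insensitive to $r$, i.e. is the reductive $\fpInd_{P_0!}^{G_0}$) applied to the fibrewise Fourier transform of $\For M$, up to tensoring with a sheaf pulled back from the base. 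Everything therefore reduces to the reductive assertion that $\FT$ of the Lie-algebra parabolic induction of $\cL_\psi$ is supported on the coadjoint orbit $-\bbG(X_\psi)\subseteq\mfg^*$. For this I would invoke the standard compatibility of the Fourier--Deligne transform with parabolic induction on a reductive Lie algebra --- it turns induction through $P_0$ into induction (or restriction) through the opposite parabolic on the dual --- applied to $\FT(\cL_\psi)=\delta_{-X_\psi}$ (up to a Tate twist): the Fourier transform of that induction is then supported on the set of $Y\in\mfg^*$ that are $G_0$-conjugate into $\mfp_0^*$ with $\mfl^*$-component $-X_\psi$. Here $(L,G)$-genericity enters decisively: condition $\frak{ge1}$ --- that $X_\psi$ pairs nontrivially with every coroot outside $L$ --- forces the unipotent-radical orbit map $u\mapsto\Ad^*(u)(-X_\psi)$ to be an isomorphism onto its image, so that set collapses to the single, and closed, coadjoint orbit $-\bbG(X_\psi)$, with $\frak{ge2}$ controlling the remaining multiplicity structure.

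Reinserting this shows that $\FT(\pi_!f^*\For M)$ is supported fibrewise along $-\bbG(X_\psi)$, which is exactly the membership condition for $D_{G_r}^\psi(G_r)$; the statement for $\pInd_{P_r*}^{G_r}$ is proved identically using the $\pi_*f^![-2n]$ formula and the $\star_*$-half of Lemma~\ref{lem:local support}, or deduced from the $!$-case by Verdier duality. The step I expect to be the real obstacle is the middle one: pinning down the fibrewise reduction of $\widetilde G_r$ to $\widetilde{\mfg}$ at the level of Fourier transforms, and then executing the genericity vanishing that collapses the nilpotent directions so that the support is exactly a coadjoint orbit and not something larger. This is the reductive prototype of the Mackey-formula argument (Proposition~\ref{prop:mackey}), and it is where both genericity conditions $\frak{ge1}$ and $\frak{ge2}$ are indispensable. (Equivalently, one can package the same computation as an isomorphism $\fpInd_{P_r!}^{G_r}(\cL_\psi)\cong\cF_\psi$ up to shift and twist, and then deduce the proposition from a projection formula $\pInd_{P_r!}^{G_r}(\cL_{\psi,r}\star_!M)\cong\cF_{\psi,r}\star_!\pInd_{P_r!}^{G_r}(M)$ together with the idempotency of $\cL_{\psi,r}$.)
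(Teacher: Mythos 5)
Your overall skeleton does match the paper's: both arguments reduce, via Proposition~\ref{prop:free idempotent}, Lemma~\ref{lem:local support} and the non-equivariant formula of Lemma~\ref{lem:pInd}, to showing that the Fourier--Deligne transform of $\pi_!f^*\For(M)$ along $G_r\to G_r/\mfg$ is supported over $-\bbG(X_\psi)$, and both make $\mathfrak{ge1}$ do the decisive work by forcing the unipotent directions to contribute only when the dual variable lies on the single closed coadjoint orbit. Where you diverge is the middle step, and as you formulate it it does not quite go through. The paper never identifies the fibrewise Fourier transform of the induction with a Lie-algebra parabolic induction of the fibrewise Fourier transform of $M$; it base-changes to the fiber over a fixed pair $(\bar g,hP_r)$ and a fixed dual point $X$, observes that this fiber is an affine space $\{Y\in\mfg:\ad(h)Y\in\mfp\}\cong\mfl\oplus\mfu$ on which, by the $(\mfl,\cL_\psi)$-equivariance of $M$ (Lemma~\ref{lem:L psi}), the integrand restricts to the rank-one local system $\mathfrak f^*\cL_\psi\otimes X^*\cL$, and concludes that nonvanishing of the stalk forces $X\circ\ad(h^{-1})$ to kill $\mfu$ and to equal $-X_\psi$ on $\mfl$, whence $X\in-\bbG(X_\psi)$ by $\mathfrak{ge1}$. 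Your proposed intertwining statement is both stronger than needed and imprecise: the two ``fibrewise'' sides live over different bases ($G_{r-1}$ versus $L_{r-1}$), and the level-$r$-to-level-$0$ reduction of Remark~\ref{rem:fpInd} uses that the Lie-algebra sheaf is pulled back along the congruence directions of $G_r/P_r$, which fails for $f^*M$ with $M$ nonconstant; in the paper those directions are not integrated out for free but are absorbed into the same fiberwise character-triviality vanishing. The $\FT$--$\fpInd$ compatibility you invoke is what the paper uses only later, in Proposition~\ref{prop:Ind L psi}, to compute $\pInd_{P_r!}^{G_r}(\cL_{\psi,r})\cong\cF_{\psi,r}$; it is not needed here, and routing the present proposition through it would require you to supply exactly the fiberwise analysis you defer.

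Your parenthetical repackaging is also not available at this stage: the formula $\pInd_{P_r!}^{G_r}(\cL_{\psi,r}\star_! M)\cong\cF_{\psi,r}\star_!\pInd_{P_r!}^{G_r}(M)$ is not proved in the paper and is not a formal projection formula --- already for $M=\delta_e$ it asserts that $\star_!$-convolution of $\cF_{\psi,r}$ with the level-$r$ Springer-type sheaf $\pInd_{P_r!}^{G_r}(\delta_e)$ returns $\cF_{\psi,r}$, which encodes the same genericity vanishing; and its natural relatives (Lemma~\ref{lem:CH HC} together with Proposition~\ref{prop:p psi support}, or the Mackey formula) are established after, and partly using, the present proposition, so deducing Proposition~\ref{prop:generic induction} from them risks circularity. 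In short: right criterion, right genericity mechanism, but the step you flag as ``the real obstacle'' is where the entire proof lives, and the way through it is the paper's direct stalk computation on the correspondence rather than a general intertwining of $\FT$ with parabolic induction.
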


\begin{proof}
  By Proposition \ref{prop:free idempotent} in this setting, we wish to prove that for any $M \in D_{L_r}^\psi(L_r)$, the parabolic induction $\pInd_{P_r!}^{G_r}(M)$ satisfies the following property:
  \begin{quote}
    Let $U$ be any open of $G_r/\mfg$ over which the vector bundle $G_r \xrightarrow{\pi_G} G_r/\mfg$ trivializes, let $G_r' \xrightarrow{\pi_G'} G_r/\mfg$ denote the dual vector bundle to $\pi_G$, and $\varphi_U' \from \mfg^* \times U \xrightarrow{\cong} \pi_G^{\prime-1}(U)$ be a trivialization over $U$. Then $\FT(\pInd_{P_r!}^{G_r}(M))|_{\pi_G^{\prime-1}(U)}$ is supported on $\varphi_U'(-\bbG(X_\psi + \mfu) \times U)$.
  \end{quote}
  To this end, let $U$ be as in the above quote. By definition, our goal is to show that for the maps in the picture
  \begin{equation*}
    \begin{tikzcd}
      & \widehat G_r \ar{dl}[above left]{f} \ar{dr}{\alpha} &&&& \bbG_a \\
      L_r && \widetilde G_r  \ar{dr}{\pi} && G_r \times G_r' \ar{dl}[above left]{\pr} \ar{ur}{h} \ar{dr}{\pr'} \\
      &&& G_r && G_r' & \pi_G^{\prime-1}(U) \ar{l}{i_U'}
    \end{tikzcd}
  \end{equation*}
  we have that 
  \begin{equation}\label{eq:support claim}
    \text{$i_U'{}^* \pr_!' (\pr^* \pi_! \widetilde{f^* M} \otimes h^* \cL)$ is supported on $\varphi_U'(-\bbG(X_\psi + \mfu) \times U)$,}  
  \end{equation}
  where, as usual, $\widetilde{f^* M}$ denotes the unique object on $\widetilde G_r$ such that $\alpha^* \widetilde{f^* M} = f^* M$. Applying base change to the diagram
  \begin{equation*}
    \begin{tikzcd}
      \widetilde G_r \times G_r' \ar{r}{\pi_{12}} \ar{d}[left]{\pi_{13}} & \widetilde G_r \ar{d}{\pi} \\
      G_r \times G_r' \ar{r}{\pr} & G_r
    \end{tikzcd}
  \end{equation*}
  we obtain that
  \begin{align*}
    i_U'{}^* \pr_!' (\pr^* \pi_! \widetilde{f^* M} \otimes h^* \cL)
    &\cong i_U'{}^* \pr_!' (\pi_{13!} \pi_{12}^* \widetilde{f^* M} \otimes h^* \cL) \\
    &\cong i_U'{}^* \pr_!' \pi_{13!} (\pi_{12}^* \widetilde{f^* M} \otimes \pi_{13}^* h^* \cL)
  \end{align*}
  where the second isomorphism holds by the projection formula. Base-changeing once again, we get that the above expression is isomorphic to $\pr_!' \pi_{13!}(\pi_{12}^* \widetilde{f^* M} \otimes \pi_{13}^* h^* \cL)$, where the maps $\pi_{12}, \pi_{13}$ now denote the restrictions to the subvariety $\{(g,hP_r,g') \in \widetilde G_r : g' \in \varphi_U'(\mfg^* \times U)\} \subset \widetilde G_r \times G_r'$.

  Fix a $g' = \varphi_U'(X,\bar g') \in \varphi_U'(\mfg^* \times U)$ and consider the map $\pi_r$ induced by  projection in the first coordinate via $G_r \to G_r/\mfg$:
  \begin{equation*}
    \pi_r \from (\pr' \circ \pi_{13})^{-1}(g') \to \{(\bar g,hP_r) \in G_r/\mfg \times G_r/P_r\}.
  \end{equation*}
  Given $(g,hP_r,g') \in \pi_r^{-1}(\bar g,hP_r)$, we obtain an isomorphism
  \begin{equation*}
    \pi_r^{-1}(\bar g,hP_r) \cong \{Y \in \mfg : \ad(h)(Y) \in \mfp\}
  \end{equation*}
  given by $(\mu(Y,g),hP_r,g') \mapsfrom Y$. Using this isomorphism together with the fact that $M$ is $(\mfl,\cL_\psi)$-equivariant (Lemma \ref{lem:L psi}), we see that the stalk of $\pi_{r!}(\pi_{12}^* \widetilde{f^* M} \otimes \pi_{13}^* h^* \cL)$ over $(\bar g,hP_r, g')$ is zero if and only if the stalk of $\pi_{2!}(\pi_{12} \widetilde{f^* \cL_\psi} \otimes \pi_{13}^* h^* \cL)$ over the chosen $X \in \mfg^*$ is zero. Here, $\pi_2 \from \mfg \times \mfg^* \to \mfg^*$ denotes the second projection. Again by base-change, we have
  \begin{equation*}
    \pi_{2!}(\pi_{12} \widetilde{f^* \cL_\psi} \otimes \pi_{13}^* h^* \cL) \cong \FT(\fpInd_{P_r!}^{G_r}(\cL_\psi)).
  \end{equation*}
  But now by Lemma \ref{lem:FT pInd} (especially \eqref{eq:FT pInd}), this implies that the stalk of $\pi_{r!}(\pi_{12}^* \widetilde{f^* M} \otimes \pi_{13}^* h^* \cL)$ over $(\bar g,hP_r, g')$ vanishes if $X \notin \bbG(-X_\psi + \mfu)$, which establishes \eqref{eq:support claim}. 
\end{proof}

\section{Generic parabolic induction}\label{sec:Ind t-exact equivalence}

In this section we prove the main theorem of this paper: $(L,G)$-generic parabolic induction $\pInd_{P_r!}^{G_r} \from D_{L_r}^\psi(L_r) \to D_{G_r}^\psi(G_r)$ is a $t$-exact equivalence of categories (Theorem \ref{thm:equivalence}). We record some general lemmas in Section \ref{subsec:soft lemmas}. In our $r>0$ setting, a common general strategy to establishing a desired statement is to find some locus on which the assertion is true and prove a vanishing statement outside of this locus. In Section \ref{subsec:hard lemmas} we collect and prove several lemmas we will later use to prove vanishing assertions used to establish the Mackey formula (Proposition \ref{prop:mackey}) in Section \ref{subsec:mackey} and study the Harish-Chandra transform on $D_{G_r}^\psi(G_r)$ (Proposition \ref{prop:p psi support}) in Section \ref{subsec:harishchandra}. 
These are the key propositions to the proof of one of the main theorems of this paper: Theorem \ref{thm:equivalence}.

\subsection{Basic properties}\label{subsec:soft lemmas}

Let $\phi \from G_r \to G_r/U_{P,r}$. We record several general results we will later need. Each of these lemmas is well known in the $r = 0$ case  (for example, see \cite[Theorem 3.6(a)]{MV88} for Lemma \ref{lem:Av phi}, \cite[Lemma 4.3]{Che23} for Lemmas \ref{lem:monoidal p} and \ref{lem:CH HC}) and the proof in our $r>0$ setting is the same. 

\begin{lemma}\label{lem:monoidal p}
  The functor $D_{G_r}(G_r) \to D_{P_r}(G_r/U_{P,r}) \cong D_{L_r}(U_{P,r} \backslash G_r / U_{P,r})$ induced by $\phi_!$ (resp.\ $\phi_*$) is monoidal with respect to $\star_!$ (resp.\ $\star_*$). 
\end{lemma}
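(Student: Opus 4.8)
The claim is that $\phi_!$ (resp. $\phi_*$) induces a monoidal functor $D_{G_r}(G_r) \to D_{L_r}(U_{P,r}\backslash G_r/U_{P,r})$ where the source has $\star_!$ (resp. $\star_*$) — this is the classical statement that "Harish-Chandra transform" / "horocycle-type" pushforward along the quotient by a unipotent subgroup is compatible with convolution. The plan is to write out the convolution diagrams on both sides and construct a chain of base-change and projection-formula isomorphisms relating $\phi_!(M \star_! N)$ to $(\phi_! M) \star_! (\phi_! N)$, and observe (as the excerpt says) that the $r=0$ proof — e.g. the one underlying \cite[Theorem 3.6(a)]{MV88} — goes through verbatim because all the relevant maps are honest morphisms of (perfect) schemes and the unipotent radical $U_{P,r}$ is normal in $P_r$ with $G_r/U_{P,r}$ carrying the residual $L_r$-action making the identification $D_{P_r}(G_r/U_{P,r}) \cong D_{L_r}(U_{P,r}\backslash G_r/U_{P,r})$ tautological.

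\textbf{Key steps.} First I would fix the multiplication map $m\colon G_r \times G_r \to G_r$, form the quotient map $\phi\colon G_r \to \bar G_r \colonequals G_r/U_{P,r}$, and note that convolution on $G_r$ (for the conjugation or left-multiplication action, whichever is in play) and convolution on $\bar G_r$ both fit into standard correspondence diagrams. Second, I would use that $\phi$ is a $U_{P,r}$-torsor (in the perfect setting), so that the square relating $m$ on $G_r$, $m$ on $\bar G_r$, and the two copies of $\phi$ is "as Cartesian as it needs to be" up to a $U_{P,r} \times U_{P,r}$-bundle — precisely, $\bar G_r \times \bar G_r$ receives a map from $G_r \times G_r$ whose fibers are affine spaces, and the failure of the naive square to commute on the nose is absorbed because $U_{P,r}$ is a normal subgroup and left/right translations by it are accounted for. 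Third, chase base change for proper-ish (resp. arbitrary, using $*$) pushforward around the resulting diagram and apply the projection formula to the external tensor product $(p_1^*M)\otimes(p_2^*N)$; the $!$-version uses proper base change freely, the $*$-version uses smooth base change along the smooth (affine-space-fibered) maps, which is why one gets $\star_!$ for $\phi_!$ and $\star_*$ for $\phi_*$. Finally, I would record that the isomorphism is natural and respects the $L_r$-equivariant structure, giving the monoidal functor between the equivariant derived categories.

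\textbf{Main obstacle.} The only real subtlety — and the reason this is stated as a lemma rather than left to the reader — is bookkeeping the equivariance: one must check that the quotient $G_r/U_{P,r}$ inherits a genuine $L_r$-action (not merely a $P_r$-action) compatible with the convolution structure, i.e. that the monoidal unit and associativity constraints transport correctly, and that the identification $D_{P_r}(G_r/U_{P,r}) \cong D_{L_r}(U_{P,r}\backslash G_r/U_{P,r})$ is monoidal. Since $P_r = L_r \ltimes U_{P,r}$ with $U_{P,r}$ acting trivially on the relevant double-coset space, this identification is via restriction/inflation along $P_r \twoheadrightarrow L_r$ and $P_r \hookleftarrow L_r$, which is an equivalence (as already used in Lemma \ref{lem:adjointness}); monoidality of this equivalence is formal. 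I expect the diagram chase itself to be entirely routine — each arrow is either a proper map (allowing proper base change), an affine-space bundle (allowing the projection formula to collapse a constant shift, or smooth base change for the $*$-version), or an isomorphism — so I would not grind through it, instead citing that the argument of \cite[Theorem 3.6(a)]{MV88} applies mutatis mutandis, with the perfect-scheme framework of \cite[Appendix A]{Z17} ensuring all the sheaf-theoretic operations behave as in the finite-type case.
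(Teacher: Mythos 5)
Your proposal is correct and takes essentially the same route as the paper: the paper gives no separate argument for Lemma \ref{lem:monoidal p}, stating only that it (like the neighboring lemmas in that subsection) is well known for $r=0$ and that the proof carries over unchanged to $r>0$, which is exactly the base-change/projection-formula diagram chase plus torsor-descent bookkeeping you outline. Your additional remarks on the $!$ versus $*$ variants and on transporting the $L_r$-equivariant structure through $P_r = L_r \ltimes U_{P,r}$ are consistent with how the paper uses the lemma.
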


\begin{lemma}\label{lem:Av phi}
  For any $N \in D_{G_r}(G_r)$, we have
  \begin{equation*}
    \Av_{P_r!}^{G_r}(\phi^* \phi_! N) \cong N \star_! \Av_{P_r!}^{G_r}(\delta_{U_{P,r}}).
  \end{equation*}
\end{lemma}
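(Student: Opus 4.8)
The plan is to recognize $\phi^*\phi_!N$ as a convolution and then deduce the claim from a general interchange property of the averaging functor; the whole argument is purely formal, exactly as in the $r=0$ case (cf.\ \cite{MV88}).

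\emph{Step 1: $\phi^*\phi_!N\cong N\star_!\delta_{U_{P,r}}$.} Since $\phi\from G_r\to G_r/U_{P,r}$ is the quotient by a (one-sided) translation action of $U_{P,r}$, the fibre product $G_r\times_{G_r/U_{P,r}}G_r$ is identified with $G_r\times U_{P,r}$ by $(g,u)\mapsto(g,gu)$. Base change along the resulting Cartesian square expresses $\phi^*\phi_!N$ as a pushforward of a pullback of $N$ along the two maps $G_r\times U_{P,r}\to G_r$ given by $(g,u)\mapsto g$ and $(g,u)\mapsto gu$. On the other hand, unwinding the definition of $\star_!$ (group convolution, with $\mu\from G_r\times G_r\to G_r$ the multiplication map) and using $p_2^*\delta_{U_{P,r}}=\delta_{G_r\times U_{P,r}}$ produces the same expression for $N\star_!\delta_{U_{P,r}}$ after transporting along the involution $(g,u)\mapsto(gu,u^{-1})$ of $G_r\times U_{P,r}$. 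The resulting isomorphism is natural and compatible with the $P_r$-conjugation-equivariant structures on both sides.

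\emph{Step 2: interchange with $\Av$.} By Step 1 it suffices to prove, for all $N\in D_{G_r}(G_r)$ and $M\in D_{P_r}(G_r)$,
\[
  \Av_{P_r!}^{G_r}(N\star_! M)\;\cong\;N\star_!\Av_{P_r!}^{G_r}(M),
\]
and then specialize to $M=\delta_{U_{P,r}}$ (which lies in $D_{P_r}(G_r)$ but not in $D_{G_r}(G_r)$). Write $N\star_! M=\mu_!(p_1^*N\otimes p_2^*M)$ and equip $G_r\times G_r$ with the simultaneous conjugation action, so that $\mu,p_1,p_2$ are all $G_r$-equivariant. Realizing $\Av_{P_r!}^{G_r}$ as $\sigma_!$ precomposed with the induction equivalence $D_{P_r}(X)\cong D_{G_r}(G_r\times^{P_r}X)$ as in \S\ref{subsec:Av} (with $\sigma$ the action map), and using only the ordinary projection formula and base change for $\sigma$ together with the fact that this equivalence commutes with $(-)_!$, $(-)^*$ and $\otimes$, one obtains three formal properties: (i) $\Av_{P_r!}^{G_r}$ commutes with $!$-pushforward along a $G_r$-equivariant morphism, so $\Av_{P_r!}^{G_r}\circ\mu_!\cong\mu_!\circ\Av_{P_r!}^{G_r}$; (ii) it commutes with $*$-pullback along a $G_r$-equivariant morphism, so $\Av_{P_r!}^{G_r}(p_2^*M)\cong p_2^*\Av_{P_r!}^{G_r}(M)$; and (iii) the projection formula $\Av_{P_r!}^{G_r}(\For_{P_r}^{G_r}(\mathcal N)\otimes\mathcal M)\cong\mathcal N\otimes\Av_{P_r!}^{G_r}(\mathcal M)$ for $\mathcal N\in D_{G_r}$. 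Since $p_1^*N$ is genuinely $G_r$-equivariant, applying (i), then (iii), then (ii) gives $\Av_{P_r!}^{G_r}(N\star_! M)\cong\mu_!\bigl(p_1^*N\otimes p_2^*\Av_{P_r!}^{G_r}(M)\bigr)=N\star_!\Av_{P_r!}^{G_r}(M)$.

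There is essentially no obstacle here: there is no geometric input, and the only thing one must track carefully is the equivariance bookkeeping in Step 2 — which action is used on each product, and the fact that $\delta_{U_{P,r}}$ is merely $P_r$-equivariant, which is exactly why the averaging functor mediates between the two sides of the identity.
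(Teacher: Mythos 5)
Your proposal is correct and is essentially the argument the paper has in mind: the paper gives no proof of this lemma, citing \cite[Theorem 3.6(a)]{MV88} and asserting the $r=0$ proof carries over verbatim, and your two steps (base change plus the involution $(g,u)\mapsto(gu,u^{-1})$ to identify $\phi^*\phi_!N\cong N\star_!\delta_{U_{P,r}}$, then the standard compatibilities of $\Av_{P_r!}^{G_r}$ with equivariant $!$-pushforward, $*$-pullback, and the projection formula to pull the genuinely $G_r$-equivariant factor $N$ out of the averaging) are exactly that formal argument. The equivariance bookkeeping you flag is handled correctly, so there is nothing to add.
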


\begin{lemma}\label{lem:CH HC}
  For any $M \in D_{L_r}(L_r)$ and  $N \in D_{G_r}(G_r)$ such that $M \star_! \phi_! N$ is supported on $P_r/U_{P,r}$, we have
  \begin{equation*}
    \pInd_{P_r!}^{G_r}(M \star_! \phi_! N) \cong \pInd_{P_r!}^{G_r}(M) \star_! N.
  \end{equation*}
\end{lemma}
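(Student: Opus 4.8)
The plan is to recognize this as the character-sheaf incarnation of the projection formula (``Frobenius reciprocity'' for parabolic induction): with $\phi_!N$ regarded as an object of $D_{L_r}(L_r)$ via restriction along $L_r\hookrightarrow G_r/U_{P,r}$, the left-hand side is $\pInd_{P_r!}^{G_r}$ applied to the convolution of $M$ with the Harish--Chandra-type restriction of $N$, and the claim is the usual identity $\pInd(M\star_! \pRes N)\cong\pInd(M)\star_! N$. I would prove it by a purely formal diagram chase: express both sides as iterated pushforwards along the correspondences that define $\pInd_{P_r!}^{G_r}$, the convolution $\star_!$, and the quotient $\phi\from G_r\to G_r/U_{P,r}$, then match them via proper base change along Cartesian squares together with the projection formula. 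As the authors indicate, the argument should follow the classical $r=0$ case verbatim, so the positive-depth structure will play no essential role.

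Concretely, I would first replace $\pInd_{P_r!}^{G_r}$ by its non-equivariant model $\pi_!\,f^*[2n]$ from Lemma~\ref{lem:pInd}, with $\widetilde G_r$, $f$, $\pi$ as in Definition~\ref{def:f and pi}, and expand the outer convolution $\pInd_{P_r!}^{G_r}(M)\star_! N$ using the defining diagram of $\star_!$ for the conjugation action of $G_r$ on itself. Dually, I would expand $M\star_!\phi_! N$ using the diagram of $\phi$ together with the defining diagram of $\star_!$ for the conjugation action of $L_r=P_r/U_{P,r}$ on $G_r/U_{P,r}$, and then apply $\pInd_{P_r!}^{G_r}=\Av_{P_r!}^{G_r}\circ i_!\circ\Infl_{L_r}^{P_r}\circ p^*$ in the unwound form of Section~\ref{subsec:Av}. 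Taking the fiber product of the correspondence computing $\pInd_{P_r!}^{G_r}$ with that computing the outer convolution produces a single space mapping to $G_r$ through which both composites factor; the two inputs needed to see that the reorganized $U_{P,r}$-quotient factors cancel are precisely the monoidality of $\phi_!$ (Lemma~\ref{lem:monoidal p}) and the identity $\Av_{P_r!}^{G_r}(\phi^*\phi_! N)\cong N\star_!\Av_{P_r!}^{G_r}(\delta_{U_{P,r}})$ (Lemma~\ref{lem:Av phi}). One then concludes by invoking proper base change for the Cartesian squares coming from $\ker(p)=U_{P,r}$ and from the construction of $G_r\times^{P_r}G_r$, together with the projection formula.

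I expect the only real difficulty to be bookkeeping: keeping straight the several ambient spaces ($G_r/U_{P,r}$, $G_r\times^{P_r}G_r$, $\widetilde G_r$), their conjugation actions, and the maps among them, and verifying that each square invoked is genuinely Cartesian. Since every step uses only formal machinery---proper and smooth base change, the projection formula, and the inflation equivalence $\Infl_{L_r}^{P_r}$ arising from $P_r=L_r\ltimes U_{P,r}$---there is no new geometric obstruction in the $r>0$ setting, and the proof should go through unchanged from the reductive case (cf.\ \cite{MV88}).
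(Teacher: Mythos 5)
The paper does not actually prove Lemma~\ref{lem:CH HC}: all three lemmas of Section~\ref{subsec:soft lemmas} are recorded as standard facts whose $r=0$ proofs carry over, so there is no argument here to compare yours against. Your global strategy---unwind $\pInd_{P_r!}^{G_r}$ via Lemma~\ref{lem:pInd} and Section~\ref{subsec:Av}, then match the two sides by base change and the projection formula---is the right one, but you misidentify the convolution $M\star_!\phi_!N$ on the left, and both descriptions you offer are incorrect. In your first paragraph you regard $\phi_!N$ as an object of $D_{L_r}(L_r)$ by \emph{restriction} along $i_L\colon L_r\hookrightarrow G_r/U_{P,r}$, i.e.\ you read the lemma as $\pInd_{P_r!}^{G_r}\bigl(M\star_!\pRes_{P_r!}^{G_r}(N)\bigr)\cong\pInd_{P_r!}^{G_r}(M)\star_! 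N$. That is false: put $M=\delta_e$. Then the right side is $\Av_{P_r!}^{G_r}(\delta_{U_{P,r}})\star_!N\cong\Av_{P_r!}^{G_r}(\phi^*\phi_!N)$ by Lemma~\ref{lem:Av phi}, while the left side equals $\Av_{P_r!}^{G_r}\bigl(\phi^*(i_L)_!\,i_L^*\phi_!N\bigr)$; these agree only when $\phi_!N$ happens to be supported on $L_r\subset G_r/U_{P,r}$, which is not the case for general $N$.

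In your second paragraph you instead take $\star_!$ for the \emph{conjugation} action of $L_r$ on $G_r/U_{P,r}$, which is also not the right structure: $\phi_!N$ is already $P_r$-conjugation equivariant, so the conjugation convolution degenerates to $R\Gamma_c(L_r,M)\otimes\phi_!N$, a constant tensor factor, which is not what the lemma asserts. The intended reading uses the \emph{left-multiplication} action of $L_r$ on $G_r/U_{P,r}$---equivalently, the Hecke-type convolution of the extension by zero $(i_L)_!M$ with $\phi_!N$ inside $D_{L_r}(U_{P,r}\backslash G_r/U_{P,r})$---with $\pInd_{P_r!}^{G_r}$ tacitly extended to $D_{P_r}(G_r/U_{P,r})$ via $\Av_{P_r!}^{G_r}\circ\phi^*$; this extension is forced since $M\star_!\phi_!N$ need not be supported on $L_r$. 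With that reading, Lemma~\ref{lem:Av phi} is precisely the $M=\delta_e$ special case of Lemma~\ref{lem:CH HC} rather than an independent input, and the general case does follow from the base-change and projection-formula chase you outline, the essential extra point being that the $G_r$-conjugation equivariance of $N$ lets one trade $U_{P,r}$-translates on the left of a product for $U_{P,r}$-translates on the right. Finally, note that the sole application, in the proof of Theorem~\ref{thm:t exact}, invokes the lemma only when $\cL_{\psi,r}\star_!\phi_!M$ \emph{is} supported on $L_r$ (Proposition~\ref{prop:p psi support}), which is why neither of your misreadings would have surfaced downstream.
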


\subsection{Vanishing lemmas}\label{subsec:hard lemmas}

\begin{lemma}\label{lem:equiv vanishing}
  Let $X$ be a space equipped with a free action of an algebraic group $H$. If $\cL_H$ is a nontrivial multiplicative local system on $H$, then for any $M \in D(X)$ which is $(H,\cL_H)$-equivariant, then $R\Gamma_c(X,M) = 0$ and $R\Gamma(X,M) = 0$.
\end{lemma}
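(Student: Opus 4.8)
The plan is to exploit the free action of $H$ to pass to the quotient $X/H$ and use the fact that an $(H,\cL_H)$-equivariant sheaf, under a free action, descends to a twisted sheaf whose cohomology is controlled by the cohomology of $\cL_H$ itself. More precisely, since the action is free, $q\from X \to X/H$ is an $H$-torsor; after pulling back along $q$ we have $q^*q_! M$ (resp.\ $q^*q_* M$) on $X \cong H \times_{X/H} X$, or more usefully we can trivialize locally: over an open $V \subseteq X/H$ with $q^{-1}(V)\cong H\times V$, the equivariance isomorphism $\mu^*M \cong \cL_H \boxtimes M$ identifies $M|_{q^{-1}(V)}$ with $\cL_H \boxtimes M_0$ for some $M_0 \in D(V)$, where $\mu\from H\times X\to X$ is the action map. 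The Künneth/projection-formula computation then shows that the contribution to $R\Gamma_c$ of such a chart involves $R\Gamma_c(H,\cL_H)$ as a tensor factor.

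Concretely, I would first reduce to the case $X = H$ with $M = \cL_H$: consider the action map $\mu\from H\times X \to X$ and the second projection $p_2\from H\times X\to X$. These are both smooth with fibers $H$, and $(H,\cL_H)$-equivariance gives $\mu^* M \cong p_1^*\cL_H \otimes p_2^* M$. Apply $R\Gamma_c(H\times X, -)$ to both sides. By Künneth, $R\Gamma_c(H\times X, p_1^*\cL_H\otimes p_2^*M) \cong R\Gamma_c(H,\cL_H)\otimes R\Gamma_c(X,M)$. On the other hand, because $\mu$ admits a section (e.g.\ $x\mapsto (e,x)$) and more importantly $\mu$ is a trivial $H$-bundle when paired with $p_2$ (the map $(h,x)\mapsto (h, \mu(h,x))$ is an automorphism of $H\times X$ intertwining $p_2$ with $\mu$), we get $R\Gamma_c(H\times X,\mu^*M)\cong R\Gamma_c(H\times X, p_2^* M)\cong R\Gamma_c(H,\overline{\QQ}_\ell)\otimes R\Gamma_c(X,M)$. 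Comparing, $R\Gamma_c(H,\cL_H)\otimes R\Gamma_c(X,M)\cong R\Gamma_c(H,\overline\QQ_\ell)\otimes R\Gamma_c(X,M)$. Now the key input is that $R\Gamma_c(H,\cL_H) = 0$ for a nontrivial multiplicative (hence, in particular, character) local system on the algebraic group $H$: this is a standard fact, following since $\cL_H$ is nontrivial on some $\Gm$ or $\Gm$-quotient inside $H$ and $R\Gamma_c(\Gm,\cL_\chi)=0$ for nontrivial $\cL_\chi$, bootstrapped along the group structure. Wait—that gives an isomorphism of the form $0 \cong R\Gamma_c(H,\overline\QQ_\ell)\otimes R\Gamma_c(X,M)$, and since $R\Gamma_c(H,\overline\QQ_\ell)\ne 0$ (it contains $\cH^{2\dim H}_c \ne 0$) and we are over a field, a Künneth argument forces $R\Gamma_c(X,M) = 0$.

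For $R\Gamma(X,M)$ the cleanest route is Verdier duality: $\mathbb{D}M$ is $(H,\cL_H^{-1})$-equivariant (the dual of a nontrivial multiplicative local system is again nontrivial multiplicative), and the same argument gives $R\Gamma_c(X,\mathbb{D}M) = 0$; since $R\Gamma(X,M) \cong \mathbb{D}(R\Gamma_c(X,\mathbb{D}M))$ as objects of $D(\mathrm{pt})$, we conclude $R\Gamma(X,M) = 0$ as well. The main obstacle I anticipate is making the reduction to $X=H$ fully rigorous at the level of the equivariant derived category (as opposed to the naive equivariance isomorphism on the nose)—one needs the cocycle/compatibility conditions in the definition of $(H,\cL_H)$-equivariance to ensure the isomorphism $R\Gamma_c(H,\cL_H)\otimes R\Gamma_c(X,M)\cong R\Gamma_c(H,\overline\QQ_\ell)\otimes R\Gamma_c(X,M)$ is genuinely induced by the equivariance structure and not merely an abstract coincidence of objects—but since $H$ acts freely this is exactly the content of descent along the torsor $X\to X/H$, and the argument is robust. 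Everything else is Künneth, the projection formula, and the vanishing $R\Gamma_c(\Gm,\cL_\chi) = 0$ for nontrivial $\cL_\chi$.
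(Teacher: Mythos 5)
Your proof is correct but takes a genuinely different route from the paper's. The paper uses freeness directly: it passes to the quotient $q\from X\to X/H$ and argues fiberwise, noting that for each $y\in X/H$ the restriction $M|_{q^{-1}(y)}$ is identified (via a choice of base point in the $H$-torsor fiber) with $\cL_H$ tensored by a vector space, so $R\Gamma_c(q^{-1}(y),M)=0$, hence $q_!M=0$ and $R\Gamma_c(X,M)=0$. Your argument instead stays on $H\times X$: you apply $R\Gamma_c$ to both sides of the single equivariance isomorphism $\mu^*M\cong p_1^*\cL_H\otimes p_2^*M$, use K\"unneth on the right and the shear automorphism $(h,x)\mapsto(h,\mu(h,x))$ together with K\"unneth on the left, and compare to get $R\Gamma_c(H,\cL_H)\otimes R\Gamma_c(X,M)\cong R\Gamma_c(H,\overline\QQ_\ell)\otimes R\Gamma_c(X,M)$; since the first factor on the left vanishes while the first factor on the right does not, the conclusion follows over a field. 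A few remarks. First, your anticipated obstacle is not one: the argument only uses the \emph{existence} of an isomorphism $\mu^*M\cong p_1^*\cL_H\otimes p_2^*M$, not the cocycle condition, and there is no descent to $X/H$ anywhere in your ``Concretely'' paragraph, so nothing needs to be trivialized locally. Second, as a consequence your argument in fact never uses the freeness hypothesis at all, which gives a slightly more general statement than the paper's (the paper's quotient-and-fiber argument genuinely relies on freeness to identify fibers with $H$). Third, the paper's proof only addresses $R\Gamma_c$ explicitly and is silent on $R\Gamma$; your Verdier duality argument (with the observation that $\mathbb D M$ is $(H,\cL_H^{-1})$-equivariant, the dual being again nontrivial multiplicative) cleanly handles it, and is preferable to trying to run a K\"unneth argument for $*$-pushforward along the non-proper map $p_2$. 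The only small imprecision is the aside that $\cL_H$ ``is nontrivial on some $\Gm$'': in the paper's applications $H$ is typically a vector group, so the relevant sub/quotient is $\bbG_a$ and the vanishing is that of an Artin--Schreier sheaf, but the standard fact $R\Gamma_c(H,\cL_H)=0$ for a nontrivial multiplicative local system on a connected commutative algebraic group covers all cases.
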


\begin{proof}
  Consider the quotient map $q \from X \to X/H$. To prove that $R\Gamma_c(X,M) = 0$, it suffices to prove that over any $y \in X/H$, we have $R\Gamma_c(q^{-1}(y),M) = 0$. We have have $q^{-1}(y) \cong \{y\} \times H$. Under this isomorphism, $M|_{q^{-1}(y)} \cong i_y^* M \otimes \cL$. Thus $i_y^* q_! M \cong q_!(M|_{q^{-1}(y)}) \cong R\Gamma_c(q^{-1}(y),\cL_H).$ But since $\cL_H$ is a nontrivial multiplicative local system on $H$ by construction, we have $R\Gamma_c(q^{-1}(y),\cL_H) = 0$.
\end{proof}

\begin{lemma}\label{lem:F psi image}
  Let $\phi \from \mfg \to \mfg/\mfu$. The pushforward $\phi_! \cF_\psi$ is supported on $\mfp/\mfu$.
\end{lemma}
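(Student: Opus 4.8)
The plan is to pass, via the Fourier--Deligne transform, to a geometric statement about the closed coadjoint orbit $\bbG(X_\psi)\subset\mfg^*$, and then invoke Lemma \ref{lem:equiv vanishing}.

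First I would compute $\phi_!\cF_\psi$ stalkwise. Since $\cF_\psi=\FT(\delta_{\bbG(X_\psi)})$, for $Y\in\mfg$ the stalk $(\cF_\psi)_Y$ is, up to shift, $R\Gamma_c\bigl(\bbG(X_\psi),\ \cL_Y\bigr)$, where $\cL_Y$ is the pullback of $\cL$ along $X\mapsto h(X,Y)$ for the canonical pairing $h\colon\mfg^*\times\mfg\to\bbG_a$. Given $\bar Y\in\mfg/\mfu$, choose a lift $Y_0\in\mfg$; the fibre $\phi^{-1}(\bar Y)$ is the affine space $Y_0+\mfu$, so proper base change gives $(\phi_!\cF_\psi)_{\bar Y}\cong R\Gamma_c\bigl(\bbG(X_\psi)\times(Y_0+\mfu),\ h^*\cL\bigr)$ up to shift. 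Integrating first over the $\mfu$-fibre: for fixed $X$ the function $v\mapsto h(X,Y_0+v)=h(X,Y_0)+h(X,v)$ on $\mfu$ is linear, so $R\Gamma_c$ of its pullback of $\cL$ vanishes unless it is constant, i.e.\ unless $X\in\mfu^\perp$, in which case it contributes $\cL_{Y_0}|_X$ (a shift and Tate twist). Hence
\[
  (\phi_!\cF_\psi)_{\bar Y}\ \cong\ R\Gamma_c\bigl(\bbG(X_\psi)\cap\mfu^\perp,\ \cM_{Y_0}\bigr)[\,*\,](*),
\]
where $\cM_{Y_0}$ is the restriction to $\bbG(X_\psi)\cap\mfu^\perp$ of the rank-one multiplicative local system $X\mapsto\cL_{Y_0}|_X$ on $\mfu^\perp$ (which on $\mfu^\perp$ does not depend on the choice of lift). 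Equivalently, one can package this reduction as $\phi_!\cF_\psi\cong\FT_{\mfu^\perp}\bigl(\delta_{\bbG(X_\psi)\cap\mfu^\perp}\bigr)[\,*\,]$, using the compatibility of the Fourier--Deligne transform with pushforward along the vector-space quotient $\phi$ (the restriction to $\mfu^\perp\subset\mfg^*$ being dual to $\phi$), a variant of Lemma \ref{lem:convolution}. In any case it suffices to prove the displayed cohomology vanishes whenever $\bar Y\notin\mfp/\mfu$.

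The crux — and I expect the main obstacle — is the following description of $\bbG(X_\psi)\cap\mfu^\perp$: \emph{it is a union of cosets of the subspace $\mfp^\perp\subset\mfu^\perp$} (the annihilator of $\mfp$ in $\mfg^*$). Here $\mfp^\perp\subset\mfu^\perp$ because $\mfu\subset\mfp$. For the coset-invariance I would argue that the coadjoint action of $P_0$ preserves $\mfu^\perp$ (as $P_0$ preserves $\mfu$), that $U_{P,0}$ acts on $\mfu^\perp$ through translations in the $\mfp^\perp$-direction (for $u\in U_{P,0}$ and $X\in\mfu^\perp$ the functionals $uX$ and $X$ agree on $\mfp$, since $\mathrm{Ad}(u^{-1})$ fixes $\mfp$ modulo $\mfu$), and that condition $\mathfrak{ge1}$ — with $\mathfrak{ge2}$, which gives $\Stab_{G_0}(X_\psi)=L_0$ — forces these $U_{P,0}$-orbits to sweep out \emph{full} $\mfp^\perp$-cosets; here one combines a dimension count with the fact that $U_{P,0}$-orbits on an affine space are closed. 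This step is exactly where genericity is used, and it is the heart of the lemma.

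Granting this, write $\bbG(X_\psi)\cap\mfu^\perp=\bigsqcup_\xi(\xi+\mfp^\perp)$ and restrict to one coset: for $X=\xi+\eta$ with $\eta\in\mfp^\perp$ one has $h(X,Y_0)=h(\xi,Y_0)+\eta(Y_0)$, so $\cM_{Y_0}|_{\xi+\mfp^\perp}$ is, for the translation action of $\mfp^\perp$, equivariant against the multiplicative local system $\cL_\chi$ on $\mfp^\perp$ corresponding to the additive character $\eta\mapsto\eta(Y_0)$. Now $\bar Y\notin\mfp/\mfu$ is equivalent to $Y_0\notin\mfp$, i.e.\ to the image of $Y_0$ in $\mfg/\mfp\cong(\mfp^\perp)^{*}$ being nonzero, i.e.\ to $\cL_\chi$ being non-trivial. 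Lemma \ref{lem:equiv vanishing} then gives $R\Gamma_c(\xi+\mfp^\perp,\cM_{Y_0})=0$, and summing over the finitely many cosets yields $(\phi_!\cF_\psi)_{\bar Y}=0$. Hence $\phi_!\cF_\psi$ is supported on $\mfp/\mfu$, as claimed.
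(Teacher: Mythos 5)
Your overall route coincides with the paper's: both reduce, via the compatibility of the Fourier--Deligne transform with $\phi_!$ (the paper quotes Laumon, you integrate over the $\mfu$-fibres directly, which is the same computation), to the assertion that $R\Gamma_c\bigl(\bbG(X_\psi)\cap\mfu^\perp,\ \cM_{Y_0}\bigr)=0$ for $Y_0\notin\mfp$, and both then rest the vanishing on the claim that $\bbG(X_\psi)\cap\mfu^\perp$ is a union of $\mfp^\perp$-cosets (in the paper's notation: $\bbG(X_\psi)\cap\mfp_-^*$ is stable under translation by $\mfu_-^*$), combined with the nontrivial-character vanishing of Lemma \ref{lem:equiv vanishing}. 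So you have isolated exactly the claim the paper's proof hinges on.

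The gap is in your justification of that claim. Your dimension count needs $\Stab_{G_0}(X)\cap U_{P,0}$ to be finite for \emph{every} $X\in\bbG(X_\psi)\cap\mfu^\perp$; knowing $\Stab_{G_0}(X_\psi)=L_0$ only tells you that $\Stab_{G_0}(X)$ is a conjugate $gL_0g^{-1}$, and such a conjugate can meet $U_{P,0}$ in a positive-dimensional subgroup, in which case the $U_{P,0}$-orbit of $X$ is a proper closed subvariety of $X+\mfp^\perp$ rather than the full coset. Concretely, take $G=\GL_3$, $L=\GL_2\times\GL_1$, identify $\mfg^*\cong\mfg$ by the trace form, so $X_\psi\leftrightarrow\diag(a,a,b)$ with $a\neq b$, $\mfu^\perp\leftrightarrow\mfp$ and $\mfp^\perp\leftrightarrow\mfu$. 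The point $X\leftrightarrow\diag(a,b,a)$ lies in $\bbG(X_\psi)\cap\mfu^\perp$, its stabilizer contains the root subgroup for $e_1-e_3$, which sits inside $U_{P,0}$, and $X+E_{13}$ is not semisimple, hence not in $\bbG(X_\psi)$: so once $L\supsetneq T$ the intersection is \emph{not} a union of $\mfp^\perp$-cosets, and no orbit/dimension argument can make it one. (Your sketch is fine when $L=T$, since then every point of the intersection is regular semisimple and its stabilizer, a maximal torus, meets $U_{P,0}$ trivially.) Worse, in this example the non-invariant stratum actually contributes: the sum of $\psi(\tr(W E_{32}))$ over $W\in\bbG(X_\psi)\cap\mfp$ over $\FF_q$ equals $q^2\neq 0$, even though $E_{32}\notin\mfp$, so the very vanishing you are trying to prove fails at that stalk; some additional input (a restriction on $L$, or passing first to the convolution with $\cL_{\psi,r}$ as in Proposition \ref{prop:p psi support}) is needed before the support statement can hold. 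To be fair, the paper's own proof asserts the same translation-invariance in one line, citing only $\mathfrak{ge1}$, so the difficulty you flagged as ``the heart of the lemma'' is genuine; but as written, your argument for that heart does not go through.
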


\begin{proof}
  Write $\mfp_-$ to denote the opposite parabolic subalgebra to $\mfp$. By \cite[Theorem 1.2.2.4]{Lau87}, we have that
  \begin{equation*}
    \FT(\phi_! \cF_\psi) \cong \phi'{}^* \FT(\cF_\psi)[-\dim \mfu],
  \end{equation*}
  where $\phi' \from \mfp_-^* \to \mfg^*$. Since $\FT(\cF_\psi) \cong \delta_{-\bbG(X_\psi)}(-\dim \mfg)$, we have 
  \begin{equation*}
    \phi_! \cF_\psi \cong \FT(\delta_{\bbG(X_\psi) \cap \mfp_-^*})[-\dim \mfu](-\dim \mfg +  \dim \mfp) = \FT(\delta_{\bbG(X_\psi) \cap \mfp_-^*})[-\dim \mfu](-\dim \mfu).
  \end{equation*} 
  To complete the proof, we need to show that $\FT(\delta_{\bbG(X_\psi) \cap \mfp_-^*})$ is supported on $\mfp/\mfu \cong \mfl \hookrightarrow \mfp_-$.
  By definition $\FT(\delta_{\bbG(X_\psi) \cap \mfp_-^*}) = \pr_!(\pr'{}^* \delta_{\bbG(X_\psi) \cap \mfp_-^*} \otimes h^* \cL)$. Since $X_\psi$ satisfies the genericity condition $\mathfrak{ge1}$, it follows that for $\bbG(X_\psi) \cap \mfp_-^*$ is closed under translation by $\mfu_-^*$. Since $h^* \cL|_{\{Z\} \times \mfu_-^*}$ is a nontrivial multiplicative local system on $\mfu_-^*$ for any nonzero $Z \in \mfu_-$, it then follows that $\Gamma_c(\{Y\} \times \mfp_-^*, \pr'{}^* \delta_{\bbG(X_\psi) \cap \mfp_-^*} \otimes h^* \cL) = 0$ for any $Y \in \mfp_- \smallsetminus \mfl$.
\end{proof}

\begin{lemma}\label{lem:1}
  Let $P' \subset G$ be another parabolic subgroup whose Levi component $L'$ contains $T$. If $h \notin P_r' N_{G_r}(T_r) P_r$, then for some root $\alpha \in \Phi(G,T) \smallsetminus (\Phi(L,T) \cup \Phi(L',T))$, the image $p(h^{-1}U_{P,r} h \cap L_{r:r+} U_{P,r})$ contains $\mft_\alpha$.
\end{lemma}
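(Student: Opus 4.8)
The plan is to reduce the statement to a computation inside the reductive quotient $G_0$, where it becomes a statement about the Bruhat decomposition, and then to use the hypothesis $h \notin P_r' N_{G_r}(T_r) P_r$ to locate an offending root. First I would observe that the $G_r$-action on $\mfg$ (and on quotients such as $L_{r:r+}U_{P,r}$ and $\mft_\alpha$) factors through $G_0$, so the image $p(h^{-1}U_{P,r}h \cap L_{r:r+}U_{P,r})$ depends only on the image $\bar h \in G_0$ of $h$; moreover the statement about which root spaces $\mft_\alpha$ are contained in this image is insensitive to the Moy--Prasad level and can be read off from the adjoint action of $\bar h$ on $\mfg = \bigoplus_\alpha \mft_\alpha \oplus \mft$. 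The hypothesis $h \notin P_r' N_{G_r}(T_r) P_r$ translates into $\bar h \notin P_0' N_{G_0}(T_0) P_0$, i.e.\ (passing to the reductive quotient) the double coset $P_0' \bar h P_0$ is not one of those represented by a Weyl group element normalizing $T_0$.

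Next I would use the Bruhat/Bruhat--Tits combinatorics: decompose with respect to $P_0$ on the right and $P_0'$ on the left. The key point is that if $\bar h$ were in $P_0' N_{G_0}(T_0)P_0$, then $h^{-1}U_{P,r}h \cap L_{r:r+}U_{P,r}$ would be ``small'' — its image under $p$ would be built only out of root spaces $\mft_\alpha$ with $\alpha$ in $\Phi(L,T)\cup\Phi(L',T)$ (or would be trivial), because conjugating $U_P$ by an element of $N_{G_0}(T_0)$ permutes root subgroups and the intersection with $L_{r:r+}U_{P,r}$ then only picks up the Levi directions. Conversely, if $\bar h$ is \emph{not} in this union of double cosets, I would extract an element $\exp(X_\alpha) \in U_{P,r}$ with $X_\alpha$ in a root space $\mft_\alpha$, $\alpha \notin \Phi(L,T)\cup\Phi(L',T)$, such that $\bar h^{-1}\exp(X_\alpha)\bar h$ lands in $L_{r:r+}U_{P,r}$ and has nontrivial $\mft$-component after applying $p$; running over all of $\mft_\alpha$ (using that $\mft_\alpha$ is a line and $\ad(h^{-1})$ is linear on it) shows the whole line $\mft_\alpha$ is in the image. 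The existence of such $\alpha$ is exactly what the double-coset condition buys us: otherwise the positive-root directions of $U_P$ not shared with $L'$ would all be ``rotated'' by $\bar h$ into the unipotent radical side, forcing $\bar h \in P_0' N_{G_0}(T_0) P_0$.

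The main obstacle I expect is the careful bookkeeping of how $\ad(\bar h)$ moves root spaces relative to the two parabolics $\mfp$ and $\mfp'$, and in particular making precise the claim that ``$h^{-1}U_{P,r}h \cap L_{r:r+}U_{P,r}$ only sees shared Levi roots when $\bar h$ is in the bad double cosets.'' Concretely one wants: $p\bigl(h^{-1}U_{P,r}h \cap L_{r:r+}U_{P,r}\bigr)$ contains $\mft_\alpha$ for some $\alpha \in \Phi(G,T)\smallsetminus(\Phi(L,T)\cup\Phi(L',T))$ unless $\ad(\bar h)$ conjugates the unipotent radical $\mfu_P$ into $\mfp'$ (equivalently $\bar h P_0 \subset P_0' \cdot(\text{something normalizing }T_0)$). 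This is a finite check on the Weyl group: decompose $\bar h = p_1' w p_1$ with $w$ a minimal-length double coset representative; if $w \notin N_{G_0}(T_0)$-coset structure compatible with $L,L'$ then there is a root $\alpha > 0$ with $\alpha \notin \Phi(L,T)$, $w^{-1}\alpha \notin \Phi(L',T)$ and $w^{-1}\alpha$ still ``pointing into $U_{P'}$'' appropriately, and chasing $\exp$ of the corresponding root vector through the conjugation produces the desired element of the intersection whose $p$-image is all of $\mft_\alpha$. I would organize the write-up so that the group-scheme language is dispensed with at the start (reduce to $G_0$), and the remaining argument is the classical Bruhat-decomposition manipulation.
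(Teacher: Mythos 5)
Your opening reduction is where the argument breaks, and it cannot be repaired along the lines you propose. The condition $h \notin P_r' N_{G_r}(T_r) P_r$ is \emph{not} visible in the reductive quotient: by the ordinary Bruhat decomposition, $P_0' N_{G_0}(T_0) P_0 = G_0$ (both parabolics contain Borel subgroups containing $T_0$, and these are $N_{G_0}(T_0)$-conjugate), so the statement ``$\bar h \notin P_0' N_{G_0}(T_0) P_0$'' is vacuous and your translation of the hypothesis is empty. Likewise, the set $p(h^{-1}U_{P,r}h \cap L_{r:r+}U_{P,r})$ is not determined by $\bar h$: although the conjugation action of $G_r$ on the top graded piece $\mfg = G_{r:r+}$ factors through $G_0$, the intersection in question is an intersection of subgroups of $G_r$, and it changes when $h$ is multiplied by an element $z$ of the congruence kernel ($\bar z = 1$, $z \neq 1$). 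Indeed, for $h = \dot w \in N_{G_r}(T_r)$ the conclusion of the lemma fails (that is the whole point of the hypothesis), while for $h = z\dot w$ with suitable nontrivial $z$ it holds; if your claimed $G_0$-dependence were correct these two cases could not be distinguished. So the lemma is genuinely a positive-depth statement, and a ``finite check on the Weyl group'' in $G_0$ has no access to the hypothesis at all.

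What the paper does instead is use the Bruhat decomposition \emph{pulled back to $G_r$}: one writes $h = p' z \dot w p$ with $p' \in P_r'$, $p \in P_r$, $\dot w \in N_{G_r}(T_r)$, and $z$ in the congruence kernel lying in $U_{P',r}^- \cap \dot w U_{P,r}^- \dot w^{-1}$; the hypothesis says exactly $z \neq 1$, and one reduces to $h = z\dot w$. Letting $d$ be the first level at which $z$ is nontrivial and $\alpha$ a root of maximal height appearing in $\bar z_d$, one chooses $x$ in the root subgroup $U_{-\alpha}$ at the complementary depth ($\bar x_{r-d} \neq 1$, $\bar x_{r-d-1}=1$); then the commutator manipulation $\dot w^{-1}z^{-1}xz\dot w$ lands in $T_{r:r+}U_{P,r}$ with nonzero $\mft_{w^{-1}\alpha}$-component, and rescaling $x$ sweeps out the whole line $\mft_{w^{-1}\alpha}$. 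Your proposal contains no mechanism of this kind --- no interplay between the level $d$ where $z$ first appears and the level $r-d$ where $x$ is chosen --- and that interplay is precisely what produces the root space $\mft_\alpha$ in the image. To fix the proof you would need to abandon the reduction to $G_0$ and work with the filtration levels of the congruence kernel directly.
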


\begin{proof}
  Throughout this proof, for any $g \in G_r$, we write $\bar g$ to mean its image in $G_0$. Write $N_r \colonequals h^{-1} U_{P',r} h$ for convenience. By Bruhat decomposition pulled back to $G_r$, we may write $h = p' z \dot w p$ for some $p' \in P_r'$, $p \in P_r$, $\dot w \in N_{G_r}(T_r)$,  and $z \in U_{P',r}^- \cap \dot w U_{P,r}^- \dot w^{-1}$ where $\bar z = 1$. Then we see that $N_r = p^{-1} \dot w^{-1} z^{-1} U_{P',r} z \dot w p$ and that $p(N_r \cap L_{r:r+}U_{P,r}) = p(\dot w^{-1} z^{-1} U_{P',r} z \dot w \cap L_{r:r+}U_{P,r})$ since $L_{r:r+}U_{P,r}$ is normalized by $P_r$. Hence we may assume that $h = z \cdot \dot w$.

  Assume now that $z \neq 1$ so that $h \notin P_r' N_{G_r}(T_r) P_r$. Let $d$ be minimal such that the image $\bar z_d$ of $z$ in $G_d$ is not the identity; such a $d$ exists since $z \neq 1$. Write $\bar z_d = \prod_{\alpha \in (\Phi^-(G,T) \smallsetminus \Phi^-(L',T)) \cap w \cdot (\Phi^-(G,T) \smallsetminus \Phi^-(L,T))} \bar z_d^\alpha$ where $\bar z_d^\alpha \in U_{\alpha,d}$ and the product is taken with respect to an arbitrary fixed order on the roots. Let $\alpha \in (\Phi^-(G,T) \smallsetminus \Phi^-(L',T)) \cap w \cdot (\Phi^-(G,T) \smallsetminus \Phi^-(L,T))$ be maximal in the sense that $\height(\alpha) \geq \height(\beta)$ for all $\beta \in (\Phi^-(G,T) \smallsetminus \Phi^-(L',T)) \cap w \cdot (\Phi^-(G,T) \smallsetminus \Phi^-(L,T))$ with $\bar z_d^\beta \neq 1$. Choose $x \in U_{r,-\alpha} \subset U_{P',r}$ such that $\bar x_{r-d} \neq 1$ and $\bar x_{r-d-1} = 1$. Then $\dot w^{-1} z^{-1} x z \dot w \in T_{r:r+} U_{P,r}$ and $p(\dot w^{-1} z^{-1} x z \dot w)$ contains a nonzero element of $\mft_{w^{-1} \cdot \alpha}$. By rescaling $x$, we see that any element of $\mft_{w^{-1} \cdot \alpha}$ can be written in the form $p(\dot w^{-1} z^{-1} x z \dot w)$ for some $x \in U_{r,-\alpha}$. Noting that $w^{-1} \cdot \alpha \in \Phi^-(G,T) \smallsetminus \Phi^-(L,T)$ by construction, the conclusion of the lemma holds.
\end{proof}

\begin{lemma}\label{lem:2 L}\label{lem:2}
  Let $g \in G_r \smallsetminus P_r$ such that its image $\bar g$ in $G_0$ lies in $P_0$. Then there exists a root $\alpha \in \Phi(G,T) \smallsetminus \Phi(L,T)$ such that
  \begin{equation}
    \label{eq:fiber contains alpha}
    \{u l g l^{-1} u' : u,u' \in U_{P,r}, \, l \in L_r\} \supset g \cdot g'{}^{-1}\mft_\alpha g'
  \end{equation}
  for some $g' \in L_r$.
\end{lemma}

\begin{proof}
  The set of $g \in G_r \smallsetminus P_r$ with $\bar g \in P_0$ forms a subgroup with an Iwahori decomposition with respect to $P = L U_P$, and therefore  we may write $g = up$ for some non-identity $u \in U_{P,r}^-$ with $\bar u = 1$ and some $p \in P_r$; moreover, by the first paragraph of the proof, we may assume $\bar p \in L_0$. Let $d$ be such that $\bar u_d \neq 1$ and $\bar u_{d-1} = 1$. Let $\alpha \in \Phi(G,T)^-$ be a root such that $\bar u_d^\alpha \neq 1$ and such that $\height(\alpha) \geq \height(\beta)$ for all $\beta \in \Phi^-(G,T)$ such that $\bar u_d^\beta \neq 1$. By assumption $\alpha \notin \Phi(L,T)$. Choose $x \in U^{-\alpha} \subset U_{P,r}$ such that $\bar x_{r-d} \neq 1$ and $\bar x_{r-d-1} = 1$. Then $xupx^{-1} \in u \cdot \mft_\alpha \cdot xpx^{-1} \subset u \cdot \mft_\alpha \cdot p \cdot U_{P,r} = up \cdot p^{-1} \mft_\alpha p \cdot U_{P,r}$. 
  Since $\bar p \in L_0$, we see that $p^{-1} \mft_\alpha p = g'{}^{-1} \mft_\alpha g'$ for some $g' \in L_r$. 
  For any $x$ as above, there exists a $u_x' \in U_{P,r}$ such that $xupx^{-1} \in up \cdot g'{}^{-1} \mft_\alpha g' \cdot u_x'$. Moreover, by varying $x$, we can arrange for any element of $up \cdot g'{}^{-1} \mft_\alpha g'$ to be obtained.  
\end{proof}

\subsection{The Mackey formula}\label{subsec:mackey}

Before we prove the Mackey formula, we note that Lemma \ref{lem:FT pInd} can be upgraded to a statement about $\cL_{\psi,r}$ and $\cF_{\psi,r}$.

\begin{proposition}\label{lem:Ind L psi}\label{prop:Ind L psi}
  We have $\pInd_{P_r!}^{G_r}(\cL_{\psi,r}) \cong \cF_{\psi,r}$.
\end{proposition}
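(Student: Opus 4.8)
The plan is to reduce the assertion to a statement on the reductive Lie algebra $\mfg$ and to prove that by a Fourier--Deligne computation.

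First I would pass to the non-equivariant picture. By Lemma~\ref{lem:pInd}, $\For(\pInd_{P_r!}^{G_r}(\cL_{\psi,r}))\cong \pi_!f^*(\mfi_!\cL_\psi)[2n+\dim\mfl]$ with $n=\dim U_{P,r}$ and $f,\pi$ as in Definition~\ref{def:f and pi}. Since $\mfi(\mfg)=G_{r:r+}$ is conjugation-stable, $\pi^{-1}(\mfi(\mfg))$ is exactly the Lie-algebra variety $\widetilde\mfg$ of \eqref{eq:fpInd} and $f$ restricts there to the Lie-algebra map; so, granting that $\pInd_{P_r!}^{G_r}(\cL_{\psi,r})$ is supported on $\mfi(\mfg)$, proper base change along this Cartesian square together with \eqref{eq:fpInd} gives $\pInd_{P_r!}^{G_r}(\cL_{\psi,r})\cong \mfi_!\,\fpInd_{P_r!}^{G_r}(\cL_\psi)[\dim\mfl]$, the equivariant structures being canonical. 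The support claim is the genuine obstacle: for $g\notin\mfi(\mfg)$ one must show that the part of $\pi^{-1}(g)$ lying over $\mfi(\mfl)$ carries a complex which is $(\mft_\alpha,\cL_\psi|_{\mft_\alpha})$-equivariant for some root $\alpha\in\Phi(G,T)\smallsetminus\Phi(L,T)$; this is a Bruhat-decomposition analysis of those fibres in the spirit of Lemma~\ref{lem:1} and Lemma~\ref{lem:2}, after which $\cL_\psi|_{\mft_\alpha}$ is nontrivial by $\mathfrak{ge1}$ and the stalk vanishes by Lemma~\ref{lem:equiv vanishing}. This is the step I expect to be hardest to make clean, and it is where the genericity of $X_\psi$ really enters; in the body of the paper it is presumably organized through the Harish--Chandra-transform results of this subsection.

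Next I would reduce to depth zero. By Lemma~\ref{lem:L psi}(a), $\cL_\psi$ is a shift of an $L_0$-equivariant rank-one local system on $\mfl$, so it lies in $D_{L_r}(\mfl)=D_{L_0}(\mfl)$, and Remark~\ref{rem:fpInd} gives $\fpInd_{P_r!}^{G_r}(\cL_\psi)\cong\fpInd_{P_0!}^{G_0}(\cL_\psi)$. Everything therefore comes down to proving $\fpInd_{P_0!}^{G_0}(\cL_\psi)\cong\cF_\psi[\dim\mfg-\dim\mfl]$ on the reductive group $G_0$; combined with the previous step this yields $\pInd_{P_r!}^{G_r}(\cL_{\psi,r})\cong\mfi_!\cF_\psi[\dim\mfg]=\cF_{\psi,r}$.

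For that last identity I would apply $\FT$ on $\mfg$. Since $\cL_\psi=\FT(\delta_{X_\psi})$ and $\FT\circ\FT$ is pullback along negation up to twist, it suffices to identify $\FT(\fpInd_{P_0!}^{G_0}(\FT(\delta_{X_\psi})))$ with $\delta_{-\bbG(X_\psi)}$ up to shift and twist. Writing $\fpInd_{P_0!}^{G_0}\cong\pi_{0!}\circ f_0^*[2\dim U_{P,0}]$ as in Remark~\ref{rem:fpInd} and using base change and the projection formula, the left-hand side is the pushforward to $\mfg^*$ of an Artin--Schreier sheaf on $\widetilde\mfg_0\times\mfg^*$ whose character at $(Y,hP_0,Z)$ is $\langle\ad^*(h)\widetilde X_\psi+Z,\,Y\rangle$, where $\widetilde X_\psi\in\mfp^*$ is the extension of $X_\psi$ by zero along $\mfu$. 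For fixed $hP_0$ the inner fibre integral runs over the linear subspace $\ad(h)\mfp\subset\mfg$, hence vanishes unless $\ad^*(h^{-1})Z|_\mfp=-\widetilde X_\psi$ and is otherwise a shifted, twisted constant sheaf. Under $\mfg^*\cong\mfg$ this condition reads $\ad(h^{-1})Z\in-X_\psi+\mfu$; since $\mathfrak{ge1}$ forces $\ad(X_\psi)$ to be invertible on $\mfu$ we have $-X_\psi+\mfu=U_P\cdot(-X_\psi)$, so the locus of $Z$ with a nonzero contribution is precisely $G_0\cdot(-X_\psi)=-\bbG(X_\psi)$, which is closed by $\mathfrak{ge1}$; moreover for such $Z$ the contributing cosets $hP_0$ reduce to a single point because $\Stab_{G_0}(X_\psi)=L_0$, which uses $\mathfrak{ge2}$. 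Thus $\FT(\fpInd_{P_0!}^{G_0}(\cL_\psi))$ is a $G_0$-equivariant rank-one local system concentrated in one degree on the closed orbit $-\bbG(X_\psi)$, and it is trivial since its fibre over $-X_\psi$ is canonically the compactly supported cohomology of an affine space with trivial $L_0$-action. Hence it equals $\delta_{-\bbG(X_\psi)}$ up to shift and twist, and applying $\FT$ once more — the shifts and twists matching because of the normalizations chosen in Definition~\ref{def:generic idempotent} — gives $\fpInd_{P_0!}^{G_0}(\cL_\psi)\cong\cF_\psi[\dim\mfg-\dim\mfl]$, as needed. The Fourier computation here is the expected depth-zero statement; the real work is the support/vanishing in the first step.
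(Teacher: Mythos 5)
Your overall route is the same as the paper's: reduce via Lemma~\ref{lem:pInd} and base change to showing that $\pInd_{P_r!}^{G_r}(\cL_{\psi,r})$ is supported on $\mfi(\mfg)$, so that it equals $\mfi_!\,\fpInd_{P_r!}^{G_r}(\cL_\psi)[\dim\mfl]$; then use Remark~\ref{rem:fpInd} to drop to depth zero; then compute the Fourier--Deligne transform of $\fpInd_{P_0!}^{G_0}(\cL_\psi)$. That is exactly the paper's skeleton.

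Two deviations are worth flagging. First, on the support claim: you correctly identify it as the crux and correctly sketch the mechanism (a Bruhat/height analysis of the fibres of $\pi\circ\tilde\mfi$ over $g\notin\mfi(\mfg)$, producing equivariance under a root-group piece against a nontrivial multiplicative local system via $\mathfrak{ge1}$, hence vanishing by Lemma~\ref{lem:equiv vanishing}). The paper proves exactly this as a self-contained claim inside the proof, stratifying the fibre by subvarieties $Y_{\alpha,d}$ indexed by the minimal level $d$ at which $hgh^{-1}$ fails to be in $\mfi(\mfp)$. However, your side remark that this is ``presumably organized through the Harish--Chandra-transform results of this subsection'' is backwards and would be circular: Lemma~\ref{lem:pRes summands} and Proposition~\ref{prop:p psi support} both \emph{use} Proposition~\ref{prop:Ind L psi}, so you cannot invoke them here. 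The support argument must be, and in the paper is, independent of those.

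Second, for the depth-zero step the paper simply cites the known fact that $\FT$ intertwines $\fpInd_{P_0}^{G_0}$ with itself (\cite[Coda 11.3]{B86}, \cite[Prop.\ 13.6]{KW01}), giving $\FT(\fpInd_{P_0}^{G_0}(\cL_\psi))\cong\fpInd_{P_0}^{G_0}(\delta_{-X_\psi})\cong\delta_{-\bbG(X_\psi)}$ up to shift and twist. You instead unwind the integral kernel by hand, using that $U_P\cdot X_\psi=\tilde X_\psi+\mfp^\perp$ (from $\mathfrak{ge1}$) and $\Stab_{G_0}(X_\psi)=L_0$ (from $\mathfrak{ge1}$ and $\mathfrak{ge2}$) to show that for each $Z\in-\bbG(X_\psi)$ exactly one coset $hP_0$ contributes, an affine-space cohomology. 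This is correct and in fact re-proves the cited compatibility in the case at hand; the paper's choice to cite it is shorter but buys nothing conceptual. Your shifts and twists do come out right given the normalization $\cL_{\psi,r}=\mfi_!\cL_\psi[\dim\mfl]$, $\cF_{\psi,r}=\mfi_!\cF_\psi[\dim\mfg]$, and $\dim\mfg=\dim\mfl+2\dim\mfu$.

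The one substantive gap, then, is that the support claim is left as a sketch and accompanied by a misleading pointer; if you flesh it out the way you first describe (root-group equivariance plus Lemma~\ref{lem:equiv vanishing}), not via the Harish--Chandra results, the proof goes through.
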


\begin{proof}
  By Lemma \ref{lem:FT pInd}, the desired assertion holds once we prove
  \begin{equation}\label{eq:two pInd}
    \pInd_{P_r!}^{G_r}(\mfi_! \cL_\psi) \cong \mfi_! \mathfrak{pInd}_{P_r!}^{G_r}(\cL_\psi).
  \end{equation}
  Observe that by base change with respect to the Cartesian square
  \begin{equation*}
    \begin{tikzcd}
      f^{-1}(\mfi(\mfl)) \ar{r}{f} \ar{d}[left]{\mfi} & \mfl \ar{d}{\mfi} \\
      \widehat G_r \ar{r}{f} & L_r
    \end{tikzcd}
  \end{equation*}
  we have 
  \begin{equation*}
    \pInd_{P_r}^{G_r}(\mfi_! \cL_\psi) \cong \pi_!  \widetilde{{\mfi}_! f^* \cL_\psi}[2n].
  \end{equation*}
  From this we see that to prove \eqref{eq:two pInd}, it suffices to prove that $\pInd_{P_r}^{G_r}(\mfi_! \cL_\psi)$ is supported on $i(\mfg)$. Writing $\tilde i$ for the inclusion in $\widetilde G_r$ induced by $\mfi \from f^{-1}(\mfi(\mfl)) \hookrightarrow \widehat G_r$, we now need to prove:
  \begin{equation*}
    \text{If $g \in G_r \smallsetminus \mfi(\mfg)$, then $R\Gamma_c((\pi \circ \tilde \mfi)^{-1}(g), \widetilde{f^* \cL_\psi}) = 0$.}
  \end{equation*}

  To this end, we first make the following argument: Let $b \in \mfi(\mfl)U_{P,r} \smallsetminus \mfi(\mfp)$. We may write $b = l \cdot \prod_{\alpha \in \Phi^+(G,T) \smallsetminus \Phi^+(L,T)} u_\alpha$ for some $u_\alpha \in U_{\alpha,r}$. By assumption, there exists a $d < r$ such that the image $\bar b_d$ of $b$ in $B_d$ is nontrivial but $\bar b_{d-1} = 1$. Choose an $\alpha \in \Phi^+(G,T)$ such that $\height(-\alpha) \geq \height(-\beta)$ for all $\beta \in \Phi^+(G,T)$ with $\bar u_{d,\beta} \neq 1$. Then $u_{-\alpha} b u_{-\alpha}^{-1} \in i(\mft) U_r$ for any $u_{-\alpha} \in U_{-\alpha,r}$ with $\bar u_{-\alpha,r-d-1} = 1$ and moreover $p(\{u_{-\alpha} b u_{-\alpha}^{-1} : u_{-\alpha} \in U_{-\alpha,r-d:r+}\}) \supset \mft_\alpha$.

  Now consider the following: For $\alpha \in \Phi^+(G,T) \smallsetminus \Phi^+(L,T)$ and $0 \leq d \leq r-1$, set
    \begin{equation*}
      Y_{\alpha,d} \colonequals \left\{(g,hP_r) \in (\pi \circ \tilde \mfi)^{-1}(g) : 
      \begin{gathered}
        \text{$\overline{(hgh^{-1})}_d \neq 1$ but $\overline{(hgh^{-1})}_{d-1} = 1$} \\
        \text{$\alpha \in \Phi^+(G,T)$ is maximal for $(hgh^{-1})_d$}
      \end{gathered}\right\}.
    \end{equation*}
    Since $g \notin \mfi(\mfg)$ by assumption, any $hP_r \in G_r/P_r$ such that $(g,hP_r) \in (\pi \circ \tilde i)^{-1}(hgh^{-1})$ satisfies that $h^{-1} g h \in \mfi(\mfl) U_{P,r} \smallsetminus \mfi(\mfp)$. Hence we have
    \begin{equation*}
      (\pi \circ \tilde \mfi)^{-1}(g) = \bigcup_{\alpha,d} Y_{\alpha,d}.
    \end{equation*}
    By the first paragraph, we know that each $Y_{\alpha,d}$ is equivariant under left-multiplication by $U_{-\alpha,r-d:r+}$ with respect to a \textit{nontrivial} multiplicative local system by the genericity assumption on $\psi$, so the desired assertion holds by Lemma \ref{lem:equiv vanishing}.
\end{proof}

Now we prove the Mackey formula. 

\begin{proposition}\label{prop:mackey}\label{prop:Res Ind}
  Let $P' \subset G$ be a parabolic subgroup with Levi complement $L'$ and assume that $L'$ contains $T$. If $X_\psi$ is $(L,G)$-generic, then for $M \in D_{L_r}^{\psi}(L_r)$, we have
  \begin{equation*}
    \pRes_{P_r'!}^{G_r} \pInd_{P_r!}^{G_r}(M) \cong \bigoplus_{w \in W_{L'} \backslash W / W_L} \pInd_{L_r' \cap \ad(w)P_r!}^{L_r'} \pRes_{P_r' \cap \ad(w)L_r!}^{\ad(w)L_r}(M_w),
  \end{equation*}
  where $M_w$ denotes the pullback of $M$ under $\ad(w^{-1}) \from \ad(w)L_r \to L_r$.
\end{proposition}

\begin{proof}
  For convenience, let $n = \dim U_{P,r}.$ Then by using Lemma \ref{lem:pInd} and base change with respect to the Cartesian square
  \begin{equation*}
    \begin{tikzcd}
      Y \ar{r} \ar{d} & \widetilde G_r \ar{d} \\
      P_r' \ar{r} & G_r
    \end{tikzcd}
  \end{equation*} 
  where $Y \colonequals \{(g,hP_r) \in P_r' \times G_r/P_r : hgh^{-1} \in P_r\}$, we have
  \begin{equation*}
    \pRes_{P_r'!}^{G_r}(\pInd_{P_r!}^{G_r}(M)) \cong \pi_{Y!} \widetilde{f_Y^* M}[2n],
  \end{equation*}
  where $f_Y$, $\pi_Y$, and $\alpha_Y$ are
  \begin{equation*}
    \begin{tikzcd}
      & \widehat Y \ar{dl}[above left]{f_Y} \ar{dr}{\alpha_Y} \\
      L_r & & Y \ar{dr}{\pi_Y}  \\
      &&& L_r'
    \end{tikzcd}
    \qquad
    \begin{tikzcd}
      & (g,h)\ar[mapsto]{dl} \ar[mapsto]{dr} \\
      \beta(h^{-1}gh) & & (g, hP_r) \ar[mapsto]{dr} \\
      &&& \beta'(g)
    \end{tikzcd}
  \end{equation*}
  for the projections $\beta \from P_r \to L_r$ and $\beta' \from P_r' \to L_r'$. Here, $\widehat Y = \{(g,h) \in P_r' \times G_r : h^{-1} g h \in P_r\}$ and again as usual, $\widetilde{f_Y^*M}$ denotes the unique object on $\widetilde G_r$ such that $\alpha_Y^* \widetilde{f_Y^* M} \cong f_Y^* M$.   Consider
  \begin{align*}
    Y' &\colonequals \{(g,hP_r) \in Y : h \notin Q_r N_{G_r}(T_r) P_r\}, &  
    \widehat Y' &\colonequals \{(g,h) \in \widehat Y : h \notin Q_r N_{G_r}(T_r) P_r\},\\
    Y'' &\colonequals \{(g,hP_r) \in Y : h \in Q_r N_{G_r}(T_r)P_r\} & 
    \widehat Y'' &\colonequals \{(g,h) \in \widehat Y : h \in Q_r N_{G_r}(T_r)P_r\}.
  \end{align*}
  Write $f_Y' = f_Y|_{\widehat Y'}, f_Y'' = f_Y|_{\widehat Y''}$ and $\pi_Y' = \pi_Y|_{Y'}, \pi_Y'' = \pi_Y|_{Y''}$.

  \begin{claim}\label{claim:Y'}
    We have
    \begin{equation*}
      \pi_{Y!}' \widetilde{f_Y^{\prime *} M} = 0.
    \end{equation*}
  \end{claim}

  \begin{proof}
    Choose any $l' \in L_r'$ and set
    \begin{equation*}
      Y_{l'} \colonequals \pi_Y^{-1}(l') = \{(l'u',hP_r) \in l'U_{P',r} \times G_r/P_r : h^{-1}l'u'h \in P_r\}
    \end{equation*}
    and consider the projection to the second coordinate
    \begin{equation*}
      Y_{l'} \xrightarrow{\pi_2} G_r/P_r, \qquad (l'u',hP_r) \mapsto hP_r.
    \end{equation*}
    Write $Y_{l'}' = Y_{l'} \cap Y'$. By definition, the stalk of $\pi_{Y!}' \widetilde{f_Y^{\prime *} M}$ at $l'$ vanishes exactly when the cohomology of $\widetilde{f_Y^{\prime*} M}$ on $Y_{l'}$ is zero. We will prove that this cohomology is zero by showing that the stalks of $\pi_{2!}(\widetilde{f_Y^* M}|_{Y_{l'}})$ at $hP_r$ for any $h \in P_r' N_{G_r}(T_r) P_r$ \textit{already} vanish.

    If $(l'u_0', hP_r) \in \pi_2^{-1}(hP_r)$, then $(l'u_0'u',hP_r) \in \pi_2^{-1}(hP_r)$ if and only if $h^{-1}u'h \in P_r$. Hence we have an isomorphism $\pi_2^{-1}(hP_r) \cong U_{P',r} \cap hP_rh^{-1}$. Under this isomorphism, the map $f_Y$ is transported to the map $U_{P',r} \cap hP_r h^{-1} \to L_r$ given by $u' \mapsto p(h^{-1}l'u_0'h) p(h^{-1}u'h)$.
    By Lemma \ref{lem:1}, if $h \notin P_r' N_{G_r}(T_r) P_r$, then there exists a root $\alpha \in \Phi(G,T) \smallsetminus \Phi(L,T)$ such that $U_{P',r} \cap hP_rh^{-1}$ contains $U_{P',r} \cap h\mfi(\mft_\alpha)U_{P,r} h^{-1}$. It follows then, using that $U_{P,r} \cap hP_rh^{-1}$ is invariant under right-multiplication by $h\mfi(\mft_\alpha)h^{-1}$, that $f_Y^*M|_{\pi_2^{-1}(hP_r)}$ is $(\mft_\alpha,\cL_\psi|_{\mft_\alpha})$-equivariant. By the genericity assumption $\mathfrak{ge1}$ on $X_\psi$, we know that $\cL_\psi|_{\mft_\alpha}$ is a nontrivial multiplicative local system on $\mft_\alpha$. Therefore, by Lemma \ref{lem:equiv vanishing}, the stalk of $\pi_{2!}(f_Y^*(M))$ at $hP_r$ for $h \notin P_r'N_{G_r}(T_r)P_r$ vanishes. The claim now follows.
  \end{proof}

  Claim \ref{claim:Y'} shows that
  \begin{equation*}
    \pi_{Y!} \widetilde{f_Y^* M} \cong \pi_{Y!}'' \widetilde{f_Y^{\prime\prime*} M},
  \end{equation*}
  so for the rest of the proof, we calculate on $Y''$. We have a disjoint union decomposition
  \begin{equation*}
    Y'' = \bigsqcup_{W_{L'} \backslash W / W_L} Y_w'', \qquad \text{where $Y_w'' = \{(g,hP_r) \in Y : h \in P_r' \dot w P_r\}$,}
  \end{equation*}
  where by $\dot w$ we mean a lift of any representative of the double coset $w$. Of course this lifts to an analogous decomposition for $\widehat Y''$. Write $f_w'' = f_Y|_{\widehat Y_w''}$ and $\pi_w'' = \pi_Y|_{Y_w''}.$ 

  The upshot of Claim 1 is that it proves that the functor $\pRes_{P_r'!}^{G_r} \pInd_{P_r!}^{G_r}$ on $D_{L_r}^\psi(L_r)$ is really controlled by its behavior on the ``expected'' part of $G_r/P_r$---namely, the part that has a recognizable generalized Bruhat decomposition. From here, using standard methods, one should expect $\pRes_{P_r'!}^{G_r} \pInd_{P_r!}^{G_r}$ to have the typical shape of a Mackey formula. For only this next claim, we abuse notation and write everything in terms of ``$\pi_! f^* M$'' when we really mean ``$\pi_! \widetilde{f^* M}$.'' 

  \begin{claim}
    We have
    \begin{equation*}
      \pi_{w!}'' f_w^{\prime\prime*} \cong \pInd_{L_r' \cap \ad(w)P_r!}^{L_r'} \pRes_{P_r' \cap \ad(w)L_r}^{\ad(w)L_r} {} \circ {} \ad(w^{-1})^* [-2n].
    \end{equation*}
  \end{claim}

  \begin{proof}
    Using Lemma \ref{lem:pInd}, we have
    \begin{equation*}
      \pInd_{L_r' \cap \ad(w)P_r!}^{L_r'} \pRes_{P_r' \cap \ad(w)L_r}^{\ad(w)L_r} {} \circ {} \ad(w^{-1})^* \cong 
      \pi_!' f'{}^* \beta_{!}' q_w^* \ad(w^{-1})^*[2n_w],
    \end{equation*}
    where
    \begin{enumerate}[label=\textbullet]
      \item $n_w = \dim(L_r' \cap \ad(w)U_{P,r})$
      \item $q_w$ denotes the inclusion $P_r' \cap \ad(w)L_r \to \ad(w)L_r$
      \item $\beta'$ denotes the natural projection $P_r' \to L_r'$
      \item $\beta_w$ denotes the natural projection $\ad(w)P_r \to \ad(w)L_r$
      \item $\widetilde L_r' = \{(g',h'(L_r' \cap \ad(w)P_r)) \in L_r' \times L_r'/(L_r' \cap \ad(w)P_r) : h'{}^{-1}g'h' \in L_r' \cap \ad(w)P_r\}$
      \item $f' \from \widetilde L_r' \to L_r' \cap \ad(w)L_r$ is $(g',h'(L_r' \cap \ad(w)P_r)) \mapsto \beta_w(h'{}^{-1}g'h')$
      \item $\pi' \from \widetilde L_r' \to L_r'$ is the projection to the first coordinate
    \end{enumerate}
    Pictorially:
    \begin{equation*}
      \begin{tikzcd}
        && P_r' \cap \ad(w)L_r \ar{dl}[above left]{q_w} \ar{dr}{\beta'} && \widetilde L_r' \ar{dl}[above left]{f'} \ar{dr}{\pi'} \\
        L_r & \ar{l}{\ad(w^{-1})} \ad(w)L_r && L_r' \cap \ad(w)L_r && L_r'
      \end{tikzcd}
    \end{equation*}
    Let 
    \begin{equation*}
      \widetilde L_r^{\prime(P)} \colonequals \{(g',h'(L_r' \cap \ad(w)P_r), p') \in \widetilde L_r' \times (P_r' \cap \ad(w)L_r) : \beta_w(h'{}^{-1}g'h') = \beta'(p')\}
    \end{equation*}
    and consider the Cartesian square
    \begin{equation*}
      \begin{tikzcd}
        \widetilde L_r^{\prime (P)} \ar{r}{p_3} \ar{d}[left]{p_{12}} & P_r' \cap \ad(w)L_r \ar{d}{\beta'} \\
        \widetilde L_r' \ar{r}{f'} & L_r' \cap \ad(w)L_r
      \end{tikzcd}
    \end{equation*}
    By base change, we have
    \begin{equation*}
      \pi_!' f^{\prime*} \beta_!' q_w^* \ad(w^{-1})^* \cong \pi_!' p_{12!} p_3^* q_w^* \ad(w^{-1})^* = \tilde \pi_{w!} \tilde f_w^*,
    \end{equation*}
    where
    \begin{align*}
      \tilde \pi_w &= \pi' \circ p_{12} \from (g',h'(L_r' \cap \ad(w)P_r),p') \mapsto g', \\
      \tilde f_w &= \ad(w^{-1}) \circ q_w \circ p_3 \from (g',h'(L_r' \cap \ad(w)P_r),p') \mapsto \ad(w^{-1})(p').
    \end{align*}

    We now relate this to $Y_w''$. We have an isomorphism
    \begin{equation*}
      P_r' \dot w P_r/P_r \to P_r'/(P_r' \cap \ad(w)P_r), \qquad p'' \dot w P_r \mapsto p''(P_r' \cap \ad(w)P_r),
    \end{equation*}
    and we may consider the map
    \begin{align*}
      \varphi \from Y_w'' &\to \widetilde L_r^{\prime (P)}, &
      (p',p'' \dot w P_r) &\mapsto (\beta'(p'),\beta'(p'')(L_r' \cap \ad(w)P_r),\beta_w(p''{}^{-1}p'p'')). 
    \end{align*}
    Then we have a commutative diagram
    \begin{equation*}
      \begin{tikzcd}
        & Y_w'' \ar[bend right=15]{ddl}[above left]{f_w''} \ar[bend left=15]{ddr}{\pi_w''} \ar{d}{\varphi} \\
        & \widetilde L_r^{\prime(P)} \ar{dl}[above left]{\tilde f_w} \ar{dr}{\tilde \pi_w} \\
        L_r && L_r'
      \end{tikzcd}
    \end{equation*}
    where we note that left triangle holds since 
    \begin{equation*}
      f_w''(p',p'' \dot w P_r) = \beta(\dot w^{-1} p''{}^{-1}p'p'' \dot w) = \dot w^{-1} \beta_w(p''{}^{-1}p'p'') \dot w.
    \end{equation*}
    The map $\varphi$ is an affine fibration with fibers of dimension
    \begin{align*}
      \dim{}&{}(P_r' \cap \ad(w)U_{P,r}) + \dim(P_r' \dot w P_r/P_r) \\
      &= \dim(P_r' \cap \ad(w)U_{P,r}) + \dim(P_r') - \dim(P_r' \cap \ad(w)P_r) \\
      &= \dim(P_r') - \dim(P_r' \cap \ad(w)L_r) \\
      &= \dim (L_r') - \dim(L_r' \cap \ad(w)L_r) + \dim(U_{P',r}) - \dim(U_{P',r} \cap \ad(w)L_r) \\
      &= \dim(L_r' \cap \ad(w)U_{P,r}) + \dim(L_r' \cap \ad(w)U_{P,r}^-) \\
      &\qquad\qquad\qquad+ \dim(U_{P',r} \cap \ad(w)U_{P,r}) + \dim(U_{P',r} \cap \ad(w)U_{P,r}^-) \\
      &= \dim(L_r' \cap \ad(w)U_{P,r}) + \dim(\ad(w)U_{P,r}) = n_w + n.
    \end{align*}
    Therefore we have
    \begin{equation*}
      \tilde \pi_{w!} \tilde f_w^* 
      \cong \tilde \pi_{w!} \varphi_! \varphi^* \tilde \pi_w[-2n-2n_w] 
      = \pi_{w!}'' f_w^{\prime\prime*}[-2n-2n_w].
    \end{equation*}
    Altogether, 
    \begin{equation*}
      \pInd_{L_r' \cap \ad(w)P_r!}^{L_r!} \pRes_{P_r' \cap \ad(w)L_r}^{\ad(w)L_r} \circ \ad(w^{-1})^* \cong \tilde \pi_{w!} \tilde f_w^*[2n_w] = \pi_{w!}'' f_w^{\prime\prime*}[-2n].\qedhere
    \end{equation*}
  \end{proof}

  Let us now see the conclusion of the proposition from the two claims. For $w \in W$, we may view $\ad(w^{-1})^* X_\psi$ as an element of $(\mfl' \cap \ad(w)\mfl)^*$ so that $\ad(w^{-1})X_\psi$ is $(L' \cap ad(w)L,L')$-generic. We write $D_{L_r' \cap \ad(w)L_r}^{\psi_w}(L_r' \cap \ad(w) L_r)$ and $D_{L_r'}^{\psi_w}(L_r')$ for the corresponding generic subcategories as in Definition \ref{def:psi subcats}. Then by Proposition \ref{prop:generic induction}, we have $\pInd_{L_r' \cap \ad(w)P_r!}^{L_r'} \pRes_{P_r' \cap \ad(w)L_r}^{\ad(w)L_r}(\ad(w^{-1})^* M) \in D_{L_r'}^{\psi_w}(L_r')$. By assumption, $X_\psi$ is $(L,G)$-generic, which means it satisfies the condition $\mathfrak{ge2}$. Hence for $w,w'$ representing two different double cosets in $W_{L'} \backslash W/W_L$, the corresponding $(L' \cap \ad(w)L,L')$-generic elements $\ad(w^{-1})^* X_\psi$, $\ad(w'{}^{-1})^* X_\psi$ satisfy the hypothesis of Lemma \ref{lem:idempotent orthog}. Therefore, this lemma implies that the categories $D_{L_r'}^{\psi_w}(L_r'), D_{L_r'}^{\psi_{w'}}(L_r')$ are orthogonal subcategories. The direct summand decomposition now follows.
\end{proof}

\subsection{Harish-Chandra transform}\label{subsec:harishchandra}

Let $\phi \from G_r \to G_r/U_{P,r}$ as in Section \ref{subsec:soft lemmas}.

\begin{proposition}
  \label{prop:p psi support}
  For any $N \in D_{G_r}^\psi(G_r)$, the convolution $\cL_{\psi,r} \star_! \phi_! N$ is supported on $P_r/U_{P,r} \cong L_r$ so that $\cL_{\psi,r} \star_! \phi_! N \cong \cL_{\psi,r} \star_! \pRes_{P_r!}^{G_r}(N).$
\end{proposition}

\begin{proof}
  Since $N$ is $G_r$-equivariant under conjugation, the pushforward $\phi_! N$ is $P_r$-equivariant under conjugation. On the other hand, $\cL_{\psi,r}$ is also $P_r$-equivariant under conjugation, and therefore so must be $\cL_{\psi,r} \star_! \phi_! N$. Explicitly, this means that for the conjugation action $c \from P_r \times G_r/U_{P,r} \to G_r/U_{P,r}$, we have $c^* (\cL_{\psi,r} \star_! \phi_! N) \cong (\overline \QQ_\ell)_{P_r} \boxtimes (\cL_{\psi,r} \star_! \phi_! N).$ In particular, for every $g \in G_r$, we have
  \begin{equation}\label{eq:constancy}
    c^*(\cL_{\psi,r} \star_! \phi_! N)_{P_r \times \{g U_{P,r}\}} \cong (\overline \QQ_\ell)_{P_r} \boxtimes (\cL_{\psi,r} \star_! \phi_! N)_{gU_{P,r}}.
  \end{equation}

  For any element of $G_r$, we use $\bar{\phantom{g}}$ to denote its image in $G_0$. 
  First consider the setting that the image $\bar g \notin P_0$. Then we have $g = x \dot w$ for some $x \in G_r$ with $\bar x \in L_0$ and some $\dot w \in N_{G_0}(T_0) \smallsetminus N_{L_0}(T_0)$. Since $X_\psi$ is fixed by the coadjoint action of $L_r$ and this action factors through $L_0$, we see that while the restriction $(\cL_{\psi,r} \star_! \phi_! N)|_{g P_r/U_{P,r}}$ is $(\mfl, \cL_\psi)$-equivariant by right multiplication (Lemma \ref{lem:L psi}), it is $(\ad^*(\dot w)(\mfl), \ad^* \cL_\psi)$-equivariant by left multiplication. By \eqref{eq:constancy} together with the argument as the end of Proposition \ref{prop:mackey}, we see that condition $\mathfrak{ge2}$ on $X_\psi$ then forces $(\cL_{\psi,r} \star_! \phi_! N)|_{g P_r/U_{P,r}} = 0$.

  It remains to show then that $(\cL_{\psi,r} \star_! \phi_! N)_{g U_{P,r}} = 0$ for $g \notin P_r$ with $\bar g \in P_0$. For this, we will invoke Lemma \ref{lem:2 L}, which tells us that $c(P_r \times g U_{P,r})$ contains $g \cdot g'{}^{-1} \mft_\alpha g'$ for some $\alpha \in \Phi(G,T) \smallsetminus \Phi(L,T)$ and some $g' \in L_r.$ Since $\cL_{\psi,r} \star_! \phi_! N$ is $(\mfl,\cL_\psi)$-equivariant, the genericity assumption $\mathfrak{ge1}$ guarantees that the restriction $\cL_\psi|_{g'{}^{-1} \mft_\alpha g'}$ is a nontrivial multiplicative local system. But now the constancy required in \eqref{eq:constancy} then forces $(\cL_{\psi,r} \star_! \phi_! N)_{gU_{P,r}} = 0$.

  We have now shown that $\cL_{\psi,r} \star_! \phi_! N$ is supported on $P_r/U_{P,r} \cong L_r$. By base change, 
  this implies that $\cL_{\psi,r} \star_! \phi_! N \cong \cL_{\psi,r} \star_! i_L^* \phi_! N$, where $i_L \from L_r \hookrightarrow G_r/U_{P,r}$. On the other hand, by base change again, 
  we have $i_L^* \phi_! N \cong \pRes_{P_r!}^{G_r}(N)$. Hence we have the isomorphism $\cL_{\psi,r} \star_! \phi_! N \cong \cL_{\psi,r} \star_! \pRes_{P_r!}^{G_r}(N)$.
\end{proof}

\begin{corollary}\label{cor:monoidal}
  The functor $\cL_{\psi,r} \star_! \pRes_{P_r!}^{G_r}$ is compatible with $\star_!$ and $\cL_{\psi,r} \star_* \pRes_{P_r*}^{G_r}$ is compatible with $\star_*$.
\end{corollary}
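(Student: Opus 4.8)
The plan is to strip off the generic idempotent, invoke the monoidality of the Harish--Chandra functor $\phi_!$ from Lemma~\ref{lem:monoidal p}, and then shuffle $\cL_{\psi,r}$ through the resulting convolution by associativity, using that $\cL_{\psi,r}$ is a central idempotent.

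First I would fix $N,N'\in D_{G_r}^\psi(G_r)$ and observe that $N\star_! N'$ again lies in $D_{G_r}^\psi(G_r)$: since $\cF_{\psi,r}$ is a $\star_!$-idempotent (Lemma~\ref{lem:idempotent}) and $N\cong \cF_{\psi,r}\star_! N$, we get $N\star_! N'\cong \cF_{\psi,r}\star_!(N\star_! N')$. Hence Proposition~\ref{prop:p psi support} applies to $N$, $N'$ and $N\star_! N'$, identifying $\cL_{\psi,r}\star_!\pRes_{P_r!}^{G_r}(-)$ with $\cL_{\psi,r}\star_!\phi_!(-)$ on all three. So the claim reduces to
\[
  \cL_{\psi,r}\star_!\phi_!(N\star_! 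N')\;\cong\;\bigl(\cL_{\psi,r}\star_!\phi_! N\bigr)\star_!\bigl(\cL_{\psi,r}\star_!\phi_! N'\bigr).
\]
By Lemma~\ref{lem:monoidal p}, the left side is $\cL_{\psi,r}\star_!(\phi_! N\star_!\phi_! N')$, and associativity of convolution (for the left-translation action of $L_r$ on $G_r/U_{P,r}$) rewrites this as $\bigl(\cL_{\psi,r}\star_!\phi_! N\bigr)\star_!\phi_! N'$. Writing $B\colonequals\cL_{\psi,r}\star_!\phi_! N$, which by Proposition~\ref{prop:p psi support} is supported on $P_r/U_{P,r}\cong L_r$ and (being $\cong\cL_{\psi,r}\star_! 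B$ by idempotency) lies in $D_{L_r}^\psi(L_r)$, it then suffices to re-insert $\cL_{\psi,r}$ into the second factor, i.e.\ to prove $B\star_!\phi_! N'\cong B\star_!(\cL_{\psi,r}\star_!\phi_! N')$.

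This last step I would settle by the centrality of $\cL_{\psi,r}$ in $(D_{L_r}(L_r),\star_!)$: because $X_\psi$ is fixed by the coadjoint action of $L$ (equivalently, by $\mathfrak{ge2}$), the multiplicative local system $\cL_\psi$ on the abelian group $\mfl$ is $L_0$-equivariant, so $\cL_{\psi,r}=\mfi_!\cL_\psi[\dim\mfl]$ commutes with every object of $D_{L_r}(L_r)$ under $\star_!$; in particular $B\star_!\cL_{\psi,r}\cong\cL_{\psi,r}\star_! B\cong B$. One more application of associativity then gives $B\star_!\phi_! N'\cong(B\star_!\cL_{\psi,r})\star_!\phi_! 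N'\cong B\star_!(\cL_{\psi,r}\star_!\phi_! N')$, as wanted. The $\star_*$ assertion follows identically using the $\star_*$-versions of Lemmas~\ref{lem:monoidal p} and~\ref{lem:idempotent} and of Proposition~\ref{prop:p psi support}.

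The part I expect to demand the most care is justifying the associativity isomorphisms across the two convolution structures involved --- the internal one on $D_{L_r}(L_r)$ and the action of $D_{L_r}(L_r)$ on $D(G_r/U_{P,r})$ by left translation. The point is that $B$ and $\cL_{\psi,r}\star_!\phi_! N'$ are both supported on the single $L_r$-coset $P_r/U_{P,r}$, where the left-translation action of $L_r$ is just group multiplication, so the two convolutions literally coincide there; one should also check, using that Lemma~\ref{lem:monoidal p} is an honest monoidal equivalence and all the other moves are plain associativity/idempotency isomorphisms, that no cohomological shifts sneak in.
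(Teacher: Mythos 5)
Your argument is correct and follows the paper's own route: reduce to $\phi_!$ via Proposition \ref{prop:p psi support}, invoke the monoidality of $\phi_!$ (Lemma \ref{lem:monoidal p}), and use idempotency of $\cL_{\psi,r}$ — you simply make explicit the redistribution of the idempotent across the two factors via its centrality, which the paper leaves implicit. One tiny slip: the $L$-coadjoint-fixedness of $X_\psi$ is part of Definition \ref{def:generic element} rather than being equivalent to $\mathfrak{ge2}$, but this does not affect the proof.
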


\begin{proof}
  By Lemma \ref{lem:monoidal p}, we have that $\phi_! \from D_{G_r}(G_r) \to D_{L_r}(G_r/U_{P,r})$ is monoidal with respect to $\star_!$. Since $\cL_{\psi,r} \star_! \pRes_{P_r!}^{G_r}(N) \cong \cL_{\psi,r} \star_! \phi_! N$ for any $N \in D_{G_r}^\psi(G_r)$ (Proposition \ref{prop:p psi support}), the corollary now holds using the fact that $\cL_{\psi,r}$ is idempotent (Lemma \ref{lem:idempotent}). 
\end{proof}

\begin{lemma}\label{lem:pRes summands}
  For any $N \in D_{G_r}^{\psi}(G_r)$, we have
  \begin{equation*}
    \phi_! N  \cong \bigoplus_{w \in W_L \backslash W/W_L} \phi_! N \star_! \pInd_{L_r \cap \ad(w)P_r!}^{L_r} \pRes_{\ad(w)L_r \cap P_r!}^{\ad(w)L_r}(\ad(w^{-1})^* \cL_{\psi,r}).
  \end{equation*}
\end{lemma}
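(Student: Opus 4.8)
\emph{Proof plan.} The idea is to recognise the right-hand side as a single right $L_r$-convolution against $\pRes_{P_r!}^{G_r}(\cF_{\psi,r})$ and then to evaluate that using the monoidality of $\phi_!$ and the idempotency of $\cF_{\psi,r}$. I would begin by applying the Mackey formula (Proposition \ref{prop:mackey}) with $P'=P$ and $M=\cL_{\psi,r}$; since $\pInd_{P_r!}^{G_r}(\cL_{\psi,r})\cong\cF_{\psi,r}$ by Proposition \ref{prop:Ind L psi}, this reads
\begin{equation*}
  \pRes_{P_r!}^{G_r}(\cF_{\psi,r})\cong\bigoplus_{w\in W_L\backslash W/W_L}T_w,\qquad T_w\colonequals\pInd_{L_r\cap\ad(w)P_r!}^{L_r}\pRes_{\ad(w)L_r\cap P_r!}^{\ad(w)L_r}(\ad(w^{-1})^*\cL_{\psi,r}).
\end{equation*}
As convolution distributes over direct sums, it then suffices to prove the single isomorphism $\phi_!N\cong\phi_!N\star_!\pRes_{P_r!}^{G_r}(\cF_{\psi,r})$, the $\star_!$ being taken for the right $L_r$-action on $G_r/U_{P,r}$.

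Next I would locate the support of $\phi_!\cF_{\psi,r}$. Since $\cF_{\psi,r}=\mfi_!\cF_\psi[\dim\mfg]$ with $\cF_\psi\in D(\mfg)$ and the composite $\mfg\hookrightarrow G_r\xrightarrow{\phi}G_r/U_{P,r}$ factors through the closed immersion $\mfg/\mfu\hookrightarrow G_r/U_{P,r}$ induced by $\phi$, functoriality of pushforward reduces the computation to $\phi_!\cF_\psi$ for $\mfg\to\mfg/\mfu$, which by Lemma \ref{lem:F psi image} is supported on $\mfp/\mfu\cong\mfl$. Hence $\phi_!\cF_{\psi,r}$ is supported on the image of $\mfl$, which lies in the closed subvariety $L_r\cong P_r/U_{P,r}\hookrightarrow G_r/U_{P,r}$ (closed, being the fibre over $eP_r$ of $G_r/U_{P,r}\to G_r/P_r$); call this immersion $\jmath$. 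Therefore $\phi_!\cF_{\psi,r}\cong\jmath_!\jmath^*\phi_!\cF_{\psi,r}$, and $\jmath^*\phi_!\cF_{\psi,r}\cong\pRes_{P_r!}^{G_r}(\cF_{\psi,r})$ (after forgetting equivariance), as one sees from Definition \ref{def:pInd pRes} by base change along the Cartesian square relating $\phi$ and $\jmath$ to the maps $P_r\hookrightarrow G_r$ and $P_r\to L_r$.

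Finally, for $N\in D_{G_r}^\psi(G_r)$ I would use that $\cF_{\psi,r}$ is a \emph{two-sided} idempotent, so that $N\cong N\star_!\cF_{\psi,r}$: indeed, the intrinsic Fourier-support description of the generic subcategory (Lemma \ref{lem:local support}, Proposition \ref{prop:free idempotent}) for the bundle $G_r\to G_{r-1}$ with fibre $\mfg$ is unchanged if one convolves with $\cF_{\psi,r}$ on the right, because the left and right translation actions of $\mfg=G_{r:r+}$ on $G_r$ differ by a conjugation fixing $\bbG(X_\psi)$. Applying the monoidal functor $\phi_!$ (Lemma \ref{lem:monoidal p}) then yields
\begin{equation*}
  \phi_!N\cong\phi_!(N\star_!\cF_{\psi,r})\cong\phi_!N\star_!\phi_!\cF_{\psi,r}\cong\phi_!N\star_!\jmath_!\Bigl(\bigoplus_{w}T_w\Bigr),
\end{equation*}
and since convolution in $D_{L_r}(U_{P,r}\backslash G_r/U_{P,r})$ by a kernel supported on $U_{P,r}\backslash P_r/U_{P,r}=L_r$ is precisely the right $L_r$-action, the last expression equals $\bigoplus_w\phi_!N\star_!T_w$.

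I expect the main obstacle to be the careful bookkeeping of left/right module structures and of shifts and Tate twists: one must check that $\cF_{\psi,r}$ placed on the \emph{right} of $N$ is carried by $\phi_!$ to the right $L_r$-convolution appearing in the statement, and that convolving in the double-coset (``Hecke'') category with a kernel supported on the Levi part collapses to the $L_r$-action. The two-sidedness of $\cF_{\psi,r}$ and the passage of Lemma \ref{lem:F psi image} from $\mfg$ to $G_r$ are secondary points that also require care.
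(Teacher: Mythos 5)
Your proposal is correct and follows essentially the same route as the paper's proof: identify $\phi_!\cF_{\psi,r}$ with $\pRes_{P_r!}^{G_r}(\cF_{\psi,r})$ via the support statement of Lemma \ref{lem:F psi image}, decompose it by the Mackey formula together with $\pInd_{P_r!}^{G_r}(\cL_{\psi,r})\cong\cF_{\psi,r}$, and then use $N\cong N\star_!\cF_{\psi,r}$ plus the monoidality of $\phi_!$ to conclude. The extra bookkeeping you flag (right versus left convolution, $\Ad$-invariance of $\bbG(X_\psi)$) is handled implicitly in the paper and does not change the argument.
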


\begin{proof}
  The end of the proof of Proposition \ref{prop:p psi support} shows that if $N \in D_{G_r}(G_r)$ has the property that $\phi_!N$ is supported on $P_r/U_{P,r} \cong L_r$, then $\phi_! N \cong \pRes_{P_r!}^{G_r}(N)$. Combining this with Lemma \ref{lem:F psi image}, we therefore have $\phi_! \cF_{\psi,r} \cong \pRes_{P_r!}^{G_r}(\cF_{\psi,r})$, which by Proposition \ref{prop:Ind L psi} and Proposition \ref{prop:mackey} yields
  \begin{equation}\label{eq:phi F psi}
    \phi_! \cF_{\psi,r} 
    \cong \bigoplus_{w \in W_L \backslash W / W_L} \pInd_{L_r \cap \ad(w)P_r!}^{L_r} \pRes_{\ad(w)L_r \cap P_r!}^{\ad(w)L_r}(\ad(w^{-1})^* \cL_{\psi,r}).
  \end{equation}
  Of course, for any $N \in D_{G_r}^\psi(G_r)$, we have $N \cong N \star_! \cF_{\psi,r}$, and therefore Lemma \ref{lem:monoidal p} implies $\phi_! (N \star_! \cF_{\psi,r}) \cong \phi_! N \star_! \phi_! \cF_{\psi,r}$, and so the conclusion follows from \eqref{eq:phi F psi}.
\end{proof}

\subsection{Harish-Chandra transform in the torus case}

In this subsection, we work in the special case that $L = T$ and $P = B$, $U_P = U$. 
Here, the behavior of the Harish-Chandra transform $\phi_! N$ of any $N \in D_{G_r}^\psi(G_r)$ (i.e.\ without needing to additionally project by convolving with $\cL_{\psi,r}$) already satisfies the nice properties established in Section \ref{subsec:harishchandra}.

\begin{proposition}\label{prop:torus p support}
  Let $N \in D_{G_r}^\psi(G_r)$.
  \begin{enumerate}[label=(\alph*)]
    \item $\phi_!N$ is supported on $B_r/U_r \cong T_r$.
    \item  $\pRes_{B_r!}^{G_r}(N) \cong \phi_!N$ and $\pInd_{B_r!}^{G_r}(\pRes_{B_r!}^{G_r}(N)) \cong \Av_{B_r!}^{G_r} \phi^* \phi_! N$.
  \end{enumerate}
\end{proposition}

\begin{proof}
  We know from Proposition \ref{prop:p psi support} that $\cL_{{}^w \psi,r} \star_! \phi_! N$ is supported on $T_r$, for every $w \in W$. To show that $\phi_! N$ is already supported on $T_r$, it is enough to show that $\cL_{\psi',r} \star_! \phi_! N = 0$ for any $X_{\psi'} \notin \{w^* X_\psi : w \in W\}$. By construction, we have $N \cong N \star_! \cF_{\psi,r}$, and by Lemma \ref{lem:monoidal p} and Proposition \ref{prop:Ind L psi}, we see that
  \begin{equation*}
    \cL_{\psi',r} \star_! \phi_! N \cong \cL_{\psi',r} \star_! \phi_! N \star_! \phi_! \pInd_{B_r}^{G_r}(\cL_{\psi,r}).
  \end{equation*}
  so that we see that the desired vanishing follows if we can show
  \begin{equation*}\label{eq:vanishing psi'}
    \cL_{\psi'} \star_! \phi_! \fpInd_{\mft}^{\mfg}(\cL_{\psi}) = 0,
  \end{equation*}
  where now $\phi \from \mfg \to \mfg/\mfu$. 
  By the same methodology as in the proof of Proposition \ref{prop:mackey}, we see that the above vanishing holds if 
  \begin{equation}\label{eq:vanishing psi' 2}
    \cL_{\psi'} \star_! \phi_! \pi_{w!} f_w^* \cL_\psi = 0,
  \end{equation}
  where $f_w, \pi_w$ denote restrictions of the usual $f,\pi$ to $\{(X,hB) \in \mfg \times BwB/B : \ad(h^{-1})(X) \in \mfb\}$. (Note here that there is no need to work with $\alpha$ since $f$ descends to a map on $\tilde \mfg$ due to the commutativity of $T$.)
  A simplification of the proof of Claim 2 in the proof of Proposition \ref{prop:mackey} shows that $\phi_! \pi_{w!} f_w^* \cL_\psi = \cL_{{}^w \psi}$ up to a shift. This implies \eqref{eq:vanishing psi' 2}, and so (a) is proved.

  The first assertion in (b) follows from (a) using the Cartesian square at the end of the proof of Proposition \ref{prop:p psi support}. From this, the second assertion holds via
  \begin{equation*}
    \pInd_{B_r!}^{G_r}(\pRes_{B_r!}^{G_r}(N)) \cong \Av_{B_r!}^{G_r} i_! p^* i_T^* \phi_! N \cong \Av_{B_r!}^{G_r} \phi^* \phi_! N. \qedhere
  \end{equation*}
\end{proof}

\begin{proposition}\label{lem:easy Res}
  \mbox{}
  \begin{enumerate}[label=(\alph*)]
    \item $\pRes_{B_r}^{G_r} \from D_{G_r}^\psi(G_r) \to D_{T_r}^\psi(T_r)$ is compatible with $\star_!$.
    \item The composition $\pInd_{B_r!}^{G_r} \pRes_{B_r!}^{G_r} \from D_{G_r}^\psi(G_r) \to D_{G_r}^\psi(G_r)$ is isomorphic to $\star_!$ convolution with $\pInd_{B_r!}^{G_r}(\pRes_{B_r!}^{G_r}(\cF_{\psi,r}))$.
  \end{enumerate}
\end{proposition}

\begin{proof}
  By Proposition \ref{prop:torus p support}, (a) now follows from Lemma \ref{lem:monoidal p}.

  We next prove (b). By Proposition \ref{prop:torus p support}, for any $N \in D_{G_r}^\psi(G_r)$, we have $\pRes_{B_r}^{G_r}(N) = \phi_! N$. Therefore, using Lemma \ref{lem:Av phi} and Lemma \ref{lem:idempotent}, we have
  \begin{align*}
    \pInd_{B_r!}^{G_r}(\pRes_{B_r!}^{G_r}(N))
    &\cong \Av_{B_r!}^{G_r} i_! \Infl_{T_r}^{B_r} \phi^* \phi_!(N) \\
    &\cong \Av_{B_r!}^{G_r} \phi^* \phi_! N \\
    &\cong \Av_{B_r!}^{G_r} \delta_{U_r} \star_! N \\
    &\cong \Av_{B_r!}^{G_r} \delta_{U_r} \star_! \cF_{\psi,r} \star_! N \\
    &\cong \pInd_{B_r!}^{G_r}(\pRes_{B_r!}^{G_r}(\cF_{\psi,r})) \star_! N. \qedhere
  \end{align*}
\end{proof}

\subsection{Generic parabolic induction is a $t$-exact equivalence}\label{subsec:Ind t-exact equivalence}

We come now come to the main theorem of this section.

\begin{theorem}\label{thm:equivalence}
  \mbox{}
  \begin{enumerate}[label=(\alph*)]
    \item $\cL_{\psi,r} \star_! \pRes_{P_r!}^{G_r}$ and $\pInd_{P_r!}^{G_r}$ are $t$-exact monoidal inverse equivalences between $D_{L_r}^\psi(L_r)$ and $D_{G_r}^\psi(G_r)$, and similarly for the functors $\cL_{\psi,r} \star_* \pRes_{P_r*}^{G_r}$ and $\pInd_{P_r*}^{G_r}$.
    \item $\cL_{\psi,r} \star_! \pRes_{P_r!}^{G_r} \cong \cL_{\psi,r} \star_* \pRes_{P_r*}^{G_r}$ on $D_{G_r}^\psi(G_r)$ and $\pInd_{P_r!}^{G_r} \cong \pInd_{P_r*}^{G_r}$ on $D_{L_r}^\psi(L_r)$.
    \item $\pInd_{P_r}^{G_r} \colonequals \pInd_{P_r!}^{G_r} \from D_{L_r}^\psi(L_r) \to D_{G_r}^\psi(G_r)$ is $t$-exact.
  \end{enumerate}
  In particular, $\pInd_{P_r}^{G_r}$ sends simple perverse sheaves lying in $D_{L_r}^\psi(L_r)$ to simple perverse sheaves lying in $D_{G_r}^\psi(G_r)$.
\end{theorem}

\begin{proof}
  We prove (a) first. We first check that $\pInd_{B_r!}^{G_r}\circ (\cL_\psi \star_! \pRes_{B_r!}^{G_r}) \cong \mathrm{Id}_{D_{G_r}^\psi(G_r)}$. Recall from Proposition \ref{prop:p psi support} that $\cL_{\psi,r} \star_! \pRes_{P_r!}^{G_r}(M) \cong \cL_{\psi,r} \star_! \phi_! M$. Hence by Lemma \ref{lem:CH HC}, we have
  \begin{equation*}
    \pInd_{P_r!}^{G_r}(\cL_{\psi,r} \star_! \pRes_{P_r!}^{G_r}(M)) 
    \cong \pInd_{P_r!}^{G_r}(\cL_{\psi,r}) \star_! M.
  \end{equation*}
  By Lemma \ref{lem:Ind L psi}, $\pInd_{P_r!}^{G_r}(\cL_{\psi,r}) = \cF_{\psi,r}$, and so in fact
  \begin{equation*}
    \pInd_{P_r!}^{G_r}(\cL_{\psi,r} \star_! \pRes_{P_r!}^{G_r}(M)) \cong \cF_{\psi,r} \star_! M \cong M,
  \end{equation*}
  where the last equality holds by Lemma \ref{lem:idempotent}. We next show the other composition---that $\cL_{\psi,r} \star_! \pRes_{B_r!}^{G_r} \circ \pInd_{B_r!}^{G_r} \cong \mathrm{Id}_{D_{L_r}^\psi(L_r)}$. 
  By the idempotency of $\cL_{\psi,r}$ (Lemma \ref{lem:idempotent}) and the orthogonality $\cL_{\psi,r} \star_! D_{L_r \cap \ad(w)L_r}^{\psi_w}(L_r \cap \ad(w)L_r) = 0$ for $w \notin W_L$ ($\mathfrak{ge2}$ and Lemma \ref{lem:idempotent orthog}), it follows from Proposition \ref{prop:Res Ind} that
  \begin{equation*}
    \cL_{\psi,r} \star_! \pRes_{P_r!}^{G_r}(\pInd_{P_r!}^{G_r}(N)) 
    \cong N.
  \end{equation*} 
  We may now conclude that $\cL_{\psi,r} \star_! \pRes_{P_r!}^{G_r}$ and $\pInd_{P_r!}^{G_r}$ are inverse equivalences of each other. The monoidality of $\pInd_{P_r!}^{G_r} \from D_{L_r}^\psi(L_r) \to D_{G_r}^\psi(G_r)$ now follows from the monoidality of $\cL_{\psi,r} \star_! \pRes_{P_r!}^{G_r} \from D_{G_r}^\psi(G_r) \to D_{L_r}^\psi(L_r)$ (Corollary \ref{cor:monoidal}). This proves (a) for the $!$ functors, and the proof is completely parallel for the $*$ functors.
  
  Next let us establish (b). By Lemma \ref{lem:adjointness}, we know that $\pRes_{P_r!}^{G_r}$ is left adjoint to $\pInd_{P_r*}^{G_r}$, which means that for any $N \in D_{G_r}(G_r)$ and any $M \in D_{L_r}(L_r)$, we have
  \begin{equation*}
    \Hom_{D_{G_r}(G_r)}(N, \pInd_{P_r*}^{G_r}(M)) \cong \Hom_{D_{L_r}(L_r)}(\pRes_{P_r!}^{G_r}(N), M).
  \end{equation*}
  We showed in Proposition \ref{prop:generic induction} that $\pInd_{P_r!}^{G_r}(D_{L_r}^\psi(L_r)) \subset D_{G_r}^\psi(G_r)$ and therefore we see that for any $N \in D_{G_r}^\psi(G_r)$ and $M \in D_{L_r}^\psi(L_r)$,
  \begin{align*}
    \Hom_{D_{G_r}^\psi(G_r)}{}&{}(N, \pInd_{P_r*}^{G_r}(M)) \\
    &\cong \Hom_{D_{L_r}^\psi(L_r)}(\cL_{\psi,r} \star_! \pRes_{P_r!}^{G_r}(N), M).
  \end{align*}
  On the other hand, the right adjoint of an equivalence must be its inverse equivalence, and so from (a) it follows that in fact $\pInd_{P_r!}^{G_r} \cong \pInd_{P_r*}^{G_r}$. Furthermore, analogously to above, $\pInd_{P_r!}^{G_r}$ is left adjoint to $\cL_{\psi,r} \star_* \pRes_{P_r*}^{G_r}$, and since we know from (a) that $\cL_{\psi,r} \star_! \pRes_{P_r!}^{G_r}$ is an inverse equivalence to $\pInd_{P_r!}^{G_r}$, we now have $\cL_{\psi,r} \star_* \pRes_{P_r*}^{G_r} \cong \cL_{\psi,r} \star_! \pRes_{P_r!}^{G_r}$. This proves (b). 

  For (c), we first recall that by Proposition \ref{prop:p psi support}, we have $\cL_{\psi,r} \star_! \pRes_{P_r!}^{G_r}(N) \cong \cL_{\psi,r} \star_! \phi_!N$, and by Lemma \ref{lem:pRes summands}, we have that $\cL_{\psi,r} \star_! \phi_!N$ is a direct summand of $\phi_! N$ for any $N \in D^\psi_{G_r}(G_r)$.  Therefore by Artin's theorem applied to $\phi$, we have that $\cL_{\psi,r} \star_! \pRes_{P_r!}^{G_r}$ is left $t$-exact for all parabolic subgroups $P$ which have Levi component $L$. By the analogous argument for $*$, we have that $\cL_{\psi,r} \star_* \pRes_{P_r*}^{G_r}$ is right $t$-exact for all such $P$. By hyperbolic localization, we have $\pRes_{P_r*}^{G_r} \cong \pRes_{P_r^-!}^{G_r}$, allowing us to conclude that $\cL_{\psi,r} \star_! \pRes_{P_r!}^{G_r}$ and $\cL_{\psi,r} \star_* \pRes_{P_r*}^{G_r}$ are both $t$-exact. By (a), it follows that $\pInd_{P_r*}^{G_r} \cong \pInd_{P_r!}^{G_r}$ is $t$-exact.
  
  The final assertion follows from Proposition \ref{prop:free idempotent}.
\end{proof}

\section{Construction of character sheaves on $G_r$}\label{sec:generic char sheaves}

We have shown that $(L,G)$-generic parabolic induction preserves perversity. In this section, we demonstrate that this construction can be iterated to produce many conjugation-equivariant perverse sheaves on $G_r$ starting from the data of any conjugation-equivariant perverse sheaf on a connected reductive group and a compatible sequence of generic rank-1 multiplicative local systems on Levi subgroups. 

This is decidedly inspired from the structure of Yu's construction \cite{Yu01} of supercuspidal representations of $p$-adic groups and Kim--Yu's generalization \cite{KY17} to the construction of types. The constructions in \textit{op. cit.\ }are $F$-rational, but here we work geometrically, at the level of $F^{\ur}$. In the context of Yu and Kim--Yu, our constructions can be viewed as sheaf-theoretic incarnations of their constructions in the case that everything splits over $F^{\ur}$. As such, our ``start'' torus $T$ can always be taken to be a \textit{split} maximal torus. Of course, as in the classical setting of Lusztig's character sheaves, these objects still know about the representation theory even when $T$ descends to a non-split torus rationally. In Section \ref{sec:comparison}, we will see the first instance of a highly nontrivial relationship between our construction of conjugation-equivariant perverse sheaves and Yu's construction of supercuspidal types.

\subsection{Clipped generic data}

We consider the notion of a \textit{clipped generic datum} (see \cite[Definition 5.3]{CO23}).

\begin{definition}
\label{def:generic datum}
  A \textit{clipped generic datum} is a 5-tuple $\dashover\Psi \colonequals (T, \vec G, \x, \vec r, \vec \cL)$ satisfying:
  \begin{enumerate}
    \item[\textbf{D0}] $T$ is a split maximal torus of $G$
    \item[\textbf{D1}] $\vec G = (G^0, G^1, \ldots, G^d)$ is a strictly increasing sequence of Levi subgroups of $G$ which contain $T$; we assume $G^d = G$
    \item[\textbf{D2}] $\x$ is a point in the apartment of $T$ in $G$ 
    \item[\textbf{D3}] $\vec r = (r_0, r_1, \ldots, r_d)$ is a sequence of integers satisfying $0 < r_0 < r_1 < \cdots < r_{d-1} \leq r_d$ if $d > 0$ and $0 \leq r_0$ if $d = 0$.
    \item[\textbf{D5}] $\vec \cL = (\cL_0, \cL_1, \ldots, \cL_d)$ is a sequence where for $0 \leq i \leq d$, $\cL_i$ is a rank-1 multiplicative local system on $G_{r_i}^i$ which is $(\mfg^i,\cL_{\psi_i})$-equivariant for a $(G^i,G^{i+1})$-generic element $X_{\psi_i} \in (\mfg^i)^*$; if $r_{d-1} = r_d$, we assume $\cL_d$ is the constant local system
  \end{enumerate}
\end{definition}

To handle edge cases, we put $r_{-1} = 0$. 

\begin{definition}
  Let $\dashover \Psi$ be any clipped generic datum. For $0 \leq i \leq d-1$, choose a parabolic subgroup $P^i$ of $G^{i+1}$ whose Levi subgroup is $G^i$; write $\vec P = (P^0, P^1, \ldots, P^{d-1})$. Define the functor
  \begin{equation*}
    \Ind_{\dashover \Psi, \vec P} \from D_{G_0^0}(G_0^0) \to D_{G_r}(G_r), \qquad \cK_{-1} \mapsto \cK_d,
  \end{equation*}
  where
  \begin{equation*}
    \cK_i \colonequals (\pi^i)^\dagger \pInd_{P_{r_{i-1}}^{i-1}}^{G_{r_{i-1}}^i}(\cK_{i-1}) \otimes \cL_i, \qquad \text{for $0 \leq i \leq d$},
  \end{equation*}
  where we write $\pi^i$ for the corresponding quotient maps $G_{r_i}^i \to G_{r_{i-1}}^i$ for $0 \leq i \leq d$ and $(\pi^i)^\dagger = (\pi^i)^*[\dim G_{r_{i-1}+:r_i+}^i]$ for smooth pullback.
\end{definition}

From Theorem \ref{thm:equivalence}, we obtain the following corollary:

\begin{theorem}\label{thm:clipped datum}
  To any clipped  generic datum $\dashover \Psi$ and any associated $\vec P$,
  \begin{equation*}
    \Ind_{\dashover \Psi, \vec P} \from D_{G_0^0}(G_0^0) \to D_{G_r}(G_r)
  \end{equation*}
  is $t$-exact, monoidal, fully faithful, and its image is closed under taking subquotients. 
\end{theorem}

\begin{proof}
  Since $\Ind_{\dashover \Psi, \vec P}$ is constructed inductively, it suffices to show that for each $i$, the functor
  \begin{equation*}
    D_{G_{r_{i-1}}^{i-1}}^{\psi_{i-1}}(G_{r_{i-1}}^{i-1}) \to D_{G_{r_i}^i}^{\psi_i}(G_{r_i}^i), \qquad \cK_{i-1} \mapsto (\pi^i)^\dagger \pInd_{P_{r_{i-1}}^{i-1}}^{G_{r_{i-1}}^i}(\cK_{i-1}) \otimes \cL_i
  \end{equation*}
  satisfies the desired adjectives. The above functor is given by a composition of the functors 
  \begin{align*}
    \pInd_{P_{r_{i-1}}^{i-1}}^{G_{r_{i-1}}^{i}} \from D_{G_{r_{i-1}}^{i-1}}^{\psi_{i-1}}(G_{r_{i-1}}^{i-1}) &\to D_{G_{r_{i-1}}^{i}}^{\psi_{i-1}}(G_{r_{i-1}}^{i}) \\
    (\pi^i)^\dagger - \otimes \cL_i \from D_{G_{r_{i-1}}^{i}}^{\psi_{i-1}}(G_{r_{i-1}}^{i}) &\to D_{G_{r_{i}}^{i}}^{\psi_i}(G_{r_{i}}^{i})
  \end{align*}
  where we note that the second functor has image contained in $D_{G_{r_i}^i}^{\psi_i}(G_{r_i}^i)$ exactly because of the assumption that $r_{i-1} < r_i$. The second functor is obviously monoidal, $t$-exact, and fully faithful. For the first functor, the required properties are the content of Theorem \ref{thm:equivalence}.
\end{proof}

In particular, Theorem \ref{thm:clipped datum} allows us to build an irreducible \textit{character sheaf} on $G_r$ from the datum of:
\begin{enumerate}
  \item a clipped generic datum $\dashover \Psi$ and an associated $\vec P$
  \item any irreducible character sheaf $\cK_{-1}$ of the connected reductive group $G_0^0$ over $k$.
\end{enumerate}
That is to say, by completing a clipped generic datum $\dashover \Psi = (T, \vec G, \x, \vec r, \vec \cL)$ to a \textit{generic datum} $\Psi \colonequals (T, \vec G, \x, \vec r, \vec \cL, \cK_{-1})$, Theorem \ref{thm:clipped datum} constructs from $\cK_{-1}$ a positive-depth character sheaf on $G_r$. (These constructions work equally well for weak clipped generic data, but for this next part, we choose to work with clipped generic data only so that we may make use of Theorem \ref{thm:equivalence}.) We summarize this for easy reference:

\begin{theorem}[sheaf associated to a datum]\label{thm:generic character sheaf}\label{thm:datum}
  Let $\Psi \colonequals (T,\vec G,\x,\vec r, \cF_\rho, \vec \cL)$ be a generic datum and $\vec P$ an associated sequence of parabolic subgroups of $G$. Then
  \begin{equation*}
    \cK_{\Psi} \colonequals \Ind_{\dashover \Psi, \vec P}(\cK_{-1})
  \end{equation*}
  is a simple conjugation-equivariant perverse sheaf on $G_r$.
\end{theorem}

\subsection{Howe factorization}\label{subsec:howe}

We describe a special case of Theorem \ref{thm:datum} which may be of particular interest. This section depends on the existence of \textit{Howe factorizations} of local systems of $T_r$ established by Kaletha \cite{Kal19}. As such, let us assume as in \textit{op.\ cit.\ }that $p$ is odd, not bad for $G$, and that $p \nmid |\pi_1(G_{\der})|$ and $p \nmid |\pi_1(\widehat G)|$ (note that the latter two conditions are implied by the non-badness of $p$ unless a component
of type $A_n$ is present).

\begin{definition}
  A \textit{Howe factorization} for a multiplicative local system $\cL$ on $T_r$ is a sequence  $\vec \cL = (\cL_{-1}, \cL_0, \cL_1, \ldots, \cL_d)$ where:
  \begin{enumerate}[label=\textbullet]
    \item there is a strictly increasing sequence $\vec G = (G^0, G^1, \ldots, G^d)$ of Levi subgroups of $G$ which contain $T \equalscolon G^{-1}$,
    \item for each $-1 \leq i \leq d$, $\cL_i$ is a multiplicative local system on $G_{r_i}^i$ which is $(G^i,G^{i+1})$-generic if $i < d$,
    \item for each $-1 \leq i \leq d-2$, we have $0 < r_i < r_{i+1}$ and we write $\vec r = (r_0, r_1, \ldots, r_d = 0)$,
    \item $\cL = \cL_{-1} \otimes \cL_0|_T \otimes \cdots \otimes \cL_d|_T$, where $\cL_i|_T$ is interpreted to mean the restriction of $\cL_i$ to $T_{r_i}$ viewed as a sheaf on $T_r$.
  \end{enumerate}
\end{definition}

Although Howe factorizations are not uniquely determined by $\cL$, both the sequence of Levi subgroups $\vec G$ and the sequence of depths $\vec r$ \textit{are}. In \textit{op. cit.}, Kaletha proves that any character has a Howe factorization.

\begin{proposition}[existence of Howe factorization \cite{Kal19}]\label{prop:howe}
  Any multiplicative local system $\cL$ on $T_r$ has a Howe factorization. Furthermore, if $\sigma$ is the Frobenius associated to an $F$-rational structure on $T \hookrightarrow G$ and $n$ is a positive integer such that $\sigma^n{}^* \cL \cong \cL$, then its Howe factorization can also be chosen to be $\sigma^n$-equivariant. 
\end{proposition}

\begin{proof}
  We give a sketch of Kaletha's construction. Associated to $\cL$ is a family of subsets 
  \begin{equation*}
    \Phi_s \colonequals \{\alpha \in \Phi(G,T) : ((\alpha^\vee)^* \cL)|_{\bbW_s} \cong \overline \QQ_\ell\} \subset \Phi(G,T), \qquad s \in \bbR_{\geq 0},
  \end{equation*}
  where $\bbW_s$ denotes the $s$th filtration of the Witt ring $\bbW$. The assumption on $p$ implies that each $\Phi_s$ is a Levi subsystem of $\Phi(G,T)$ (Lemma 3.6.1 of \textit{op. cit.}); these determine $\vec G$ and $\vec r$. One then constructs $\cL_{-1}, \cL_0, \ldots, \cL_d$ inductively. If $\Phi_r \subsetneq \Phi(G,T)$, then we take $\cL_d = \overline \QQ_\ell$. If $\Phi_r = \Phi(G,T)$, then $\cL$ restricts to the trivial local system on $\mfi(\mft) \cap G_{\der}$. By assumption, $\sigma^n{}^* \cL \cong \cL$ and therefore $\cL$ corresponds to a character on the $F_n$-points of the $r$th filtration of the torus $G/G_{\der}$ which is a quotient of $\mfi(\mft)(\FF_{q^n})$ by Lemma 3.1.3 of \textit{op. cit.}. (Here, $G_{\der}$ denotes the derived subgroup of $G$ and  $F_n$ denotes the degree-$n$ unramified extension of $F$.) Therefore $\cL|_{\mfi(\mft)}$ can be viewed as a $\sigma^n$-equivariant multiplicative local system on a subgroup scheme of the torus $G/G_{\der}$. Choose any extension of this to a $\sigma^n$-equivariant multiplicative local system on $G/G_{\der}$ and define $\cL_d$ to be its pullback. Then $\cL \otimes \cL_d^{-1}|_{T_r}$ can be viewed as a local system on $T_{r_{d-1}}$ is $(G^{d-1},G^d)$-generic using Lemma 3.6.8 and Corollary 3.6.10 as in Proposition 3.6.7, all in in \textit{op. cit.}.
\end{proof}

The upshot of Proposition \ref{prop:howe} is that from a Howe factorization $\vec \cL$ of $\cL$, we may extract a clipped generic datum $\dashover \Psi_{\vec \cL}$ by forgetting $\cL_{-1}$. We may then apply Theorem \ref{thm:clipped datum}:

\begin{corollary}
  Let $\cL$ be any multiplicative local system on $T_r$. For any Howe factorization $\vec \cL$ and any associated sequence of parabolic subgroups $\vec P$, we may assign a semisimple conjugation-equivariant perverse sheaf 
  \begin{equation*}
    \cK_{\vec \cL, \vec P} \colonequals \Ind_{\dashover \Psi, \vec P}(\pInd_{B_0^0}^{G_0^0}(\cL_{-1})). 
  \end{equation*}  
  In particular, if $\cL$ has trivial $W$-stabilizer, then $\cK_{\vec \cL, \vec P}$ is simple.
\end{corollary}

\begin{proof}
  Theorem \ref{thm:clipped datum} implies that $\Ind_{\dashover \Psi, \vec P}$ maps semisimple perverse sheaves to semisimple perverse sheaves. The semisimplicity and perversity of $\cK_{\vec \cL, \vec P}$ therefore follows from the semisimplicity and perversity of $\pInd_{B_0^0}^{G_0^0}(\cL_{-1})$, which holds by the decomposition theorem together with the fact that $\pi$ is proper and small when $r=0$ \cite[Proposition 1.2]{Lus84}. 
  The last sentence holds since $\cL$ having trivial stabilizer in the Weyl group $W$ of $G$ is equivalent to $\cL_{-1}$ having trivial stabilizer in the Weyl group $W_0$ of $G^0$, and then $\pInd_{B_0^0}^{G_0^0}(\cL_{-1})$ is simple by Proposition 4.5 of \textit{op. cit}.
\end{proof}

We shall see later (see Theorem \ref{thm:IC vreg}) that in fact $\cK_{\vec \cL, \vec P}$ is independent of the choice of Howe factorization $\vec \cL$ and independent of the choice of parabolic subgroups $\vec P$.

\section{Intermediate extension from the very regular locus}\label{sec:IC T vreg}

For the rest of the paper, we focus on studying $(T,G)$-generic parabolic induction. In this section and the next, we give alternative descriptions of $\pInd_{B_r}^{G_r}(\cL)$ for $(T,G)$-generic multiplicative local systems $\cL$ on $T_r$. The first (Theorem \ref{thm:IC vreg}) implies in particular that if $\cL$ is Frobenius-equivariant, then so is $\pInd_{B_r}^{G_r}(\cL)$, and therefore it makes sense to consider its trace-of-Frobenius function. The second description (Theorem \ref{thm:Borel sequence}) will provide for us a framework wherein we can establish that the trace-of-Frobenius functions associated to $(T,G)$-generic parabolic induction in fact coincides, up to a sign, with the character of the representation obtained by the corresponding parahoric Deligne--Lusztig induction (Theorem \ref{thm:comparison}).

\subsection{Very regular elements}

We first recall the notion of \textit{very regularity}, following \cite[Definition 5.1]{CI21-RT} \cite[Definition 4.2]{CO21}:

\begin{definition} 
  We say $\gamma \in \cG_{\x,0}$ is \textit{very regular} if:
  \begin{enumerate}
    \item the connected centralizer $T_\gamma$ of $\gamma$ in $G$ is a maximal torus,
    \item the apartment of $T_{\gamma}$ contains $\x$,
    \item $\alpha(\gamma) \not\equiv 1$ modulo $\mfp$ for all roots $\alpha$ of $T_\gamma$ in $G$.
  \end{enumerate}
  We say an element in $G_r$ is very regular if it is the image of a very regular element of $\cG_{\x,0}$.
\end{definition}

We write $G_{r,\vreg}$ to denote the locus of very regular elements in $G_r$ and let $j_{\vreg} \from G_{r,\vreg} \hookrightarrow G_r$ denote the inclusion.   Note that $G_{r,\vreg}$ is a subvariety of the preimage, under the natural map $G_r \to G_0$, of the regular semisimple locus $G_{0,\rss}$ of the reductive quotient $G_0$ of $\cG_{\x,0}$. If $\x$ is hyperspecial, then in fact $G_{r,\vreg}$ is the entire preimage of $G_{0,\rss}$.

Let $T_{r,\vreg} \colonequals T_r \cap G_{r,\vreg}$. Set
\begin{equation*}
  \widetilde G_{r,\vreg} \colonequals \{(g,hT_r) \in G_r \times G_r/T_r : h^{-1} g h \in T_r\}
\end{equation*} 
and consider the maps
\begin{equation*}
  \begin{tikzcd}
    & \widetilde G_{r,\vreg} \ar{dl}[above left]{f_{\vreg}} \ar{dr}{\pi_{\vreg}} \\
    T_{r,\vreg} && G_{r,\vreg}
  \end{tikzcd}
\end{equation*}
given by:
\begin{equation*}
  f_{\vreg}(g,hT_r) = h^{-1} g h, \qquad \pi_{\vreg}(g,hT_r) = g.
\end{equation*}

\begin{lemma}\label{lem:tilde Gvreg}
  The map $(g,hT_r) \mapsto (g,hB_r)$ defines an isomorphism
  \begin{equation*}
    \widetilde G_{r,\vreg} \cong \{(g,hB_r) \in G_{r,\vreg} \times G_r/B_r : h^{-1}gh \in B_r\} \subset \widetilde G_r.
  \end{equation*}
  Moreover, under this isomorphism, $f_{\vreg}$ and $\pi_{\vreg}$ correspond to restrictions of the maps $f$ and $\pi$ defined in Definition \ref{def:f and pi} and are both $W$-torsors.
\end{lemma}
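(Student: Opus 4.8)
The plan is to prove the two assertions in order: first that $(g,hT_r) \mapsto (g,hB_r)$ is an isomorphism onto the indicated subvariety of $\widetilde G_r$, and then that the resulting maps $f_{\vreg}, \pi_{\vreg}$ are $W$-torsors over $G_{r,\vreg}$ (equivalently, that the original maps are).

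For the isomorphism, the point is that for a very regular element $g \in G_{r,\vreg}$ and $h \in G_r$, the condition $h^{-1}gh \in B_r$ already forces $h^{-1}gh \in T_r$. First I would reduce to the image $\bar g$ of $g$ in $G_0$: very regularity of $g$ implies $\bar g$ is a regular semisimple element of $G_0$ whose centralizer torus contains the image of $T$, and the condition $\alpha(\gamma) \not\equiv 1 \pmod{\mfp}$ means $\bar g$ acts on each root space $\mathfrak u_\alpha$ (and more precisely on each graded piece $\mathfrak u_{\alpha, s:s+}$ of the relevant Moy--Prasad filtration) by a scalar different from $1$. So if $h^{-1}gh \in B_r$, writing $h^{-1}gh = t\cdot u$ with $t \in T_r$ and $u \in U_r$, conjugating by the $T_r$-part and using the Iwahori-type filtration of $U_r$, one argues inductively up the filtration that $u = 1$: at each level the failure of $\bar g$ (equivalently $\bar t$) to fix a nonzero vector in $\mathfrak u_{\alpha,s:s+}$ lets one absorb the bottom nonzero layer of $u$ by a conjugation. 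This is essentially the same computation that appears in Lemma \ref{lem:1} and Lemma \ref{lem:2}, run in the very regular regime; I would cite those mechanics rather than redo them. Once $h^{-1}gh \in T_r$, the map to $(g,hB_r)$ is visibly injective (two cosets $hB_r = h'B_r$ with both conjugates landing in $T_r$ differ by $N_{G_r}(T_r)$ already at the level of $T_r$-cosets so $hT_r$ is determined), and surjective onto the displayed set by the argument just given; and the inverse is algebraic, so it is an isomorphism of varieties. Compatibility of $f_{\vreg}, \pi_{\vreg}$ with $f, \pi$ is then immediate from the formulas, noting that on this locus $p(h^{-1}gh) = \beta(h^{-1}gh) = h^{-1}gh$ since $h^{-1}gh \in T_r$.

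For the torsor statement: $W = N_{G_r}(T_r)/T_r$ acts on $\widetilde G_{r,\vreg}$ by $\dot w \cdot (g,hT_r) = (g, h\dot w^{-1} T_r)$ (well-defined since $\dot w$ normalizes $T_r$, hence $\dot w^{-1} h^{-1} g h \dot w \in T_r$), commuting with $\pi_{\vreg}$, and it acts on fibers. I need: (i) $\pi_{\vreg}$ is surjective — for $g$ very regular, $\bar g$ lies in some maximal torus of $G_0$ containing the image of $T$, and by the building-theoretic input in the definition of very regular (the apartment of $T_\gamma$ contains $\x$) one can conjugate $g$ into $T_r$ by an element of $G_r$, so the fiber is nonempty; (ii) the fiber over $g$ is exactly a $W$-orbit and the action is free — if $h^{-1}gh, h'^{-1}gh' \in T_r$ then $g':=h^{-1}gh$ and $g'':=h'^{-1}gh'$ are $G_r$-conjugate elements of $T_r$ with regular semisimple image, so $h'h^{-1}$ conjugates $g'$ to $g''$; the connected centralizer of $g'$ in $G_r$ is $T_r$ (again by very regularity: no root of $T$ can act trivially at any filtration level), so $h'h^{-1} \in N_{G_r}(T_r)\cdot T_r = N_{G_r}(T_r)$, giving a unique $w \in W$ with $\dot w \cdot (g,hT_r) = (g,h'T_r)$, and freeness is the statement that $h\dot w^{-1}T_r = hT_r$ forces $\dot w \in T_r$. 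Finally I would note the action is in fact simply transitive on geometric fibers and, since everything is over the perfect base and the schemes are smooth, this (together with local triviality, which holds as $G_r \to G_r/B_r$ is a Zariski-locally trivial $B_r$-bundle so $\pi_{\vreg}$ is a finite étale cover of degree $|W|$ with a free transitive $W$-action) gives that $\pi_{\vreg}$, and hence also $f_{\vreg}$, is a $W$-torsor.

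The main obstacle is the first reduction: cleanly establishing that $h^{-1}gh \in B_r \Rightarrow h^{-1}gh \in T_r$ for very regular $g$. The subtlety is that it is not enough to work modulo $\varpi$ (i.e.\ in $G_0$) — one must propagate the vanishing of the unipotent part through every graded piece of the Moy--Prasad filtration of $U_r$, and the relevant non-vanishing is precisely the condition $\alpha(\gamma)\not\equiv 1 \pmod{\mfp}$ rather than a strict inequality of valuations. I expect this to be a short but slightly delicate filtration induction, closely parallel to the vanishing arguments in Lemmas \ref{lem:1}--\ref{lem:2}, and I would present it in that style. Everything after that — injectivity, the centralizer computation, freeness and transitivity of the $W$-action, local triviality — is formal.
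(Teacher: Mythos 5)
Your central claim is false as stated, and it is load-bearing. You assert that for very regular $g$, the condition $h^{-1}gh \in B_r$ already forces $h^{-1}gh \in T_r$, and you build both directions of the isomorphism on this. Take $g = t \in T_r$ very regular and $h = u_\alpha(x) \in U_r$ with $x \neq 0$: then $h^{-1}gh = t \cdot u_\alpha\bigl((1 - \alpha(t)^{-1})x\bigr)$, whose unipotent part is nonzero precisely \emph{because} very regularity makes $\alpha(t) - 1$ a unit in $\breve\cO/\varpi^{r+1}\breve\cO$. So $h^{-1}gh \in B_r \smallsetminus T_r$. What is true — and what your filtration sketch actually establishes — is that there is a \emph{unique} $v \in U_r$ with $v^{-1}(h^{-1}gh)v \in T_r$; equivalently, inside $hB_r$ there is a unique $T_r$-coset $h'T_r = hvT_r$ with $h'^{-1}gh' \in T_r$. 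Your inductive step ("absorb the bottom nonzero layer of $u$ by a conjugation") is exactly the construction of this $v$, but you then misread the output as ``$u = 1$'' and reason from ``$h^{-1}gh \in T_r$'' for the original $h$, which is wrong. The argument needs to be restated as producing a conjugator, not as a property of the original element; surjectivity of the map $(g,hT_r)\mapsto(g,hB_r)$ is the existence of such $v$, and the uniqueness you need for injectivity/well-definedness is the uniqueness of $v$.

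The paper's proof runs the same mechanics but phrases them correctly: it cites \cite[Proposition 5.5]{CI21-RT} for the existence of a unique $w \in W$ with $h \in \dot w B_r$, notes $\dot w^{-1}g\dot w \in T_{r,\vreg}U_r$, writes $\dot w^{-1}g\dot w = tu$, invokes very regularity of $t$ to produce the unique $v \in U_r$ with $t^{-1}vt\cdot v^{-1} = u$, and concludes that $\dot w v T_r$ is the unique preimage of $(g,hB_r)$. Your $W$-torsor discussion is more explicit than the paper's (which leaves the torsor claim implicit in the uniqueness of $w$) and is essentially sound modulo a sign typo ($h'h^{-1}$ should be $h^{-1}h'$ as the element conjugating $g' = h^{-1}gh$ to $g'' = h'^{-1}gh'$), though the appeal to Zariski local triviality of $G_r \to G_r/B_r$ is not quite the right justification that $\pi_{\vreg}$ is a torsor — what you really use is that a free action of a finite group with a geometric quotient gives étale local triviality.
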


\begin{proof}
  Note that in this setting, since $T_r$ is commutative, Definition \ref{def:f and pi} can be simplified: we may define $f \from \widetilde G_r \to T_r$ via $f(g, hB_r) = p(h^{-1} g h)$ and then the $\widetilde{f^* M}$ for $f \from \widehat G_r \to T_r$ in Lemma \ref{lem:pInd} is simply $f^*M$. 

  By \cite[Proposition 5.5]{CI21-RT}, if $g \in G_{r,\vreg}$ and $h \in G_r$ is such that $h^{-1} g h \in B_r$, then there exists a unique $w \in W$ such that for any lift $\dot w$, we have $h \in \dot w B_r$. Furthermore, it follows from this that we have $\dot w^{-1} g \dot w \in T_{r,\vreg} U_r$. For any $u \in U_r$ and any $t \in T_{r,\vreg}$, the very regularity of $t$ implies that there exists a unique $v \in U_r$ such that $t^{-1} v t \cdot v^{-1} = u$. Applying this to $\dot w^{-1} g \dot w = tu$, we see that $\dot w v T_r$ is the unique element of $G_r/T_r$ such that $(\dot w v)^{-1} g (\dot w v) \in T_r$ and such that $\dot w v B_r = hB_r$. 
\end{proof}

\subsection{Intersection cohomology complexes}\label{subsec:IC generic}

Lusztig's conjecture \cite[8(a)]{MR2181813} (see Section \ref{subsec:intro_lusztig}) predicts that if $X_\psi$ is $(T,G)$-generic, then for any multiplicative local system $\cL \in D_{T_r}^\psi(T_r)$, the positive-depth parabolic induction $\pInd_{B_r}^{G_r}(\cL)$ is an IC sheaf.  We can now establish this:

\begin{theorem}\label{thm:IC vreg}
  For any multiplicative local system $\cL \in D_{T_r}^\psi(T_r)$, we have
  \begin{equation*}
    \pInd_{B_r}^{G_r}(\cL[\dim T_r]) \cong (j_{\vreg})_{!*} ((\pi_{\vreg})_! f_{\vreg}^* \cL_{\vreg}[\dim G_r]),
  \end{equation*}
  where $\cL_{\vreg}$ denotes the restriction of $\cL$ to $T_r \cap G_{r,\vreg}$.
\end{theorem}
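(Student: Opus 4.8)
The plan is to reduce to the following standard fact about intermediate extensions: if $X$ is a variety, $U \subset X$ an open dense subset with complement of codimension $\geq$ something, and $K$ a simple perverse sheaf on $X$ with no subobjects or quotients supported on $X \setminus U$, then $K \cong (j_U)_{!*}(K|_U)$. Concretely, I would first invoke Theorem~\ref{thm:t exact}(c): since $\cL[\dim T_r]$ is a simple perverse sheaf in $D_{T_r}^\psi(T_r)$ (a multiplicative local system on the torus $T_r$, shifted), $\pInd_{B_r}^{G_r}(\cL[\dim T_r])$ is a simple $G_r$-equivariant perverse sheaf on $G_r$, call it $K$. In particular $K$ has no nonzero subobject or quotient supported on the closed complement $G_r \setminus G_{r,\vreg}$, because $K$ is simple and (as we must check) $K|_{G_{r,\vreg}} \neq 0$. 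This is precisely the hypothesis needed to conclude $K \cong (j_{\vreg})_{!*}(K|_{G_{r,\vreg}})$.

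So the two things to verify are: (i) $K$ restricted to $G_{r,\vreg}$ is isomorphic to $(\pi_{\vreg})_! f_{\vreg}^* \cL_{\vreg}[\dim G_r]$, and (ii) $G_{r,\vreg}$ is open in $G_r$ and $K|_{G_{r,\vreg}}$ is nonzero (and lives in the right perverse degree so that the intermediate extension statement is literally as stated). For (i): by Lemma~\ref{lem:pInd}, $\For(\pInd_{B_r}^{G_r}(\cL[\dim T_r])) \cong \pi_! f^* \For(\cL)[\dim T_r + 2\dim U_r] = \pi_! f^* \cL[\dim G_r]$ using $\dim G_r = \dim T_r + 2\dim U_r$, where $f,\pi$ are the maps of Definition~\ref{def:f and pi} with $P = B$, $L = T$. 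Restricting to $G_{r,\vreg}$ and applying proper base change along the Cartesian square
\begin{equation*}
  \begin{tikzcd}
    \pi^{-1}(G_{r,\vreg}) \ar{r} \ar{d} & \widetilde G_r \ar{d}{\pi} \\
    G_{r,\vreg} \ar{r}{j_{\vreg}} & G_r
  \end{tikzcd}
\end{equation*}
identifies $j_{\vreg}^* \pi_! f^* \cL$ with $(\pi_{\vreg})_! (f|_{\pi^{-1}(G_{r,\vreg})})^* \cL$. By Lemma~\ref{lem:tilde Gvreg}, $\pi^{-1}(G_{r,\vreg})$ is exactly $\widetilde G_{r,\vreg}$ (via $(g,hB_r)\mapsto(g,hT_r)$, using that every $h$ with $h^{-1}gh \in B_r$ lies in some $\dot w B_r$ and the $U_r$-coset is pinned down by very regularity), and under this identification $f|_{\pi^{-1}(G_{r,\vreg})}$ corresponds to $f_{\vreg}$ followed by the projection $T_{r,\vreg}U_r \to T_{r,\vreg}$, so $(f|_{\pi^{-1}(G_{r,\vreg})})^* \cL \cong f_{\vreg}^* \cL_{\vreg}$. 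This gives (i). The equivariant structure on both sides matches because the forgetful functor is conservative and everything in sight is canonically $G_r$-equivariant.

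For (ii): openness of $G_{r,\vreg}$ is a standard consequence of the very regularity conditions being open (the centralizer having maximal-torus type and the root-value condition $\alpha(\gamma)\not\equiv 1$ are open); this is already used implicitly in \cite{CI21-RT,CO21} and I would cite it. Nonvanishing of $K|_{G_{r,\vreg}}$: the maps $f_{\vreg}$ and $\pi_{\vreg}$ are $W$-torsors by Lemma~\ref{lem:tilde Gvreg}, hence finite étale, so $(\pi_{\vreg})_! f_{\vreg}^* \cL_{\vreg}[\dim G_r]$ is a nonzero local system placed in perverse degree $0$ on the smooth variety $G_{r,\vreg}$ of dimension $\dim G_r$ (here one uses that $\cL_{\vreg}$ is a nonzero rank-one local system on $T_{r,\vreg}$, which is nonempty). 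The main obstacle, and the step deserving the most care, is confirming that $G_{r,\vreg}$ is dense enough — or rather, that one does not need density, only that $K$ is \emph{supported} with a dense-enough open piece on which it restricts as claimed — together with the bookkeeping in Lemma~\ref{lem:tilde Gvreg} that $\pi^{-1}(G_{r,\vreg})$ is cut out correctly and that $f$ really does restrict to $f_{\vreg}$ composed with the $T_{r,\vreg}$-projection. Given the simplicity of $K$ from Theorem~\ref{thm:t exact}, once $K|_{G_{r,\vreg}}\neq 0$ the intermediate extension identity is automatic, so no genericity input beyond what is already packaged in Theorem~\ref{thm:t exact} is needed here.
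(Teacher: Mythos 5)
Your proposal is correct and follows essentially the same route as the paper: simplicity of $\pInd_{B_r}^{G_r}(\cL[\dim T_r])$ from Theorem \ref{thm:t exact}, identification of its restriction to $G_{r,\vreg}$ via Lemma \ref{lem:pInd}, base change, and Lemma \ref{lem:tilde Gvreg}, and then the intermediate-extension characterization of a simple perverse sheaf on the open very regular locus. The paper packages the last step by noting that $G_{r,\vreg}$ is open and dense and that the support of the induced sheaf contains $G_{r,\vreg}$ (since $\cL$, being multiplicative, is supported on all of $T_r$), which is the same content as your ``nonzero restriction, hence no subobjects or quotients supported on the complement'' formulation.
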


\begin{proof}
  By Theorem \ref{thm:equivalence}, we know that $\pInd_{B_r}^{G_r}(\cL[\dim T_r])$ is a simple perverse sheaf. By Lemma \ref{lem:tilde Gvreg}, we see that 
  \begin{equation*}
    j_{\vreg}^* \pInd_{B_r}^{G_r}(\cL[\dim T_r]) \cong (\pi_{\vreg})_! f_{\vreg}^* \cL_{\vreg}[\dim G_r].
  \end{equation*}
  The desired conclusion then follows by properties of intermediate extension.
\end{proof}

\begin{corollary}\label{cor:independence of B}
  Let $\cL \in D_{T_r}^\psi(T_r)$ be a multiplicative local system. 
  \begin{enumerate}[label=(\alph*)]
    \item $\pInd_{B_r}^{G_r}(\cL)$ is independent of the choice of $B$ containing $T$.
    \item Let $\sigma \from G_r \to G_r$ be an automorphism such that $\sigma(T_r) = T_r$. If $\cL$ is $\sigma$-equivariant, then so is $\pInd_{B_r}^{G_r}(\cL)$.
  \end{enumerate}
\end{corollary}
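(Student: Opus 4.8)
The plan is to deduce both parts directly from Theorem~\ref{thm:IC vreg}, which identifies
\[
  \pInd_{B_r}^{G_r}(\cL[\dim T_r]) \cong (j_{\vreg})_{!*}\bigl((\pi_{\vreg})_! f_{\vreg}^* \cL_{\vreg}[\dim G_r]\bigr).
\]
The crucial observation is that none of the ingredients appearing on the right --- the open subvariety $G_{r,\vreg}$, its open immersion $j_{\vreg}$, the $W$-torsors $f_{\vreg}$, $\pi_{\vreg}$ out of $\widetilde G_{r,\vreg}$, and the sheaf $\cL_{\vreg}$ (the restriction of $\cL$ to $T_{r,\vreg}$) --- refers to the Borel subgroup $B$ at all: they are built solely from the pair $T_r \subset G_r$. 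Part (a) is then immediate, since the right-hand side of the displayed isomorphism depends only on $T$ and not on the choice of $B$ containing it.

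For part (b), note first that $\sigma$ restricts to an automorphism of $G_{r,\vreg}$ and hence of $T_{r,\vreg} = T_r \cap G_{r,\vreg}$: very regularity is preserved by $\sigma$ (in the case of interest $\sigma$ is a power of the Frobenius, which manifestly respects all of the defining data of very regularity, and also fixes $T_r$ by hypothesis). Consequently $\sigma$ lifts to an automorphism $\tilde\sigma \from \widetilde G_{r,\vreg} \to \widetilde G_{r,\vreg}$, $(g, hT_r) \mapsto (\sigma(g), \sigma(h)T_r)$, which fits into commutative squares with both $f_{\vreg}$ and $\pi_{\vreg}$. A $\sigma$-equivariance structure $\varphi \from \sigma^*\cL \xrightarrow{\sim} \cL$ restricts to an isomorphism $\sigma^*\cL_{\vreg} \cong \cL_{\vreg}$; applying $f_{\vreg}^*$, then $(\pi_{\vreg})_!$, and using base change along the isomorphism $\tilde\sigma$ yields a canonical isomorphism
\[
  \sigma^*\bigl((\pi_{\vreg})_! f_{\vreg}^* \cL_{\vreg}\bigr) \cong (\pi_{\vreg})_! f_{\vreg}^* \cL_{\vreg}
\]
of complexes on $G_{r,\vreg}$. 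Finally, since the intermediate extension functor is canonical it commutes with the automorphism $\sigma$ of $G_r$ and its restriction to the open subvariety $G_{r,\vreg}$, i.e.\ $\sigma^* (j_{\vreg})_{!*}(-) \cong (j_{\vreg})_{!*}\,(\sigma|_{G_{r,\vreg}})^*(-)$. Feeding the previous isomorphism into this and comparing with Theorem~\ref{thm:IC vreg} produces $\sigma^*\pInd_{B_r}^{G_r}(\cL) \xrightarrow{\sim} \pInd_{B_r}^{G_r}(\cL)$, functorially in $\varphi$; this is the desired $\sigma$-equivariance structure on $\pInd_{B_r}^{G_r}(\cL)$.

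I do not expect a genuine obstacle here, since all the content has been absorbed into Theorem~\ref{thm:IC vreg}. The only points needing (routine) verification are that $\sigma$ preserves the very regular locus and that intermediate extension commutes with pullback along an isomorphism; both are standard, the latter because $(j_{\vreg})_{!*}$ is characterized by properties (no subobjects or quotients supported on the boundary) that are visibly stable under an automorphism of the ambient space preserving the open stratum.
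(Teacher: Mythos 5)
Your argument for part (a) is exactly the paper's: Theorem~\ref{thm:IC vreg} expresses $\pInd_{B_r}^{G_r}(\cL[\dim T_r])$ in terms of data ($G_{r,\vreg}$, $\widetilde G_{r,\vreg}$, $f_{\vreg}$, $\pi_{\vreg}$, $\cL_{\vreg}$) that make no reference to $B$.

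For part (b), your route is genuinely different from the paper's and slightly more laborious. The paper does not unpack the intermediate-extension formula at all; it simply applies base change to get
\[
\sigma^* \pInd_{B_r}^{G_r}(\cL) \cong \pInd_{\sigma^{-1}(B_r)}^{G_r}(\sigma^* \cL) \cong \pInd_{\sigma^{-1}(B_r)}^{G_r}(\cL) \cong \pInd_{B_r}^{G_r}(\cL),
\]
where the middle step is the hypothesis $\sigma^*\cL \cong \cL$ and the last step is part (a) applied to the new Borel $\sigma^{-1}(B)$. This is shorter and, importantly, sidesteps the point you flag yourself: you must check that $\sigma$ preserves $G_{r,\vreg}$, which you justify only ``in the case of interest,'' whereas the corollary is stated for an arbitrary automorphism $\sigma$ of $G_r$ fixing $T_r$. (In fact any such $\sigma$ does preserve the very regular locus since the defining conditions are intrinsic to $(G_r,T_r)$, so your proof is not wrong, but the paper's deduction of (b) from (a) avoids having to argue this.) Your approach buys explicitness — you exhibit the equivariance isomorphism concretely on the level of $\widetilde G_{r,\vreg}$ — at the cost of an extra verification; the paper's approach buys brevity by leveraging (a) as a black box.
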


\begin{proof}
  By definition, the right-hand side of the displayed equation in Theorem \ref{thm:IC vreg} is independent of the choice of $B$ containing $T$, so (a) holds. To see (b), we note that
  \begin{equation*}
    \sigma^* \pInd_{B_r}^{G_r}(\cL) \cong \pInd_{\sigma^{-1}(B_r)}^{G_r}(\sigma^* \cL) \cong \pInd_{\sigma^{-1}(B_r)}^{G_r}(\cL) \cong \pInd_{B_r}^{G_r}(\cL),
  \end{equation*}
  where the first isomorphism holds by base-change, the second isomorphism holds by hypothesis, and the third isomorphism holds by (a).
\end{proof}

\begin{remark}\label{rem:small p}
  Technically, Lusztig's conjecture \cite[8(a)]{MR2181813} predicts that if $X_\psi$ satisfies $\mathfrak{ge1}$, then for any multiplicative local system $\cL \in D_{T_r}^\psi(T_r)$, the sheaf $\pInd_{B_r}^{G_r}(\cL)$ is an IC sheaf. As noted after Definition \ref{def:generic element}, under mild conditions on $p$, condition $\mathfrak{ge1}$ implies $\mathfrak{ge2}$, so the setting left unresolved is a small-prime phenomenon. We predict the statement of Theorem \ref{thm:IC vreg} should also hold in this slightly more general setting of requiring only $\frak{ge1}$. We have verified this in the setting that $G = \SL_2$ and $p=2$: there are $W$-invariant multiplicative local systems $\cL$ satisfying $\frak{ge1}$, and yet $\pInd_{B_r}^{G_r}(\cL)$ is still an IC sheaf.
\end{remark}

\subsection{Character sheaves associated to multiplicative local systems on $T_r$}

In this subsection, we resume the assumptions in place in Section \ref{subsec:howe}. We make these assumptions so that we may state our result for arbitrary multiplicative local systems on $T_r$. (Alternatively, one could choose to forgo the assumption on $p$ and instead only allow multiplicative local systems which have a Howe factorization: that is, any multiplicative local system $\cL \in D_{T_r}(T_r)$ for which there exists a clipped generic datum $(T, \vec G, \x, \vec r, \vec \cL)$ such that $\cL_{-1} \colonequals \cL \otimes (\cL_0|_{T_r} \otimes \cL_1|_{T_r} \otimes \cdots \otimes \cL_d|_{T_r})$ factors through $T_r \to T_0$. Here, each $\cL_i|_{T_r}$ is given by pullback along $T_r \to T_{r_i} \hookrightarrow G_{r_i}^i$.) We prove the ``multi-step'' version of Theorem \ref{thm:IC vreg}.

\begin{theorem}\label{thm:K_L}
  Let $\cL \in D_{T_r}(T_r)$ be a multiplicative local system. For any Howe factorization $\vec \cL$ and any associated sequence of parabolic subgroups $\vec P$, 
  \begin{equation*}
    \cK_{\vec \cL, \vec P}[\dim T_r] \cong (j_{\vreg})_{!*} ((\pi_{\vreg})_! f_{\vreg}^* \cL_{\vreg}[\dim G_r]).
  \end{equation*}
  In particular, $\cK_{\vec \cL, \vec P}$ depends only on $\cL$.
\end{theorem}

\begin{proof}
  We know from Theorem \ref{thm:clipped datum} that $\cK_{\vec \cL, \vec P}$ is semisimple and the endomorphism algebra has dimension equal to $|\Stab_{W^0}(\cL_{-1})| = |\Stab_W(\cL)|$. The same is true of the intermediate extension $(j_{\vreg})_{!*} ((\pi_{\vreg})_! f_{\vreg}^* \cL_{\vreg}[\dim G_r])$. Therefore to see that they are isomorphic, it is enough to show that the restriction of $\cK_{\vec \cL, \vec P}$ to $G_{r,\vreg}$ coincides with $(\pi_{\vreg})_! f_{\vreg}^* \cL_{\vreg}$ (up to the specified shift). To do this, we proceed by induction. 
  
  We first observe that by construction, the very regular locus $L_{r,\vreg}$ in $L_r$ contains the intersection $L_r \cap G_{r,\vreg}$. Since $L_r \cap G_{r,\vreg}$ is still dense in $L_r$, the intermediate extension from $L_{r,\vreg}$ agrees with the intermediate extension from $L_r \cap G_{r,\vreg}$. Our task is then to prove that for any Levi $L \subset G$ containing $T$, any $s < r$, any multiplicative local system $\cL$ on $T_s$, any $(L,G)$-generic multiplicative local system $\cL'$ on $L_r$, we have
  \begin{align}\nonumber
    \pInd_{P_r}^{G_r}(\varphi^* (j_{s,\vreg}^L)_{!*}&((\pi_{s,\vreg}^L)_! (f_{s,\vreg}^L)^* \cL_{\vreg}) \otimes \cL'[\dim L_r]) \\ \label{eq:vreg induction}
    &\cong (j_{\vreg})_{!*} ((\pi_\vreg)_! f_{\vreg}^* (\cL \otimes \cL'|_{T_r})_{\vreg}[\dim G_r]),
  \end{align}
  where:
  \begin{enumerate}[label=\textbullet]
    \item $\varphi \from L_r \to L_s$
    \item $j_{s,\vreg}^L \from L_s \cap G_{s,\vreg} \hookrightarrow L_s$
    \item $\pi_{s,\vreg}^L = \pi^L|_{\widetilde L_{s,\vreg}}$ and $f_{s,\vreg}^L = f^L|_{\widetilde L_{s,\vreg}}$, where $\widetilde L_{s,\vreg} = \pi^{-1}(L_s \cap G_{s,\vreg})$ and $\widetilde L_s$ is relative to $T \hookrightarrow L$,
    \item $\cL_\vreg = \cL|_{T_s \cap L_{s,\vreg}}$
    \item $j_{\vreg} \from G_{r,\vreg} \hookrightarrow G_r$
    \item $\pi_{\vreg} = \pi|_{\widetilde G_{r,\vreg}}$ and $f_{\vreg} = f|_{\widetilde G_{r,\vreg}}$, where again $\widetilde G_r$ is relative to $T$
    \item $(\cL \otimes \cL'|_{T_r})_{\vreg} = (\cL \otimes \cL'|_{T_r})_{T_r \cap G_{r,\vreg}}$.
  \end{enumerate}
  It is straightforward to see that the argument of $\pInd_{P_r}^{G_r}$ simplifies to
  \begin{align*}
    &(j_{\vreg}^L)_{!*} (\varphi_{\vreg}^* (\pi_{s,\vreg}^L)_! (f_{s,\vreg}^L)^* \cL_\vreg) \otimes \cL' \\
    \cong {}&{} (j_{\vreg}^L)_{!*} ((\pi_{r,\vreg}^L)_! (f_{r,\vreg}^L)^* (\varphi_{\vreg}^T)^* \cL_\vreg) \otimes \cL' \\
    \cong {}&{} (j_{\vreg}^L)_{!*} ((\pi_{r,\vreg}^L)_! (f_{r,\vreg}^L)^* ((\varphi_{\vreg}^T)^* \cL \otimes \cL'|_{T_r})_{\vreg}),
  \end{align*}
  where $\varphi_\vreg \from L_r \cap G_{r,\vreg} \to L_s \cap G_{s,\vreg}$ and $\varphi_{\vreg}^T = \varphi_{\vreg}|_{T_r \cap G_{r,\vreg}}$ and the first isomorphism holds by base-change and the second isomorphism holds by properties of intermediate extension. So now, to show \eqref{eq:vreg induction}, it remains to prove that
  \begin{align}\nonumber
    \pInd_{P_r}^{G_r}((j_{\vreg}^L)_{!*} &((\pi_{r,\vreg}^L)_! (f_{r,\vreg}^L)^* ((\varphi_{\vreg}^T)^* \cL \otimes \cL'|_{T_r})_{\vreg}))|_{G_{r,\vreg}} \\ \label{eq:vreg induction 2}
    &\cong (\pi_\vreg)_! f_{\vreg}^* (\cL \otimes \cL'|_{T_r})_{\vreg}[\dim G_r - L_r].
  \end{align}
  By base-change, the left-hand side becomes
  \begin{equation}\label{eq:vreg pInd of vreg}
    (\pi_{\vreg}')_! ((f_{\vreg}')^* (\pi_{r,\vreg}^L)_! (f_{r,\vreg}^L)^* ((\varphi_{\vreg}^T)^* \cL \otimes \cL'|_{T_r})_{\vreg})^{\sim}[\dim G_r - \dim L_r],
  \end{equation}
  where $\pi_{\vreg}' = \pi'|_{\widetilde G_{r,\vreg}'}$ and $f_{\vreg}' = f'|_{\widehat G_{r,\vreg}'}$ for $\widetilde G_{r,\vreg}' = \pi'{}^{-1}(G_{r,\vreg}) \subset G_{r,\vreg} \times G_r/P_r$ and $\widehat G_{r,\vreg}' = f'{}^{-1}(L_r \cap G_{r,\vreg}) \subset G_{r,\vreg} \times G_r$ (i.e., these spaces are taken to be relative to $L \hookrightarrow G$). Here, for $M$ on $\widehat G_{r,\vreg}'$, we write $M^\sim = \widetilde M$ to mean the unique object on $\widetilde G_{r,\vreg}'$ such that $\alpha_{\vreg}^* M^\sim \cong M$, where $\alpha_{\vreg} \from \widehat G_{r,\vreg}' \to \widetilde G_{r,\vreg}'$. The fibered product $\widehat G_{r,\vreg}' \times_{L_r \cap G_{r,\vreg}} \widetilde L_{r,\vreg}$ is a $L_r$-fibration over $\widehat G_r$, and so it follows that \eqref{eq:vreg pInd of vreg} agrees with the right-hand side of \eqref{eq:vreg induction 2}.
\end{proof}

\section{Sequences of Borel subgroups}\label{sec:Borel sequence}

In this section, we give a description of $(T,G)$-generic parabolic induction in terms of sequences 
\begin{equation*}
  \underline B = (B^{(1)}, B^{(2)}, \ldots, B^{(n+1)})
\end{equation*}
of Borel subgroups of $G$, each of which contains $T$. In Section \ref{sec:comparison}, we will use Theorem \ref{thm:Borel sequence} in the special case $B^{(i)} = \sigma^{i-1}(B)$, where $\sigma$ is a Frobenius morphism on $G$ associated to an $F$-rational structure, and $n$ is a positive integer such that $\sigma^n(B) = B$.

Fix a sequence $\underline B$ as above and assume $B^{(1)} = B^{(n+1)}$.  Let $\underline B_r = (B_r^{(1)}, B_r^{(2)}, \ldots, B_r^{(n+1)})$ denote the associated sequence of subgroups of $G_r$ and write $\underline \Omega = (\Omega_1, \Omega_2, \ldots, \Omega_n)$ where $\Omega_i = B_r^{(i)}B_r^{(i+1)}$. Consider the following subvariety of $G_r \times G_r/B_r^{(1)} \times G_r/B_r^{(2)} \times \cdots \times G_r/B_r^{(n+1)}$:
\begin{equation*}
  Y_{\underline \Omega}
  \colonequals \left\{
  (g,h_1B_r^{(1)}, h_2B_r^{(2)}, \ldots, h_{n+1}B_r^{(n+1)}) :  
  \begin{gathered}
    \text{$h_i^{-1}h_{i+1} \in \Omega_i$ for $i = 1, \ldots, n$} \\
    h_{n+1}^{-1} g h_1 \in B_r^{(1)}
  \end{gathered}
  \right\}
\end{equation*}
together with the maps
\begin{align*}
  \pi_{\underline \Omega}(g,h_1B_r^{(1)}, \ldots, h_{n+1}B_r^{(n+1)}) &= g \\
  f_{\underline \Omega}(g,h_1B_r^{(1)}, \ldots, h_{n+1}B_r^{(n+1)}) &= \beta_{\Omega_1}(h_1^{-1}h_2) \cdot \beta_{\Omega_2}(h_2^{-1}h_3) \cdots \beta_{\Omega_n}(h_n^{-1}h_{n+1}) \cdot \beta(h_{n+1}^{-1}gh_1)
\end{align*}
where $\beta \from B_r^{(1)} \to T_r$ and $\beta_{\Omega_i} \from \Omega_i \to T_r$ for $1 \leq i \leq n$. (Note here that the last two factors in $f_{\underline \Omega}$ can be combined: $\beta_{\Omega_n}(h_n^{-1}h_{n+1})\beta(h_{n+1}^{-1} g h_1) = \beta_{\Omega_n}(h_n^{-1}gh_1)$.)

\begin{theorem}\label{thm:Borel sequence}
  Let $\underline \Omega$ be as above. For any $M \in D_{T_r}^\psi(T_r)$,
  \begin{equation*}
    \pInd_{B_r}^{G_r}(M) \cong \pi_{\underline \Omega!} f_{\underline \Omega}^* M[2\dim U_r+\ell(\underline \Omega)],
  \end{equation*}
  where $\ell(\underline \Omega) \colonequals \sum_{i=1}^n \dim(\Omega_i/B_r^{(i+1)})$.
\end{theorem}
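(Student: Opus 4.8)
The plan is to run a double induction that contracts an arbitrary loop $\underline B = (B^{(1)},\dots,B^{(n+1)})$, with $B^{(1)}=B^{(n+1)}$, down to the constant loop $(B,B)$. For the constant loop one has $n=1$, $\Omega_1 = B_r$, $\ell(\underline\Omega)=0$, and the map $(g,h_1B_r,h_2B_r)\mapsto(g,h_1B_r)$ identifies $Y_{\underline\Omega}$ with $\widetilde G_r$ (for $P=B$) in such a way that $\pi_{\underline\Omega},f_{\underline\Omega}$ become the $\pi,f$ of Definition \ref{def:f and pi}; thus the assertion there is precisely Lemma \ref{lem:pInd}. The only non-formal ingredient will be the vanishing Lemma \ref{lem:equiv vanishing}, applied through the genericity hypothesis $\mathfrak{ge1}$: because $L=T$ here, $\cL_\psi|_{\mft_\alpha}$ is a nontrivial multiplicative local system for \emph{every} root $\alpha$, and every $M\in D_{T_r}^\psi(T_r)$ is $(\mft,\cL_\psi)$-equivariant by Lemma \ref{lem:L psi}.

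First I would establish two elementary moves on loops, each preserving the right-hand side of the asserted identity up to the stated cohomological shift. The first is \emph{geodesic refinement}: if $C$ is a Borel containing $T$ with $\ell(\inv(B^{(i)},C))+\ell(\inv(C,B^{(i+1)}))=\ell(\inv(B^{(i)},B^{(i+1)}))$, then inserting $C$ between $B^{(i)}$ and $B^{(i+1)}$ replaces $Y_{\underline\Omega}$ by an affine bundle over it whose rank equals the increase in $\ell(\underline\Omega)$; this is a Bruhat-cell computation in $G_r$ that reduces to the analogous statement in $G_0$ plus bookkeeping inside $\ker(G_r\to G_0)$, and uses no genericity. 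Running it backwards gives geodesic contraction (in particular, deletion of a repeated Borel). The second is \emph{backtrack elimination}: when an excursion overshoots — the cleanest case being a triple $(\dots,B^{(i-1)},B^{(i)},B^{(i+1)},\dots)$ in which $\inv(B^{(i-1)},B^{(i+1)})$ is strictly shorter than $\inv(B^{(i-1)},B^{(i)})\cdot\inv(B^{(i)},B^{(i+1)})$ — the map forgetting $h_i$ carries, on its fibres, a local system built from $\beta_{\Omega_{i-1}}$, $\beta_{\Omega_i}$ and $M$ which, off the sublocus where the excursion is already geodesic, is $(\mft_\alpha,\cL_\psi|_{\mft_\alpha})$-equivariant for a root $\alpha$ occurring in $\inv(B^{(i-1)},B^{(i)})$; Lemma \ref{lem:equiv vanishing} annihilates the fibrewise cohomology there, and what remains reproduces the shortened loop. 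This is the step where genericity is indispensable — the identity genuinely fails for non-generic $M$.

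With the two moves in hand I would run the combinatorial reduction. The \emph{outer} induction reduces to the case in which every consecutive relative position $\inv(B^{(i)},B^{(i+1)})$ admits a representative in $N_{G_r}(T_r)$ — equivalently, all the $B_r^{(i)}$ lie in a single $G_r$-conjugacy class; the passage from a general loop to this situation is carried out by backtrack elimination, discarding the non-liftable ``jumps'' (again via genericity). In this $G_r$-conjugate setting one may conjugate the loop (replacing each $h_i$ by $h_in_i$ for suitable $n_i\in N_{G_r}(T_r)$) so that all $B^{(i)}=B$; the loop is then recorded by a word $w_1,\dots,w_n$ in the (parabolic) subgroup $W_\x\subseteq W$ of liftable elements, and the \emph{inner} induction is on $\sum_i\ell(w_i)$: if some $w_i=e$ delete it; if $\ell(w_iw_{i+1})=\ell(w_i)+\ell(w_{i+1})$ contract $B^{(i+1)}$ geodesically; otherwise use the exchange-type combinatorics available in $W_\x$ together with geodesic refinement to expose a genuine backtrack and apply backtrack elimination, strictly decreasing $\sum_i\ell(w_i)$. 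This terminates at $n=1$, the base case.

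The step I expect to be the main obstacle is backtrack elimination: producing the forgetful map $Y_{\underline\Omega}\to Y_{\underline\Omega'}$ and coordinatizing its fibres so that the local system they carry is manifestly $(\mft_\alpha,\cL_\psi|_{\mft_\alpha})$-equivariant requires an Iwahori-type parametrization of the relevant intersections $\Omega_{i-1}\cap\Omega_i^{-1}$ \emph{inside} $G_r$ (not merely inside $G_0$), followed by tracking how the homomorphisms $\beta_{\Omega_{i-1}},\beta_{\Omega_i}$ combine along those fibres — the same ``the affine space in $\ker(G_r\to G_0)$ contributes trivially'' phenomenon underlying the Mackey formula (Proposition \ref{prop:mackey}), but here it must be carried out with the precise shift $2\dim U_r+\ell(\underline\Omega)$ coming out on the nose. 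A subsidiary point is to verify that $\ell(\underline\Omega)$, a sum of dimensions of thickened Schubert cells, is additive along minimal galleries, so that geodesic refinement is shift-neutral.
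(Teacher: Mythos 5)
Your plan follows essentially the same route as the paper's proof: your two moves are the paper's length-additive composition/refinement (Lemma \ref{lem:elementary composition}, Corollaries \ref{cor:composition} and \ref{cor:elementary expansion}) and its two genericity-driven vanishing lemmas (the $(s,s)$ composition Lemma \ref{lem:vanishing elementary} and the $0$-trivial composition Lemma \ref{lem:vanishing 0-trivial}), and your outer/inner double induction---first removing the non-$G_r$-conjugate (``$0$-trivial'') jumps via genericity, then inducting on length in the liftable Weyl group by exposing a backtrack---is exactly the structure of Section \ref{subsec:Borel sequence proof} together with Lemma \ref{lem:all simple}. The work you defer as the main obstacle (parametrizing the fibres over the overshoot strata inside $\ker(G_r\to G_0)$ and checking that the induced local systems are nontrivial multiplicative ones, with the shifts matching) is precisely the content of those two vanishing lemmas, and your sketch anticipates their arguments correctly.
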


Our proof of Theorem \ref{thm:Borel sequence} is  is modeled on Lusztig's methods \cite{Lus90} (see \cite{MR1040575} for an exposition on Lusztig's methods in the Borel case). In the classical $r=0$ setting, any two Borel subgroups which contain the same maximal torus are conjugate by an element of the Weyl group. Thus, one can associate to $\underline B_0$ a sequence of elements $\underline w$ in the Weyl group and prove the $r=0$ case of Theorem \ref{thm:Borel sequence} by working with $\underline w$. 

The naive generalization of this strategy immediately fails in the $r>0$ setting: not only can one not associate such a $\underline w$ to $\underline B_r$, but the elements in $\underline B_r$ may not even be conjugate in $G_r$! The simplest example of this is to take an Iwahori subgroup and consider two opposing Borel subgroups. In Section \ref{subsec:Borel sequence proof} we prove Theorem \ref{thm:Borel sequence} via a double induction. The idea is to modify $\underline B$, keeping track of the resulting impact on $\pi_{\underline \Omega!} \circ f_{\underline \Omega}^*$. We first establish that we may reduce the calculation to the setting where each consecutive pair $B^{(i)}, B^{(i+1)}$ of Borel subgroups $\underline B$ is one of the following two types:
\begin{enumerate}
  \item $B^{(i)}$ and $B^{(i+1)}$ are related by a simple reflection in $W_{G_r}(T_r)$ (``elementary'')
  \item $B_0^{(i)} = B_0^{(i+1)}$ (``0-trivial'')
\end{enumerate}
Establishing this first reduction follows from Section \ref{subsec:elementary}. We then proceed by inducting on both $\ell(\underline \Omega)$ and the number of times Case(2) appears; this involves the two lemmas in Section \ref{subsec:two vanishing}. If Case(2) does not appear in $\underline B$, then we use Lemma \ref{lem:vanishing elementary}. If Case(2) appears, the number of such instances can be reduced using Lemma \ref{lem:vanishing 0-trivial}; this induction crucially uses the $(T,G)$-genericity of $X_\psi$.

\subsection{Elementary double cosets}\label{subsec:elementary}

\begin{definition}[elementary double coset]\label{def:elementary}
  Let $B,B'$ be two Borel subgroups of $G$ which contain $T$. We say that $B_rB_r'$ is \textit{elementary} if there exists a simple reflection $s \in W_{G_r}(T_r)$ relative to $B$ such that $B' = sBs^{-1}$.
\end{definition}

Let $\ell_B \from W_{G_r}(T_r) \to \bbZ_{\geq 0}$ denote the length function relative to $B$.

\begin{lemma}\label{lem:bruhat multiplication}
  Let $\alpha$ be a simple root and $s \in W_{G_r}(T_r)$ the corresponding simple reflection. Then for $w \in W_{G_r}(T_r)$,
  \begin{align*}
    B_r s B_r \cdot B_r w B_r =
    \begin{cases}
      B_r sw B_r & \text{if $\ell_B(sw) = \ell_B(w) + 1$} \\
      B_r U_{-\alpha,0+:r+} sw B_r \sqcup B_r w B_r & \text{if $\ell_B(sw) = \ell_B(w) - 1$.}
    \end{cases}
  \end{align*}
\end{lemma}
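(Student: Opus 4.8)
The plan is to reduce the parahoric Bruhat product formula to the classical one on the reductive quotient $G_0$, together with a careful analysis of the affine kernel $\ker(G_r \to G_0)$.

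First I would set up notation. Write $K = \ker(G_r \to G_0)$, which is unipotent (a successive extension of affine spaces $\mfg^i$). Since $B_r s B_r$ and $B_r w B_r$ both project to the corresponding Bruhat cells in $G_0$, the product $B_r s B_r \cdot B_r w B_r$ projects into $B_0 s B_0 \cdot B_0 w B_0$, which by the classical Bruhat product formula equals $B_0 sw B_0$ in the length-increasing case and $B_0 sw B_0 \sqcup B_0 w B_0$ in the length-decreasing case. So the content is (i) to show that in the length-increasing case the preimage of $B_0 sw B_0$ inside the product is \emph{all} of $B_r sw B_r$ (not a proper subvariety), and (ii) in the length-decreasing case, to identify precisely which part of the preimage of $B_0 sw B_0$ is hit, namely $B_r U_{-\alpha,0+:r+} sw B_r$, and to show the $B_r w B_r$-component is fully covered.

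The key computation I would carry out is the rank-one $\SL_2$ (or $\PGL_2$) calculation at the parahoric level associated to the simple root $\alpha$. Pick a lift $\dot s$ of $s$ normalizing $T_r$. Using the Iwahori-type decomposition of $B_r \dot s B_r$ as $U_{\alpha,r}\, \dot s\, B_r$ (coming from $B_r = U_r T_r$ and $\dot s U_{\beta,r}\dot s^{-1} \subset B_r$ for positive $\beta \ne \alpha$), I would write a general element of $B_r \dot s B_r \cdot B_r w B_r$ as $u_1 \dot s b \, u_2 \dot w b'$ with $u_1 \in U_{\alpha,r}$, reduce $b$ modulo what commutes through, and isolate the $U_{-\alpha,r}$-component $x_{-\alpha}$ of the relevant factor. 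The classical identity $\dot s\, u_\alpha(t) = u_{-\alpha}(t^{-1})\, h(t)\, u_\alpha(t^{-1})\cdot(\text{sign})$ for $t$ a unit governs the length-decreasing case: when $t$ runs over units one lands in $B_r \dot s\dot w B_r$ (the unit locus of $U_{-\alpha}$ times $sw$-cell) together with the $B_r w B_r$ stratum (the $t$ non-unit / $t=0$ degeneration), but the non-unit values of $t$ lie in $U_{-\alpha,0+:r+}$, which is exactly the extra affine-space factor $U_{-\alpha,0+:r+}$ appearing in the statement. In the length-increasing case $\ell_B(sw) = \ell_B(w)+1$, the root $w^{-1}\alpha > 0$, so $\dot w^{-1} U_{\alpha,r}\dot w \subset B_r$ and the product collapses cleanly: every element of $B_r \dot s\dot w B_r$ is obtained, with no degenerate stratum.

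I would organize this as: (1) reduce to the $\dot s, \dot w$ statement modulo $U_r$ on both sides, quoting that $B_r = U_r \rtimes T_r$ and $B_r x B_r$ only depends on $x$ modulo $B_r$; (2) prove the length-increasing case directly from $\dot w^{-1} U_{\alpha,r} \dot w \subseteq B_r$ and surjectivity of the resulting multiplication map onto $B_r \dot s \dot w B_r$; (3) prove the length-decreasing case via the explicit $\SL_2$-calculation above, checking that the ``unit part'' gives $B_r \dot s \dot w B_r$, the ``non-unit part'' gives $B_r U_{-\alpha,0+:r+}\dot s\dot w B_r$ (note $U_{-\alpha,0+:r+}$ normalizes nothing new but enlarges the cell in the $K$-direction), and the separate component is $B_r \dot w B_r$, and that the two pieces are disjoint since they project to disjoint Bruhat cells in $G_0$; (4) note the decomposition on the right-hand side of the length-decreasing case is a genuine disjoint union by the same projection-to-$G_0$ argument. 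The main obstacle I expect is bookkeeping in step (3): making sure the parahoric/Moy--Prasad filtration indices match, i.e.\ that the degenerate locus is \emph{precisely} $U_{-\alpha,0+:r+}$ and not some larger or smaller filtration piece, and that no extra factors from $T_r$ or from higher root groups $U_{\beta,r}$ contaminate the statement; this is where the rank-one reduction and a clean choice of coordinates on $B_r \dot s B_r$ are essential.
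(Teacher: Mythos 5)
Your plan is essentially sound and rests on the same two ingredients as the paper's proof: the root-permutation argument for the length-additive case (both of you use that $w^{-1}\alpha>0$ forces the $\alpha$-factor to be absent, so conjugation by $s$ keeps everything in $B_r$), and a rank-one $\SL_2$ computation in which the unit locus versus the positive-depth locus of the $(-\alpha)$-root group produces the two strata, with the non-unit parameters accounting for exactly the factor $U_{-\alpha,0+:r+}$. The organizational difference is in the length-decreasing case: you propose to run the $\SL_2$ identity directly inside $B_r s B_r \cdot B_r w B_r$, writing elements as $u_1\dot s b u_2 \dot w b'$ and tracking the $U_{-\alpha}$-component through all the other root-group factors of the $w$-cell --- precisely the filtration bookkeeping you flag as the main obstacle. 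The paper sidesteps this by first proving the additive case, then writing $B_r s B_r \cdot B_r w B_r = B_r s B_r \cdot B_r s B_r \cdot B_r sw B_r$ and computing once and for all $B_r s B_r \cdot B_r s B_r = U_{-\alpha,0+:r+}B_r \sqcup B_r s B_r$ (the $\SL_2$ calculation now lives purely in the rank-one group, with no interference from $w$), after which the additive case is applied again to each piece. If you want to avoid the commutator bookkeeping, this factorization is the cleaner route. Your explicit appeal to the projection $G_r\to G_0$ and the classical Bruhat product is a reasonable way to get disjointness of the two strata (they map to $B_0swB_0$ and $B_0wB_0$); the paper leaves this implicit.

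One slip to correct when you write this up: in your step (3) the tags on the two strata are crossed. In the direct analysis, the \emph{unit} values of the parameter are the ones for which $u_{-\alpha}(t)$ lies in $U_\alpha T \dot s U_\alpha \subset B_r\dot s B_r$, and after multiplying through they produce the $B_r w B_r$ stratum; the \emph{non-unit} values (including $t=0$) are the ones that stay in $U_{-\alpha,0+:r+}$ and produce the stratum $B_r U_{-\alpha,0+:r+} sw B_r$. You correctly attach $U_{-\alpha,0+:r+}$ to the non-unit locus, but the parenthetical assignment of ``units $\mapsto B_r\dot s\dot w B_r$'' and ``$t=0$ degeneration $\mapsto B_r w B_r$'' is backwards; this would surface and self-correct in the actual computation, but as stated it is inconsistent.
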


\begin{proof}
We have 
\begin{equation*}
  B_r w B_r = \prod_{\substack{\beta \in \Phi^+(G,T) \\ \text{s.t.\ } w^{-1}\beta < 0}} U_{\beta,r} w B_r
\end{equation*}
and therefore
\begin{equation}\label{eq:bruhat product}
  B_r s B_r \cdot B_r w B_r = B_r s \prod_{\substack{\beta \in \Phi^+(G,T) \\ \text{s.t.\ } w^{-1}\beta < 0}} U_{\beta,r} w B_r.
\end{equation}

Assume $\ell_B(sw) = \ell_B(w) + 1$. Then $w^{-1}(\alpha) > 0$, and in particular, if $\beta \in \Phi^+(G,T)$ is such that $w^{-1}\beta < 0$, then $\beta \neq \alpha$. Since $s$ permutes all positive roots other than $\alpha$, we have that
\begin{equation*}
  s \prod_{\substack{\beta \in \Phi^+(G,T) \\ \text{s.t.\ } w^{-1}\beta < 0}} U_{\beta,r} s \subset B_r.
\end{equation*}
Therefore using \eqref{eq:bruhat product}, we have $B_r s B_r \cdot B_r w B_r = B_r sw B_r$, as desired.

Assume $\ell(sw) = \ell(w) + 1$. By \eqref{eq:bruhat product}, we have
\begin{equation*}
  B_r s B_r \cdot B_r s B_r = U_{\alpha,r} s U_{\alpha,r} s B_r = U_{\alpha,r} U_{-\alpha,r} B_r.
\end{equation*}
By doing an $\SL_2$ calculation, we see that
\begin{equation*}
  U_{\alpha,r} (U_{-\alpha,r} \smallsetminus U_{-\alpha,0+:r+}) B_r = B_r s B_r.
\end{equation*}
For the complement of this piece, by the Iwahori decomposition we have
\begin{equation*}
  U_{\alpha,r}U_{-\alpha,0+:r+}B_r = U_{-\alpha,0+:r+} B_r.
\end{equation*}
This therefore gives
\begin{equation*}
  B_r s B_r \cdot B_r s B_r = U_{-\alpha,0+:r+}B_r \sqcup B_r s B_r.
\end{equation*}
Using this together with the length-additive case, we get
\begin{align*}
  B_r s B_r \cdot B_r w B_r 
  &= B_r s B_r \cdot B_r s B_r \cdot B_r sw B_r \\
  &= (U_{-\alpha,0+:r+}B_r \sqcup B_r s B_r) \cdot B_r s w B_r \\
  &= B_r U_{-\alpha,0+:r+} sw B_r \sqcup B_r w B_r. \qedhere
\end{align*}
\end{proof}

\begin{lemma}[length-additive composition]\label{lem:elementary composition}
  Let $(B,B',B'')$ be a subsequence of $\underline B$ such that $B_r' = s B_r s^{-1}$ and $B_r'' = swB_rw^{-1}s^{-1}$ for some $w,s \in W_{G_r}(T_r)$, where $s$ is a simple reflection relative to $B.$ If $\ell_B(sw) = \ell_B(w)+1$, then 
  \begin{equation*}
    \pi_{\underline \Omega!} f_{\underline \Omega}^* \cL \cong (\pi_{\underline \Omega'})f_{\underline \Omega'}^* \cL,
  \end{equation*}
  where $\underline \Omega'$ is obtained from $\underline \Omega$ by replacing $(B_rB_r', B_r'B_r'')$ with $B_rB_r''$ (i.e.\ deleting $B'$ from $\underline B$).
\end{lemma}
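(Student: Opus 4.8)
The plan is to compare the two correspondences geometrically by analyzing the Bruhat products along the sequence $\underline B$. Recall that the variety $Y_{\underline\Omega}$ records flags $(h_1B_r^{(1)},\ldots,h_{n+1}B_r^{(n+1)})$ with $h_i^{-1}h_{i+1}\in\Omega_i$, and that $f_{\underline\Omega}$ multiplies together the $T_r$-components $\beta_{\Omega_i}(h_i^{-1}h_{i+1})$. The key observation is that when $\ell_B(sw) = \ell_B(w)+1$, Lemma \ref{lem:bruhat multiplication} gives $B_rsB_r\cdot B_r(sw)s^{-1}\cdots$ is length-additive, so the product $\Omega\cdot\Omega'$ of the two consecutive double cosets $\Omega = B_rB_r' = B_rsB_rs^{-1}$ and $\Omega' = B_r'B_r'' = sB_rs^{-1}\cdot swB_rw^{-1}s^{-1}$ is \emph{exactly} $B_rB_r'' = \Omega''$, with the multiplication map $\Omega\times^{B_r'}\Omega'\to\Omega''$ being an isomorphism (no "collision" term appears precisely because of length-additivity).

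Concretely, I would first translate the length-additivity hypothesis into the statement that the map
\begin{equation*}
  \Omega_i/B_r^{(i+1)} \times \Omega_{i+1}/B_r^{(i+2)} \longrightarrow \Omega_i\Omega_{i+1}/B_r^{(i+2)} = \Omega_i''/B_r^{(i+2)}
\end{equation*}
induced by multiplication is an isomorphism, where $\Omega_i'' = B_r^{(i)}B_r^{(i+2)}$; this is essentially a restatement of the first case of Lemma \ref{lem:bruhat multiplication} after translating from the Weyl-group parametrization back to double cosets of Borel subgroups (using that $B_r\dot wB_r$ depends only on the coset, and relative position is governed by $w$). Given this, I would exhibit an explicit isomorphism $Y_{\underline\Omega}\xrightarrow{\sim}Y_{\underline\Omega'}$: forget the coordinate $h_{i+1}B_r^{(i+1)}$. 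To see this is well-defined and an isomorphism, note that for fixed $h_i$ and $h_{i+2}$ with $h_i^{-1}h_{i+2}\in\Omega_i''$, the isomorphism above shows there is a \emph{unique} $h_{i+1}B_r^{(i+1)}$ with $h_i^{-1}h_{i+1}\in\Omega_i$ and $h_{i+1}^{-1}h_{i+2}\in\Omega_{i+1}$. Under this forgetting map $\pi_{\underline\Omega}$ clearly corresponds to $\pi_{\underline\Omega'}$; and $f_{\underline\Omega}$ corresponds to $f_{\underline\Omega'}$ because the $T_r$-cocycle condition $\beta_{\Omega_i}(h_i^{-1}h_{i+1})\cdot\beta_{\Omega_{i+1}}(h_{i+1}^{-1}h_{i+2}) = \beta_{\Omega_i''}(h_i^{-1}h_{i+2})$ holds — this is the multiplicativity of the projections $\beta$, which in turn follows from the semidirect product decompositions $\Omega = U\rtimes B_r'$ etc. and the fact that the $T_r$-component of a product is the product of $T_r$-components in a length-additive Bruhat cell. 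Since $Y_{\underline\Omega}\cong Y_{\underline\Omega'}$ compatibly with $\pi$ and $f$ (and the shift $\ell(\underline\Omega) = \ell(\underline\Omega')$ is unchanged because $\dim\Omega_i/B_r^{(i+1)} + \dim\Omega_{i+1}/B_r^{(i+2)} = \dim\Omega_i''/B_r^{(i+2)}$, again by length-additivity and Lemma \ref{lem:bruhat multiplication}), we get $\pi_{\underline\Omega!}f_{\underline\Omega}^*\cL \cong \pi_{\underline\Omega'!}f_{\underline\Omega'}^*\cL$ immediately.

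The main obstacle I anticipate is bookkeeping the $T_r$-components carefully: one must check that $\beta_{\Omega_i}$ is genuinely a \emph{homomorphism-like} section (i.e.\ that the cocycle identity holds on the nose, not just up to something in $U_r$), and that the decomposition $\Omega = U_{?,r}\cdot B_r'$ used to define $\beta_{\Omega_i}$ is compatible across the multiplication map. This is where the length-additivity is doing real work — in the non-length-additive case (the second bullet of Lemma \ref{lem:bruhat multiplication}), the extra $B_rwB_r$ piece and the $U_{-\alpha,0+:r+}$ factor spoil both the uniqueness of $h_{i+1}$ and the clean behavior of $\beta$, which is exactly why that case is handled separately (via Lemmas \ref{lem:vanishing elementary} and \ref{lem:vanishing 0-trivial}, using genericity). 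I would also need to verify the dimension count $\ell(\underline\Omega) = \ell(\underline\Omega')$ explicitly from $\ell_B(sw) = \ell_B(w)+1$, but this is routine once the isomorphism of varieties is in hand since the total space dimensions agree.
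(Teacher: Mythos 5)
Your approach is essentially the same as the paper's: both prove that length-additivity (via Lemma \ref{lem:bruhat multiplication}) makes the forgetting map $Y_{\underline\Omega}\to Y_{\underline\Omega'}$ an isomorphism of the two correspondences. You are in fact more explicit than the paper about the one point that needs checking beyond the bijection of flags --- namely that $f_{\underline\Omega}$ is carried to $f_{\underline\Omega'}$ via the cocycle identity $\beta_{\Omega_i}(h_i^{-1}h_{i+1})\beta_{\Omega_{i+1}}(h_{i+1}^{-1}h_{i+2}) = \beta_{\Omega_i''}(h_i^{-1}h_{i+2})$; the paper elides this with ``the desired conclusion follows,'' so highlighting it (and tying it to length-additivity) is a welcome addition.
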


\begin{proof}
  The proof relies on establishing the isomorphism \eqref{eq:add length} below. This is well known and classical when $r=0$ and the proof when $r>0$ is the same; we present it here for the convenience of the reader. Since $\ell_B(sw) = \ell_B(w) + 1$, then by Lemma \ref{lem:bruhat multiplication} we have
  \begin{equation*}
    B_r s B_r \cdot B_r w B_r = B_r sw B_r.
  \end{equation*}
  This implies that
  \begin{align*}
    B_r(sB_rs^{-1}) \cdot (sB_rs^{-1})(swB_rw^{-1}s^{-1}) 
    &= B_r s B_r \cdot B_r w B_r w^{-1} s^{-1}
    &= B_r (swB_r w^{-1}s^{-1}).
  \end{align*}
  Moreover, we have an isomorphism
  \begin{equation}\label{eq:add length}
    (B_r (sB_rs^{-1})) \cdot (sB_rs^{-1})(swB_rw^{-1}s^{-1})/sB_rs^{-1} \to B_r(swB_rw^{-1}s^{-1}),
  \end{equation}
  where the quotient is taken under the action $b' \cdot (g,g'') = (gb',b'{}^{-1}g'')$ for $b' \in sB_rs^{-1}$ and $(g,g'') \in B_r(sB_rs^{-1}) \times (sB_rs^{-1})(swB_rw^{-1}s^{-1})$. Hence the map $Y_{\underline \Omega} \to Y_{\underline \Omega'}$ obtained by deleting the $h'B_r'$ term is an isomorphism, and the desired conclusion follows.
\end{proof}

We immediately obtain the following two corollaries.

\begin{corollary}\label{cor:composition}
  Let $(B,B',B'')$ be a subsequence of $\underline B$ such that $B_r' = w B_r w^{-1}$ and $B_r'' = ww' B_r w^{\prime-1}w^{-1}$ for some $w,w' \in W_{G_r}(T_r)$ such that $\ell_B(ww') = \ell_B(w) + \ell_B(w')$. Then
  \begin{equation*}
    \pi_{\underline \Omega!} f_{\underline \Omega}^* \cL \cong \pi_{\underline \Omega'!} f_{\underline \Omega'}^* \cL,
  \end{equation*}
  where $\underline \Omega'$ is obtained from $\underline \Omega$ by replacing $(B_rB_r',B_r'B_r'')$ by $B_rB_r''$ (i.e.\ deleting $B'$ from $\underline B$).
\end{corollary}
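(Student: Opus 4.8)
The plan is to deduce Corollary~\ref{cor:composition} from Lemma~\ref{lem:elementary composition} by an induction on $\ell_B(w')$, reducing a length-additive concatenation $w \cdot w'$ to a sequence of single simple-reflection steps. First I would write $w' = s_1 s_2 \cdots s_k$ as a reduced word in simple reflections relative to $B$, where $k = \ell_B(w')$; the hypothesis $\ell_B(ww') = \ell_B(w) + \ell_B(w')$ guarantees that each partial product $w s_1 \cdots s_j$ has length $\ell_B(w) + j$, so in particular $\ell_B\bigl((w s_1 \cdots s_{j-1}) s_j\bigr) = \ell_B(w s_1 \cdots s_{j-1}) + 1$ for every $j$. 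Then I would insert into $\underline{B}$ the intermediate Borel subgroups $B^{(j)}$ with $B_r^{(j)} = (w s_1 \cdots s_{j-1}) B_r (w s_1 \cdots s_{j-1})^{-1}$ for $1 \le j \le k-1$, refining the step from $B'$ to $B''$ into $k$ consecutive elementary steps; note that passing from $B_r^{(j)}$ to $B_r^{(j+1)}$ is conjugation by the simple reflection $(w s_1 \cdots s_{j-1}) s_j (w s_1 \cdots s_{j-1})^{-1}$, which is a simple reflection relative to $B_r^{(j)}$, so each of these is elementary in the sense of Definition~\ref{def:elementary}.

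The subtlety is that Lemma~\ref{lem:elementary composition} is stated for a triple $(B, B', B'')$ where $B'_r = sB_rs^{-1}$ and $B''_r = sw B_r w^{-1} s^{-1}$ with $s$ simple relative to $B$; to apply it to each pair of consecutive elementary steps I need the length condition $\ell_{B^{(j)}}(\cdot)$ phrased relative to the appropriate base Borel. Here I would use the standard fact that conjugating all of $\underline{B}$ by a fixed Weyl element does not change $\pi_{\underline{\Omega}!} f_{\underline{\Omega}}^*$ up to canonical isomorphism (the defining varieties $Y_{\underline{\Omega}}$ are conjugation-equivariant and $M$ is $(T,G)$-generic, in particular $T_r$-conjugation equivariant), which lets me transport each application of Lemma~\ref{lem:elementary composition} to the setup where the relevant reflection is simple relative to the literal Borel $B$. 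Alternatively, and perhaps more cleanly, I would just observe that Lemma~\ref{lem:elementary composition} only ever uses the identity $B_r s B_r \cdot B_r w B_r = B_r sw B_r$ from Lemma~\ref{lem:bruhat multiplication}, which holds verbatim whenever $\ell_B(sw) = \ell_B(w)+1$; so the proof of Lemma~\ref{lem:elementary composition} applies directly to each intermediate triple, eliminating $B^{(j)}$ one at a time and producing $\underline{\Omega}'$ with the middle Borel deleted.

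Carrying this out: after refining, I delete the intermediate Borels $B^{(k-1)}, B^{(k-2)}, \ldots, B^{(1)}$ one at a time, each deletion being an instance of (the proof of) Lemma~\ref{lem:elementary composition}, each leaving $\pi_{\underline{\Omega}!} f_{\underline{\Omega}}^* \cL$ unchanged up to isomorphism; at the end the triple $(B, B', B'')$ has collapsed to the pair $(B, B'')$, i.e.\ $\underline{\Omega}$ has been replaced by $\underline{\Omega}'$ with $B'$ deleted, which is exactly the claim. I expect the main obstacle to be bookkeeping rather than anything deep: one must check that at every stage the sequence of inserted Borel subgroups is genuinely a valid refinement of $\underline{B}$ (each consecutive pair lies in the correct $\Omega$, and the first and last entries match the original $B, B''$), and that the length-additivity hypothesis propagates correctly to each sub-triple. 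Once the reduced-word decomposition and the propagation of lengths are in place, each elementary step is immediate from Lemma~\ref{lem:elementary composition}, and the corollary follows.
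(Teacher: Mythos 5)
There is a genuine gap, and it sits exactly at the step your whole argument hinges on. Lemma~\ref{lem:elementary composition} is not symmetric: it deletes the middle Borel of a triple whose \emph{first} step is elementary (the middle Borel is $sB_rs^{-1}$ with $s$ simple relative to the first entry), while the second step may be long. In Corollary~\ref{cor:composition} it is the \emph{first} step $B\to B'$ that is long (conjugation by $w$) and the second factor $w'$ that must be broken up, so you have refined the wrong side. After you refine the step $B'\to B''$ along a reduced word for $w'$, every triple whose first entry is $B$ still has the non-elementary first step $B\to B'$, so the stated lemma can never absorb $B'$ into the $B$-side. Concretely: deleting your inserted Borels from the right, as you propose, each deletion is indeed an instance of Lemma~\ref{lem:elementary composition}, but those deletions merely undo your own insertions and return you to $(B,B',B'')$; the final deletion of $B'$ from $(B,B',B'')$ is then precisely the statement of Corollary~\ref{cor:composition}, so the argument is circular. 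Deleting from the left instead would require a mirror-image lemma (first step arbitrary, second step a single simple reflection, lengths additive), which is true and provable by the same method --- using $B_rwB_r\cdot B_rsB_r=B_rwsB_r$ when $\ell_B(ws)=\ell_B(w)+1$, obtained e.g.\ from Lemma~\ref{lem:bruhat multiplication} by taking inverses --- but it is not Lemma~\ref{lem:elementary composition} as stated, and you neither state nor prove it. A secondary inaccuracy: the ``standard fact'' that conjugating all of $\underline B$ by a fixed Weyl element leaves $\pi_{\underline\Omega!}f_{\underline\Omega}^*$ unchanged is false for a fixed coefficient sheaf; such a conjugation twists $M$ by $\ad(w)^*$, and for $(T,G)$-generic $\cL$ this is a genuinely different local system (genericity forbids $W$-fixedness). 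It is also unnecessary, since the lemma is already phrased relative to the first Borel of the triple.

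The repair is to run the induction on $\ell_B(w)$ rather than $\ell_B(w')$, i.e.\ to refine the step $B\to B'$ along a reduced word $w=s_1\cdots s_m$ (each insertion being the lemma read in reverse), producing $B, E_1,\ldots,E_{m-1},B',B''$ with every step up to $B'$ elementary. Then delete $B'$ from $(E_{m-1},B',B'')$, then $E_{m-1}$ from $(E_{m-2},E_{m-1},B'')$, and so on down to $(B,E_1,B'')$: each of these \emph{is} an honest instance of Lemma~\ref{lem:elementary composition} with lengths measured relative to the first entry of the triple, and the needed length-additivity $\ell(s_j s_{j+1}\cdots s_m w')=\ell(s_{j+1}\cdots s_m w')+1$ follows from $\ell_B(ww')=\ell_B(w)+\ell_B(w')$. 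This left-factor peeling is the ``immediate'' deduction the paper has in mind (it gives no written proof), and it is the same mechanism underlying Corollary~\ref{cor:elementary expansion}.
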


\begin{corollary}\label{cor:elementary expansion}
  Let $(B,B')$ be a subsequence of $\underline B$ and let $B_rB_r'$ be the corresponding element of $\underline \Omega$. Assume that $B' = s_1s_2 \cdots s_m B s_m \cdots s_2s_1$ for some reduced word $s_1 \cdots s_m$ relative to $B$. Then
  \begin{equation*}
    \pi_{\underline \Omega!} f_{\underline \Omega}^* \cL \cong \pi_{\underline \Omega'!} f_{\underline \Omega'}^* \cL
  \end{equation*}
  where $\underline \Omega'$ is obtained from $\underline \Omega$ by replacing $B_rB_r'$ with $(B_r(s_1B_rs_1), (s_1B_rs_1)(s_1s_2B_rs_2s_1), \ldots, \\(s_1s_2\cdots s_{m-1}B_rs_{m-1} \cdots s_1s_2)(s_1s_2 \cdots s_{m-1}s_mB_rs_ms_{m-1} \cdots s_2s_1))$ (i.e.\ inserting the Borel subgroups $s_1Bs_1, s_1s_2Bs_2s_1, \ldots, s_1s_2 \cdots s_{m-1}Bs_{m-1}\cdots s_2s_1$ between $B$ and $B'$ in $\underline B$).
\end{corollary}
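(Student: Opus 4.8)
The plan is to deduce Corollary \ref{cor:elementary expansion} from Corollary \ref{cor:composition} by starting from the expanded sequence and deleting the interpolating Borel subgroups one at a time until $\underline\Omega$ is recovered; since the isomorphism of functors supplied by Corollary \ref{cor:composition} is symmetric in its two sequences, it suffices to realize $\underline\Omega$ as the result of a chain of such deletions applied to $\underline\Omega'$.

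Write $w_j \colonequals s_1 s_2 \cdots s_j$ for $0 \le j \le m$, so that $w_0 = 1$ and $w_m B w_m^{-1} = B'$. The sequence of Borel subgroups underlying $\underline\Omega'$ has, in place of the consecutive pair $(B, B')$, the run $B = w_0 B w_0^{-1}, \, w_1 B w_1^{-1}, \, \ldots, \, w_m B w_m^{-1} = B'$, in which each consecutive pair is elementary because $w_j B w_j^{-1}$ is conjugate to $w_{j-1} B w_{j-1}^{-1}$ by the reflection $w_{j-1} s_j w_{j-1}^{-1}$, which is simple relative to $w_{j-1} B w_{j-1}^{-1}$. I would then delete the interpolating subgroups $w_1 B w_1^{-1}, \, w_2 B w_2^{-1}, \, \ldots, \, w_{m-1} B w_{m-1}^{-1}$ in this order. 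At the $j$-th deletion (for $1 \le j \le m-1$) the current underlying sequence contains the consecutive triple $(B, \, w_j B w_j^{-1}, \, w_{j+1} B w_{j+1}^{-1})$, the subgroups with smaller index having already been removed; Corollary \ref{cor:composition} applied with $w = w_j$ and $w' = s_{j+1}$ then removes $w_j B w_j^{-1}$ while preserving $\pi_{\underline\Omega!} f_{\underline\Omega}^* \cL$. Its hypothesis $\ell_B(w_j s_{j+1}) = \ell_B(w_j) + \ell_B(s_{j+1})$ is exactly the identity $\ell_B(w_{j+1}) = \ell_B(w_j) + 1$, which holds because each $w_i = s_1 \cdots s_i$ is a prefix of the reduced word $s_1 \cdots s_m$ and is therefore reduced, so $\ell_B(w_i) = i$. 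After the $(m-1)$-st deletion the underlying sequence has been contracted to $B = w_0 B w_0^{-1}, \, w_m B w_m^{-1} = B'$, i.e.\ to $\underline\Omega$, which gives the asserted isomorphism. (When $m = 1$ there is nothing to delete, $\underline\Omega' = \underline\Omega$, and the statement is trivial.)

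I do not anticipate a genuine obstacle: the geometric content---the isomorphism of induction spaces produced by deleting a Borel subgroup---is already contained in Lemma \ref{lem:elementary composition} and Corollary \ref{cor:composition}, and what remains is combinatorial bookkeeping: checking that at each stage the relevant triple still begins with $B$, that $w_{j-1} s_j w_{j-1}^{-1}$ is simple relative to the appropriate Borel subgroup, and that the length-additivity $\ell_B(w_{j+1}) = \ell_B(w_j) + 1$ persists throughout. The one fact worth isolating is that prefixes of a reduced word in a Coxeter group remain reduced, which I would invoke without further comment.
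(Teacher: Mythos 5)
Your proof is correct, and since the paper only declares Corollary \ref{cor:elementary expansion} to follow ``immediately'' from Lemma \ref{lem:elementary composition} (with no written argument), yours supplies exactly the iteration the authors have in mind. The two points of substance---that the prefixes $w_j = s_1\cdots s_j$ of a reduced word stay reduced, so that $\ell_B(w_{j+1}) = \ell_B(w_j)+1$, and that Corollary \ref{cor:composition} (already available at this point in the text) applies at each deletion step---are both handled correctly, and there is no hidden circularity: Corollary \ref{cor:composition} is derived directly from Lemma \ref{lem:elementary composition} by an independent induction on the length of the first hop, so invoking it here is legitimate.

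One small variant worth noting: rather than deleting the interpolating Borels from the left via Corollary \ref{cor:composition} (where the first hop $w_j$ is generally not simple), you could delete from the right using Lemma \ref{lem:elementary composition} itself. At the step that removes $w_jBw_j^{-1}$ (for $j = m-1, m-2, \ldots, 1$ in decreasing order), the current triple is $(w_{j-1}Bw_{j-1}^{-1},\, w_jBw_j^{-1},\, w_mBw_m^{-1})$, whose first hop $w_{j-1}s_jw_{j-1}^{-1}$ is simple relative to $w_{j-1}Bw_{j-1}^{-1}$, and the required length identity reduces to the fact that \emph{suffixes} of a reduced word are reduced. This bypasses Corollary \ref{cor:composition} entirely and is perhaps what makes the word ``immediately'' most defensible; but your route is equally valid and requires no fact beyond what you cite.
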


\subsection{Two vanishing lemmas}\label{subsec:two vanishing}

\begin{lemma}[$(s,s)$ composition]\label{lem:vanishing elementary}
  If $(B,B',B)$ is a subsequence of $\underline B$ such that $B_rB_r'$ is elementary, then for any $M \in D_{T_r}^\psi(T_r)$,
  \begin{equation*}
    \pi_{\underline \Omega!} f_{\underline \Omega}^* M \cong \pi_{\underline \Omega'!} f_{\underline \Omega'}^* M[2 \dim(B_rB_r'/B_r')],
  \end{equation*}
  where $\underline \Omega'$ is obtained from $\underline \Omega$ by deleting $(B_rB_r', B_r'B_r)$ (i.e.\ by deleting $(B,B')$ in $\underline B$).
\end{lemma}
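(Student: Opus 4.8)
The plan is to treat the triple $(B^{(i)},B^{(i+1)},B^{(i+2)})=(B,B',B)$ as a ``backtrack'' and to peel it off by stratifying $Y_{\underline\Omega}$ according to the relative position of $h_iB_r$ and $h_{i+2}B_r$. Write $\alpha$ for the simple root (relative to $B$) of the simple reflection $s$ with $B'=sBs^{-1}$ and fix a lift $\dot s\in N_{G_r}(T_r)$, so $B_r'=\dot sB_r\dot s^{-1}$. On $Y_{\underline\Omega}$ one has $h_i^{-1}h_{i+2}=(h_i^{-1}h_{i+1})(h_{i+1}^{-1}h_{i+2})\in\Omega_i\Omega_{i+1}=B_rB_r'B_r$, and by Lemma~\ref{lem:bruhat multiplication} applied with $w=s$ this set decomposes as $B_rU_{-\alpha,0+:r+}B_r\sqcup B_r\dot sB_r$; in particular $B_r$ is closed in $B_rB_r'B_r$. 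Accordingly, let $Y^\circ\subset Y_{\underline\Omega}$ be the closed subvariety cut out by $h_iB_r=h_{i+2}B_r$ (the preimage of the diagonal under $Y_{\underline\Omega}\to (G_r/B_r)^2$), and let $Y^\bullet$ be the open complement, with inclusions $\iota\colon Y^\circ\hookrightarrow Y_{\underline\Omega}$ and $\jmath\colon Y^\bullet\hookrightarrow Y_{\underline\Omega}$.

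The first main step is to identify the closed stratum with an affine bundle over $Y_{\underline\Omega'}$. The map $q\colon Y^\circ\to Y_{\underline\Omega'}$ forgetting the coordinates $h_{i+1}B_r'$ and $h_{i+2}B_r$ (and keeping the common coset $h_iB_r=h_{i+2}B_r$) is well defined, because $\Omega_{i-1}=B^{(i-1)}_rB_r$ and $\Omega_{i+2}=B_rB^{(i+3)}_r$ are invariant under right, resp.\ left, multiplication by $B_r$, so the neighbouring constraints are unchanged under the merge. Its fibre over a point with merged coset $h B_r$ is $hB_rB_r'/B_r'\cong B_r/(B_r\cap B_r')$, an affine space of dimension $\dim(B_rB_r'/B_r')$ (on $Y^\circ$ the two translates $h_iB_rB_r'/B_r'$ and $h_{i+2}B_rB_r'/B_r'$ coincide, so the constraint $h_{i+1}^{-1}h_{i+2}\in B_r'B_r$ becomes vacuous). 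Moreover $f_{\underline\Omega}^*M\big|_{Y^\circ}\cong q^*f_{\underline\Omega'}^*M$: choosing $h_i=h_{i+2}$ as a common representative, the two factors of $f_{\underline\Omega}$ indexed by the triple become $\beta_{\Omega_i}(x)\,\beta_{\Omega_{i+1}}(x^{-1})$ with $x=h_i^{-1}h_{i+1}\in\Omega_i$, and the cocycle identity $\beta_{\Omega_i}(x)\,\beta_{\Omega_{i+1}}(x^{-1})=\beta(1)=1$ (immediate from the definition of the projections $\beta_{\Omega_\bullet}$, once one checks that the $T_r$-component of an element of $B_r\cap B_r'$ is the same relative to $B$ and to $B'$) shows they cancel, while the remaining factors descend unchanged. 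Since $\pi_{\underline\Omega}\big|_{Y^\circ}=\pi_{\underline\Omega'}\circ q$, pushing forward along the affine bundle $q$ gives $(\pi_{\underline\Omega})_!\big(f_{\underline\Omega}^*M\big|_{Y^\circ}\big)\cong\pi_{\underline\Omega'!}f_{\underline\Omega'}^*M$ up to the cohomological shift $[2\dim(B_rB_r'/B_r')]$ recorded in the statement.

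The second main step is the vanishing $(\pi_{\underline\Omega})_!\big(f_{\underline\Omega}^*M\big|_{Y^\bullet}\big)=0$. I would further fibre $Y^\bullet$ over the variety obtained by forgetting $h_{i+1}B_r'$; stratifying by the $B_r$-double coset of $h_i^{-1}h_{i+2}\in B_rB_r'B_r\smallsetminus B_r$, the fibre of $h_{i+1}B_r'$ is $\{h_{i+1}B_r':h_i^{-1}h_{i+1}\in B_rB_r',\ h_{i+1}^{-1}h_{i+2}\in B_r'B_r\}$, on which the restriction of $f_{\underline\Omega}^*M$ is the pullback of $M$ along $h_{i+1}\mapsto\beta_{\Omega_i}(h_i^{-1}h_{i+1})\,\beta_{\Omega_{i+1}}(h_{i+1}^{-1}h_{i+2})$. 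A direct $\SL_2$-computation, of exactly the type underlying Lemma~\ref{lem:bruhat multiplication} (treating separately the loci where $h_i^{-1}h_{i+2}$ lies in $B_rU_{-\alpha,0+:r+}B_r\smallsetminus B_r$ and in $B_r\dot sB_r$), shows that this fibre is invariant under right multiplication by a $G_r$-conjugate $\dot h\,\mfi(\mft_\alpha)\,\dot h^{-1}$ of $\mfi(\mft_\alpha)$, acting freely, and that under the resulting identification $f_{\underline\Omega}^*M$ restricted to the fibre is $(\mft_\alpha,\cL_\psi|_{\mft_\alpha})$-equivariant. Since $X_\psi$ satisfies the genericity condition $\mathfrak{ge1}$ (with $L=T$, for every root $\alpha$), $\cL_\psi|_{\mft_\alpha}$ is a nontrivial multiplicative local system, so Lemma~\ref{lem:equiv vanishing} forces $R\Gamma_c$ of every such fibre, and hence all stalks of $(\pi_{\underline\Omega})_!\big(f_{\underline\Omega}^*M\big|_{Y^\bullet}\big)$, to vanish.

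Finally, applying $(\pi_{\underline\Omega})_!$ to the excision triangle $\jmath_!\jmath^*f_{\underline\Omega}^*M\to f_{\underline\Omega}^*M\to\iota_*\iota^*f_{\underline\Omega}^*M\to$ and using $\pi_{\underline\Omega}\circ\jmath=\pi_{\underline\Omega}|_{Y^\bullet}$ and $\pi_{\underline\Omega}\circ\iota=\pi_{\underline\Omega}|_{Y^\circ}$, the left term vanishes by the second step and the right term is $\pi_{\underline\Omega'!}f_{\underline\Omega'}^*M[2\dim(B_rB_r'/B_r')]$ by the first step, giving the asserted isomorphism. I expect the genuine obstacle to be the $\SL_2$-analysis of the open stratum: one must pin down precisely that the monodromy picked up on those fibres points in the $\mft_\alpha$-direction (so that $\mathfrak{ge1}$ applies), uniformly over both Bruhat-type pieces of $B_rB_r'B_r$; the affine-bundle bookkeeping and the cocycle cancellation in the first step are routine by comparison with the $r=0$ case.
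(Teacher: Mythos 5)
Your architecture matches the paper's proof step for step: stratify $Y_{\underline\Omega}$ by the relative position of $h_iB_r$ and $h_{i+2}B_r$ (using the $r>0$ Bruhat multiplication of Lemma~\ref{lem:bruhat multiplication} to see $B_rB_r'\cdot B_r'B_r = U_{-\alpha,0+:r+}B_r\sqcup B_r\dot sB_r$), identify the closed diagonal stratum as an affine bundle over $Y_{\underline\Omega'}$ of relative dimension $\dim(B_rB_r'/B_r')$ on which $f_{\underline\Omega}^*M$ descends by the $\beta_{\Omega_i}(x)\beta_{\Omega_{i+1}}(x^{-1})=1$ cancellation, prove vanishing on the open complement via $\mathfrak{ge1}$, and assemble via the excision triangle; this is exactly the paper's decomposition $Y_{\underline\Omega}=Y'_{\underline\Omega}\sqcup\bigsqcup_k Y''_{\underline\Omega,k}\sqcup Y''_{\underline\Omega,s}$ with its Claims.

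The one place your sketch is off — and the part you yourself flag as needing the $\SL_2$ work — is the equivariance structure used for the vanishing. The fiber of the forgetting map over a point of the open stratum is not acted on freely by a $G_r$-conjugate $\dot h\mfi(\mft_\alpha)\dot h^{-1}$; after choosing coset representatives it is identified with a piece of the root subgroup $U_{\alpha,r}$ (all of $U_{\alpha,r}$ on the $U_{-\alpha,0+:r+}$-pieces, and $U_{\alpha,r}\smallsetminus U_{\alpha,0+:r+}$ on the big cell), and what the $\SL_2$ computation shows is that under this identification the $T_r$-valued map $f_{\underline\Omega}$ varies by $\alpha^\vee(1+\varpi^k y)$ (resp.\ $\alpha^\vee(y_r)$) in the variable $y$. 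So the restriction of $f_{\underline\Omega}^*M$ is equivariant for translation along the deepest graded piece of $U_{\alpha,r}$ against the local system obtained by pulling $\cL_\psi|_{\mft_\alpha}$ back through the coroot $\alpha^\vee$, and nontriviality is indeed $\mathfrak{ge1}$. In other words $\mft_\alpha$ enters only through $\alpha^\vee$, not through an $\mft_\alpha$-action on the fiber; with that correction your outline is the paper's proof, with the final invocation of Lemma~\ref{lem:equiv vanishing} packaged there through an intermediate fibration via $G_{r-1}/B_{r-1}$.
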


\begin{proof}
  Assume that $B,B',B$ occur as $B^{(i)}, B^{(i+1)}, B^{(i+2)}$ in $\underline B$ and let $s$ be the simple reflection such that $B' = sBs$. Recall that
  \begin{equation*}
    (B_r B_r')(B_r'B_r) = (B_r sB_rs)(sB_rs B_r) = U_{-\alpha,0+:r+}B_r \sqcup B_r s B_r.
  \end{equation*}
  It is an easy calculation to see that 
  \begin{equation*}
    (B_r B_r')(B_r'B_r) = B_r \sqcup B_r u_{-\alpha}(\varpi) B_r \sqcup B_r u_{-\alpha}(\varpi^2) B_r \sqcup \cdots \sqcup B_r u_{-\alpha}(\varpi^r)B_r \sqcup B_r s B_r.
  \end{equation*}
  Define
  \begin{align*}
    Y_{\underline \Omega}' &= \{(g, \underline{hB_r}) \in Y_{\underline \Omega} : h_i^{-1} h_{i+2} \in B_r\}, \\
    Y_{\underline \Omega,k}'' &= \{(g, \underline{hB_r}) \in Y_{\underline \Omega} : h_i^{-1} h_{i+2} \in B_r u_{-\alpha(\varpi^k)}B_r\}, \\
    Y_{\underline \Omega,s}'' &= \{(g, \underline{hB_r}) \in Y_{\underline \Omega} : h_i^{-1} h_{i+2} \in B_r s B_r\}.
  \end{align*}
  Define $\underline \Omega', \underline \Omega_k'', \underline \Omega_s''$ to be the sequences of double cosets associated to
  \begin{align*}
    \underline B' &= (B^{(1)}, \ldots, B^{(i-1)}, B^{(i)}, B^{(i+3)}, \ldots, B^{(n+1)}), \\
    \underline B_k'' &= (B^{(1)}, \ldots, B^{(i-1)}, B^{(i)}, u_{-\alpha}(\varpi^k)B^{(i+2)}u_{-\alpha}(-\varpi^k), 
    \ldots, u_{-\alpha}(\varpi^k)B^{(n+1)}u_{-\alpha}(-\varpi^k)), \\
    \underline B_s'' &= (B^{(1)}, \ldots, B^{(i-1)}, B^{(i)}, sB^{(i+2)}s, 
    \ldots, sB^{(n+1)}s).
  \end{align*}
  For shorthand, let us write $x$ to either mean $k$ (and $u_{-\alpha}(\varpi^k)$) or $s$.
  We then have projection maps $p' \from Y_{\underline \Omega' \to Y_{\underline \Omega'}}$, $p_x'' \from Y_{\underline \Omega,x}'' \to Y_{\underline \Omega_x''}$ defined by 
  \begin{align*}
    p'(g, \underline{hB_r}) &= (g, h_1B_r^{(1)}, \ldots, h_{i-1}B_r^{(i-1)}, h_i B_r^{(i)}, h_{i+3}B_r^{(i+3)}, \ldots, h_{n+1}B_r^{(n+1)}), \\
    p_x''(g, \underline hB_r)
    &= (g, h_1B_r^{(1)}, \ldots, h_{i-1}B_r^{(i-1)}, h_i B_r^{(i)}, h_{i+2} x^{-1} \cdot x B_r^{(i+2)} x^{-1}, \ldots, h_{n+1} x^{-1} \cdot x B_r^{(n+1)} x^{-1}).
  \end{align*}
  It is a straightforward check to see that the condition on $(h_iB_r, h_{i+2}B_r)$ defining $Y_{\underline \Omega}', Y_{\underline \Omega,x}''$ guarantees that $p', p_x''$ are well defined and surjective. 

  We may organize all these maps into the commutative diagram
  \begin{equation*}
    \begin{tikzcd}
      (\sqcup_{k=1}^r Y_{\underline \Omega,k}'') \sqcup Y_{\underline \Omega,s}'' \ar{d}[left]{(\sqcup_{k=1}^r p_k'') \sqcup p_s''} \ar[hookrightarrow]{rrr}{(\sqcup_{k=1}^r j_k'') \sqcup j_s''} &&& Y_{\underline \Omega} \ar[hookleftarrow]{r}{j'} \ar{dd}{\pi_{\underline \Omega}} & Y_{\underline \Omega}' \ar{d}{p'} \\
      (\sqcup_{k=1}^r Y_{\underline \Omega_k''}) \sqcup Y_{\underline \Omega_s''} \ar{drrr}[below left]{(\sqcup_{k=1}^r \pi_{\underline \Omega_k''}) \sqcup \pi_{\underline \Omega_s''}} & & & & Y_{\underline \Omega'} \ar{dl}{\pi_{\underline \Omega'}} \\
      &&& G_r
    \end{tikzcd}
  \end{equation*}
  Roughly speaking, our aim is to show that $p'$ is an affine fibration (Claim \ref{claim:p'}) and that the entire left side of this diagram does not contribute (Claims \ref{claim:k vanishing} and \ref{claim:s vanishing}), therefore reducing the information on $Y_{\underline \Omega}$ to information on $Y_{\underline \Omega'}$.

  \begin{claim}\label{claim:p'}
    We have $\pi_{\underline \Omega'!} p_!' j^{\prime*} f_{\underline \Omega}^* M \cong \pi_{\underline \Omega'!} f_{\underline \Omega'}^* M[2 \dim (B_rB_r'/B_r')]$. 
  \end{claim}

  \begin{proof}
    It is enough to show that for any $y' \in Y_{\underline \Omega'}$, we have $p'{}^{-1}(y') \cong \bbA^N$ for $N = \dim(B_rB_r'/B_r')$ and $(j'{}^* f_{\underline \Omega}^*\cL)|_{p'{}^{-1}(y')}$ is the constant sheaf. By definition, 
    \begin{equation*}
      p'{}^{-1}(y') \cong \{h_{i+1}B_r' \in G_r/B_r' : \text{$h_i^{-1}h_{i+1} \in B_rB_r'$ and $h_{i+1}^{-1}h_{i+2} \in B_r'B_r$}\},
    \end{equation*}
    where we recall that $h_i,h_{i+2}$ are determined by $y'$ and $h_i^{-1}h_{i+2} \in B_r$. Hence we have
    \begin{equation*}
      f'{}^{-1}(y') 
      = h_i B_r B_r'/B_r'.
    \end{equation*}
    Moreover, this is an affine space; explicitly, $B_rB_r'/B_r' \cong (U_r \cap U_r^{\prime-})$.

    Next we show constancy of $(j^{\prime *} f_{\underline \Omega}^* M)|_{p^{\prime -1}(y')}$. Let $y = (g, \underline{hB_r}) \in p'{}^{-1}(y')$ be the lift of $y'$ with entry $h_iB_r^{(i)} = h_iB_r$. Then by the above, any other lift of $y'$ is obtained by modifying $h_iB_r$ to $h_iu$ where $u \in U_r \cap U_r^{\prime-}$; for convenience, we denote this element by $yu$. We have
    \begin{align*}
      \frac{f_{\underline \Omega}(yu)}{f_{\underline \Omega}(y)}
      &= \beta_{B_rB_r'}(h_i^{-1}h_iu) \beta_{B_r'B_r}(u^{-1}h_i^{-1}h_i) = \beta_{B_rB_r'}(u) \cdot \beta_{B_r'B_r}(u^{-1}) = 1,
    \end{align*}
    which shows constancy.
  \end{proof}

  For the rest of the proof, we focus on showing
  \begin{equation}\label{eq:Y'' vanishing 2}
    \pi_{\underline \Omega_x''!} p_{x!}'' j_x''{}^* f_{\underline \Omega}^* M = 0
  \end{equation}
  for all possibilities of $x$. We will make crucial use of the assumption $M \in D_{T_r}^\psi(T_r)$.

  Without loss of generality, assume that $h_{i+2} = h_i x$. Choose any $y'' \in Y_{\underline \Omega_x''}$.

  \begin{claim}\label{claim:k vanishing}
    For $1 \leq k \leq r$, the vanishing statement \eqref{eq:Y'' vanishing 2} holds.
  \end{claim}

  \begin{proof}
    We have 
    \begin{align*}
      p_k^{\prime \prime -1}(y'') 
      &= \{h_{i+1} B_r' : h_i^{-1}h_{i+1} \in B_r B_r', \, h_{i+1}^{-1}h_{i+2} \in B_r'B_r\} \\
      &\cong h_i \{u B_r' \in U_{\alpha,r}B_r' : u \in u_{-\alpha}(\varpi^k) U_{\alpha,r} B_r'\}.
    \end{align*}
    We have
    \begin{equation*}
      u_{-\alpha}(\varpi^k) u_{\alpha}(y) \left(\begin{smallmatrix}
        a & 0 \\ b & c
      \end{smallmatrix}\right) \in U_{\alpha,r}
    \end{equation*}
    if and only if
    \begin{equation*}
      a = 1 + \varpi^k y, \quad b = -\varpi^k, \quad c = (1+\varpi^k y)^{-1},
    \end{equation*}
    and so we see that we have an isomorphism $\varphi \from U_{\alpha,r} \xrightarrow{\sim} p_k^{\prime\prime-1}(y'')$. Moreover, this calculation shows that 
    \begin{align*}
      \frac{f_{\underline \Omega}(\varphi(u_\alpha(y)))}{f_{\underline \Omega}(\varphi(u_\alpha(0)))} 
      &= \beta_{B_rB_r'}(h_i^{-1}h_iu_{-\alpha}(\varpi^k)u_\alpha(y)) \cdot \beta_{B_r'B_r}((h_i u_{-\alpha}(\varpi^k)u_\alpha(y))^{-1} h_i u_{-\alpha}(\varpi^k)) \\
      &= \beta_{B_rB_r'}(h_i^{-1}h_iu_{-\alpha}(\varpi^k)u_\alpha(y)) \cdot \beta_{B_r'B_r}(u_\alpha(y)^{-1}) = \alpha^\vee(1+\varpi^k y).
    \end{align*}
    In particular, this holds for $u_\alpha(y) \in U_{\alpha,r:r+}$.
    Since $M$ is $(\mft,\cL_\psi)$-equivariant, the $(T,G)$-genericity of $\cL_\psi$ implies that we must have
    \begin{equation*}
      p_{k!}'' j_k^{\prime\prime*} f_{\underline \Omega}^* M = 0.
    \end{equation*}
    Indeed, we can consider an intermediate variety $\cY''$ fitting in $Y_{\underline \Omega,k}'' \to \cY'' \to Y_{\underline \Omega_k''}$ where the first map is given by replacing $h_{i+1}B_r'$ with its image in $G_{r-1}/B_{r-1}$. Then the pushforward along $p_k$ would be given by a composition of pushforwards, the first of which is fiber-wise a pushforward of a nontrivial multiplicative local system on $U_{\alpha,r:r+}$.
  \end{proof}

  \begin{claim}\label{claim:s vanishing}
    When $x = s$, the vanishing statement \eqref{eq:Y'' vanishing 2} holds.
  \end{claim}

  \begin{proof}
    We have
    \begin{equation*}
      p_s^{\prime\prime-1}(y'') = h_i\{u B_r' \in U_{\alpha,r}B_r' : u \in sU_{\alpha,r}B_r'\}.
    \end{equation*}
    If 
    \begin{equation*}
      s u_\alpha(y)\left(\begin{smallmatrix}
        a & 0 \\ b & c
      \end{smallmatrix}\right) \in U_{\alpha,r},
    \end{equation*}
    then this forces
    \begin{equation*}
      a = y, \quad b = 1, \quad cy = 1,
    \end{equation*}
    which forces $y$ to be a unit. Therefore we have an isomorphism $\varphi \from (U_{\alpha,r} \smallsetminus U_{\alpha,0+:r+}) \xrightarrow{\sim} p_s^{\prime\prime-1}(y'')$ and for any $y,y_r$ with $u_\alpha(y) \in U_{\alpha,r} \smallsetminus U_{\alpha,0+:r+}$ and $u_\alpha(y_r) \in U_{\alpha,r:r+}$,
    \begin{align*}
      \frac{f_{\underline \Omega}(\varphi(u_\alpha(yy_r)))}{f_{\underline \Omega}(\varphi(u_\alpha(y)))} 
      &= \beta_{B_rB_r'}(h_i^{-1}h_i s u_\alpha(y_r)) \beta_{B_r'B_r}((h_i s u_\alpha(y_r))^{-1} h_i s) \\
      &= \beta_{B_rB_r'}(su_\alpha(y_r)) \beta_{B_r'B_r}(u_\alpha(y_r)) = \alpha^\vee(y_r).
    \end{align*}
    Since $M$ is $(\mft,\cL_\psi)$-equivariant, then the $(T,G)$-genericity of $\cL_\psi$ implies that 
    \begin{equation*}
      p_{s!}'' j_s^{\prime\prime *} f_{\underline \Omega}^* M = 0
    \end{equation*}
    by the reasoning at the end of the proof of Claim \ref{claim:k vanishing}.
  \end{proof}

The conclusion of the lemma now holds by combining \eqref{eq:Y'' vanishing 2} with Claim \ref{claim:p'}.
\end{proof}

\begin{definition}[0-trivial double coset]
  Let $B,B'$ be two Borel subgroups of $G$ which contain $T$. We say $B_rB_r'$ is \textit{0-trivial} if $B_0 = B_0'$.
\end{definition}

\begin{lemma}[$0$-trivial composition]\label{lem:vanishing 0-trivial}
  Let $(B, B', B'')$ be a subsequence of $\underline B$ and let $(B_rB_r', B_r'B_r'')$ be the corresponding subsequence in $\underline \Omega$. If $B_rB_r'$ is $0$-trivial, then for any $M \in D_{T_r}^\psi(T_r)$,
  \begin{equation*}
    \pi_{\underline \Omega!} f_{\underline \Omega}^* M \cong \pi_{\underline \Omega'!} f_{\underline \Omega'}^* M[\dim(B_rB_r'/B_r') + \dim (B_r'B_r''/B_r'') - \dim(B_rB_r''/B_r'')],
  \end{equation*} 
  where $\underline \Omega'$ is obtained from $\underline \Omega$ by replacing the subsequence $(B_rB_r', B_r'B_r'')$ by $B_rB_r''$ (i.e.\ by deleting $B_r'$ in $\underline B$).
\end{lemma}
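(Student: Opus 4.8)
The plan is to run the same three-step argument as in the proof of Lemma~\ref{lem:vanishing elementary}: stratify $Y_{\underline\Omega}$ by the relative position of the three Borel subgroups, isolate the stratum on which the coordinate-forgetting projection $p'$ is an honest affine fibration, and kill the remaining strata using the $(T,G)$-genericity of $X_\psi$. Say $B,B',B''$ occur as $B^{(i)},B^{(i+1)},B^{(i+2)}$ in $\underline B$, so $\Omega_i=B_rB_r'$ and $\Omega_{i+1}=B_r'B_r''$. Since $B_rB_r'$ is $0$-trivial, $B_r$ and $B_r'$ have the same image $B_0$ in the reductive quotient $G_0$, hence $B_0B_0'=B_0$ and the product $(B_rB_r')(B_r'B_r'')$ maps, under $G_r\to G_0$, into $B_0B_0''$, the image of the single double coset $B_rB_r''$. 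Its preimage therefore decomposes as $B_rB_r''$ together with finitely many further pieces whose deviation from $B_rB_r''$ lies in the kernel $G_{0+:r+}$ of $G_r\to G_0$. I would first make this decomposition explicit by an $\SL_2$-type computation in the root subgroups along which $B$ and $B'$ differ, as in Lemma~\ref{lem:bruhat multiplication}; pulling back along $p'$ (the map forgetting the $h_{i+1}B_r'$-entry) yields a stratification
\[
  Y_{\underline\Omega}=Y_{\underline\Omega}'\sqcup\bigsqcup_{\bullet}Y_{\underline\Omega,\bullet}'',\qquad
  Y_{\underline\Omega}'=\{(g,\underline{hB})\in Y_{\underline\Omega}:h_i^{-1}h_{i+2}\in B_rB_r''\},
\]
on which $p'\from Y_{\underline\Omega}'\to Y_{\underline\Omega'}$ is defined; write $j'\from Y_{\underline\Omega}'\hookrightarrow Y_{\underline\Omega}$, and $j_\bullet''$, $p_\bullet''\from Y_{\underline\Omega,\bullet}''\to Y_{\underline\Omega_\bullet''}$ for the remaining strata and their own forgetful maps onto the sequences $\underline\Omega_\bullet''$ obtained by replacing $B'$ by a suitable translate or reflection of $B'',\dots,B^{(n+1)}$ (compare the diagram in the proof of Lemma~\ref{lem:vanishing elementary}).

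On the main stratum I would check, as in Claim~\ref{claim:p'}, that for $y'\in Y_{\underline\Omega'}$ the fibre $p'^{-1}(y')$ is the affine space of $h_{i+1}B_r'$ with $h_i^{-1}h_{i+1}\in B_rB_r'$ and $h_{i+1}^{-1}h_{i+2}\in B_r'B_r''$ (with $h_i,h_{i+2}$ fixed by $y'$), and that $f_{\underline\Omega}$ is constant along it, since along the fibre the only dependence of $f_{\underline\Omega}$ on $h_{i+1}$ is through $\beta_{B_rB_r'}(h_i^{-1}h_{i+1})\cdot\beta_{B_r'B_r''}(h_{i+1}^{-1}h_{i+2})$ and the two factors cancel. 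Computing the dimension of this fibre produces exactly the shift $\dim(B_rB_r'/B_r')+\dim(B_r'B_r''/B_r'')-\dim(B_rB_r''/B_r'')$ appearing in the statement, and one obtains $\pi_{\underline\Omega'!}p'_!j'^*f_{\underline\Omega}^*M\cong\pi_{\underline\Omega'!}f_{\underline\Omega'}^*M$ up to that shift.

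For each error stratum I would compute $(p_\bullet'')^{-1}(y'')$ for $y''\in Y_{\underline\Omega_\bullet''}$ and find, by an $\SL_2$-computation in the relevant root subgroup $U_{\alpha,r}$ (for a root $\alpha\in\Phi(G,T)$ along which $B,B',B''$ fail to be compatible), an isomorphism $\varphi\from U_{\alpha,r}\xrightarrow{\sim}(p_\bullet'')^{-1}(y'')$ under which $f_{\underline\Omega}(\varphi(u_\alpha(yy_r)))/f_{\underline\Omega}(\varphi(u_\alpha(y)))$ equals a nontrivial value of $\alpha^\vee$ for $u_\alpha(y_r)\in U_{\alpha,r:r+}$. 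Since $X_\psi$ is $(T,G)$-generic, $\mathfrak{ge1}$ forces $\cL_\psi|_{\mft_\alpha}$ to be a nontrivial multiplicative local system (here it matters that $L=T$, so every root is admissible), so the $(\mft,\cL_\psi)$-equivariance of $M$ (Lemma~\ref{lem:L psi}) together with the intermediate-variety trick at the end of the proof of Claim~\ref{claim:k vanishing} gives $(p_\bullet'')_!j_\bullet''^*f_{\underline\Omega}^*M=0$, hence $\pi_{\underline\Omega_\bullet''!}(p_\bullet'')_!j_\bullet''^*f_{\underline\Omega}^*M=0$. Summing over the stratification then gives the asserted isomorphism.

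The main obstacle is purely combinatorial: making the decomposition of $(B_rB_r')(B_r'B_r'')$ explicit and, for each error stratum, isolating a single root $\alpha$ and identifying the character of $T_{r:r+}$ through which $f_{\underline\Omega}$ factors on the corresponding fibre. In Lemma~\ref{lem:vanishing elementary} the three Borel subgroups were $(B,sBs,B)$ and this reduced to one clean computation with $U_{\pm\alpha}$; for a general $0$-trivial triple $B,B',B''$ one must disentangle the positions of all three simultaneously and run the computation root by root, the saving grace being that $B$ and $B'$ differ only in directions lying in $G_{0+:r+}$, so that passing to $G_0$ collapses precisely the extra strata. Once this bookkeeping is in place the vanishing is immediate from genericity, exactly as before.
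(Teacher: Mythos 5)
Your main-stratum argument matches the paper's Claim~\ref{claim:p' 0-trivial}: stratify by whether $h_i^{-1}h_{i+2}\in B_rB_r''$, identify the $p'$-fiber over a point of $Y_{\underline\Omega'}$ with the affine space $B_rB_r'/B_r'\cap B_r''B_r'/B_r'$, check constancy of $f_{\underline\Omega}^*M$ along the fiber via the cancellation $\beta_{B_rB_r'}(u)\cdot\beta_{B_r'B_r''}(u^{-1})=1$, and verify that the fiber dimension gives the stated shift. All of this is fine.

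The gap is in the vanishing step on the complement $\{h_i^{-1}h_{i+2}\notin B_rB_r''\}$, and it is a concrete one: the filtration level you propose is wrong. You model the computation on Claim~\ref{claim:s vanishing} of Lemma~\ref{lem:vanishing elementary}, exhibiting a fiber isomorphic to (a piece of) $U_{\alpha,r}$ and checking that $f_{\underline\Omega}(\varphi(u_\alpha(yy_r)))/f_{\underline\Omega}(\varphi(u_\alpha(y)))$ is a nontrivial value of $\alpha^\vee$ for $y_r$ varying over the top filtration piece $U_{\alpha,r:r+}$. But in the $0$-trivial situation $B_0=B_0'$, the relative position $z=h_i^{-1}h_{i+2}$ (after choosing a good representative in $U_{0+:r+}'\cap U_{0+:r+}^-\cap U_{0+:r+}^{\prime\prime-}$) lies in the kernel of $G_r\to G_0$, so it has strictly positive depth $a$, i.e.\ $z\in G_{a:r+}\smallsetminus G_{a+:r+}$ with $a>0$. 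Commutators add filtration depths, so $[z,U_{\alpha,r:r+}]\subset T_{\alpha,a+r:r+}=\{1\}$, and the pullback of $\cL_\psi$ along your fiber direction is \emph{trivial}; the argument has no teeth. To produce a nontrivial commutator landing in $\mft_\alpha = T_{\alpha,r:r+}$ one must take the fiber at depth $r-a$, namely $U_{\alpha,r-a:r+}$, for which $[z,U_{\alpha,r-a:r+}]=T_{\alpha,r:r+}$. Since the correct $a$ (and the root $\alpha$, taken of maximal height among the leading terms of $z$) depend on the individual point, the paper does not attempt the finite stratification you suggest; instead it maps the complement onto its image $Z_{\underline\Omega}$ under the forgetful map, and over each point of $Z_{\underline\Omega}$ builds an intermediate space $\cY_{\bar{\bar y}''}$ with $p_1''$-fibers $U_{\alpha,r-a:r+}$, working pointwise. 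The combinatorial obstacle you flag at the end is real: a general $0$-trivial pair $B,B'$ differs along many root directions at many depths, so there is no clean analogue of the explicit decomposition of $B_rsB_r\cdot B_rsB_r$ from Lemma~\ref{lem:bruhat multiplication}, and the paper's resolution is precisely to avoid needing one.
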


\begin{proof}
  For convenience, let $N = \dim(B_rB_r'/B_r') + \dim (B_r'B_r''/B_r'') - \dim(B_rB_r''/B_r'')$.
  Assume that $B,B',B''$ occur as $B^{(i)}, B^{(i+1)}, B^{(i+2)}$ in $\underline B$. Consider
  \begin{align*}
    Y_{\underline \Omega}' &= \{(g,\underline h \underline B) \in Y_{\underline \Omega} : h_i^{-1}h_{i+2} \in B_rB_r''\}, \\
    Y_{\underline \Omega}'' &= \{(g,\underline h \underline B) \in Y_{\underline \Omega} : h_i^{-1}h_{i+2} \notin B_rB_r''\},.
  \end{align*}
  Let $\underline \Omega'$ be the sequence obtained from $\underline \Omega$ by replacing the subsequence $(B_rB_r', B_r'B_r'')$ by $B_rB_r''$. We then have the commutative diagram
  \begin{equation*}
    \begin{tikzcd}
      Y_{\underline \Omega}'' \ar{ddr}[below left]{\pi_{Y_{\underline \Omega}''}} \ar[hookrightarrow]{r}{j''} & Y_{\underline \Omega} \ar[hookleftarrow]{r}{j'} \ar{dd}{\pi_{\underline \Omega}} & Y_{\underline \Omega}' \ar{d}{p'} \\
      & & Y_{\underline \Omega'} \ar{dl}{\pi_{\underline \Omega'}} \\
      & G_r
    \end{tikzcd}
  \end{equation*}
  where $p' \from Y_{\underline \Omega}' \to Y_{\underline \Omega'}$ is obtained by forgetting the term $h_{i+1}B_r'$.

  \begin{claim}\label{claim:p' 0-trivial}
    We have $(\pi_{\underline \Omega'})_! (p')_! (j')^* f_{\underline \Omega} M \cong (\pi_{\underline \Omega'})_! f_{\underline \Omega'}^* M[N].$
  \end{claim}

  \begin{proof}
    It is enough to show that for any $y' \in Y_{\underline \Omega'}$, we have $p'{}^{-1}(y') \cong \bbA^{N/2}$ and $(j'{}^* f_{\underline \Omega}^*\cL)|_{p'{}^{-1}(y')}$ is the constant sheaf. By definition, 
    \begin{equation*}
      p'{}^{-1}(y') \cong \{h_{i+1}B_r' \in G_r/B_r' : \text{$h_i^{-1}h_{i+1} \in B_rB_r'$ and $h_{i+1}^{-1}h_{i+2} \in B_r'B_r''$}\},
    \end{equation*}
    where we recall that $h_i,h_{i+2}$ are determined by $y'$ and $h_i^{-1}h_{i+2} = bb'' \in B_rB_r''$. Hence we have
    \begin{align*}
      f'{}^{-1}(y') 
      &\cong h_i(B_rB_r'/B_r' \cap bb''B_r''B_r'/B_r') \\
      &= h_ib(B_r B_r'/B_r' \cap B_r''B_r'/B_r') \\
      &\cong (B_r B_r'/B_r' \cap B_r''B_r'/B_r').
    \end{align*}
    We see that $B_r B_r'/B_r' \cap B_r''B_r'/B_r'$ is an affine space. Since $B_{0:0+} = B_r'$ by assumption, explicitly, we have
    \begin{equation*}
      B_rB_r'/B_r' \cap B_r''B_r'/B_r' \cong U_{0+:r+} \cap U_r^{\prime -} \cap U_{0+:r+}''.  
    \end{equation*}
    We have
    \begin{align*}
      N &= \dim(B_rB_r'/B_r') + \dim(B_r'B_r''/B_r'') - \dim(B_rB_r''/B_r'') \\
      &= \dim(B_rB_r'/B_r' \cap B_r''B_r'/B_r') + \dim(B_rB_r'/B_r' \cap B_r^{\prime\prime-}B_r'/B_r') \\
      &\qquad + \dim(B_r'B_r''/B_r'' \cap B_rB_r''/B_r'') + \dim(B_r'B_r''/B_r'' \cap B_r^-B_r''/B_r'') \\
      &\qquad - \dim(B_rB_r''/B_r'' \cap B_r'B_r''/B_r'') - \dim(B_rB_r''/B_r'' \cap B_r^{\prime -}B_r''/B_r'') \\
      &= 2\dim(B_rB_r'/B_r' \cap B_r''B_r'/B_r'),
    \end{align*}
    where in the last equality we use that $\dim(B_rB_r'/B_r' \cap B_r''B_r'/B_r') = \dim(B_r'B_r''/B_r'' \cap B_r^-B_r''/B_r'').$
      
    Let $y \in p'{}^{-1}(y')$ be the lift of $y'$ whose $(i+1)$th entry is $h_ibB_r'$ for some $b \in B_rB_r' \cap B_r''B_r'$. Let $yu_{i+1}$ denote the lift of $y'$ whose $(i+1)$th entry is $h_ibuB_r'$ for $u \in U_{0+:r+} \cap U_{r}^{\prime -} \cap U_{0+:r+}''$. We have
    \begin{align*}
      \frac{f_{\underline \Omega}(yu)}{f_{\underline \Omega}(y)}
      &= \beta_{B_rB_r'}(h_i^{-1}h_ibu_{i+1}) \beta_{B_r'B_r''}(u_{i+1}^{-1}b^{-1}h_i^{-1}h_{i+2}) \\
      &= \beta_{B_rB_r'}(u_{i+1}) \cdot \beta_{B_r'B_r''}(u_{i+1}^{-1}b^{-1}bb'') \\
      &= \beta_{B_rB_r'}(u_{i+1}) \cdot \beta_{B_r'B_r''}(u_{i+1}^{-1}) = 1.
    \end{align*}
    This shows that $(j'{}^* f_{\underline \Omega}^*\cL)|_{p'{}^{-1}(y')}$ is constant.
  \end{proof}

  For the rest of the proof, we focus on showing
  \begin{equation}\label{eq:Y'' vanishing}
    (\pi_{Y_{\underline \Omega}''})_! j''{}^* f_{\underline \Omega}^* M = 0.
  \end{equation}
  Set $Z_{\underline \Omega}$ to be the image of $Y_{\underline \Omega}''$ under the map $p''$ given by forgetting the $G_r/B_r^{(i+1)}$ coordinate. Then $\pi_{Y_{\underline \Omega}''}$ factors through $p''$ and to prove \eqref{eq:Y'' vanishing}, it is enough to show
  \begin{equation*}
    (p'')_! j^{\prime\prime*} f_{\underline \Omega}^* M = 0.
  \end{equation*}
  To this end, we will prove that for every $\bar{\bar y}'' \in Z_{\underline \Omega}$, there exists an intermediate space $\cY_{\bar{\bar y}''}$ satisfying
  \begin{equation*}
    p^{\prime\prime-1}(z) \stackrel{p_1''}{\longrightarrow} \cY_{\bar{\bar y}''} \stackrel{p_2''}{\longrightarrow} \{\bar{\bar y}''\}
  \end{equation*}
  such that for every $\bar y'' \in \cY_{\bar{\bar y}''}$, 
  \begin{enumerate}[label=(\roman*)]
    \item we have an isomorphism $\varphi \from A \to p^{\prime\prime-1}(\bar y'')$ for some connected algebraic group $A$, and
    \item $\varphi^*(j^{\prime\prime*} f_{Y_{\underline \Omega}^*} M)|_{p^{\prime\prime-1}(\bar y'')}$ is a nontrivial multiplicative local system on $A$.
  \end{enumerate}
  Indeed, if we can prove (i) and (ii), then we have that $(p_1'')_!(j^{\prime\prime *} f_{\underline \Omega}^* M)|_{p_1^{\prime\prime-1}(\bar y'')} = 0$ for all $\bar y'' \in p_2^{\prime\prime-1}(\bar{\bar y}'')$, which implies that the stalk of $(p'')_! j''{}^* f_{\underline \Omega}^* M$ over $\bar{\bar y}''$ is zero. 
  
  It now remains to establish (i) and (ii). Let $h_iB_r, h_{i+2}B_r''$ denote the relevant coordinates of $\bar{\bar y}''$. By assumption $h_i^{-1} h_{i+2} \in B_rB_r'B_r'' \smallsetminus B_rB_r''$. Since $B_0 = B_0'$ by assumption, we may choose coset representatives of such that $h_i^{-1} h_{i+2} = z \in U_{0+:r+}' \cap U_{0+:r+}^- \cap U_{0+:r+}^{\prime\prime-}$. By definition,
  \begin{equation*}
    h_{i+1} \in (h_iB_rB_r'/B_r' \cap h_iz B_r''B_r'/B_r').
  \end{equation*}
  Write $z = \prod_{\alpha \in \Phi^{\prime -} \cap \Phi^+ \cap \Phi^{\prime\prime+}} z_{-\alpha} \in \prod_\alpha U_{\alpha,r}$ in a(ny) fixed order. Let $a$ be minimal such that $z \in G_{a:r+} \smallsetminus G_{a+:r+}$ (note that $a > 0$ since $B_{0:0+} = B_{0:0+}'$) and let $\alpha \in \Phi^{\prime-} \cap \Phi^+ \cap \Phi^{\prime\prime+}$ have maximal height (with respect to $\Phi^+$) amongst all $\alpha$ with $z_{-\alpha} \in U_{-\alpha,a:r+} \smallsetminus U_{-\alpha,a+:r+}$. Then $[z,U_{\alpha,r-a:r+}] = T_{\alpha,r:r+}$ and we may define
  \begin{equation*}
    \cY_{\bar{\bar y}''} \colonequals h_i z U_{\alpha,r-a:r+} z^{-1} h_i^{-1} \backslash (h_iB_rB_r'/B_r' \cap h_iz B_r''B_r'/B_r').
  \end{equation*}

  \begin{claim}\label{claim:p1''}
    For any $\bar y'' \in \cY_{\bar{\bar y}''}$, we have $p_1^{\prime\prime-1}(\bar y'') \cong U_{\alpha,r-a:r+}$ (non-canonically).
  \end{claim}

  \begin{proof}
    Choose any $y'' = h_{i+1}B_r' \in p_1^{\prime\prime-1}(\bar y'')$. We may choose a representative of this coset to be $h_{i+1} = z u''$ for some $u'' \in B_r''$. To prove the claim, we need only show: if $u_\alpha \in U_{\alpha,r-a:r+}$, then $zu_\alpha u'' \in B_rB_r'/B_r' \cap B_r''B_r'/B_r'$. Indeed, we have $z u_\alpha u'' = zu_\alpha z^{-1} \cdot zu'' \in zu_\alpha z^{-1}(B_rB_r' \cap B_r''B_r')$. Since $zu_\alpha z^{-1} = [z,u_\alpha] \cdot u_\alpha \in B_r \cap B_r''$, the conclusion now follows.
  \end{proof}

  \begin{claim}\label{claim:bar y'' 0 trivial}
    Under the isomorphism $\varphi \from U_{\alpha,r-a:r+} \to p_1^{\prime\prime-1}(\bar y'')$ constructed in Claim \ref{claim:p1''}, the restriction $\varphi^*(j^{\prime\prime*}f_{\underline \Omega}^*M)|_{p^{\prime\prime-1}(\bar y'')}$ is transported to a nontrivial multiplicative local system on $U_{\alpha,ra:r+}$.
  \end{claim}

  \begin{proof}
    We retain the notation as in the proof of Claim \ref{claim:p1''}. For any $u_\alpha \in U_{\alpha,r-a:r+}$, we have 
    \begin{align*}
      \frac{f_{\underline \Omega}(y''u_\alpha)}{f_{\underline \Omega}(y'')}
      &= \frac{\beta_{B_rB_r'}(h_i^{-1} h_i z u_\alpha u'') \cdot \beta_{B_r'B_r''}(h_{i+2}^{-1}h_izu_\alpha u'')}{\beta_{B_rB_r'}(h_i^{-1} h_i z u'') \cdot \beta_{B_r'B_r''}(h_{i+2}^{-1}h_iz u'')} \\
      &= \frac{\beta_{B_rB_r'}(z u_\alpha u'') \cdot \beta_{B_r'B_r''}(u_\alpha u'')}{\beta_{B_rB_r'}(z u'') \cdot \beta_{B_r'B_r''}(u'')} \\
      &= \frac{\beta_{B_rB_r'}([z, u_\alpha] u_\alpha \cdot z u'')}{\beta_{B_rB_r'}(z u'')} = [z,u_\alpha]. \qedhere
    \end{align*}
    Since $[z,U_{\alpha,r-a:r+}] = T_{\alpha,r:r+}$, then the nontriviality of $\varphi^*(j^{\prime\prime*}f_{\underline \Omega}^*M)|_{p^{\prime\prime-1}(\bar y'')}$ holds by the $(\mft,\cL_\psi)$-equivariance of $M$ and the $(T,G)$-genericity of $\cL_\psi$.
  \end{proof}
  
  Claims \ref{claim:p1''} and \ref{claim:bar y'' 0 trivial} now establish (i) and (ii), and so the proof is now done.
\end{proof}

\subsection{Proof of Theorem \ref{thm:Borel sequence}}\label{subsec:Borel sequence proof}

We proceed by induction on $\ell(\underline \Omega) = \sum_{i=1}^n \dim(\Omega_i/B_r^{(i+1)})$. If $\ell(\underline \Omega) = 0$, then this means that $B^{(i)} = B^{(1)}$ for all $i$ and we then have an isomorphism
\begin{equation*}
  Y_{\underline \Omega} \to \widetilde G_r, \qquad (g,h_1B_r^{(1)}, h_2B_r^{(2)}, \ldots, h_{n+1}B_r^{(n+1)}) \mapsto (g,h_1B_r^{(1)}).
\end{equation*}
This establishes the base case of the induction.

Now assume $\ell(\underline \Omega) > 0$ and assume that the theorem holds for all $\underline \Omega'$ such that $\ell(\underline \Omega') < \ell(\underline \Omega)$. Note that the assumption $\ell(\underline \Omega) > 0$ immediately requires $n \geq 2$. By the same argument as in the base case of our induction, we may assume that $\dim(\Omega_i/B_r^{(i+1)}) > 0$ for all $i = 1, \ldots, n$---equivalently, we may assume that we do not have $B^{(i)} = B^{(i+1)}$ for any $i = 1, \ldots, n$. By Corollary \ref{cor:elementary expansion}, we may assume that each $\Omega_i$ is either elementary or $0$-trivial. This implies that to $\underline \Omega$ we may associate a sequence $\underline s = (s_1, s_2, \ldots, s_n)$ of elements of $W_{G_r}(T_r)$ wherein either $s_i$ is a simple reflection relative to $B^{(1)}$ or $s_i$ is the identity element.

\begin{lemma}[elementary composition]\label{lem:all simple}
  Suppose that $\underline \Omega$ is such that each $\Omega_i$ is an elementary double coset. Then for any $M \in D_{T_r}^\psi(T_r)$,
  \begin{equation*}
    \pInd_{T_r,B_r^{(1)}}^{G_r}(M) \cong \pi_{\underline \Omega!} f_{\underline \Omega}^* M [\ell(\underline \Omega)].
  \end{equation*}
\end{lemma}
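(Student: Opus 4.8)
The plan is to argue by induction on $n$, reducing an arbitrary all-elementary sequence $\underline\Omega$ to the trivial one-term sequence by repeatedly deleting ``backtracks'' via Lemma~\ref{lem:vanishing elementary}, after first rearranging $\underline B$ by braid moves realized through Corollaries~\ref{cor:composition} and~\ref{cor:elementary expansion}.

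First I would set up the combinatorial bookkeeping. Since each $B^{(i)}$ contains $T$, identifying Borel subgroups through $T$ with elements of $W$ attaches to $\underline\Omega$ a word $\underline s=(s_1,\dots,s_n)$ in the simple reflections relative to $B^{(1)}$: the step $\Omega_i=B_r^{(i)}B_r^{(i+1)}$ is elementary (Definition~\ref{def:elementary}) exactly when $B^{(i+1)}$ is $s_i$-adjacent to $B^{(i)}$, and then $B^{(k)}$ corresponds to $s_1\cdots s_{k-1}$. The hypothesis $B^{(1)}=B^{(n+1)}$ is precisely the statement $s_1\cdots s_n=e$; in particular $n$ is even. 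The base case $n=0$ is immediate: then $Y_{\underline\Omega}=\widetilde G_r$ for $B^{(1)}$, the maps $\pi_{\underline\Omega},f_{\underline\Omega}$ coincide with $\pi,f$, and Lemma~\ref{lem:pInd} identifies $\pi_{\underline\Omega!}f_{\underline\Omega}^*M[\ell(\underline\Omega)]$ with parabolic induction.

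For the inductive step ($n\ge 2$), the word $\underline s$ is nonempty but represents $e$, hence is not reduced. I would first treat the case in which some two consecutive letters coincide, $s_k=s_{k+1}$: then in the subsequence $(B^{(k)},B^{(k+1)},B^{(k+2)})$ one has $B^{(k+2)}=B^{(k)}$ and $B_r^{(k)}B_r^{(k+1)}$ elementary, so Lemma~\ref{lem:vanishing elementary} lets us delete $(B^{(k)},B^{(k+1)})$, producing an all-elementary sequence $\underline\Omega''$ whose word $s_1\cdots \widehat{s_k}\,\widehat{s_{k+1}}\cdots s_n$ still equals $e$ and has length $n-2$. The cohomological shift produced by Lemma~\ref{lem:vanishing elementary} is exactly the drop $\ell(\underline\Omega)-\ell(\underline\Omega'')$ — two elementary cosets, each of relative dimension $\dim(B_r^{(k)}B_r^{(k+1)}/B_r^{(k+1)})$, are removed — so the inductive hypothesis applies verbatim. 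To reduce the general case to this one I would invoke the standard fact that a non-reduced word in a Coxeter group can be carried, by a sequence of braid moves, to a word with two consecutive equal letters: choose the minimal non-reduced prefix $s_1\cdots s_j$, apply the exchange condition to exhibit $s_i\cdots s_{j-1}$ and $s_{i+1}\cdots s_{j-1}s_j$ as two reduced expressions of a common element, and use Matsumoto's theorem to pass between them by braid moves, after which $s_j$ appears twice in a row. Each such braid move acts on a window of $\underline s$ that is a reduced expression of an element $w$; on the level of galleries the corresponding minimal sub-gallery can be contracted to the single double coset $B_r^{(i)}\dot w B_r^{(i)}$ by iterating Corollary~\ref{cor:composition}, and then re-expanded along the new reduced word by Corollary~\ref{cor:elementary expansion}, which leaves $\pi_{\underline\Omega!}f_{\underline\Omega}^*M$ canonically unchanged and does not alter $\ell(\underline\Omega)$. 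After applying these moves we are back in the previous case, so the induction closes.

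The only genuinely nontrivial input is Lemma~\ref{lem:vanishing elementary} itself, which is where the $(T,G)$-genericity of $X_\psi$ is used (via $M\in D_{T_r}^\psi(T_r)$); the rest is bookkeeping. The point to watch most carefully is the matching of cohomological shifts throughout: verifying that braid moves are shift-neutral and $\ell$-invariant, and that the shift appearing in Lemma~\ref{lem:vanishing elementary} compensates exactly the change in $\ell(\underline\Omega)$, so that the normalization $[\ell(\underline\Omega)]$ in the statement is preserved at each step of the induction.
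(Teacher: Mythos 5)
Your argument is correct and uses the same geometric ingredients as the paper's proof: the contraction/expansion moves of Corollaries \ref{cor:composition} and \ref{cor:elementary expansion}, the backtrack-deletion Lemma \ref{lem:vanishing elementary} (the only place where $M\in D_{T_r}^\psi(T_r)$ is needed), and an induction that ends when all backtracks are gone; your normalization check that the shift in Lemma \ref{lem:vanishing elementary} exactly offsets the drop in $\ell(\underline\Omega)$ is the right one (read with the shift as actually produced in Claim \ref{claim:p'}, i.e.\ the affine-fibration direction of the shift --- the paper's displayed signs are not fully consistent, so do not try to match them literally). The difference is purely in the Coxeter-combinatorial packaging. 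The paper inducts on $\ell(\underline\Omega)$ rather than on $n$ and manufactures the backtrack more directly: it takes the first descent $i$ of the word ($\ell(s_1\cdots s_is_{i+1})<\ell(s_1\cdots s_i)$), contracts the reduced prefix $(B^{(1)},\dots,B^{(i+1)})$ to the single coset $B_r^{(1)}B_r^{(i+1)}$, and then re-splits that coset through $B^{(i+2)}$ via length-additivity (Corollary \ref{cor:composition}); this places the pair $(\Omega_{i+1}^{-1},\Omega_{i+1})$ adjacent and ready for Lemma \ref{lem:vanishing elementary}, with no braid moves and no Matsumoto. Your route through the exchange condition plus Matsumoto's theorem works, but Matsumoto is actually superfluous in your own framework: since contraction followed by re-expansion passes between \emph{any} two reduced expressions of the same element in a single step, you can replace the window $s_i\cdots s_{j-1}$ by $s_{i+1}\cdots s_{j-1}s_j$ directly, which is in substance the paper's move. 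Two small bookkeeping points to make explicit if you write this up: after a contraction the sequence is momentarily not all-elementary, so either re-expand immediately (as you do) or state the inductive hypothesis for arbitrary sequences; and when applying the corollaries inside a window starting at $B^{(a)}$, lengths and simplicity are relative to $B^{(a)}$, which matches your word in $B^{(1)}$-simple reflections only after conjugating by $s_1\cdots s_{a-1}$.
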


\begin{proof}
  As noted above, if $\ell(\underline \Omega) = 0$, then the statement holds. We proceed by induction on $\ell(\underline \Omega)$ and assume that the statement is true for all elementary double coset sequences of length strictly less than $\ell(\underline \Omega)$. We may assume that $B^{(i)} \neq B^{(i+1)}$ for all $i = 1, \ldots, n$ so that the associated sequence $\underline s = (s_1, \ldots, s_n)$ of elements of $W_{G_r}(T_r)$ consists only of simple reflections.
  
  Since $B^{(1)} = B^{(n+1)}$ by assumption, we know that $s_1s_2\cdots s_n = 1$. In particular, this means there exists an integer $1 \leq i \leq n-1$ such that $\ell(s_1 \cdots s_i) > \ell(s_1 \cdots s_i s_{i+1})$; let $i$ be the smallest such integer. Let $\Omega' = B_r^{(1)}B_r^{(i+1)}$ and set
  \begin{equation*}
    \underline \Omega' \colonequals (\Omega', \Omega_{i+1}, \Omega_{i+2}, \ldots, \Omega_n).
  \end{equation*}
  By Corollary \ref{cor:elementary expansion}, we have
  \begin{equation*}
    \pi_{\underline \Omega!} f_{\underline \Omega}^* \cL \cong \pi_{\underline \Omega'!} f_{\underline \Omega'}^* \cL.
  \end{equation*}
  Set $\Omega'' = B_r^{(1)}B_r^{(i+2)}$. Recall that $\Omega_{i+1} = B_r^{(i+1)} B_r^{(i+2)}$ where explicitly, we have $B_r^{(i+1)} = s_1 \cdots s_i B_r^{(1)} s_i \cdots s_1$ and $B_r^{(i+2)} = s_1 \cdots s_{i+1} B_r^{(1)} s_{i+1} \cdots s_1$. Since $\ell(s_1 \cdots s_{i+1}) = \ell(s_1 \cdots s_i) - 1$ by assumption, then of course $\ell(s_1 \cdots s_i) = \ell(s_1 \cdots s_i s_{i+1}) + \ell(s_{i_1})$, and so we have by Corollary \ref{cor:composition} that
  \begin{equation*}
    \pi_{\underline \Omega'!} f_{\underline \Omega'}^* \cL \cong \pi_{\underline \Omega''!} f_{\underline \Omega''}^* \cL,
  \end{equation*}
  where 
  \begin{equation*}
    \underline \Omega'' \colonequals (\Omega'', \Omega_{i+1}^{-1}, \Omega_{i+1}, \Omega_{i+2}, \ldots, \Omega_n).
  \end{equation*}
  Now $\underline \Omega''$ has a subsequence $(\Omega_{i+1}^{-1}, \Omega_{i+1})$, where $\Omega_{i+1}$ is elementary. By Lemma \ref{lem:vanishing elementary}, we have
  \begin{equation*}
    \pi_{\underline \Omega''!} f_{\underline \Omega''}^* \cL \cong \pi_{\underline \Omega'''!} f_{\underline \Omega'''}^* \cL[2 \dim(\Omega_{i+1}/B_r^{(i+2)})],
  \end{equation*}
  where $\underline \Omega'''$ is the sequence obtained from $\underline \Omega''$ by deleting $\Omega_{i+1}^{-1}, \Omega_{i+1}$. By construction, $\ell(\underline \Omega) = \ell(\underline \Omega') = \ell(\underline \Omega'')$ and so therefore 
  \begin{align*}
    \ell(\underline \Omega''') 
    = \ell(\underline \Omega'') - 2\dim(\Omega_{i+1}/B_r^{(i+2)}) 
    &= \ell(\underline \Omega) - 2\dim(\Omega_{i+1}/B_r^{(i+2)}) < \ell(\underline \Omega).
  \end{align*}
  Therefore, by the induction hypothesis, we obtain
  \begin{align*}
    \pi_{\underline \Omega!} f_{\underline \Omega}^* \cL 
    &\cong \pInd_{T_r, B_r^{(1)}}^{G_r}(\cL)[\ell(\underline \Omega''') + 2 \dim(\Omega_{i+1}/B_r^{(i+2)})]
    = \pInd_{T_r, B_r^{(1)}}^{G_r}(\cL)[\ell(\underline \Omega)].\qedhere
  \end{align*}
\end{proof}

To complete the theorem, we induct on the number of trivial elements in $\underline s$. The base case (when there are no trivial elements in $\underline s$) is done by Lemma \ref{lem:all simple}. Now let $i$ be such that $s_i = 1$; recall that this means that $\Omega_i$ is $0$-trivial. Then by Lemma \ref{lem:vanishing 0-trivial},
\begin{equation*}
  \pi_{\underline \Omega!} f_{\underline \Omega}^* \cL \cong \pi_{\underline \Omega'!} f_{\underline \Omega'}^* \cL[N]
\end{equation*}
where $N = \dim(B_rB_r'/B_r') + \dim (B_r'B_r''/B_r'') - \dim (B_rB_r''/B_r'')$ and 
where $\underline \Omega'$ is obtained from $\underline \Omega$ by replacing the subsequence $(B_rB_r', B_r'B_r'')$ with $B_rB_r''$. The sequence $\underline s'$ corresponding to $\underline \Omega'$ is obtained from $\underline s'$ by deleting $s_i = 1$ so that $\underline s'$ now has one fewer trivial element than $\underline s$. By our induction hypothesis, we have
\begin{align*}
  \pi_{\underline \Omega'!} f_{\underline \Omega'}^* \cL[N]
  &\cong \pInd_{T_r, B_r^{(1)}}^{G_r}(\cL)[\ell(\underline \Omega') + N]
  = \pInd_{T_r,B_r^{(1)}}^{G_r}(\cL)[\ell(\underline \Omega)],
\end{align*}
which proves the theorem.

\section{Trace of Frobenius}\label{sec:sheaf function}

Let $\G$ be a connected reductive group over $F$ and assume $G = \G \otimes_F F^{\ur}$, equipped with the Frobenius endomorphism $\sigma \from G \to G$. Assume that our chosen point $\x$ in the apartment of $T$ is in the rational building $\cB(\G,F)$. Then $\sigma$ induces an endomorphism $G_r \to G_r$, which we also denote by $\sigma$; moreover, $\sigma(T_r) = T_r$. This endows $T_r$ and $G_r$ with $\FF_q$-rational structures. 

We warn the reader that it may not be the case that $\sigma(B_r) = B_r$. This is exactly the point of establishing the description in Section \ref{sec:Borel sequence} of $(T,G)$-parabolic induction in terms of sequences of Borels! Set
\begin{equation*}
  \underline B = (B, \sigma(B), \sigma^2(B), \ldots, \sigma^{n-1}(B), B),
\end{equation*}
where $n$ is a positive integer such that $\sigma^n(B) = B$. As in Section \ref{sec:Borel sequence}, let $\underline B_r$ denote the corresponding sequence of subgroups of $G_r$ and $\underline \Omega$ denote the corresponding sequence of double cosets. Recall from Section \ref{sec:Borel sequence} the associated variety $Y_{\underline \Omega}$; in this section, we write $Y \colonequals Y_{\underline \Omega}$ and $\pi_Y \colonequals \pi_{\underline \Omega}, f_Y \colonequals f_{\underline \Omega}$.

The arguments in this section follow those of Lusztig's quite closely (see \cite[Section 5]{Lus90} and \cite[Section 2.6]{MR1040575} for an exposition).

\subsection{Frobenius maps}\label{subsec:Borel sequence Frob}

Consider the map $\sigma_Y \from Y \to Y$ defined by
\begin{align*}
  \sigma_Y(g, h_1B_r, {}&{} h_2\sigma(B_r), \ldots, h_n\sigma^{n-1}(B_r), h_{n+1}B_r) \\
  &= (\sigma(g), \sigma(g^{-1}h_n)B_r, \sigma(h_1)\sigma(B_r), \sigma(h_2)\sigma^2(B_r), \ldots, \sigma(h_n)B_r).
\end{align*}
Recall that $\pi_Y \from Y \to G_r$ is the projection map onto the first coordinate; let $Y_g \colonequals \pi_Y^{-1}(g)$ for $g \in G_r$.

\begin{lemma}\label{lem:F and bar F}
  We have a commutative diagram
  \begin{equation*}
    \begin{tikzcd}
      T_r \ar{d}[left]{\sigma} & \ar{l}[above]{f_Y} Y \ar{r}{\pi} \ar{d}{\sigma_Y} & G_r \ar{d}{\sigma} \\
      T_r & \ar{l}[above]{f_Y} Y \ar{r}{\pi} & G_r
    \end{tikzcd}
  \end{equation*}
\end{lemma}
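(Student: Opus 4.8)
The plan is to verify the two squares of the diagram directly, by chasing an arbitrary point $(g, h_1B_r, \ldots, h_{n+1}B_r) \in Y$ through both composites and checking equality in $G_r$ and $T_r$ respectively. The right-hand square reads $\pi_Y \circ \sigma_Y = \sigma \circ \pi_Y$; unwinding the definition of $\sigma_Y$, its first coordinate is $\sigma(g)$, and $\pi_Y$ extracts precisely that, so the right square commutes on the nose with no further work. It is worth noting first that $\sigma_Y$ is well defined as a map $Y \to Y$: one must check that the output tuple lies in $Y$, i.e. that the incidence conditions $h_i^{-1}h_{i+1} \in \Omega_i$ and $h_{n+1}^{-1}gh_1 \in B_r$ are preserved. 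This follows because $\sigma$ carries $\Omega_i = B_r^{(i)}B_r^{(i+1)} = \sigma^{i-1}(B_r)\sigma^i(B_r)$ to $\Omega_{i+1}$ (with the cyclic wraparound using $\sigma^n(B) = B$), and because the new first-vs-second relation $\sigma(g)^{-1} \cdot \sigma(g^{-1}h_n) = \sigma(h_n) \in \Omega$-appropriate-coset is exactly the condition coming from $h_{n+1}^{-1}gh_1 \in B_r$ applied under $\sigma$; I would spell this out carefully since it is the one genuinely substantive bookkeeping point.

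The left-hand square is the main computation: one must show $f_Y(\sigma_Y(y)) = \sigma(f_Y(y))$ for $y = (g, h_1B_r^{(1)}, \ldots, h_{n+1}B_r^{(n+1)})$. Here I would use the formula
\begin{equation*}
  f_{\underline\Omega}(y) = \beta_{\Omega_1}(h_1^{-1}h_2)\cdots\beta_{\Omega_{n-1}}(h_{n-1}^{-1}h_n)\cdot\beta_{\Omega_n}(h_n^{-1}gh_1),
\end{equation*}
using the remark in the text that the last two factors combine. Applying $\sigma_Y$ relabels the coordinates: the new $h$-entries are $(g^{-1}h_n, \sigma(h_1), \sigma(h_2), \ldots, \sigma(h_n))$ sitting over the Borel subgroups $(B_r, \sigma(B_r), \ldots, \sigma^{n-1}(B_r), B_r)$, and the new group element is $\sigma(g)$. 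Plugging these in, the first factor of $f_Y(\sigma_Y(y))$ is $\beta_{\Omega_1}\bigl((g^{-1}h_n)^{-1}\sigma(h_1)\bigr) = \beta_{\Omega_1}(h_n^{-1}g\sigma(h_1))$, the middle factors are $\beta_{\Omega_i}(\sigma(h_{i-1})^{-1}\sigma(h_i)) = \sigma\bigl(\beta_{\Omega_{i-1}}(h_{i-1}^{-1}h_i)\bigr)$ using that $\sigma$ is a group homomorphism compatible with the projections $\beta$, and the final combined factor is $\beta_{\Omega_n}\bigl(\sigma(h_n)^{-1}\sigma(g)\cdot(g^{-1}h_n)\bigr)$... wait — here one must be careful: the final ``$h_1$'' slot of the new tuple is the first entry $g^{-1}h_n$, so the combined last factor is $\beta\bigl(\sigma(h_n)^{-1}\sigma(g)(g^{-1}h_n)\bigr) = \beta\bigl(\sigma(h_n)^{-1}\sigma(g)g^{-1}h_n\bigr)$. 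The point of the shift in the definition of $\sigma_Y$ is precisely that the telescoping product rearranges: the leftover first factor $\beta_{\Omega_1}(h_n^{-1}g\sigma(h_1))$ combines with the shifted middle factors and the last factor so that the whole product equals $\sigma$ applied to the original telescoping product $\beta_{\Omega_1}(h_1^{-1}h_2)\cdots\beta_{\Omega_n}(h_n^{-1}gh_1) = \sigma(f_Y(y))$, using $\sigma(h_1^{-1}h_2)$ etc. and the $\sigma$-equivariance of each $\beta_{\Omega_i}$ (which holds because $\sigma$ maps $\Omega_i$ to $\Omega_{i+1}$ and intertwines the two projections to $T_r$).

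The main obstacle is not any deep idea but getting the index bookkeeping exactly right: the ``twist'' in $\sigma_Y$ (moving $g^{-1}h_n$ into the first slot and shifting everyone else) is engineered so that the reindexed product over $\Omega_1, \ldots, \Omega_n$ on the source matches $\sigma$ of the product over $\Omega_1, \ldots, \Omega_n$, and one has to track the cyclic wraparound coming from $B^{(n+1)} = B^{(1)} = B$ together with the insertion of $g$ at the correct position. I would organize the computation as a single displayed chain of equalities, at each step citing either that $\sigma$ is a group homomorphism, that $\sigma$ carries $\Omega_i$ to $\Omega_{i+1}$ compatibly with $\beta_{\Omega_i} \mapsto \beta_{\Omega_{i+1}}$, or the combination identity $\beta_{\Omega_n}(h_n^{-1}h_{n+1})\beta(h_{n+1}^{-1}gh_1) = \beta_{\Omega_n}(h_n^{-1}gh_1)$ noted after the definition of $f_{\underline\Omega}$. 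Once both squares are checked, the lemma is proved; no separate argument for commutativity with $\pi$ beyond the trivial observation above is needed.
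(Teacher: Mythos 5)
Your overall strategy is exactly the paper's: the right square is immediate from the definitions, and the left square is a direct telescoping computation using the equivariance $\sigma\circ\beta_{\Omega_i}=\beta_{\Omega_{i+1}}\circ\sigma$ (indices taken mod $n$, via $\sigma^n(B)=B$). However, your computation of the left square rests on a misreading of $\sigma_Y$: its first coset entry is $\sigma(g^{-1}h_n)B_r$, not $g^{-1}h_nB_r$; the untwisted expression $g^{-1}\sigma(h_n)B_r$ only appears later, for the restriction of $\sigma_Y$ to $Y_g$ with $g\in G_r^\sigma$. With your entries, the factors you actually write down, namely $\beta_{\Omega_1}\bigl((g^{-1}h_n)^{-1}\sigma(h_1)\bigr)=\beta_{\Omega_1}(h_n^{-1}g\sigma(h_1))$ and $\beta\bigl(\sigma(h_n)^{-1}\sigma(g)g^{-1}h_n\bigr)$, need not even be defined: nothing forces $h_n^{-1}g\sigma(h_1)$ into $\Omega_1$, i.e.\ the tuple you feed into $f_Y$ need not lie in $Y$ at all, and these factors certainly do not combine to $\sigma\bigl(\beta_{\Omega_n}(h_n^{-1}gh_1)\bigr)$. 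The decisive sentence ``the whole product equals $\sigma$ applied to the original telescoping product'' is asserted rather than checked, and with those factors it fails. (The same slip produces the garbled identity $\sigma(g)^{-1}\cdot\sigma(g^{-1}h_n)=\sigma(h_n)$ in your well-definedness paragraph; the correct statements are $\sigma(g)\sigma(g^{-1}h_n)=\sigma(h_n)$ and, for the incidence conditions, that the new first--second relation is $\sigma(h_n^{-1}gh_1)\in\Omega_1$, which follows from $h_n^{-1}gh_1=(h_n^{-1}h_{n+1})(h_{n+1}^{-1}gh_1)\in\sigma^{n-1}(B_r)B_r$.)

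With the correct first entry the computation collapses to the paper's: the first factor is $\beta_{\Omega_1}\bigl(\sigma(g^{-1}h_n)^{-1}\sigma(h_1)\bigr)=\beta_{\Omega_1}\bigl(\sigma(h_n^{-1}gh_1)\bigr)=\sigma\bigl(\beta_{\Omega_n}(h_n^{-1}gh_1)\bigr)$, the middle factors are $\beta_{\Omega_{i+1}}\bigl(\sigma(h_i^{-1}h_{i+1})\bigr)=\sigma\bigl(\beta_{\Omega_i}(h_i^{-1}h_{i+1})\bigr)$ for $1\le i\le n-1$, and the final factor $\beta\bigl(\sigma(h_n)^{-1}\sigma(g)\sigma(g^{-1}h_n)\bigr)=\beta(e)$ is trivial. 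The resulting product consists of the factors of $\sigma(f_Y(y))$ cyclically permuted, so you also need to invoke the commutativity of $T_r$ (as the paper does) to conclude $f_Y\circ\sigma_Y=\sigma\circ f_Y$. In short: right approach and right ingredients, but the key step as written does not go through and needs this correction.
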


\begin{proof}
  The commutativity of the right square is clear from the definitions. We check the left square:
  \begin{align*}
    \sigma{}&{}(f_Y(g,h_1B_r, h_2\sigma(B_r), \ldots, h_n\sigma^{n-1}(B_r), h_{n+1}B_r)) \\
    &= \sigma(\beta_{\Omega_1}(h_1^{-1}h_2) \beta_{\Omega_2}(h_2^{-1} h_3) \cdots \beta_{\Omega_n}(h_n^{-1}gh_1)) \\
    f_Y{}&{}(\sigma(g,h_1B_r, h_2\sigma(B_r), \ldots, h_n\sigma^{n-1}(B_r), h_{n+1}B_r)) \\ 
    &= f_Y(\sigma(g), \sigma(g^{-1}h_n)B_r, \sigma(h_1)\sigma(B_r), \sigma(h_2)\sigma^2(B_r), \ldots, \sigma(h_n)B_r) \\
    &= \beta_{\Omega_1}(\sigma(h_n^{-1} g h_1)) \cdot \beta_{\Omega_2}(\sigma(h_1)^{-1}\sigma(h_2)) \cdots \beta_{\Omega_n}(\sigma(h_{n-1})^{-1} \sigma(h_n))
  \end{align*}
  Noting that $T_r$ is commutative, and that $\sigma \circ \Omega_i = \Omega_{i+1} \circ \sigma$ for $i = 1, \ldots, n-1$ and $\sigma \circ \Omega_n = \Omega_1 \circ \sigma$, we now see that the commutativity of the left square holds.
\end{proof}

The preceding lemma implies that if $M \in D_{T_r}(T_r)$ is such that $\sigma^* M \cong M$, then we have
\begin{equation}\label{eq:sigma pullback}
  \sigma^*(\pi_Y)_! f_Y^* M \cong (\pi_Y)_! \sigma_Y^* f_Y^* M \cong (\pi_Y)_! f_Y^* \sigma^* M
\end{equation}

\begin{lemma}
  If $g \in G_r^\sigma$, then $\sigma_Y \from Y_g \to Y_g$ is the Frobenius map for an $\FF_q$-rational structure on $Y_g$.
\end{lemma}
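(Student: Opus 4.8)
The plan is to reduce the statement to the standard fact that a morphism one of whose iterates is a Frobenius endomorphism for an $\FF_{q^m}$-rational structure is itself a Frobenius endomorphism for an $\FF_q$-rational structure; the relevant iterate will be $\sigma_Y^n$. First, by the previous lemma we have $\pi_Y\circ\sigma_Y=\sigma\circ\pi_Y$, so for $g\in G_r^\sigma$ the morphism $\sigma_Y$ carries $Y_g=\pi_Y^{-1}(g)$ into $Y_{\sigma(g)}=Y_g$; that $\sigma_Y$ is a morphism of (perfect, finite type) varieties is clear since it is built from $\sigma$, left/right multiplication, and the projections $G_r\to G_r/\sigma^{i-1}(B_r)$. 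One checks that $\sigma_Y|_{Y_g}$ is bijective on points by writing down the inverse explicitly: from the image tuple one recovers $h_{i-1}$ from the $i$-th flag coordinate via $\sigma^{-1}$ (using $\sigma^{-1}(\sigma^{i-1}(B_r))=\sigma^{i-2}(B_r)$ and $\sigma^{-1}(B_r)=\sigma^{n-1}(B_r)$), and $h_n$ from the first flag coordinate together with the known fixed point $g$.

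Next I would compute $\sigma_Y^n$ restricted to $Y_g$. Parametrizing $Y_g$ by $(h_1B_r,h_2\sigma(B_r),\ldots,h_n\sigma^{n-1}(B_r))$ (the remaining coordinate $h_{n+1}B_r$ being $gh_1B_r$), the formula for $\sigma_Y$ is the "cyclic shift combined with $\sigma$": the $i$-th flag slot of $\sigma_Y(\cdots)$ is $\sigma(h_{i-1})\sigma^{i-1}(B_r)$ for $i\geq 2$, and $\sigma(g^{-1}h_n)B_r=g^{-1}\sigma(h_n)B_r$ for $i=1$, using $\sigma(g)=g$. Iterating $n$ times and tracking one coordinate around the cycle (the occurrences of $g$ telescoping, again because $\sigma(g)=g$), one obtains
\begin{equation*}
  \sigma_Y^n\bigl(h_1B_r,\ldots,h_n\sigma^{n-1}(B_r)\bigr)=\bigl(g^{-1}\sigma^n(h_1)\,B_r,\ \ldots,\ g^{-1}\sigma^n(h_n)\,\sigma^{n-1}(B_r)\bigr),
\end{equation*}
i.e.\ $\sigma_Y^n|_{Y_g}$ is coordinate-wise $\sigma^n$ followed by coordinate-wise left translation by $g^{-1}$. (The case $n=1$ recovers $\sigma_Y|_{\widetilde G_r}$ directly, as a sanity check.)

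Finally, since $\sigma^n(B)=B$, the subgroup scheme $B_r$ and each $\sigma^{i-1}(B_r)$ are $\sigma^n$-stable, so $\sigma^n$ is the Frobenius of an $\FF_{q^n}$-rational structure on each $G_r/\sigma^{i-1}(B_r)$ and hence on the ambient product; because $g\in G_r^\sigma\subset G_r^{\sigma^n}$, coordinate-wise left translation by $g^{-1}$ is an automorphism commuting with $\sigma^n$, so the composite $\sigma_Y^n$ is again the Frobenius of a ("twisted") $\FF_{q^n}$-rational structure on the ambient product; restricting to the locally closed, $\sigma_Y^n$-stable subvariety $Y_g$, we conclude $\sigma_Y^n|_{Y_g}$ is the Frobenius of an $\FF_{q^n}$-rational structure on $Y_g$. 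Invoking the descent principle above with $m=n$ on the bijective morphism $\sigma_Y|_{Y_g}$ gives the claim. The main obstacle I expect is the bookkeeping in the $n$-fold iteration of $\sigma_Y$ (keeping the flag indices and the telescoping of the $g$-factors straight), together with citing the descent principle in the exact form needed in the perfect-scheme framework of the paper; checking that $Y_g$ is a genuine subscheme (so that "rational structure" is meaningful) is routine given the setup of Section~\ref{sec:Borel sequence}.
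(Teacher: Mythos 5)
Your argument is correct, but it takes a genuinely different route from the paper's. The paper extends $\sigma_Y$ to the ambient product of twisted flag varieties, picks $x$ with $\sigma^n(x)x^{-1}=g$ by Lang's theorem, and conjugates the extended map by the translation automorphism built from $x,\sigma(x),\ldots,\sigma^{n-1}(x)$ so as to identify it with the standard twisted cyclic-shift Frobenius on that product; the rational structure on $Y_g$ is then inherited by restriction. You instead compute the $n$-th iterate $\sigma_Y^n|_{Y_g}$ (your telescoping formula, coordinatewise $g^{-1}\sigma^n$, is correct -- e.g.\ for $n=2$ one gets $(g^{-1}\sigma^2(h_1)B_r,\,g^{-1}\sigma^2(h_2)\sigma(B_r))$), observe it is a twisted Frobenius for an $\FF_{q^n}$-structure because left translation by the finite-order element $g^{-1}\in G_r^{\sigma}$ commutes with $\sigma^n$, and then invoke the power criterion (an endomorphism some power of which is a Frobenius for $\FF_{q^n}$ is itself a Frobenius for $\FF_q$). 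Both proofs share the step that a stable locally closed subvariety of a variety with a rational structure inherits one. What each buys: your route avoids Lang's theorem and the auxiliary twisting element entirely, at the cost of leaning on the power criterion (Digne--Michel type), whose textbook form is stated for quasi-projective varieties and so merits a sentence in the perfect-scheme framework of this paper -- as you note; the paper's conjugation argument avoids the power criterion for the main step, but its own unproved input, namely that the twisted cyclic shift on the multi-flag product is a Frobenius, is itself usually justified either by that same criterion or by a Weil-restriction description, so the foundational load is comparable. Also note that your intermediate claim (commuting finite-order translation composed with a Frobenius is again a Frobenius) can be established either by the power criterion again, taking the power to be the order of $g$, or by the very Lang-twisting trick the paper uses, applied to $\sigma^n$; either way it is standard, and your overall proof goes through.
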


\begin{proof}
  For convenience, write $\underline \sF \colonequals G_r/B_r \times G_r/\sigma(B_r) \times \cdots \times G_r/\sigma^{n-1}(B_r).$ We have an embedding $j \from Y_g \hookrightarrow \underline \sF$ given by forgetting the first and last entries ($g$ and $h_{n+1}B_r$). 
  We also have that $\sigma_Y \from Y_g \to Y_g$ is the restriction to $Y_g$ of the morphism $\tilde \sigma \from \underline \sF \to \underline \sF$ given by
  \begin{align*}
    \tilde \sigma(h_1 B_r, h_2\sigma(B_r),{}&{} \ldots, h_n\sigma^{n-1}(B_r)) \\
    &= (g^{-1}\sigma(h_n)B_r, \sigma(h_1)\sigma(B_r), \ldots, \sigma(h_{n-1})\sigma^{n-1}(B_r)).
  \end{align*}
  Hence to show that $\sigma_Y$ is the Frobenius map for an $\FF_q$-rational structure on $Y_g$, it suffices to show that $\tilde \sigma$ is the Frobenius map for an $\FF_q$-rational structure on $\underline \sF$. Choose $x \in G_r$ such that $\sigma^n(x)x^{-1} = g$ (such an $x$ exists by Lang's theorem). Then consider the map $\delta \from \underline \sF \to \underline \sF$ defined as
  \begin{align*}
    \delta(h_1 B_r, h_2\sigma(B_r), {}&{} \ldots, h_n\sigma^{n-1}(B_r)) \\
    &= (x h_1 B_r, \sigma(x) h_2 \sigma(B_r), \ldots, \sigma^{n-1}(x) h_n \sigma^{n-1}(B_r)).
  \end{align*}
  The map 
  \begin{align*}
    \sigma(h_1B_r, h_2\sigma(B_r), {}&{}\ldots, h_n\sigma^{n-1}(B_r)) \\
    &= (\sigma(h_n)B_r, \sigma(h_1)\sigma(B_r), \ldots, \sigma(h_{n-1})\sigma^{n-1}(B_r))
  \end{align*} 
  is the Frobenius map for an $\FF_q$-rational structure on $\underline \sF$. We can check that we have a commutative diagram
  \begin{equation*}
    \begin{tikzcd}
      \underline \sF \ar{r}{\sigma} \ar{d}[left]{\delta} & \underline \sF \ar{d}{\delta} \\
      \underline \sF \ar{r}{\tilde \sigma} & \underline \sF
    \end{tikzcd}
  \end{equation*}
  Indeed,
  \begin{align*}
    &\delta(\sigma(h_1B_r, h_2\sigma(B_r), \ldots, h_n\sigma^{n-1}(B_r))) \\
    &\qquad\qquad = \delta(\sigma(h_n)B_r, \sigma(h_1)\sigma(B_r), \ldots, \sigma(h_{n-1})\sigma^{n-1}(B_r)) \\
    &\qquad\qquad = 
    (x\sigma(h_n)B_r, \sigma(x)\sigma(h_1) B_r, \ldots, \sigma^{n-1}(x)\sigma(h_{n-1})\sigma^{n-1}(B_r)) \\
    &\tilde \sigma(\delta(h_1B_r, h_2\sigma(B_r), \ldots, h_n\sigma^{n-1}(B_r))) \\
    &\qquad\qquad = 
    \tilde \sigma(xh_1B_r, \sigma(x)h_2\sigma(B_r), \ldots, \sigma^{n-1}(x)h_n\sigma^{n-1}(B_r)) \\
    &\qquad\qquad = (g^{-1}\sigma^n(x) \sigma(h_n) B_r, \sigma(x) \sigma(h_1) \sigma(B_r), \ldots, \sigma^{n-1}(x)\sigma(h_{n-1} \sigma^{n-1}(B_r))) \\
    &\qquad\qquad (x \sigma(h_n) B_r, \sigma(x) \sigma(h_1) \sigma(B_r), \ldots, \sigma^{n-1}(x)\sigma(h_{n-1} \sigma^{n-1}(B_r))),
  \end{align*}
  where the final equality holds since $\sigma^n(x)x^{-1} = g$ by construction. Since $\delta$ is an isomorphism, the conclusion of the lemma now follows.
\end{proof}

\subsection{An explicit formula}\label{subsec:sheaf function formula}

\begin{proposition}\label{prop:Ind Y function}
  Let $M \in D_{T_r}(T_r)$ be such that $\sigma^*M \cong M$. For any $g \in G_r^\sigma$, we have
  \begin{equation*}
    \chi_{(\pi_Y)_!f_Y^*M}(g) = \sum_{hT_r^\sigma(U_r \cap \sigma(U_r)) \in Z_g} \chi_M(\pr_{T_r}((\sigma^nh)^{-1} g h)),
  \end{equation*}
  where 
  \begin{equation}\label{eq:Z}
    Z_g \colonequals \left\{hT_r^\sigma(U_r \cap \sigma(U_r)) : 
    \begin{gathered}
      \text{$h^{-1}\sigma(h) \in \sigma(U_r)$ and} \\
      \sigma^n(h)^{-1}gh \in T_r^\sigma(U_r \cap \sigma(U_r))
    \end{gathered}\right\}.
  \end{equation}
\end{proposition}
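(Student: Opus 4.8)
The formula is a Lefschetz trace computation, following the standard pattern for such ``sequence-of-Borels'' character formulas. The plan is as follows. By \eqref{eq:sigma pullback}, the complex $K \colonequals (\pi_Y)_! f_Y^* M$ carries a canonical isomorphism $\sigma^* K \cong K$, so $\chi_K$ is defined. Fix $g \in G_r^\sigma$ and apply proper base change to the Cartesian square cutting out $Y_g = \pi_Y^{-1}(g)$; this gives $K_g \cong R\Gamma_c(Y_g, f_Y^* M|_{Y_g})$ as a complex with Frobenius, where the Frobenius on $Y_g$ is $\sigma_Y$ (this is exactly why the previous lemma identifies $\sigma_Y|_{Y_g}$ as a genuine $\FF_q$-Frobenius). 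The Grothendieck--Lefschetz trace formula then yields
\begin{equation*}
  \chi_K(g) = \sum_{y \in Y_g^{\sigma_Y}} \chi_{M}\bigl(f_Y(y)\bigr),
\end{equation*}
using that $\sigma^* M \cong M$ so the trace of the Frobenius structure on the stalk $(f_Y^* M)_y$ equals $\chi_M(f_Y(y))$ for $y$ a fixed point. So the whole content is to (i) parametrize the fixed-point set $Y_g^{\sigma_Y}$ by the set $Z_g$ of \eqref{eq:Z}, and (ii) check that under this parametrization $f_Y(y)$ becomes $\pr_{T_r}((\sigma^n h)^{-1} g h)$.

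\textbf{Parametrizing the fixed points.} A point of $Y_g$ is a tuple $(g, h_1 B_r, h_2 \sigma(B_r), \ldots, h_{n+1} B_r)$ with $h_i^{-1} h_{i+1} \in \Omega_i = B_r^{(i)} B_r^{(i+1)}$ and $h_{n+1}^{-1} g h_1 \in B_r$; being $\sigma_Y$-fixed means, reading off the definition of $\sigma_Y$,
\begin{equation*}
  h_1 B_r = \sigma(g^{-1} h_n) B_r, \qquad h_{i+1} \sigma^i(B_r) = \sigma(h_i)\sigma^i(B_r) \ \text{for } 1 \le i \le n-1, \qquad h_{n+1} B_r = \sigma(h_n) B_r.
\end{equation*}
The middle relations say $h_{i+1} \in \sigma(h_i)\sigma^i(B_r)$, so the whole tuple is determined by $h_1$: writing $h \colonequals h_1$, one gets $h_i \in \sigma^{i-1}(h) \sigma^{i-1}(B_r) \cdots$, and after unwinding, $h_i B_r^{(i)} = \sigma^{i-1}(h') B_r^{(i)}$ for a representative $h'$ obtained from $h$ by successively absorbing the relation $h^{-1}\sigma(h) \in \sigma(B_r)$. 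The cleanest way I would organize this: the first and last relations combine (using $\sigma^n(B_r) = B_r$, $B_r^{(n+1)} = B_r$) to the single condition $h^{-1}\sigma^n(h) \in (\text{product of the }\sigma^i(B_r))$, together with $h^{-1}\sigma(h) \in \sigma(B_r)$; intersecting the Iwahori-type decompositions of these Borels and passing to the quotient isolates the unipotent part, giving $h^{-1}\sigma(h) \in \sigma(U_r)$ and $\sigma^n(h)^{-1} g h \in T_r^\sigma (U_r \cap \sigma(U_r))$. The remaining ambiguity in $h$ (changing the chosen representatives $h_i$) is precisely right multiplication by $T_r^\sigma(U_r \cap \sigma(U_r))$, which is what the quotient in the definition of $Z_g$ records. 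This gives the bijection $Y_g^{\sigma_Y} \xrightarrow{\sim} Z_g$.

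\textbf{Evaluating $f_Y$ and the main obstacle.} Under the bijection, $f_Y$ evaluated at the fixed point is the telescoping product $\beta_{\Omega_1}(h_1^{-1}h_2)\cdots\beta_{\Omega_n}(h_n^{-1} g h_1)$; using the $\sigma_Y$-fixed-point relations $h_{i+1} \in \sigma(h_i)\sigma^i(B_r)$ one checks each factor $\beta_{\Omega_i}(h_i^{-1}h_{i+1})$ equals $\sigma^{i-1}$ of the first factor's analogue, so the product collapses (exactly as in Lusztig, \cite[Section 5]{Lus90}) to a single term of the form $\pr_{T_r}$ applied to $(\sigma^n h)^{-1} g h$; since $\sigma^* M \cong M$ and the value only depends on $h$ through its class in $Z_g$, this is well-defined and matches the claimed summand. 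I expect the main obstacle to be bookkeeping: carefully tracking which representative of each coset $h_i B_r^{(i)}$ is used so that (a) the ``absorb the relation'' step genuinely lands $h^{-1}\sigma(h)$ in $\sigma(U_r)$ rather than merely in $\sigma(B_r)$, and (b) the telescoping of the $\beta_{\Omega_i}$ factors is an honest identity in $T_r$ and not just modulo something — i.e., verifying that the $T_r$-parts all conjugate consistently under the $\sigma^i$. This is routine but delicate, and is the step where one must use that $T_r$ is abelian and that $\sigma\circ\Omega_i = \Omega_{i+1}\circ\sigma$ (Lemma \ref{lem:F and bar F}). Everything else is formal: proper base change, the Lefschetz trace formula, and the identification of $\sigma_Y|_{Y_g}$ as an $\FF_q$-Frobenius already proved above.
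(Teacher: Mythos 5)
Your proposal follows essentially the same route as the paper's proof: Grothendieck--Lefschetz applied to $Y_g$ with the Frobenius $\sigma_Y$ from the preceding lemma, reduction of the fixed-point set to cosets $h_1B_r$ via the relations $h_{i+1}\sigma^i(B_r)=\sigma(h_i)\sigma^i(B_r)$, and then a choice of good representatives making $h^{-1}\sigma(h)\in\sigma(U_r)$ so that all but the last $\beta_{\Omega_i}$-factor dies and $f_Y$ becomes $\pr_{T_r}((\sigma^n h)^{-1}gh)$. The two ``delicate'' points you defer are exactly what the paper carries out: Lang's theorem in $T_r$ (then uniqueness mod $U_r\cap\sigma(U_r)$ in $U_r$) to normalize the representative, and the $\sigma$-equivariance of $f_Y$ from Lemma \ref{lem:F and bar F} to see that the torus part lands in $T_r^\sigma$, giving $Y_g^{\sigma_Y}\cong Z_g$.
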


  \begin{proof}
    Recall that by definition we have
    \begin{equation*}
      \chi_{(\pi_Y)_! f_Y^* M(g)} = \sum_i (-1)^i \Tr(\sigma_Y; H_c^i(Y_g, (f_Y^* M)_{Y_g})).
    \end{equation*}
    By the previous lemma, $\sigma_Y$ is the Frobenius of an $\FF_q$-rational structure on $Y_g$, which means we may apply the Grothendieck trace formula. We then get:
    \begin{equation*}
      \chi_{(\pi_Y)_! f_Y^* M}(g) = \sum_{y \in (Y_g)^{\sigma_Y}} \chi_M(f_Y(y)).
    \end{equation*}
    By definition, $(Y_g)^{\sigma_Y}$ consists of tuples $(h_1B_r, h_2\sigma(B_r), \ldots, h_n \sigma^{n-1}(B_r), h_{n+1}B_r)$ satisfying the following conditions:
    \begin{enumerate}[label=(\roman*)]
      \item $h_i^{-1} h_{i+1} \in \sigma^{i-1}(B_r) \sigma^i(B_r)$ for $i = 1, \ldots, n$ 
      \item $h_{n+1}^{-1} g h_1 \in B_r$
      \item $h_1 B_r = g^{-1} \sigma(h_n) B_r$
      \item $h_{i} \sigma^{i-1}(B_r) = \sigma(h_{i-1})\sigma^{i-1}(B_r)$ for $i = 2, \ldots, n+1$
    \end{enumerate}
    Suppose we are given $h_1B_r$. Then condition (iv) determines $h_i \sigma^{i-1}(B_r)$ for $i = 2, \ldots, n+1$; explicitly, we get $h_i \sigma^{i-1}(B_r) = \sigma^{i-1}(h_1) \sigma^{i-1}(B_r)$. Once this is done, then $h_1^{-1} h_2 = h_1^{-1} \sigma(h_1) \in B_r \sigma(B_r)$ automatically implies the remaining relations $h_i^{-1} h_{i+1} = \sigma^{i-1}(h_1) \sigma^i(h_1) \in \sigma^{i-1}(B_r) \sigma^i(B_r)$ for $i = 2, \ldots, n$ in condition (i). Furthermore, (iii) now is equivalent to (ii). Hence we see that $(Y_g)^{\sigma_Y}$ is isomorphic to
    \begin{equation*}
      Y_g' \colonequals \left\{h_1B_r : \begin{gathered}
        h_1^{-1} \sigma(h_1) \in B_r\sigma(B_r) \\
        \sigma^n(h_1)^{-1} g h_1 \in B_r        
      \end{gathered}
      \right\}.
    \end{equation*}
    Observe that under this isomorphism, the morphism $f_Y \from (Y_g)^{\sigma_Y} \to T_r$ becomes $f_Y' \from Y_g' \to T_r$ where
    \begin{align*}
      f_Y'(h_1B_r)
      &= f_Y(g,h_1B_r,\sigma(h_1)\sigma(B_r), \ldots, \sigma^{n-1}(h_1)\sigma^{n-1}(B_r), \sigma^n(h_1)B_r) \\
      &= \beta_{\Omega_1}(h_1^{-1}\sigma(h_1)) \cdot \beta_{\Omega_2}(\sigma(h_1)^{-1}\sigma^2(h_1)) \cdots \beta_{\Omega_n}(\sigma^{n-1}(h_1)^{-1}\sigma^n(h_1)) \cdot \beta(\sigma^n(h_1)^{-1} g h_1).
    \end{align*}
    Let $h_1 \in G_r$ be any representative of a coset in $Y_g'$. Then we have $h_1^{-1}\sigma(h_1) = u t \sigma(u')$ for some $u,u' \in U_r$ and $t \in T_r$. By Lang's theorem, we may find an $s \in T_r$ (unique up to $T_r^\sigma$-translate) such that $s^{-1}\sigma(s) = t^{-1}$. Then $h_1 s B_r = h_1B_r$ and $(h_1 s)^{-1} \sigma(h_1s) = s^{-1} ut \sigma(u') \sigma(s)$. Since $U_r$ is normalized by $T_r$, we see that $(h_1 s)^{-1} \sigma(h_1 s) \in U_r\sigma(U_r)$. Hence for $h = h_1 s$ (which represents the same coset as $h_1$), we have
    \begin{equation*}
      f_Y'(hB_r) = \beta_{\Omega_1}(h^{-1}\sigma(h)) \cdots \beta_{\Omega_n}(\sigma^{n-1}(h)^{-1} \sigma^n(h)) \cdot \beta(\sigma^n(h)^{-1} g h) = \beta(\sigma^n(h)^{-1} g h).
    \end{equation*}
    By Lemma \ref{lem:F and bar F}, for any $y \in (Y_g)^{\sigma_Y}$, we have $f_Y(y) = f_Y(\sigma_Y(y)) = \sigma(f_Y(y))$. Therefore, for any $hB_r \in Y_g'$, we have $f_Y'(hB_r) = \sigma(f_Y'(hB_r))$, which gives $\beta(\sigma^n(h)^{-1}gh) = \sigma(\beta(\sigma^n(h)^{-1} g h))$. This means $\sigma^n(h)^{-1} g h \in T_r^\sigma U_r$. Altogether, this now shows that $Y_g'$ is isomorphic to
    \begin{equation*}
      Y_g'' \colonequals \left\{
        hT_r^\sigma U_r \in G_r/T_r^\sigma U_r : 
        \begin{gathered}
          h^{-1}\sigma(h) \in U_r\sigma(U_r) \\
          \sigma^n(h)^{-1} g h \in T_r^\sigma U_r
        \end{gathered}
        \right\}
    \end{equation*}
    and that the morphism $f_Y' \from Y_g' \to T_r$ becomes
    \begin{equation*}
      f_Y''(hT_r^\sigma U_r) = \beta(\sigma^n(h)^{-1} g h) \in T_r^\sigma.
    \end{equation*}

    To finish the proof, we have left to show that $Y_g'' \cong Z_g$ \eqref{eq:Z}. Let $h \in G_r$ be a representative of a coset in $Y_g''$. By definition, $h^{-1}\sigma(h) \in U_r\sigma(U_r)$, and it is clear from here that there is a unique $(U_r \cap \sigma(U_r))$-coset of $u \in U_r$ such that $(hu)^{-1}\sigma(hu) \in \sigma(U_r)$. Choose a representative $u$ of this $(U_r \cap \sigma(U_r))$-coset; we have $(hu)^{-1} \sigma(hu) = \sigma(z)$ for some $z \in U_r$. To show $Y_g'' \cong Z_g$, we have left to show that $\sigma^n(hu)^{-1} g hu \in T_r^\sigma\sigma(U_r)$ (we already know that $\sigma^n(hu)^{-1} g hu \in T_r^\sigma U_r$ by definition of $Y_g''$). Since $\sigma(hu) = hu\sigma(z)$, we have
    \begin{equation*}
      \sigma^n(z)^{-1} \cdot \sigma^{n-1}(hu)^{-1} g hu 
      = \sigma^n(hu)^{-1} \sigma^{n-1}(hu) \cdot \sigma^{n-1}(hu)^{-1} g hu 
      = \sigma^n(hu)^{-1} g hu \in T_r^\sigma U_r.
    \end{equation*}
    Since $\sigma^n(z) \in U_r$ and $T_r$ normalizes $U_r$, this implies that $\sigma^{n-1}(hu)^{-1} g hu \in T_r^\sigma U_r$. On the other hand, we also have
    \begin{equation*}
      \sigma^n(hu)^{-1} g hu = \sigma^n(hu)^{-1} g \sigma(hu) \sigma(z)^{-1} = \sigma^n(hu)^{-1} \sigma(g) \sigma(hu) \sigma(z)^{-1} = \sigma(\sigma^{n-1}(hu)^{-1} g hu z^{-1}),
    \end{equation*}
    which we now know is in $\sigma(T_r^\sigma U_r z^{-1}) = \sigma(T_r^\sigma U_r) = T_r^\sigma \sigma(U_r)$.
  \end{proof}

We now specialize to the case that $M \in D_{T_r}^\psi(T_r)$ is simple. If $M$ is $\sigma$-equivariant, then we know from Theorem \ref{thm:Borel sequence} and Lemma \ref{lem:F and bar F} that $\pInd_{B_r}^{G_r}(M)$ is also $\sigma$-equivariant. By Theorem \ref{thm:equivalence}, we know that $\pInd_{B_r}^{G_r}(M)$ is also simple, which in particular implies that any two isomorphisms $\pInd_{B_r}^{G_r}(M) \cong \sigma^* \pInd_{B_r}^{G_r}(M)$ differ at most by a scalar factor. This line of reasoning allows us to combine Proposition \ref{prop:Ind Y function} with Theorem \ref{thm:Borel sequence}, and obtain a formula for the trace-of-Frobenius function associated to $\pInd_{B_r}^{G_r}(M)$ for any simple $M \in D_{T_r}^\psi(T_r)$.

\begin{corollary}\label{cor:Ind function}
  Assume $M \in D_{T_r}^\psi(T_r)$ is simple and $\sigma$-equivariant. Then there exists a constant $\mu$ such that for any $g \in G_r^\sigma$,
  \begin{equation*}
    \chi_{\pInd_{B_r}^{G_r}(M)}(g) = \mu \cdot \sum_{hT_r^\sigma(U_r \cap \sigma(U_r)) \in Z_g} \chi_M(\pr_{T_r}((\sigma^nh)^{-1} g h)),
  \end{equation*}
  where $Z_g$ is as in \eqref{eq:Z}.
\end{corollary}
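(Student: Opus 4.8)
The statement to prove, Corollary \ref{cor:Ind function}, is essentially a bookkeeping consequence of the two results it cites: Proposition \ref{prop:Ind Y function}, which gives an explicit formula for $\chi_{(\pi_Y)_! f_Y^* M}(g)$, and Theorem \ref{thm:Borel sequence} (in the specialized form $B^{(i)} = \sigma^{i-1}(B)$), which identifies $(\pi_Y)_! f_Y^* M$ with $\pInd_{B_r}^{G_r}(M)$ up to a shift. The point of the corollary is that once $M$ is simple and $\sigma$-equivariant, the shift and the choice of isomorphism with the Frobenius pullback can be absorbed into a single scalar $\mu$.

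Here is the plan. First I would invoke Theorem \ref{thm:Borel sequence} with the sequence $\underline B = (B,\sigma(B),\dots,\sigma^{n-1}(B),B)$, which gives an isomorphism $\pInd_{B_r}^{G_r}(M[\dim T_r]) \cong \pi_{\underline\Omega!} f_{\underline\Omega}^* M[2\dim U_r + \ell(\underline\Omega)]$; dropping the harmless shift $[\dim T_r]$ and rewriting, $(\pi_Y)_! f_Y^* M \cong \pInd_{B_r}^{G_r}(M)[-2\dim U_r - \ell(\underline\Omega) + \dim T_r]$ up to isomorphism — call the integer shift $N_0$. Next, since $M$ is $\sigma$-equivariant and simple, Corollary \ref{cor:independence of B}(b) (applied with the Borel $B$, whose $\sigma$-translates appear in $\underline B$) together with Theorem \ref{thm:t exact} shows $\pInd_{B_r}^{G_r}(M)$ is a simple $\sigma$-equivariant perverse sheaf, so $\mathrm{Hom}(\sigma^*\pInd_{B_r}^{G_r}(M),\pInd_{B_r}^{G_r}(M))$ is one-dimensional and any two choices of Frobenius structure differ by a nonzero scalar. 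Meanwhile \eqref{eq:sigma pullback} equips $(\pi_Y)_! f_Y^* M$ with its own Frobenius structure $\varphi_Y$ coming from $\sigma_Y$, and this is exactly the structure used in Proposition \ref{prop:Ind Y function} via the Grothendieck trace formula. The isomorphism from Theorem \ref{thm:Borel sequence} intertwines $\varphi_Y$ with some Frobenius structure on $\pInd_{B_r}^{G_r}(M)[N_0]$, which differs from the chosen one on $\pInd_{B_r}^{G_r}(M)$ by a scalar (the shift $[N_0]$ only contributes a sign, absorbable into the scalar); hence the trace-of-Frobenius functions satisfy $\chi_{\pInd_{B_r}^{G_r}(M)} = \mu \cdot \chi_{(\pi_Y)_! f_Y^* M}$ for a constant $\mu$ independent of $g \in G_r^\sigma$.

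Finally, I would substitute the explicit formula of Proposition \ref{prop:Ind Y function},
\begin{equation*}
  \chi_{(\pi_Y)_! f_Y^* M}(g) = \sum_{hT_r^\sigma(U_r \cap \sigma(U_r)) \in Z_g} \chi_M(\pr_{T_r}((\sigma^n h)^{-1} g h)),
\end{equation*}
which is valid because Theorem \ref{thm:Borel sequence}, Lemma \ref{lem:F and bar F}, and the hypothesis $\sigma^* M \cong M$ guarantee $(\pi_Y)_! f_Y^* M$ is $\sigma$-equivariant so that the trace function makes sense; combining with the previous paragraph yields exactly the claimed identity with $Z_g$ as in \eqref{eq:Z}. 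The one genuinely non-formal point — the only place that requires care rather than bookkeeping — is justifying that the scalar $\mu$ is \emph{the same} for all $g$; this is precisely where simplicity of $\pInd_{B_r}^{G_r}(M)$ (from Theorem \ref{thm:t exact}) is essential, since it forces the space of isomorphisms $\sigma^*\pInd_{B_r}^{G_r}(M)\to \pInd_{B_r}^{G_r}(M)$ to be one-dimensional, so that a mismatch of Frobenius structures is a global scalar rather than a function of $g$. Everything else is a matter of tracking shifts and signs, which I would not spell out in detail.
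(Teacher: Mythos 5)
Your proposal is correct and follows the paper's own argument essentially verbatim: identify $\pInd_{B_r}^{G_r}(M)$ with $(\pi_Y)_! f_Y^* M$ up to shift via Theorem \ref{thm:Borel sequence}, compare the Frobenius structure from Corollary \ref{cor:independence of B}(b) with the one from \eqref{eq:sigma pullback}, use simplicity (Theorem \ref{thm:t exact}) to conclude they differ by a single scalar, and then plug in Proposition \ref{prop:Ind Y function}. The only cosmetic difference is your explicit tracking of the shift $[\dim T_r]$ and its sign, which the paper absorbs silently into the constant $\mu$.
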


\begin{proof}
  By Corollary \ref{cor:independence of B}(b), we have an isomorphism $\tau \from \sigma^* \pInd_{B_r}^{G_r}(M) \to \pInd_{B_r}^{G_r}(M)$. Recall from \eqref{eq:sigma pullback} that we have an isomorphism $\tau_Y \from \sigma^* (\pi_Y)_! f_Y^* M[2\dim U_r + \ell(\underline \Omega)] \to (\pi_Y)_! f_Y^* M[2\dim U_r+ \ell(\underline \Omega)]$. Since $\pInd_{B_r}^{G_r}(M) \cong (\pi_{Y})_! f_{Y}^*M[2 \dim U_r + \ell(\underline \Omega)]$ by Theorem \ref{thm:Borel sequence}, $\tau_Y$ induces another isomorphism $\tau' \from \sigma^* \pInd_{B_r}^{G_r}(M) \to \pInd_{B_r}^{G_r}(M)$. Since $\pInd_{B_r}^{G_r}(M)$ is simple (Theorem \ref{thm:equivalence}), the two morphisms $\tau$ and $\tau'$ can differ at most by a scalar multiple. It follows then that $\chi_{\pInd_{B_r}^{G_r}(M)}$ at most differs from $\chi_{(\pi_Y)_! f_Y^* M}$ by a scalar and the desired result follows from Proposition \ref{prop:Ind Y function}.
\end{proof}

\section{Comparison to parahoric Deligne--Lusztig induction}\label{sec:comparison}

We retain the set-up of Section \ref{sec:sheaf function}. If $\cL$ is a $(T,G)$-generic multiplicative local system on $T_r$ such that $\sigma^* \cL \cong \cL$, then the associated trace-of-Frobenius $\chi_\cL$ is a one-dimensional representation $\theta$ of $T_r^\sigma$. Our goal in this section is to compare $\pInd_{B_r}^{G_r}(\cL)$ to the parahoric Deligne--Lusztig induction $R_{T_r}^{G_r}(\theta)$ of the character $\theta$ in the sense of \cite{CI21-RT}. This will involve essentially every theorem proved in this paper so far:

In Section \ref{subsec:parahoric DL} (see Proposition \ref{prop:DL formula}), we prove a character formula for the parahoric Deligne--Lusztig induction of $\theta$ which has the same shape as the explicit formula for the trace of Frobenius of $(T,G)$-generic parabolic induction established in Proposition \ref{prop:Ind Y function}. Recall that this relied on the description of $\pInd_{B_r}^{G_r}(\cL)$ in terms of sequences of Borel subgroups (Theorem \ref{thm:Borel sequence}). From this, we can conclude that $\pInd_{B_r}^{G_r}(\cL)$ realizes the character of $R_{T_r}^{G_r}(\theta)$ up to a constant. To pin down this constant, we make use of the description of $\pInd_{B_r}^{G_r}(\cL)$ as the intermediate extension of a local system on the very regular locus (Theorem \ref{thm:IC vreg}).
In Section \ref{subsec:comparison}, we compare the trace of Frobenius of $\pInd_{B_r}^{G_r}(\cL)$ and the character of $R_{T_r}^{G_r}(\theta)$ at a very regular element of $G_r^\sigma$, thereby establishing that generic parabolic induction realizes parahoric Deligne--Lusztig induction (up to a sign) under the assumption that a very regular element in $T_r^\sigma$ exists (Theorem \ref{thm:comparison}). We recall arguments of \cite{CO21} proving that this assumption is satisfied under a largeness assumption on $q$ (Lemma \ref{lem:q}).

We will recall in Section \ref{subsec:parahoric DL} the definition of a parahoric Deligne--Lusztig variety $X_r$. By definition, it is stable under $\sigma^n$, where $n$ is taken as in Section \ref{sec:sheaf function} to be any positive integer such that $\sigma^n(B) = B$. We know from \cite[Theorem 1.1]{CI21-RT} that as a $G_r^\sigma$-representation, $R_{T_r}^{G_r}(\theta)$ is irreducible (up to a sign) when $\theta$ is $(T,G)$-generic (Definition \ref{def:generic character}). It is expected but not yet known that $R_{T_r}^{G_r}(\theta)$ is in fact concentrated in a single cohomological degree. This would automatically imply that $\sigma^n$ acts on the genuine representation $R_{T_r}^{G_r}(\theta)$ by a scalar, a statement essential to our proof of Proposition \ref{prop:DL formula}. \textit{A priori}, the $(G_r^\sigma \times \langle \sigma^n \rangle)$-representation $R_{T_r}^{G_r}(\theta)$ is a virtual representation; we'll say $\sigma^n$ acts on $R_{T_r}^{G_r}(\theta)$ by a scalar if \eqref{eq:scalar Frob} holds. We prove this scalar-action assertion in Theorem \ref{thm:Frob scalar}. It is worth noting that although this assertion about the action of $\sigma^n$ is purely about parahoric Deligne--Lusztig varieties and their cohomology, our proof depends on the theory of generic character sheaves on $G_r$ developed in this paper! This therefore upgrades the $G_r^\sigma$-irreducibility result of \cite{CI21-RT} to $(G_r^\sigma \times \langle \sigma^n \rangle)$-irreducibility.

\subsection{Parahoric Deligne--Lusztig induction}\label{subsec:parahoric DL}

We remind the reader of the definition of parahoric Deligne--Lusztig induction, following work of the second author and Ivanov \cite{CI21-RT}. Define
\begin{equation*}
  X_r \colonequals \{x \in G_r : x^{-1}\sigma(x) \in \sigma(U_r)\}/(U_r \cap \sigma(U_r)).
\end{equation*}
Observe that $X_r$ has an action of $G_r^\sigma \times T_r^\sigma$ given by
\begin{equation*}
  (g,t) \cdot x(U_r \cap \sigma(U_r)) = gxt(U_r \cap \sigma(U_r)).
\end{equation*}
For any $\theta \from T_r^\sigma \to \overline \QQ_\ell^\times$, we define
\begin{equation*}
  R_{T_r}^{G_r}(\theta) \colonequals \sum_{i \in \bbZ} (-1)^i H_c^i(X_r, \overline \QQ_\ell)_\theta,
\end{equation*}
where $H_c^i(X_r, \overline \QQ_\ell)_\theta$ is the subspace of $H_c^i(X_r, \overline \QQ_\ell)$ on which $\{1\} \times T_r^\sigma$ acts by $\theta$. Let $n$ a positive integer such that $\sigma^n(U_r) = U_r$. Then $X_r$ is stable under $\sigma^n$.

We recall the following genericity condition \`a la \cite{Yu01}.

\begin{definition}[$(T,G)$-generic character]\label{def:generic character}
  We say that $\theta \from T_r^\sigma \to \overline \QQ_\ell^\times$ is \textit{$(T,G)$-generic} if its restriction $\psi \colonequals \theta|_{\mft^\sigma}$ satisfies the following two conditions:
  \begin{enumerate}
    \item[ge1] for any $\alpha \in \Phi(G,T)$ and any $n \geq 1$ such that $\sigma^n(\mft_\alpha) = \mft_\alpha$, the restriction of $\psi \circ N_{\sigma}^{\sigma^n} \from \mft^{\sigma^n} \to \overline \QQ_\ell^\times$ to $(\mft_\alpha)^{\sigma^n}$ is non-trivial
    \item[ge2] the stabilizer of $\psi$ in the absolute Weyl group of $G$ is trivial
  \end{enumerate}
\end{definition}

\begin{remark}
  \begin{enumerate}
    \item The condition ge1 is equivalent to regularity in the sense of \cite{Lus04}.
    \item Let $\cL$ be a Frobenius-equivariant multiplicative local system on $T_r$ such that $\chi_\cL = \theta$. Then $\cL$ is $(T,G)$-generic if and only if $\theta$ is $(T,G)$-generic.
  \end{enumerate}
\end{remark}

\begin{theorem}[{Chan--Ivanov, \cite[Theorems 1.1 and 1.2]{CI21-RT}}]\label{thm:CI}
  If $\theta$ is a $(T,G)$-generic character of $T_r^\sigma$, then $R_{T_r}^{G_r}(\theta)$ is irreducible (up to a sign) and for any very regular element $g \in G_r^\sigma$,
  \begin{equation*}
    \Theta_{R_{T_r}^{G_r}(\theta)}(g) = \sum_{w \in W_{G_r}((T_\gamma)_r,T_r)^\sigma} \theta^w(g).
  \end{equation*}
\end{theorem}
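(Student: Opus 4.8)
The statement is due to Chan--Ivanov; the plan is to reconstruct the two assertions following the template of classical Deligne--Lusztig theory, adapted to the parahoric variety $X_r$, treating irreducibility via a scalar-product computation and the very-regular character formula via a Lefschetz fixed-point count.

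\emph{The very regular character formula.} Fix a very regular $g \in G_r^\sigma$ and let $T_\gamma$ be its connected centralizer, a maximal torus whose apartment contains $\x$. By definition $\Theta_{R_{T_r}^{G_r}(\theta)}(g)$ is the alternating sum of traces of $g$ on $H_c^\bullet(X_r,\overline\QQ_\ell)_\theta$. Since the $g$-action and the $T_r^\sigma$-action on $X_r$ commute, I would average the Lefschetz-type fixed-point formula for the pairs $(g,t)$, $t \in T_r^\sigma$, against $\theta$ to isolate the isotypic component, reducing $\Theta_{R_{T_r}^{G_r}(\theta)}(g)$ to a sum over the $(g,t)$-fixed loci in $X_r$. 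The crucial input---valid precisely because $g$ is very regular---is that each such fixed locus is a disjoint union of affine spaces, so only its set of connected components contributes, and that this set is in natural bijection with $W_{G_r}((T_\gamma)_r,T_r)^\sigma$: a fixed coset is represented by some $h$ with $h^{-1}gh$ lying in $T_rU_r$, the relative position of $h^{-1}(T_\gamma)_r h$ and $T_r$ determines the Weyl element, and $\sigma$-stability of the fixed locus forces this element to be $\sigma$-invariant. Identifying the unipotent directions as affine spaces is the same $\SL_2$/Iwahori-decomposition computation used in Lemma \ref{lem:2} and in the proof of Proposition \ref{prop:Ind Y function}; on the component indexed by $w$ the contribution collapses to $\theta^w(g)$, giving the stated sum.

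\emph{Irreducibility up to sign.} I would compute the inner product $\langle R_{T_r}^{G_r}(\theta), R_{T_r}^{G_r}(\theta)\rangle_{G_r^\sigma}$ by establishing a parahoric analogue of the Deligne--Lusztig scalar-product formula: stratifying $(X_r \times X_r)/G_r^\sigma$ by $W_{G_r}(T_r)$-relative positions and using that the fibers of the reduction map $X_r \to X_0$ are affine spaces inside $\ker(G_r \to G_0)$, one reduces---via a genericity-driven vanishing statement of the same flavour as Lemma \ref{lem:equiv vanishing}---to the depth-zero computation, obtaining $\#\{w \in W_{G_r}(T_r)^\sigma : {}^w\theta = \theta\}$. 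Hypothesis ge2 makes the stabilizer of $\psi = \theta|_{\mft^\sigma}$ in the Weyl group trivial (with ge1 excluding coincidences coming from the reductive quotient), so this count equals $1$ and hence $R_{T_r}^{G_r}(\theta) = \pm\rho$ for an irreducible $G_r^\sigma$-representation $\rho$.

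The main obstacle is the parahoric scalar-product formula: unlike classical Deligne--Lusztig varieties the $X_r$ are not affine, so the usual Mackey argument does not apply directly, and the reduction to depth zero through the affine-space fibration together with the genericity vanishing is where the real work lies. One should also note that at this stage ``irreducible up to sign'' is a statement about $G_r^\sigma$ only; whether $R_{T_r}^{G_r}(\theta)$ is irreducible up to sign as a $(G_r^\sigma \times \langle\sigma^n\rangle)$-representation is genuinely subtle and, as the introduction indicates, is settled later using the character-sheaf technology developed here.
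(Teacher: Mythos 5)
There is nothing in this paper to compare your argument against: Theorem \ref{thm:CI} is imported verbatim from Chan--Ivanov \cite{CI21-RT} (their Theorems 1.1 and 1.2) and is used here as a black box, so the paper contains no proof of this statement. Your sketch is broadly consonant with the strategy of the cited work---there the character formula on very regular elements comes from a fixed-point analysis in which very regularity forces the relevant fixed loci to be controlled by $\sigma$-fixed Weyl elements recording the relative position of $T_\gamma$ and $T$, and irreducibility comes from a Mackey/inner-product computation in which genericity kills the positive-depth contributions---but as written your text is a plan rather than a proof. The two steps you yourself flag as ``where the real work lies'' (the parahoric scalar-product formula, and the identification of the $(g,t)$-fixed components with $W_{G_r}((T_\gamma)_r,T_r)^\sigma$ together with the vanishing coming from the affine directions) are precisely the technical content of \cite{CI21-RT}, and nothing in your outline supplies them: the assertion that the fixed locus is a disjoint union of affine spaces indexed by $W_{G_r}((T_\gamma)_r,T_r)^\sigma$ is stated, not derived, and the genericity hypotheses ge1/ge2 enter your inner-product count only through an unproved ``genericity-driven vanishing statement.'' So if the intent was to reprove the cited theorem, there is a genuine gap; if the intent was to recover how this paper treats the statement, the correct answer is simply that it cites it.

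You are right, and it is worth emphasizing, that the imported statement concerns $G_r^\sigma$-irreducibility only: the finer irreducibility as a $(G_r^\sigma \times \langle\sigma^n\rangle)$-representation is not part of Theorem \ref{thm:CI} and is established later in this paper (Theorem \ref{thm:Frob scalar}), via the character-sheaf results of Sections \ref{sec:Ind t-exact equivalence}--\ref{sec:sheaf function} rather than by any direct cohomological argument on $X_r$.
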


\begin{proposition}\label{prop:DL formula sigma}
  Let $\theta$ be a character of $T_r^\sigma$. For any $m \in \bbZ_{\geq 1}$ and any $g \in G_r^\sigma$, 
  \begin{equation*}
    \tr(g \circ \sigma^{nm}; R_{T_r}^{G_r}(\theta)) = \sum_{hT_r^\sigma(U_r \cap \sigma(U_r)) \in Z_g} \theta(\pr_{T_r}((\sigma^{nm}h)^{-1} g h)),
  \end{equation*}
  where $Z_g$ is as in \eqref{eq:Z}.
\end{proposition}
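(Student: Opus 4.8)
The plan is to run the Grothendieck--Lefschetz fixed-point computation underlying character formulas for (parahoric) Deligne--Lusztig induction, exactly in the spirit of \cite[\S5]{Lus90}, \cite[\S2.6]{MR1040575} and \cite{CI21-RT}; no genericity of $\theta$ is needed for this statement. First I would extract the $\theta$-isotypic part of the cohomology by averaging over $T_r^\sigma$: since the right $T_r^\sigma$-action on $\bigoplus_i H_c^i(X_r,\overline\QQ_\ell)$ commutes with both the left $G_r^\sigma$-action and the $\sigma^n$-action, the projector onto the $\theta$-isotypic subspace is $|T_r^\sigma|^{-1}\sum_{t\in T_r^\sigma}\theta(t)^{-1}\rho(t)$, whence
\[
  \tr(g\circ\sigma^{nm};R_{T_r}^{G_r}(\theta)) = \frac{1}{|T_r^\sigma|}\sum_{t\in T_r^\sigma}\theta(t)^{-1}\sum_i(-1)^i\tr\bigl(\Phi_{g,t,m}^{*};H_c^i(X_r,\overline\QQ_\ell)\bigr),
\]
where $\Phi_{g,t,m}\colon X_r\to X_r$ is the morphism $x(U_r\cap\sigma(U_r))\mapsto g^{-1}\sigma^{nm}(x)t^{-1}(U_r\cap\sigma(U_r))$ (or its inverse, depending on the convention chosen for the $G_r^\sigma\times T_r^\sigma$-action on cohomology). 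That $\Phi_{g,t,m}$ is well defined and maps $X_r$ into itself uses $\sigma^{nm}(U_r)=U_r$ and $\sigma^{nm}(\sigma(U_r))=\sigma(U_r)$ (consequences of $\sigma^n(B)=B$) together with the fact that $T_r^\sigma$ normalizes $U_r$ and $\sigma(U_r)$.

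Next, as in the lemma on $\sigma_Y$ in Section~\ref{subsec:Borel sequence Frob}, I would invoke Lang's theorem --- choosing $x_0\in G_r$ with $x_0^{-1}g\,\sigma^{nm}(x_0)\,t=1$ --- to conjugate $\Phi_{g,t,m}$ to the genuine $q^{nm}$-power geometric Frobenius $x\mapsto\sigma^{nm}(x)$ for an $\FF_{q^{nm}}$-rational structure on $X_r$, so that the Grothendieck trace formula yields
\[
  \sum_i(-1)^i\tr\bigl(\Phi_{g,t,m}^{*};H_c^i(X_r)\bigr) = \#\,X_r^{\Phi_{g,t,m}} = \#\bigl\{x(U_r\cap\sigma(U_r)) : x^{-1}\sigma(x)\in\sigma(U_r),\ \sigma^{nm}(x)^{-1}g\,x\in t(U_r\cap\sigma(U_r))\bigr\}.
\]
Interchanging the sums over $t$ and over fixed points: for a coset $x(U_r\cap\sigma(U_r))$ with $x^{-1}\sigma(x)\in\sigma(U_r)$ there is at most one $t\in T_r^\sigma$ with $\sigma^{nm}(x)^{-1}g\,x\in t(U_r\cap\sigma(U_r))$, and it exists exactly when $\sigma^{nm}(x)^{-1}g\,x\in T_r^\sigma(U_r\cap\sigma(U_r))$, in which case $t=\pr_{T_r}(\sigma^{nm}(x)^{-1}g\,x)$ and $\theta(t)^{-1}=\theta(\pr_{T_r}((\sigma^{nm}x)^{-1}g\,x))$. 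The double sum thus collapses to $|T_r^\sigma|^{-1}$ times $\sum_x\theta(\pr_{T_r}((\sigma^{nm}x)^{-1}g\,x))$ over the $x$ satisfying $x^{-1}\sigma(x)\in\sigma(U_r)$ and $\sigma^{nm}(x)^{-1}g\,x\in T_r^\sigma(U_r\cap\sigma(U_r))$; this set is stable under right translation by $T_r^\sigma$ with summand constant on $T_r^\sigma$-orbits, so the factor $|T_r^\sigma|^{-1}$ cancels and one obtains a sum over $T_r^\sigma(U_r\cap\sigma(U_r))$-cosets with summand $\theta(\pr_{T_r}((\sigma^{nm}h)^{-1}g\,h))$.

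It then remains to identify this index set of cosets with $Z_g$ as in \eqref{eq:Z}. I would do this by a chain of Lang-theorem normalizations of the representative $h$ --- paralleling the identifications $(Y_g)^{\sigma_Y}\cong Y_g'\cong Y_g''\cong Z_g$ in the proof of Proposition~\ref{prop:Ind Y function} --- rewriting the conditions so that they refer only to $\sigma$ and $\sigma^n$ (rather than $\sigma^{nm}$) and pinning the torus components into $T_r^\sigma$. The main obstacle is precisely this last bookkeeping: one must verify that the extra factor $\sigma^n(h)^{-1}\sigma^{nm}(h)=\prod_{j=n+1}^{nm}\sigma^j(z)$ (where $h^{-1}\sigma(h)=\sigma(z)$) does not alter the conditions cutting out the index set, so that one and the same $Z_g$ --- whose definition involves only $\sigma$ and $\sigma^n$ --- serves uniformly for all $m\ge 1$, with only the summand $\theta(\pr_{T_r}((\sigma^{nm}h)^{-1}g\,h))$ depending on $m$. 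Although elementary, keeping this normalization consistent (and matching the conventions for the $G_r^\sigma$- and $\sigma^n$-actions on cohomology so that no spurious sign or inverse appears) is the real content of the argument.
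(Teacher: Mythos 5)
Your main computation---averaging over $T_r^\sigma$ to extract the isotypic part, using Lang's theorem to recognize $(g^{-1},t)\circ\sigma^{nm}$ as the Frobenius of an $\FF_{q^{nm}}$-rational structure on $X_r$, applying the Grothendieck trace formula, and collapsing the double sum over $t$ and fixed points into a single sum over $T_r^\sigma(U_r\cap\sigma(U_r))$-cosets---is exactly the paper's proof; even the $\theta$ versus $\theta^{-1}$ bookkeeping you worry about is resolved there the way you anticipate, by computing on the $\theta^{-1}$-isotypic part against $g^{-1}$ and dualizing at the end.

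The genuine problem is the step you defer in your last paragraph. The fixed-point count naturally produces the index set cut out by $h^{-1}\sigma(h)\in\sigma(U_r)$ and $\sigma^{nm}(h)^{-1}gh\in T_r^\sigma(U_r\cap\sigma(U_r))$, i.e.\ \eqref{eq:Z} with $nm$ in place of $n$, and that is how the proposition is meant to be read: the paper's own proof ends with this set (denoted $Z_{g,nm}$ there), and the proof of Corollary \ref{cor:Frob cm} deliberately keeps $Z_{g,nm}$ and $Z_{g,n}$ distinct, since what is needed there is precisely the match with Proposition \ref{prop:Ind Y function} applied to the length-$nm$ sequence of Borel subgroups. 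The identification you call ``the real content''---that one and the same $Z_g$, defined with $\sigma^n$, serves for all $m$, with only the summand depending on $m$---is not only unnecessary but false in general. The discrepancy $\sigma^n(h)^{-1}\sigma^{nm}(h)=\prod_{j=n+1}^{nm}\sigma^j(z)$ lies in a product of the varying groups $\sigma^j(U_r)$ and there is no reason for it to preserve the condition $\sigma^{n}(h)^{-1}gh\in T_r^\sigma(U_r\cap\sigma(U_r))$; more decisively, taking $\theta$ trivial the two sums would be the cardinalities of the two coset sets, which compute Lefschetz numbers of different powers of Frobenius on $X_r$ and so differ in general (already for $g=e$ they grow with $m$). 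So drop that normalization step and state the index set with $\sigma^{nm}$ throughout; with that reading your argument is complete and coincides with the paper's.
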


\begin{proof}
  Since $U_r$ is $\sigma^{nm}$-stable for any $m \geq \bbZ_{\geq 1}$, the variety $X_r$ is then stable under $\sigma^{nm}$. Moreover, this commutes with the action of $G_r^\sigma \times T_r^\sigma$. For any $(g^{-1},t) \in G_r^\sigma \times T_r^\sigma$, the composition $(g^{-1}, t) \circ \sigma^{nm}$ is the Frobenius endomorphism for some $\FF_{q^{nm}}$-rational structure on $X_r$, and hence by the Grothendieck trace formula, we obtain:
  \begin{align*}
    \sum_i (-1)^i {}&{} \tr(g^{-1} \circ \sigma^{nm} ; H_c^i(X_r, \overline \QQ_\ell)_{\theta^{-1}}) \\
    &= \sum_{t \in T_r^\sigma} \theta(t) \sum_{i \in \bbZ} \tr((g^{-1}, t) \circ \sigma^{nm}; H_c^i(X_r, \overline \QQ_\ell)) \\
    &= \sum_{t \in T_r^\sigma} \theta(t) \cdot \#\{h(U_r \cap \sigma(U_r)) \in G_r/(U_r \cap \sigma(U_r)) : \\
    &\qquad\qquad\qquad\qquad \text{$h^{-1} \sigma(h) \in \sigma(U_r)$ and $h \in g^{-1} \sigma^{nm}(h) t(U_r \cap \sigma(U_r))$}\} \\
    &= \sum_{hT_r^\sigma(U_r \cap \sigma(U_r)) \in Z_{g,nm}} \theta(\pr_{T_r}((\sigma^{nm}h)^{-1} g h)),
  \end{align*}
  where the last equality holds since the condition $h \in g^{-1} \sigma^{nm}(h) t (U_r \cap \sigma(U_r))$ is equivalent to the condition $(\sigma^{nm}(h))^{-1} g h \in t(U_r \cap \sigma(U_r))$ and $t = \pr_{T_r}((\sigma^{nm}h)^{-1} g h).$ The proof is now complete since $R_{T_r}^{G_r}(\theta)$ is the dual of $R_{T_r}^{G_r}(\theta^{-1})$.
\end{proof}

\begin{corollary}\label{cor:Frob cm}
  Let $\theta$ be any character of $T_r^\sigma$. For any $m \in \bbZ_{\geq 1}$, there exists a constant $c_m \in \overline \QQ_\ell^\times$ such that for all $g \in G_r^\sigma$,
  \begin{equation*}
    \tr(g \circ \sigma^n; R_{T_r}^{G_r}(\theta)) = c_m \cdot \tr(g \circ \sigma^{nm}; R_{T_r}^{G_r}(\theta)).
  \end{equation*}
\end{corollary}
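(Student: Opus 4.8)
The plan is to deduce this directly from Proposition~\ref{prop:DL formula sigma}, applied once with the given $m$ and once with $m=1$. This rewrites both traces as sums over the \emph{same} index set $Z_g$ of \eqref{eq:Z}:
\begin{align*}
  \tr(g\circ\sigma^{nm};R_{T_r}^{G_r}(\theta)) &= \sum_{hT_r^\sigma(U_r\cap\sigma(U_r))\in Z_g}\theta(\pr_{T_r}((\sigma^{nm}h)^{-1}gh)),\\
  \tr(g\circ\sigma^{n};R_{T_r}^{G_r}(\theta)) &= \sum_{hT_r^\sigma(U_r\cap\sigma(U_r))\in Z_g}\theta(\pr_{T_r}((\sigma^{n}h)^{-1}gh)).
\end{align*}
Since $Z_g$ is independent of $m$, it suffices to exhibit a single element $t_m\in T_r^\sigma$ — depending only on $m$, and neither on $g$ nor on the class represented by $h$ — with $\pr_{T_r}((\sigma^{nm}h)^{-1}gh)=t_m\cdot\pr_{T_r}((\sigma^{n}h)^{-1}gh)$ for all $h$ with $hT_r^\sigma(U_r\cap\sigma(U_r))\in Z_g$. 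The corollary then holds with $c_m=\theta(t_m)^{-1}$ (and $c_1=1$).

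Next I would carry out the term-by-term comparison. A direct manipulation gives
\begin{equation*}
  (\sigma^{nm}h)^{-1}gh = \delta_h\cdot(\sigma^{n}h)^{-1}gh, \qquad \delta_h \colonequals (\sigma^{nm}h)^{-1}\sigma^n(h) = \sigma^n\bigl(h^{-1}\sigma^{n(m-1)}(h)\bigr)^{-1},
\end{equation*}
and it is worth stressing that $\delta_h$ does not involve $g$: it depends on $h$ only through $z\colonequals h^{-1}\sigma(h)$. Writing $z\in\sigma(U_r)$ (the first condition defining $Z_g$) and using $\sigma^n(U_r)=U_r$, one has $h^{-1}\sigma^{n(m-1)}(h)=z\,\sigma(z)\cdots\sigma^{n(m-1)-1}(z)$, so $\delta_h$ is a product of $n(m-1)$ factors lying in the $\sigma$-rotations $\sigma(U_r),\sigma^2(U_r),\dots,\sigma^{n-1}(U_r),U_r$ taken cyclically. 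On the other hand, since $g\in G_r^\sigma$ and $h$ represents a class in $Z_g$, the $\sigma$-equivariance argument already used in the proof of Proposition~\ref{prop:Ind Y function} forces both $(\sigma^{nm}h)^{-1}gh$ and $(\sigma^{n}h)^{-1}gh$ into $T_r^\sigma(U_r\cap\sigma(U_r))\subset B_r$; hence $\delta_h\in B_r$, and taking $T_r$-parts yields $\pr_{T_r}((\sigma^{nm}h)^{-1}gh)=\pr_{T_r}(\delta_h)\cdot\pr_{T_r}((\sigma^{n}h)^{-1}gh)$. Thus the entire problem reduces to controlling $\pr_{T_r}(\delta_h)$.

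The hard part will be precisely this: showing that $\pr_{T_r}(\delta_h)$ does not depend on $h$. I expect in fact that $\delta_h\in U_r$, i.e. $\pr_{T_r}(\delta_h)=1$, so that $c_m=1$; but even granting only $h$-independence one obtains the corollary. The subtlety is that the individual factors of $\delta_h$ lie in $\sigma$-rotations of $U_r$ rather than in $U_r$, so unipotency of the product cannot be checked factor by factor — it must be extracted from membership in $Z_g$, i.e. from the interplay between $z\in\sigma(U_r)$ and the condition $(\sigma^n h)^{-1}gh\in T_r^\sigma(U_r\cap\sigma(U_r))$, together with the $\sigma^n$-periodicity that makes the cyclic ``wrap-around'' close up. This is a bookkeeping computation with the Iwahori decomposition of $G_r$, in the spirit of Lemmas~\ref{lem:1}--\ref{lem:2}; once it is in place, everything else is formal. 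When $\theta$ is moreover $(T,G)$-generic, an alternative route is available: by Theorem~\ref{thm:CI}, $R_{T_r}^{G_r}(\theta)$ is then $\pm$ an irreducible $G_r^\sigma$-representation $\rho$, so the commuting $\sigma^n$-action identifies $R_{T_r}^{G_r}(\theta)$ with $\rho\boxtimes V$ for a virtual $\langle\sigma^n\rangle$-representation $V$, whence $\tr(g\circ\sigma^{nm};R_{T_r}^{G_r}(\theta))=\chi_\rho(g)\cdot\tr(\sigma^{nm};V)$ and $c_m=\tr(\sigma^n;V)/\tr(\sigma^{nm};V)$.
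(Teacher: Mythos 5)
Your termwise comparison does not go through, and the paper's actual argument is of a quite different, softer nature. First, the index set $Z_g$ appearing in Proposition~\ref{prop:DL formula sigma} in fact depends on $m$: its own proof writes $Z_{g,nm}$, and tracing through Proposition~\ref{prop:Ind Y function} (applied with $nm$ in place of $n$) the second defining condition should read $\sigma^{nm}(h)^{-1}gh \in T_r^\sigma(U_r\cap\sigma(U_r))$. Consequently a class $h$ satisfying the $m=1$ conditions need not have $(\sigma^{nm}h)^{-1}gh$ in $T_r^\sigma(U_r\cap\sigma(U_r))$; your appeal to the $\sigma$-equivariance argument of Proposition~\ref{prop:Ind Y function} is circular here, since that argument takes membership of $(\sigma^{nm}h)^{-1}gh$ in $T_r U_r$ as input rather than producing it. Concretely, $\delta_h^{-1}=\sigma^n\bigl(\prod_{i=0}^{n(m-1)-1}\sigma^i(z)\bigr)$ with factors cycling through $\sigma(U_r),\dots,\sigma^{n-1}(U_r),U_r$, and such a product generically lies in the open cell rather than in $B_r$; already for $\GL_2$ with a nonsplit torus and $n=m=2$ one computes $\delta_h=\sigma^3(z)^{-1}\sigma^2(z)^{-1}$ with one factor in $U_r$ and the other in $U_r^-$, so $\pr_{T_r}(\delta_h)$ is not even defined. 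Moreover the expected answer $c_m=1$ is false: by Theorem~\ref{thm:Frob scalar} one has $c_m=c^{1-m}$ where $c$ is the $\sigma^n$-eigenvalue, and $c\neq1$ whenever the cohomology carries nontrivial Frobenius weights. Your fallback for generic $\theta$ is also unsound: knowing $R_{T_r}^{G_r}(\theta)$ equals $\pm\rho$ as a virtual $G_r^\sigma$-representation does not yield a factorization $\rho\boxtimes V$ as a virtual $G_r^\sigma\times\langle\sigma^n\rangle$-representation, since the individual $H_c^i(X_r)_\theta$ need not be $\rho$-isotypic --- that refinement is precisely Theorem~\ref{thm:Frob scalar}, which the paper deduces \emph{from} the present corollary, so the reasoning would be circular.

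The paper sidesteps all of this with a structural argument. Combining Propositions~\ref{prop:DL formula sigma} and~\ref{prop:Ind Y function}, one has $\tr(g\circ\sigma^{nm};R_{T_r}^{G_r}(\theta))=\chi_{(\pi_{Y_m})_!f_{Y_m}^*\cL}(g)$, where $Y_m$ is built from the Borel sequence $(B,\sigma(B),\dots,\sigma^{nm}(B))$. By Theorem~\ref{thm:Borel sequence}, for every $m$ the complex $(\pi_{Y_m})_!f_{Y_m}^*\cL$ is, up to shift, the fixed simple perverse sheaf $\pInd_{B_r}^{G_r}(\cL)$ (simple by Theorem~\ref{thm:t exact}; the paper's proof implicitly uses the $(T,G)$-genericity of $\cL$ here even though the corollary is stated for an arbitrary $\theta$). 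Any two Weil structures on a simple perverse sheaf differ by a scalar, so the associated trace-of-Frobenius functions differ by a constant; that constant is $c_m$. No term-by-term matching of the two sums is needed or attempted.
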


\begin{proof}
  Let $\cL$ be a multiplicative local system on $T_r$ such that $\chi_\cL = \theta$. Then by Proposition \ref{prop:Ind Y function}, we have that the above is equal to $\chi_{(\pi_{Y_m})_! f_{Y_m}^* \cL}$, where $Y_m$ is the variety corresponding to $\underline B = (B_r, \sigma(B_r), \ldots, \sigma^{nm}(B_r))$. By Theorem \ref{thm:Borel sequence}, we know $\pInd_{B_r}^{G_r} \cong (\pi_{Y_m})_! \circ f_{Y_m}^*$ up to a shift. On the other hand, the  genericity condition on $\theta$ implies that $\cL$ is $(T,G)$-generic, so by Theorem \ref{thm:equivalence}, $\pInd_{B_r}^{G_r}(\cL)$ is a simple perverse sheaf, and therefore the functions $\chi_{(\pi_{Y_m})_! f_Y^* \cL}$ for $m \in \bbZ_{\geq 1}$ are equal up to a constant scalar; that is, constant $c_m \in \overline \QQ_\ell^\times$ such that for all $g \in G_r^\sigma$,
  \begin{equation*}
    \sum_{hT_r^\sigma(U_r \cap \sigma(U_r)) \in Z_{g,nm}} \theta(\pr_{T_r}((\sigma^{nm}h)^{-1} g h)) = c_m \sum_{hT_r^\sigma(U_r \cap \sigma(U_r)) \in Z_{g,n}} \theta(\pr_{T_r}((\sigma^{n}h)^{-1} g h)).
  \end{equation*}
  Therefore, for any $m \in \bbZ_{\geq 1}$,
  \begin{equation}\label{eq:c_m}
    \tr(g^{-1} \circ \sigma^{nm}; R_{T_r}^{G_r}(\theta)) = c_m \cdot \tr(g^{-1} \circ \sigma^{n}; R_{T_r}^{G_r}(\theta)) \qquad \text{for all $g \in G_r^\sigma$.} \qedhere
  \end{equation}
\end{proof}

We remark that although the statement of Corollary \ref{cor:Frob cm} is simple and purely in terms of parahoric Deligne--Lusztig varieties, the proof relied on essentially all the main theorems proved thus far: that $\pInd_{B_r}^{G_r}$ sends simple perverse sheaves to simple perverse sheaves (Theorem \ref{thm:equivalence}), that $\pInd_{B_r}^{G_r}$ has an alternative description using certain sequences of Borel subgroups $\underline B$ (Theorem \ref{thm:Borel sequence}), and that when $\underline B = (B, \sigma(B), \ldots, \sigma^n(B))$, we have an explicit description of the associated trace-of-Frobenius function (Proposition \ref{prop:Ind Y function}). An elementary argument allows us to promote Corollary \ref{cor:Frob cm} to the following strengthening of Theorem \ref{thm:CI}.

\begin{theorem}\label{thm:Frob scalar}
  If $\theta$ is a $(T,G)$-generic character of $T_r^\sigma$, then there exists a scalar $c \in \overline \QQ_\ell^\times$ such that for any $g \in G_r^\sigma$,
  \begin{equation}\label{eq:scalar Frob}
    \tr(g \circ \sigma^n; R_{T_r}^{G_r}(\theta)) = c \cdot \tr(g; R_{T_r}^{G_r}(\theta)).
  \end{equation}
  Hence $R_{T_r}^{G_r}(\theta)$ is irreducible (up to a sign) as a representation of $G_r^\sigma \times \langle \sigma^n \rangle$.
\end{theorem}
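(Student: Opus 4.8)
The goal is to bootstrap from Corollary \ref{cor:Frob cm} (which says $\tr(g\circ\sigma^{nm};R_{T_r}^{G_r}(\theta)) = c_m\cdot\tr(g\circ\sigma^{n};R_{T_r}^{G_r}(\theta))$ for all $g\in G_r^\sigma$, with $c_m\in\overline\QQ_\ell^\times$ independent of $g$) to the $m=1$ statement \eqref{eq:scalar Frob}, and then deduce the irreducibility. The first step is to decompose the virtual $(G_r^\sigma\times\langle\sigma^n\rangle)$-representation $V\colonequals R_{T_r}^{G_r}(\theta)$ into isotypic pieces for the $G_r^\sigma$-action. By Theorem \ref{thm:CI}, $V = \pm\rho$ for a single irreducible $G_r^\sigma$-representation $\rho$ (after forgetting the $\sigma^n$-action), so as a virtual $(G_r^\sigma\times\langle\sigma^n\rangle)$-module we may write $V = \sum_{j} a_j\,(\rho\otimes\chi_j)$ where $\chi_j$ runs over characters of the (finite cyclic, if $\sigma^n$ has finite order on $V$; otherwise we argue with the operator directly) group generated by the $\sigma^n$-action and $a_j\in\bbZ$. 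Actually the cleanest formulation: let $F = \sigma^n$ acting on the finite-dimensional space underlying $V$ (genuine or virtual, we track the trace); since $F$ commutes with $G_r^\sigma$ and $V$ is $\pm$ an irreducible $G_r^\sigma$-module, Schur's lemma forces $F$ to act on each genuine constituent as a scalar times the identity up to the ambiguity of which cohomological degrees contribute. The substance of the argument is to rule out cancellation between degrees.

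The key step is the following elementary linear-algebra observation applied to Corollary \ref{cor:Frob cm}. Write $V_+$ (resp.\ $V_-$) for the sum of the even (resp.\ odd) cohomology groups $H_c^i(X_r,\overline\QQ_\ell)_\theta$, so $V = V_+ - V_-$ as virtual $(G_r^\sigma\times\langle F\rangle)$-modules, and each of $V_+,V_-$ is a genuine $G_r^\sigma$-module which is a multiple of $\rho$ (by Theorem \ref{thm:CI}, since $R_{T_r}^{G_r}(\theta)=\pm\rho$, one of $V_\pm$ is $\rho$ plus something and the other is that something — in any case each is $\rho^{\oplus k_\pm}$ for integers with $k_+ - k_- = \pm 1$). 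On each of $V_+, V_-$, the commuting operator $F$ acts; by Schur, $F|_{V_\pm}$ decomposes $V_\pm$ into $F$-eigenspaces each of which is $\rho$-isotypic, i.e.\ $F|_{V_\pm}$ acts with some eigenvalues $\lambda$ with multiplicities that are multiples of $\dim\rho$. Then $\tr(g\circ F^m;V) = \Theta_\rho(g)\cdot\bigl(\sum_\lambda m_\lambda^+\lambda^m - \sum_\mu m_\mu^-\mu^m\bigr)$ where $m^\pm$ are the (nonnegative, divisible by $\dim\rho$) multiplicities. Corollary \ref{cor:Frob cm} says the quantity $P(m)\colonequals\sum_\lambda m_\lambda^+\lambda^m - \sum_\mu m_\mu^-\mu^m$ satisfies $P(m) = c_m P(1)$ for all $m\ge 1$, with $P(1)\neq 0$ (since $R_{T_r}^{G_r}(\theta)\neq 0$ and $\Theta_\rho\not\equiv 0$). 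The hard part — and I expect this to be the main obstacle — is to conclude from "$P(m)/P(1)$ is independent of $g$" (which is automatic here since $P$ doesn't depend on $g$ at all!) ... wait — rather, the real content is: $P$ is a generalized exponential sum in $m$ with the $c_m$ ratios forced, and we must show $P(m) = \lambda^m P(1)$ for a single $\lambda$ (equivalently $V$ has $F$ acting by the single scalar $\lambda$ on the genuine representation $R_{T_r}^{G_r}(\theta)$).

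To push this through I would argue as follows. Pick $g=e$; then $P(m) = \Theta_\rho(e)^{-1}\tr(F^m;V) = \Theta_\rho(e)^{-1}\sum_i(-1)^i\tr(F^m;H_c^i(X_r)_\theta)$, a $\bbZ$-linear combination of $m$-th powers of Weil numbers (eigenvalues of geometric Frobenius on étale cohomology). Corollary \ref{cor:Frob cm} gives $P(m) = c_m P(1)$, and applying the same corollary with roles permuted (or just using multiplicativity $c_{m_1m_2}$-type relations obtained by iterating) shows $m\mapsto P(m)$ and $m\mapsto P(1)$ are "proportional with $m$-dependent constant," which for exponential sums forces $P(m) = \alpha^m\beta$ for scalars $\alpha,\beta$: indeed if $P(m) = \sum_k n_k\nu_k^m$ in lowest terms (distinct $\nu_k$, nonzero integers $n_k$) has $\geq 2$ terms, then the ratio $P(m)/P(1)$ is a non-constant ratio of such sums and cannot equal a "constant times itself" pattern consistently — more precisely one compares growth/archimedean absolute values and the action of $\Gal(\overline\QQ/\QQ)$ on Weil numbers to conclude a single term survives. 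Having established $\tr(F^m;V) = \alpha^m\cdot\tr(V) $ as virtual-character identities, set $c=\alpha$ (and $m=1$) to get \eqref{eq:scalar Frob}. Finally, for the irreducibility statement: \eqref{eq:scalar Frob} says that as a function on $G_r^\sigma\times\langle\sigma^n\rangle$ the virtual character of $R_{T_r}^{G_r}(\theta)$ equals $\Theta_{R_{T_r}^{G_r}(\theta)}(g)$ times the character of the one-dimensional representation $\sigma^n\mapsto c$ of $\langle\sigma^n\rangle$ — but $c$ must be a root of unity since $\sigma^{nN}$ acts trivially for $N$ large (the rational structure is defined over a finite field), so this is $\pm$ an honest irreducible character of $G_r^\sigma\times\langle\sigma^n\rangle$ (external tensor product of the irreducible $\pm\rho$ with a character of the cyclic group), giving the claim. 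The cleanest write-up uses $\langle g\circ\sigma^n\text{-virtual character},\text{itself}\rangle = 1$ over the finite group $G_r^\sigma\times(\mathbb Z/N)$, which follows from \eqref{eq:scalar Frob} together with $\langle\Theta_\rho,\Theta_\rho\rangle_{G_r^\sigma}=1$.
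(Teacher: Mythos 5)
There is a genuine gap, in two places. First, your reduction to the factorization $\tr(g\circ\sigma^{nm};R_{T_r}^{G_r}(\theta))=\Theta_\rho(g)\cdot P(m)$ rests on the claim that the even and odd parts $V_\pm$ of $\bigoplus_i H_c^i(X_r,\overline\QQ_\ell)_\theta$ are each $\rho$-isotypic. Theorem \ref{thm:CI} only says the \emph{virtual} representation $V_+-V_-$ is $\pm\rho$; it allows constituents $\tau\neq\rho$ to occur in $V_+$ and $V_-$ with equal multiplicities as $G_r^\sigma$-modules while $\sigma^n$ acts on them with different eigenvalues, in which case $\tr(g\circ\sigma^{nm};R_{T_r}^{G_r}(\theta))$ is \emph{not} of the form $\Theta_\rho(g)P(m)$. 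Ruling this out is essentially the concentration statement the paper explicitly flags as unknown, and it is the actual content of the theorem: if the factorization held, \eqref{eq:scalar Frob} would follow at once with $c=\pm P(1)$ (since $P(0)=\pm1$), and your subsequent ``single exponential'' analysis would be unnecessary. So the proposal assumes the hard part.

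Second, the step you yourself flag as the main obstacle does fail as written: once you fix $g=e$, the relation $P(m)=c_mP(1)$ carries no information, because \emph{any} sequence with $P(1)\neq0$ satisfies it by defining $c_m\colonequals P(m)/P(1)$; no appeal to growth rates or the Galois action on Weil numbers can extract a single exponential from it. The leverage in Corollary \ref{cor:Frob cm} is precisely that $c_m$ is independent of $g$, and the argument must exploit the identity $\sum_i\lambda_i^m\tr(g;\rho_{\lambda_i})=c_m\sum_i\lambda_i\tr(g;\rho_{\lambda_i})$ for \emph{varying} $g$ simultaneously with $g=e$. This is what the paper does: decomposing by distinct $\sigma^n$-eigenvalues $\lambda_i$ (not by $G_r^\sigma$-isotype), forming the rational function $\sum_i\lambda_i\tr(g;\rho_{\lambda_i})/(1-\lambda_it)$, and using distinctness of the poles to conclude that each function $g\mapsto\tr(g;\rho_{\lambda_i})$ is proportional to $g\mapsto\tr(e;\rho_{\lambda_i})$ with a ratio independent of $i$; \eqref{eq:scalar Frob} then follows by summing over $i$, with no purity or isotypicity input. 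Finally, your derivation of the last assertion assumes $\sigma^n$ has finite order on cohomology (``$c$ is a root of unity since $\sigma^{nN}$ acts trivially''), which is false---the eigenvalues are Weil numbers; the irreducibility over $G_r^\sigma\times\langle\sigma^n\rangle$ should instead be read off directly from \eqref{eq:scalar Frob} combined with Theorem \ref{thm:CI}.
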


\begin{proof}
  For $\lambda \in \overline \QQ_\ell^\times$, let $\rho_\lambda$ denote the virtual $G_r^\sigma$-representation on which $\sigma^n$ acts by multiplication by $\lambda$. We therefore have
    \begin{equation*}
      R_{T_r}^{G_r}(\theta) = \sum_i \rho_{\lambda_i}
    \end{equation*}
    for some pairwise distinct $\lambda_i \in \overline \QQ_\ell^\times$. Note that the above sum is finite. We then have
    \begin{align*}
      \tr(g \circ \sigma^{nm}; R_{T_r}^{G_r}(\theta)) = \sum_i \lambda_i^m \tr(g;\rho_{\lambda_i}), \qquad \text{for all $m \in \bbZ_{\geq 1}$.}
    \end{align*}
    By Equation \eqref{eq:c_m}, there exists a constant $c_m$ such that for all $g \in G_r^\sigma$,
    \begin{equation*}
      \sum_i \lambda_i^m \tr(g;\rho_{\lambda_i}) = c_m \cdot \sum_i \lambda_i \tr(g;\rho_{\lambda_i}).
    \end{equation*}

    Suppose that $\sum_i \lambda_i \tr(g;\rho_{\lambda_i}) = 0$. By the above, we then see that 
    \begin{equation*}
      \sum_i \lambda_i^m \tr(g;\rho_{\lambda_i}) = 0,
    \end{equation*}
    and therefore
    \begin{equation*}
      \sum_i \frac{\lambda_i \tr(g;\rho_{\lambda_i})}{1 - \lambda_i t} = 0,
    \end{equation*}
    which therefore implies $\tr(g;\rho_{\lambda_i}) = 0$ for all $i$. In particular, we see that $\sum_i \lambda_i \tr(e;\rho_{\lambda_i}) \neq 0$.

    Let $g \in G_r^\sigma \smallsetminus \{e\}$ be such that $\sum_i \lambda_i \tr(g;\rho_{\lambda_i}) \neq 0$. Then we have
    \begin{equation*}
      \frac{\sum_i \lambda_i^m \tr(g; \rho_{\lambda_i})}{\sum_i \lambda_i \tr(g;\rho_{\lambda_i})} = 
      \frac{\sum_i \lambda_i^m \tr(e;\rho_{\lambda_i})}{\sum_i \lambda_i \tr(e;\rho_{\lambda_i})},
    \end{equation*}
    which implies
    \begin{equation*}
      \frac{1}{\sum_j \lambda_j \tr(g;\rho_{\lambda_j})} \cdot \sum_i \frac{\lambda_i \tr(g;\rho_{\lambda_i})}{1 - \lambda_i t} = 
      \frac{1}{\sum_j \lambda_j \tr(e;\rho_{\lambda_j})} \cdot \sum_i \frac{\lambda_i \tr(e;\rho_{\lambda_i})}{1 - \lambda_i t}.
    \end{equation*}
    Since the $\lambda_i$ are all distinct, the above equality of rational functions gives
    \begin{equation*}
      \tr(g;\rho_{\lambda_i}) = 
      \frac{\tr(e;\rho_{\lambda_i}) \cdot \sum_j \lambda_j \tr(g;\rho_{\lambda_j})}{\sum_j \lambda_j \tr(e;\rho_{\lambda_j})} \qquad \text{for all $i$},
    \end{equation*}
    and therefore
    \begin{equation*}
      \tr(g;\rho_{\lambda_i}) = \frac{\tr(e;\rho_{\lambda_i})}{\tr(e;\rho_{\lambda_1})} \cdot \tr(g;\rho_{\lambda_1}) \qquad \text{for all $i$}.
    \end{equation*}
    
    Combining this with the vanishing statement, we've now shown
    \begin{equation*}
      \tr(g;\rho_{\lambda_i}) = d_i \cdot \tr(g;\rho_{\lambda_1})
    \end{equation*}
    for some $d_i \in \overline \QQ_\ell^\times$. Hence 
    \begin{equation*}
      \tr(g \circ \sigma^n; R_{T_r}^{G_r}(\theta)) = \sum_i \lambda_i \tr(g;\rho_{\lambda_i}) = \sum_i \lambda_i d_i \tr(g;\rho_{\lambda_1}) = \frac{\sum_i \lambda_i}{\sum_i \lambda_i d_i} \tr(g;R_{T_r}^{G_r}(\theta)),
    \end{equation*}
    which establishes \eqref{eq:scalar Frob}. The final assertion now follows from Theorem \ref{thm:CI}.
\end{proof}

From Theorem \ref{thm:Frob scalar} and Proposition \ref{prop:DL formula}, we may now establish that up to a scalar, the character of $R_{T_r}^{G_r}(\theta)$ can be expressed using the same formula as in Proposition \ref{prop:Ind Y function}:

\begin{proposition}\label{prop:DL formula}
  If $\theta$ is a $(T,G)$-generic character of $T_r^\sigma$, then there exists a constant $\lambda \in \overline \QQ_\ell^\times$ such that for any $g \in G_r^\sigma$, we have
  \begin{equation*}
    \Theta_{R_{T_r}^{G_r}(\theta)}(g) = \lambda \cdot \sum_{hT_r^\sigma(U_r \cap \sigma(U_r)) \in Z_g} \theta(\pr_{T_r}((\sigma^nh)^{-1} g h)),
  \end{equation*}
  where $Z_g$ is as in \eqref{eq:Z}.
\end{proposition}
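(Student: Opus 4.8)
The statement is essentially a formal consequence of two results already established: the scalar-action theorem (Theorem \ref{thm:Frob scalar}) and the twisted character formula for parahoric Deligne--Lusztig induction (Proposition \ref{prop:DL formula sigma}). The plan is to combine them directly.

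First I would invoke Theorem \ref{thm:Frob scalar}: since $\theta$ is a $(T,G)$-generic character of $T_r^\sigma$, there exists $c \in \overline{\QQ}_\ell^\times$ with
\begin{equation*}
  \tr(g \circ \sigma^n; R_{T_r}^{G_r}(\theta)) = c \cdot \tr(g; R_{T_r}^{G_r}(\theta)) = c \cdot \Theta_{R_{T_r}^{G_r}(\theta)}(g)
\end{equation*}
for all $g \in G_r^\sigma$. This is the only step that uses the genericity of $\theta$, and it is where the generic character sheaf technology of this paper enters (via the proof of Theorem \ref{thm:Frob scalar}, which rests on Corollary \ref{cor:Frob cm} and hence on Theorems \ref{thm:t exact} and \ref{thm:Borel sequence}).

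Next I would specialize Proposition \ref{prop:DL formula sigma} to $m = 1$: for any $g \in G_r^\sigma$,
\begin{equation*}
  \tr(g \circ \sigma^n; R_{T_r}^{G_r}(\theta)) = \sum_{hT_r^\sigma(U_r \cap \sigma(U_r)) \in Z_g} \theta\bigl(\pr_{T_r}((\sigma^n h)^{-1} g h)\bigr),
\end{equation*}
noting that $Z_{g,n} = Z_g$ is exactly the set \eqref{eq:Z}. Combining the two displayed equations and dividing by $c$ (which is legitimate since $c \neq 0$) yields
\begin{equation*}
  \Theta_{R_{T_r}^{G_r}(\theta)}(g) = c^{-1} \cdot \sum_{hT_r^\sigma(U_r \cap \sigma(U_r)) \in Z_g} \theta\bigl(\pr_{T_r}((\sigma^n h)^{-1} g h)\bigr),
\end{equation*}
so the claim holds with $\lambda = c^{-1} \in \overline{\QQ}_\ell^\times$. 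There is no substantive obstacle at this stage; all the difficulty has been front-loaded into Theorem \ref{thm:Frob scalar}, whose proof is what genuinely requires the sheaf-theoretic input.
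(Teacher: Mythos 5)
Your proposal is correct and takes exactly the approach the paper intends: the paper gives no separate proof of this proposition but introduces it with the line ``From Theorem \ref{thm:Frob scalar} and Proposition \ref{prop:DL formula}, we may now establish\ldots'' (where the second reference is evidently a typo for Proposition \ref{prop:DL formula sigma}), and your two-step derivation---scalar action of $\sigma^n$ from Theorem \ref{thm:Frob scalar}, then the $m=1$ case of Proposition \ref{prop:DL formula sigma}---is precisely that implicit argument, including the small point that $Z_g$ as written coincides with the $m=1$ set.
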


\begin{remark}
  In the $r=0$ setting, Proposition \ref{prop:DL formula} follows from Proposition \ref{prop:DL formula sigma} together with the classical fact that the cohomology groups $H_c^i(X_0, \overline \QQ_\ell)_\theta$ vanish outside the middle degree. The method of proof presented here gives an alternate argument in the $r=0$ setting, relying ``only'' on the weaker statement \eqref{eq:scalar Frob} about the action of Frobenius (Theorem \ref{thm:Frob scalar}). 
\end{remark}

\subsection{Comparison}\label{subsec:comparison}

We are now ready to establish the compatibility between $(T,G)$-generic parabolic induction and parabolic Deligne--Lusztig induction. At this point, comparing the two formulae given in Corollary \ref{cor:Ind function} and Proposition \ref{prop:DL formula}, we see that we already know this compatibility up to a constant and that it remains only to determine this constant. Hence we need only to compare the two sides at a single conveniently chosen element of $G_r^\sigma$. 

The locus of very regular elements provides a natural choice for this comparison. On the representation theoretic side, from \cite[Theorem 1.2]{CI21-RT} (see Theorem \ref{thm:CI} of this paper for the statement), we know that $\Theta_{R_{T_r}^{G_r}}(\theta)$ takes a simple shape on such elements. On the geometric side, from Theorem \ref{thm:IC vreg}, we know that $(T,G)$-generic parabolic induction is given by the intermediate extension of a local system from the very regular locus, and as such $\chi_{\pInd_{B_r}^{G_r}(\cL[\dim T_r])}$ has an analogously simple shape on these elements. We follow this line of reasoning and establish our desired comparison theorem under the (mild) assumption of non-emptyness of $(T_r^\sigma)_{\vreg}$.

\begin{theorem}\label{thm:comparison}
  Let $r>0$ and assume that $(T_r^\sigma)_{\vreg} \neq \varnothing$. Let $\cL$ be any generic rank-1 local system such that $\sigma^* \cL \cong \cL$. Then for all $g \in G_r^\sigma$, we have
  \begin{equation*}
    \chi_{\pInd_{B_r}^{G_r}(\cL[\dim T_r])}(g) = (-1)^{\dim G_r} \cdot \Theta_{R_{T_r}^{G_r}(\chi_\cL)}(g).
  \end{equation*}
  In particular, the class function $\chi_{\pInd_{B_r}^{G_r}(\cL[\dim T_r])}$ is the character of an irreducible virtual $G_r^\sigma$-representation.
\end{theorem}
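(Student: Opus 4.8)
\medskip

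\noindent\textbf{Proof proposal.}
The plan is to follow Lusztig's strategy for Green functions: first prove the asserted identity up to a nonzero scalar, then pin down the scalar by evaluating both sides at a very regular element of $T_r^\sigma$. Write $\theta \colonequals \chi_\cL$, a one-dimensional character of $T_r^\sigma$, and for $g \in G_r^\sigma$ set $S(g) \colonequals \sum_{hT_r^\sigma(U_r\cap\sigma(U_r))\in Z_g}\theta(\pr_{T_r}((\sigma^n h)^{-1}gh))$ with $Z_g$ as in \eqref{eq:Z}. Since $\cL[\dim T_r]$ is a $\sigma$-equivariant simple object of $D_{T_r}^\psi(T_r)$, Corollary \ref{cor:Ind function} gives a constant $\mu \in \overline\QQ_\ell^\times$ with $\chi_{\pInd_{B_r}^{G_r}(\cL[\dim T_r])}(g) = \mu\cdot S(g)$ for all $g$; here $\mu \neq 0$ because it measures the difference between two isomorphisms of a simple perverse sheaf. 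On the other hand, Proposition \ref{prop:DL formula} produces $\lambda \in \overline\QQ_\ell^\times$ with $\Theta_{R_{T_r}^{G_r}(\theta)}(g) = \lambda\cdot S(g)$ for all $g$. Hence $\chi_{\pInd_{B_r}^{G_r}(\cL[\dim T_r])} = \nu\cdot\Theta_{R_{T_r}^{G_r}(\theta)}$ identically, where $\nu \colonequals \mu\lambda^{-1} \in \overline\QQ_\ell^\times$. It remains to show $\nu = (-1)^{\dim G_r}$.

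Fix $\gamma \in (T_r^\sigma)_{\vreg}$, which is nonempty by hypothesis. Then $\gamma$ is a very regular element of $G_r^\sigma$ whose connected centralizer is $T$ (since $T$ is a maximal torus contained in $Z_G(\gamma)^\circ$, which is a maximal torus by very regularity), so $W_{G_r}((T_\gamma)_r,T_r) = W_{G_r}(T_r) =: W$. On the representation side, Theorem \ref{thm:CI} gives $\Theta_{R_{T_r}^{G_r}(\theta)}(\gamma) = \sum_{w\in W^\sigma}\theta^w(\gamma)$. On the geometric side, Theorem \ref{thm:IC vreg} identifies $j_{\vreg}^*\pInd_{B_r}^{G_r}(\cL[\dim T_r])$ with $(\pi_{\vreg})_! f_{\vreg}^*\cL_{\vreg}[\dim G_r]$, and by Lemma \ref{lem:tilde Gvreg} the morphisms $\pi_{\vreg}$ and $f_{\vreg}$ are $W$-torsors, in particular finite \'etale. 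The fiber $\pi_{\vreg}^{-1}(\gamma) = \{(\gamma,hT_r) : h^{-1}\gamma h \in T_r\}$ is a $W$-torsor trivialized by the $\sigma$-fixed point $(\gamma,eT_r)$, so its set of $\sigma$-fixed points is indexed by $W^\sigma$ via $w\mapsto (\gamma,\dot wT_r)$, and under this indexing $f_{\vreg}(\gamma,\dot wT_r) = \dot w^{-1}\gamma\dot w$, a $\sigma$-fixed point of $T_{r,\vreg}$ at which $\cL$ has trace of Frobenius $\theta^w(\gamma)$. Since $\pi_{\vreg}$ is finite \'etale, the Grothendieck trace formula introduces no Tate twists, and the only remaining contribution is the shift $[\dim G_r]$, giving $\chi_{\pInd_{B_r}^{G_r}(\cL[\dim T_r])}(\gamma) = (-1)^{\dim G_r}\sum_{w\in W^\sigma}\theta^w(\gamma) = (-1)^{\dim G_r}\,\Theta_{R_{T_r}^{G_r}(\theta)}(\gamma)$.

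Finally, the genericity condition ge2 forces the characters $\theta^w|_{T_r^\sigma}$, $w \in W^\sigma$, to be pairwise distinct, so $\sum_{w\in W^\sigma}\theta^w$ is not identically zero on $T_r^\sigma$; by the argument of \cite[Lemma 9.6]{CO21} one may in fact choose $\gamma \in (T_r^\sigma)_{\vreg}$ with $\sum_{w\in W^\sigma}\theta^w(\gamma) \neq 0$, i.e.\ $\Theta_{R_{T_r}^{G_r}(\theta)}(\gamma) \neq 0$. Substituting this $\gamma$ into the identity $\nu\cdot\Theta_{R_{T_r}^{G_r}(\theta)}(\gamma) = \chi_{\pInd_{B_r}^{G_r}(\cL[\dim T_r])}(\gamma) = (-1)^{\dim G_r}\,\Theta_{R_{T_r}^{G_r}(\theta)}(\gamma)$ yields $\nu = (-1)^{\dim G_r}$, proving the displayed formula for all $g \in G_r^\sigma$. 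The final ``in particular'' is then immediate: by Theorem \ref{thm:CI}, $R_{T_r}^{G_r}(\theta)$ is, up to sign, an irreducible $G_r^\sigma$-representation, so $(-1)^{\dim G_r}\,\Theta_{R_{T_r}^{G_r}(\theta)}$ is the character of an irreducible virtual $G_r^\sigma$-representation.

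I expect the main obstacle to be the geometric-side computation at $\gamma$: correctly identifying $\pi_{\vreg}^{-1}(\gamma)^\sigma$ with $W^\sigma$ compatibly with $f_{\vreg}$ and the $\FF_q$-rational structure, and verifying that the $W$-torsor structure plus the perverse shift contribute exactly the sign $(-1)^{\dim G_r}$ with no extraneous Frobenius weights. The nonvanishing step, although cited from \cite{CO21}, also deserves care, since linear independence of characters only yields nonvanishing of $\sum_w\theta^w$ somewhere on $T_r^\sigma$, and one must use the structure of $(T_r^\sigma)_{\vreg}$ inside $T_r^\sigma$ to locate a very regular $\gamma$ where this sum is nonzero.
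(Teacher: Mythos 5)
Your proposal is correct and follows essentially the same route as the paper: proportionality of the two class functions via Corollary \ref{cor:Ind function} and Proposition \ref{prop:DL formula}, then evaluation at a very regular element of $T_r^\sigma$ using Theorem \ref{thm:CI} on one side and Theorem \ref{thm:IC vreg} with Lemma \ref{lem:tilde Gvreg} on the other. The one step you flag as needing care is handled in the paper exactly by the trick of \cite[Lemma 9.6]{CO21}: multiplying $\gamma$ by $t_+ \in T_{0+:r+}^\sigma$ stays in the very regular locus, and since the restrictions $\theta^w|_{T_{0+:r+}^\sigma}$ are pairwise distinct, pairing $t_+ \mapsto \sum_w \theta^w(\gamma)\theta^w(t_+)$ against $\theta|_{T_{0+:r+}^\sigma}$ gives $\theta(\gamma) \neq 0$, so the sum is nonzero at some very regular point.
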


\begin{proof}
  Recall from Corollary \ref{cor:Ind function} that there exists a constant $\mu \in \overline \QQ_\ell^\times$ such that 
  \begin{equation*}
    \chi_{\pInd_{B_r}^{G_r}(\cL[\dim T_r])}
    = \mu \cdot \chi_{(\pi_Y)_! f_Y^* \cL}.
  \end{equation*}
  Let $\theta = \chi_\cL$. By Proposition \ref{prop:Ind Y function} and Proposition \ref{prop:DL formula}, there is a constant $\lambda \in \overline \QQ_\ell^\times$ such that 
  \begin{equation*}
    \chi_{(\pi_Y)_! f_Y^* \cL} = \lambda^{-1} \cdot \Theta_{R_{T_r}^{G_r}(\theta)},
  \end{equation*}
  and so therefore we have the following identity of functions on $G_r^\sigma$
  \begin{equation*}
    \chi_{\pInd_{B_r}^{G_r}(\cL)} = \frac{\mu}{\lambda} \cdot \Theta_{R_{T_r}^{G_r}(\theta)}.
  \end{equation*}
  To determine $\mu/\lambda$, we need only compute both sides at a(ny) convenient element $g \in G_r^\sigma$.

  Let $g \in G_r^\sigma$ be any very regular element. By Theorem \ref{thm:CI}, we know 
  \begin{equation*}
    \Theta_{R_{T_r}^{G_r}(\theta)}(g) = \sum_{w \in W_{G_r}(T_r)^\sigma}\theta^w(g).
  \end{equation*}
  On the other hand, by Theorem \ref{thm:IC vreg} and Lemma \ref{lem:tilde Gvreg}, we have
  \begin{align*}
    \chi_{\pInd_{B_r}^{G_r}(\cL[\dim T_r])}(g) 
    &= \chi_{(\pi_{\vreg})_!f_{\vreg}^*\cL_{\vreg}[\dim G_r]}(g) \\
    &= (-1)^{\dim G_r}  \cdot \sum_{w \in W_{G_r}(T_r)^\sigma}\theta^w(g).
  \end{align*}
  In order to conclude that $\mu/\lambda = (-1)^{\dim G_r}$, we need to make sure that $\sum_{w \in W_{G_r}(T_r)^\sigma}\theta^w(g) \neq 0$ for some very regular element $g$. This follows from \cite[Lemma 9.6]{CO21}. The proof is a simple trick, so we provide it for the convenience of the reader: for any $t_+ \in T_r^\sigma$, the element $gt_+$ is still very regular. By the genericity condition on $\theta$, the characters $\theta^w|_{T_{0+:r+}^\sigma}$ are all distinct. Hence we have 
  \begin{equation*}
    \langle \sum_{w \in W_{G_r}(T_r)^\sigma} \theta^w(g) \cdot \theta^w|_{T_{0+:r+}^\sigma}, \theta|_{T_{0+:r+}^\sigma} \rangle_{T_{0+:r+}^\sigma} = \theta(g) \neq 0,
  \end{equation*}
  which in particular means that $\sum_{w \in W_{G_r}(T_r)^\sigma} \theta^w(g) \cdot \theta^w|_{T_{0+:r+}^\sigma}$ is not identically zero, finishing the proof.
\end{proof}


\begin{remark}
  Note that the proof of Theorem \ref{thm:comparison} relies on the positivity of $r$ to obtain the nonvanishing of $\Theta_{R_{T_r}^{G_r}(\theta)}$ at any very regular element in $T_r^\sigma$. 
  
  In the case $r = 0$, we may run the same argument to obtain a comparison between character sheaves and Deligne--Lusztig induction for $\theta$ in general position, but it is no longer true that $\Theta_{R_{T_0}^{G_0}(\theta)}$ is nonzero at every regular element of $T_0^\sigma$. Hence we would need to assume a stronger condition: the existence of a regular element of $T_0^\sigma$ for which $\Theta_{R_{T_0}^{G_0}(\theta)}$ is nonzero. In \cite[Lemma 4.3]{CO23}, it is shown that this assumption is guaranteed by requiring
  \begin{equation*}
    \frac{|T_0^\sigma|}{|T_0^\sigma| - |(T_0^\sigma)_{\reg}|} > 2 \cdot |W_{G_0^\sigma}(T_0)|.
  \end{equation*}

  We remark that this is a different strategy to Lusztig's $r = 0$ comparison, where he pins down the constant by making a comparison on a regular unipotent element. This requires an assumption on $q$ and also takes some effort to calculate on the cohomology side (but is trivial on the sheaf side). 
\end{remark}

We make some remarks on the existence of very regular elements in $T_r^\sigma$. Following Kaletha \cite[Section 3.4]{Kal19} (see also \cite[Lemma 5.6]{CO21}), the unramified torus $\T$ of $\G$ transfers to an unramified torus $\T^*$ of the quasisplit inner form $\G^*$ such that the associated point $\bar \x^*$ of $\cB^{\red}(\G^*,F)$ corresponds to a Chevalley valuation of $\G^*$. Moreover, by \cite[Lemma 5.7]{CO21}, this induces an isomorphism $T_r^\sigma \cong T_r^{* \sigma}$ identifying the respective sets of very regular elements $(T_r^\sigma)_\vreg \cong (T_r^{* \sigma})_\vreg$. But now, as explained in the proof of \cite[Proposition 5.8]{CO21}, the locus $(T_r^{* \sigma})_{\vreg}$ is exactly equal to the preimage under $T_r^{* \sigma} \to T_0^{*\sigma}$ of the regular elements of $T_0^{*\sigma}$ in $G_0^{* \sigma}$. Hence the existence of certain very regular elements in $G_r^\sigma$ is equivalent to the existence of certain regular semisimple elements in $G_0^{* \sigma}$. In particular, we may conclude (see \cite[Proposition 5.8]{CO21}):

\begin{lemma}\label{lem:q}
  There exists a constant $C$ depending only on the absolute rank of $\G$ such that $(T_r^\sigma)_{\vreg} \neq \varnothing$ whenever $q > C$.
\end{lemma}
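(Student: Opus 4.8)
The statement to prove is Lemma~\ref{lem:q}: there is a constant $C$ depending only on the absolute rank of $\G$ such that $(T_r^\sigma)_{\vreg} \neq \varnothing$ whenever $q > C$. As the discussion preceding the lemma already lays out, the strategy is to reduce the existence of very regular elements in $T_r^\sigma$ to the existence of regular semisimple elements in the reductive quotient $G_0^{*\sigma}$ of the quasisplit inner form, where it becomes an elementary point-counting estimate. I would organize the argument in three steps.

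\emph{Step 1: transfer to the quasisplit inner form.} Following Kaletha \cite[Section 3.4]{Kal19} (and \cite[Lemma 5.6]{CO21}), the unramified maximal torus $\T \subset \G$ transfers to an unramified maximal torus $\T^* \subset \G^*$ of the quasisplit inner form, in such a way that the point $\x$ in the building of $\G$ corresponds to a point $\bar\x^*$ giving a Chevalley valuation of $\G^*$. By \cite[Lemma 5.7]{CO21} this yields an isomorphism $T_r^\sigma \cong T_r^{*\sigma}$ which matches up the loci of very regular elements, so it suffices to prove $(T_r^{*\sigma})_{\vreg} \neq \varnothing$.

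\emph{Step 2: reduce to depth zero.} Because $\bar\x^*$ corresponds to a Chevalley valuation, the quotient map $T_r^{*\sigma} \to T_0^{*\sigma}$ has the property that $(T_r^{*\sigma})_{\vreg}$ is precisely the preimage of the set $(T_0^{*\sigma})_{\reg}$ of elements of $T_0^{*\sigma}$ that are regular semisimple in $G_0^{*\sigma}$; this is exactly the computation carried out in the proof of \cite[Proposition 5.8]{CO21}. (Concretely: very regularity at positive depth amounts to the depth-zero reduction being regular, since condition (3) in the definition of very regular element is a condition modulo $\mfp$, and the Chevalley-valuation hypothesis guarantees the apartment/centralizer conditions (1)--(2) are inherited from depth zero.) Hence the fibers over $(T_0^{*\sigma})_{\reg}$ are nonempty affine spaces, and $(T_r^{*\sigma})_{\vreg} \neq \varnothing$ if and only if $(T_0^{*\sigma})_{\reg} \neq \varnothing$.

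\emph{Step 3: count points in $T_0^{*\sigma}$.} Now $T_0^*$ is a maximal torus of the connected reductive group $G_0^*$ over $\FF_q$, and the complement of the regular locus is the union of the root-kernel subtori $\ker(\alpha)$ for $\alpha \in \Phi(G_0^*, T_0^*)$. Each such subtorus has dimension $\dim T_0^* - 1$, so by the Lang--Weil/Steinberg estimates $|T_0^{*\sigma}| = q^{\dim T_0^*} + O(q^{\dim T_0^* - 1})$ while $|\ker(\alpha)^\sigma| = O(q^{\dim T_0^* - 1})$, with implied constants bounded in terms of the absolute rank. Since the number of roots is also bounded in terms of the absolute rank, there is a constant $C$, depending only on the absolute rank of $\G$ (note $\G$ and $\G^*$ have the same absolute rank and root datum up to the inner twist), such that $|T_0^{*\sigma}| > \sum_{\alpha} |\ker(\alpha)^\sigma|$ whenever $q > C$; for such $q$ the set $(T_0^{*\sigma})_{\reg}$ is nonempty. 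Combining Steps 1--3 gives the lemma.

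\emph{Main obstacle.} None of the three steps is genuinely hard --- the result is essentially a citation to \cite[Proposition 5.8]{CO21}, and the only mild subtlety is Step~2, namely keeping careful track of how the three conditions in the definition of a very regular element (connected centralizer a maximal torus, apartment containing $\x$, $\alpha(\gamma) \not\equiv 1 \bmod \mfp$) interact with the quotient map $\cG_{\x,0} \to G_r \to G_0$ and with the Chevalley-valuation hypothesis on $\bar\x^*$. The cleanest route is simply to invoke the cited lemmas of \cite{CO21} for Steps 1--2 and supply the elementary counting argument for Step~3, making explicit that the constant $C$ depends only on the root datum and hence only on the absolute rank.
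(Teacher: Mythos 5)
Your proposal matches the paper's argument: the paper establishes Lemma~\ref{lem:q} by exactly the reduction you describe in Steps 1--2 (transfer to $\T^*$ in the quasisplit inner form via \cite[Lemmas 5.6, 5.7]{CO21}, then reduce very regularity in $T_r^{*\sigma}$ to regularity in $T_0^{*\sigma}$ as in the proof of \cite[Proposition 5.8]{CO21}) and then simply cites \cite[Proposition 5.8]{CO21} for the final count. Your Step~3 makes explicit the elementary point-count over $\FF_q$ that the paper delegates to that citation; otherwise the two proofs are the same.
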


\newpage


\providecommand{\bysame}{\leavevmode\hbox to3em{\hrulefill}\thinspace}
\providecommand{\MR}{\relax\ifhmode\unskip\space\fi MR }
\providecommand{\MRhref}[2]{%
  \href{http://www.ams.org/mathscinet-getitem?mr=#1}{#2}
}
\providecommand{\href}[2]{#2}

\end{document}